\DeclareMathAlphabet{\pazocal}{OMS}{zplm}{m}{n}
\tikzset{>=stealth}
  \newcommand{\calB}{\mathcal{B}}
  \newcommand{\calC}{\mathcal{C}}
  \newcommand{\calI}{\mathcal{I}}
  \newcommand{\calJ}{\mathcal{J}}
  \newcommand{\calN}{\mathcal{N}}
  \newcommand{\calU}{\mathcal{U}}
    \newcommand{\calV}{\mathcal{V}}
  \newcommand{\calW}{\mathcal{W}}
  \newcommand{\EE}{\mathbb{E}}
  \newcommand{\NN}{\mathbb{N}}
  \newcommand{\RR}{\mathbb{R}}
  \newcommand{\ZZ}{\mathbb{Z}}
  \newcommand{\bfa}{\textbf{a}}
  \newcommand{\bfb}{\textbf{b}}
  \newcommand{\bfc}{\textbf{c}}
  \newcommand{\gothic}{\mathfrak}
  \newcommand{\GF}{{\gothic F}}
  \newcommand{\Gg}{{\gothic g}}
  \newcommand{\go}{{\gothic o}}
  \newtheorem{theorem}{Theorem}[section]
  \newtheorem{proposition}[theorem]{Proposition}
  \newtheorem{corollary}[theorem]{Corollary}
  \newtheorem{lemma}[theorem]{Lemma}
  \newtheorem{introthm}{Theorem}
  \newtheorem{introcor}[introthm]{Corollary}
  \theoremstyle{definition}
  \newtheorem{definition}[theorem]{Definition}
  \newtheorem*{claim*}{Claim}
  \newtheorem*{question*}{Question}
  \newtheorem*{answer*}{Answer}
  \newtheorem*{application*}{Application}
  \theoremstyle{remark}
  \newtheorem{remark}[theorem]{Remark}
  \newtheorem*{remark*}{Remark}
  \newcommand{\thmref}[1]{Theorem~\ref{#1}}
  \newcommand{\corref}[1]{Corollary~\ref{#1}}
  \newcommand{\lemref}[1]{Lemma~\ref{#1}}
  \newcommand{\propref}[1]{Proposition~\ref{#1}}
  \newcommand{\remref}[1]{Remark~\ref{#1}}
  \newcommand{\defref}[1]{Definition~\ref{#1}}
  \newcommand{\eqnref}[1]{Equation~\eqref{#1}}
  \newcommand{\pka}{\partial_{\kappa}}
  \newcommand{\sA}{{\sf A}}   
  \newcommand{\sC}{{\sf C}}   
  \newcommand{\sD}{{\sf D}}
  \newcommand{\sK}{{\sf K}}   
  \newcommand{\sL}{{\sf L}}     
  \newcommand{\sM}{{\sf M}}
  \newcommand{\sQ}{{\sf Q}}   
  \newcommand{\sR}{{\sf R}}
  \renewcommand{\aa}{{\sf a}}   
  \newcommand{\bb}{{\sf b}}   
  \newcommand{\cc}{{\sf c}}   
  \newcommand{\dd}{{\sf d}}
  \newcommand{\kk}{{\sf k}}   
  \newcommand{\mm}{{\sf m}}   
  \newcommand{\nn}{{\sf n}}
  \newcommand{\qq}{{\sf q}}   
  \newcommand{\rr}{{\sf r}}
  \newcommand{\uu}{{\sf u}}
  \newcommand{\supp}{{\rm supp}}
\DeclareMathOperator{\diam}{diam}
  \newcommand{\param}{{\mathchoice{\mkern1mu\mbox{\raise2.2pt\hbox{$
  \centerdot$}}
  \mkern1mu}{\mkern1mu\mbox{\raise2.2pt\hbox{$\centerdot$}}\mkern1mu}{
  \mkern1.5mu\centerdot\mkern1.5mu}{\mkern1.5mu\centerdot\mkern1.5mu}}}
\DeclarePairedDelimiterX{\norm}[1]{\lvert}{\rvert}{#1}
\DeclarePairedDelimiterX{\Norm}[1]{\lVert}{\rVert}{#1}
  \newcommand{\st}{\mathbin{\mid}} 
  \newcommand{\ST}{\mathbin{\Big|}} 
  \newcommand{\from}{\colon\thinspace}
  \newcommand{\A}{{A(\Gamma)}}
\newcommand{\CAT}{\ensuremath{\operatorname{CAT}(0)}\xspace}
\newcommand{\myGlobalTransformation}[2]
{
    \pgftransformcm{1}{0}{0.5}{0.3}{\pgfpoint{#1}{#2}}
}
\begin{document}

  \title[Sublinearly Morse Boundary I: \CAT Spaces]
  {Sublinearly Morse Boundary I: \CAT Spaces}
  
%
 \author{Yulan Qing}
 \thanks{Shanghai Center for Mathematical Sciences, Fudan University, Shanghai, China \url{qingyulan@fudan.edu.cn}.}
 
 \author   {Kasra Rafi}
 \thanks{Department of Mathematics, University of Toronto, Toronto, ON, \url{rafi@math.toronto.edu}.}
 
 
  \date{\today}

\begin{abstract} 
To every Gromov hyperbolic space $X$ one can associate a space at infinity
called the Gromov boundary of $X$.  Gromov showed that 
quasi-isometries of hyperbolic metric spaces induce homeomorphisms on
their boundaries, thus giving rise to a well-defined notion of the
boundary of a hyperbolic group. Croke and Kleiner showed that the visual
boundary of non-positively curved (CAT(0)) groups is not well-defined,
since quasi-isometric CAT(0) spaces can have non-homeomorphic boundaries.

We attempt to construct an analogue of the Gromov boundary that encodes the 
hyperbolic directions in a metric space. To this end, for any sublinear function $\kappa$, 
we define a subset of the visual boundary called the $\kappa$--Morse boundary. 
We show that, equipped with a coarse notion of visual topology, this space is QI-invariant 
and metrizable. That is to say, the $\kappa$--Morse boundary of a CAT(0) group is 
well-defined. In the case of Right-angled Artin groups, it is shown in the Appendix that 
the Poisson boundary of random walks is naturally identified with the 
$(\sqrt{t \log t})$--boundary. 
\end{abstract}

\maketitle

\section{Introduction}

To every Gromov hyperbolic space $X$ one can associate a space at infinity 
$\partial X$ called the \emph{Gromov boundary} of $X$. The space $\partial X$ 
consists of equivalence classes of geodesic rays, where two rays are equivalent if they 
stay within bounded distance of each other, and is equipped with the visual topology. 
This boundary is a fundamental tool for studying hyperbolic groups and hyperbolic 
spaces (for example, see  \cite{Kapovichsurvey}). As shown by Gromov  \cite{gromov}, 
quasi-isometries between hyperbolic metric spaces induce homeomorphisms between 
their boundaries, thus giving rise to a well-defined notion of the boundary of a hyperbolic 
group. 

However, this is not true under weaker assumptions. 
In particular, for \CAT spaces, Croke and Kleiner \cite{crokekleiner} showed that visual 
boundaries of \CAT spaces are generally not quasi-isometrically invariant and hence one 
cannot talk about the visual boundary of a \CAT group.  In \cite{qing1} Qing showed that 
even if we restrict our attention to \emph{rank-1} geodesics, 
the space of all rank-1 geodesics is still not quasi-isometry invariant. In \cite{cashen} 
Cashen showed that the subset of the visual boundary consisting of only the Morse
geodesics (equipped with the usual cone topology) is not in general preserved 
by quasi-isometries. 

The correct analogue of the Gromov boundary should consist of all the hyperbolic 
directions in a given metric space. It turns out, many arguments in the study of 
Gromov hyperbolic spaces can still be carried out with sub-linear error terms 
rather than uniform ones. This is our guiding principle as we attempt find correct 
generalizations of the fundamental notions in Gromov hyperbolic spaces and construct 
the new boundary.  In this paper, we introduce a boundary for \CAT spaces that is strictly
larger than the set of Morse geodesics and is equipped with a coarse notion 
of cone topology that makes it invariant under quasi-isometries. 
However, this principle is also applicable beyond the setting of \CAT spaces 
(see \remref{Rem:Developments} for further developments). 

The points in this boundary are geodesic rays that behave like geodesics
in a Gromov hyperbolic space with a sublinear error term. More precisely, 
they satisfy one of the following two equivalent characterizations. 
Given a base-point $\go$ in $X$, define the norm of a point $x$ to be 
$\Norm{x} = d_X(\go, x)$. Now, fixing a sublinear function $\kappa$, 
we say a geodesic ray $b \from [0, \infty) \to X$ starting from $\go$ is $\kappa$--Morse 
if there is a Morse gauge function $\mm_b \from \RR_+^2 \to \RR_+$ such that
if $\zeta$ is a $(\qq, \sQ)$--quasi-geodesic segment with end points on $b$
then, for every point $x$ on $\zeta$, we have 
\[
d_X(x, b) \leq \mm_b(\qq, \sQ) \cdot \kappa(\Norm{x}). 
\]
Alternatively, we say $b$ is $\kappa$--contracting if there exists a constant $\cc_b$ such 
that, for any metric ball $B$ centered at $x$ that is disjoint from $b$, the projection of $B$ 
to $b$ has diameter at most $\cc_b \cdot \kappa(\Norm x)$. 
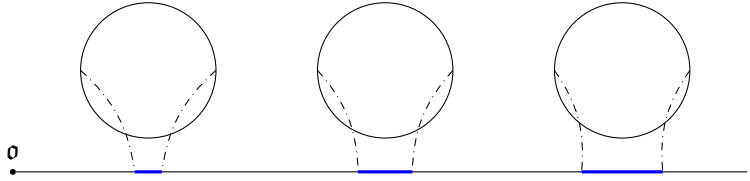
\begin{figure}[h!]
\begin{tikzpicture}[scale=0.9]
 \tikzstyle{vertex} =[circle,draw,fill=black,thick, inner sep=0pt,minimum size=.5 mm]
 
[thick, 
    scale=1,
    vertex/.style={circle,draw,fill=black,thick,
                   inner sep=0pt,minimum size= .5 mm},
                  
      trans/.style={thick,->, shorten >=6pt,shorten <=6pt,>=stealth},
   ]
   
    \node[vertex] (a) at (-1,0) [label=$\go$]  {};
    \node(b) at (10, 0) {};
    \draw(a)--(b){};
    \draw (1,1.5) circle (1);
     \draw (4.5,1.5) circle (1);
      \draw (8,1.5) circle (1);
      
      \draw [-, dash dot] (0, 1.5) to [bend left=20] (0.5+0.3, 0);
       \draw[-, dash dot] (2, 1.5) to [bend right=20](1.5-0.3, 0);
       
        \draw[-, blue, very thick](0.8, 0) to (1.2, 0);
       
        \draw [-, dash dot](3.5, 1.5)to [bend left=20](3.5+0.6, 0);
        \draw [-, dash dot](5.5,1.5)to [bend right=20](5.5-0.6,0);
        
        \draw[-, blue, very thick](4.1, 0) to (4.9, 0);
        
        \draw[-, dash dot](7, 1.5)to [bend left=20](6.5+0.9, 0);
        \draw[-, dash dot](9, 1.5)to [bend right=20](9.5-0.9, 0);
        
        \draw[-, blue, very thick](7.4, 0) to (8.6, 0);
   
\end{tikzpicture}
\caption{Along a $\kappa$--contracting geodesic ray, the diameter of the projection 
of a disjoint ball is allowed to grow at a rate comparable to $\kappa$.}
\label{Fig:Contracting}
\end{figure}

Recall that geodesic rays in Gromov hyperbolic spaces are $\kappa$--contracting
and $\kappa$--Morse for $\kappa =1$. 

\begin{introthm}
A geodesic ray is $\kappa$--Morse if and only if it is $\kappa$--contracting. 
\end{introthm}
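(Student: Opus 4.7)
The plan is to adapt the classical Morse/contracting equivalence to the sublinear setting, tracking carefully the error term $\kappa(\Norm{x})$. Both directions are proved by contradiction and rely on two basic CAT(0) facts: the nearest-point projection $\pi_b$ onto the geodesic $b$ is well-defined and $1$-Lipschitz, and the distance function $z \mapsto d_X(z, b)$ is convex.

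For the direction $\kappa$-Morse implies $\kappa$-contracting, suppose $B = B(x, r)$ is a ball disjoint from $b$ and let $y, y' \in B$ with projections $p = \pi_b(y)$ and $p' = \pi_b(y')$. My strategy is to extend the broken geodesic $[p, y] \cup [y, y'] \cup [y', p']$ by sufficiently long segments along $b$ on both sides of $p$ and $p'$, producing a path $\gamma$ from a point $p_-$ to a point $p_+$ on $b$. For appropriately chosen extension length, $\gamma$ is a $(q_0, Q_0)$-quasi-geodesic with universal constants $q_0, Q_0$: in CAT(0), the corners at $p$ and $p'$ have angle at least $\pi/2$ by the nearest-point projection property, so neither end of the detour admits a shortcut back to $b$, and the ratio of detour length to endpoint distance can be made as close to $1$ as desired by lengthening the extensions. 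Applying the $\kappa$-Morse hypothesis to $\gamma$ at the point $y$ yields $d_X(y, b) \leq \mm_b(q_0, Q_0)\, \kappa(\Norm{y})$; bounding the length of the detour through $y$ and $y'$ by twice this quantity then bounds $d_X(p, p')$. Sublinearity of $\kappa$ lets us replace $\kappa(\Norm{y})$ by a multiple of $\kappa(\Norm{x})$, yielding the contracting bound with constant $\cc_b$ depending only on $\mm_b$ and universal data.

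For the reverse direction, let $\zeta$ be a $(q, Q)$-quasi-geodesic with endpoints on $b$ and let $x \in \zeta$ with $d_X(x, b) = R$. My strategy is to isolate the excursion of $\zeta$ through the region $\{z : d_X(z, b) \geq R/2\}$ containing $x$, cover this excursion by a chain of overlapping balls $B(x_i, r_i)$ disjoint from $b$ with $r_i \asymp d_X(x_i, b)$, and apply the $\kappa$-contracting hypothesis to each ball to bound the displacement of the projection $\pi_b \circ \zeta$ along $b$. Combining this with the quasi-geodesic inequality that relates the length of $\zeta$ to the endpoint distance along $b$ then forces the excursion length, and hence $R$, to be controlled by $\kappa(\Norm{x})$ times a function of $(q, Q)$.

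The principal technical obstacle in both directions is the sublinear error term. In the Morse-to-contracting direction, a direct length estimate on the detour produces a term proportional to $r$, which may exceed $\kappa(\Norm{x})$ when $r$ is large; absorbing this term requires a careful choice of the extension lengths, and possibly an iterative refinement that handles big balls by reducing to smaller scales. In the reverse direction, one must control the number of balls in the covering chain so that the accumulated projection diameter does not outgrow what the quasi-geodesic inequality permits, which is where sublinearity of $\kappa$ is decisive. Both issues are handled by exploiting that under perturbations of the basepoint of size bounded by the relevant scales, $\kappa(\Norm{\cdot})$ remains comparable to $\kappa(\Norm{x})$, ensuring the final constants $\mm_b$ and $\cc_b$ depend only on $b$ and the quasi-geodesic quality.
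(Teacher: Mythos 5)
Your plan for the contracting-to-Morse direction tracks the paper's argument (Theorem~\ref{Thm:Strong}): decompose the excursion of $\zeta$ away from $b$ into a chain of disjoint-ball steps, bound each step's projection by $\kappa$ using the contracting hypothesis, and play the total projection diameter against the lower quasi-geodesic bound on arc length. That part is sound.

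The Morse-to-contracting direction has a genuine gap. You propose to extend the broken path $[p,y]\cup[y,y']\cup[y',p']$ along $b$ and claim the result is a quasi-geodesic with universal constants $(q_0,Q_0)$. But a quasi-geodesic bound must hold between \emph{every} pair of points on the path, not only its endpoints. For the pair $(p,p')$, the arc length between them along your path is at least about $2\,d_X(x,b)$, while the distance $d_X(p,p')$ is precisely the projection diameter you are trying to bound and may be tiny. Extending along $b$ does not affect this pair, so whenever $d_X(x,b)$ is much larger than $d_X(p,p')$ the lower inequality $\frac{1}{q_0}|s-t|-Q_0\leq d_X(p,p')$ fails for any fixed $(q_0,Q_0)$; there is then no universal gauge $\mm_b(q_0,Q_0)$ to invoke. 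Your remark about ``iterative refinement to smaller scales'' gestures at the problem but does not fix it, because the naive detour's height above $b$ scales with $d_X(x,b)$, which is a priori unrelated to the quantity $d_X(p,p')$ you want to control.

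The paper avoids this via the Charney--Sultan construction (Proposition~\ref{charneysultan}). Rather than routing the test path through $x$ and $y$, one builds a smaller quadrilateral $Q_2$ inside $Q_1 = [x,x_b]\cup[x_b,y_b]\cup[y_b,y]\cup[y,x]$ all of whose sides are comparable to $\sD := d_X(x'_b,y'_b)\geq\frac{1}{4} d_X(x_b,y_b)$, and then runs a path inside $Q_2$ whose maximal height above $b$ is pinched between $\sD/80$ and $8\sD$. Because every length in this picture is a bounded multiple of the projection diameter, the resulting path is a genuine $(32,0)$-quasi-geodesic regardless of how far $x$ sits from $b$, and its height is bounded below by $\frac{1}{80}d_X(x_b,y_b)$. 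Only then does the Morse hypothesis, applied at the single fixed gauge $\mm_b(32,0)$, yield the contraction bound. Producing this controlled-height quasi-geodesic is the essential content of this implication, and it cannot be replaced by the naive detour through the ball.
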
 

We define the \emph{$\kappa$--Morse boundary} of $X$, which we denote by $\pka X$, 
to be the space of all such geodesic rays and we equip this space
with a notion of visual topology on quasi-geodesics (see Section \ref{Sec:Topology}). 
In the case where $X$ is a Gromov hyperbolic space, $\partial_\kappa X$ is the 
same as the Gromov boundary of $X$ for every function $\kappa$. 

\begin{introthm}
If $\Phi \from X \to Y$ is a quasi-isometry between proper \CAT metric spaces
$X$ and $Y$, then $\Phi$ induces a homeomorphism 
$\Phi^\star \from \partial_\kappa X \to \partial_\kappa Y$.
\end{introthm}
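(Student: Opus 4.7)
The plan is to define $\Phi^\star$ on rays, verify bijectivity using a quasi-inverse $\Psi \from Y \to X$, and then establish continuity in both directions from the quasi-geodesic formulation of the visual topology.

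\textbf{Construction of $\Phi^\star$.} Let $b \from [0,\infty) \to X$ be a $\kappa$--Morse geodesic ray based at $\go$, and fix a basepoint $\go' \in Y$. The image $\Phi \circ b$ is a $(\qq',\sQ')$--quasi-geodesic in $Y$ whose constants depend only on those of $\Phi$. Setting $y_n = \Phi(b(n))$, properness and the \CAT condition allow a subsequential limit of the geodesic segments $[\go', y_n]$ via Arzel\`a--Ascoli, producing a geodesic ray $b'$ based at $\go'$. I would first show that this limit is independent of the subsequence by invoking Theorem~A: the $\kappa$--contracting property transfers to $\Phi\circ b$ and forces any two such subsequential limits to sublinearly fellow-travel; in a \CAT space two rays from $\go'$ that fellow-travel sublinearly must agree. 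Define $\Phi^\star(b) := b'$.

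\textbf{Preservation of the $\kappa$--Morse property.} The core lemma is that $b'$ is itself $\kappa$--Morse. Given a $(\qq,\sQ)$--quasi-geodesic $\zeta$ in $Y$ with endpoints on $b'$, apply a quasi-inverse $\Psi$. The curve $\Psi(\zeta)$ is a $(\qq'',\sQ'')$--quasi-geodesic whose endpoints lie within uniformly bounded distance of $b$; after nudging these endpoints onto $b$ (an adjustment of additive constant order, hence absorbed into the sublinear error), the Morse gauge $\mm_b$ yields
\[
d_X\bigl(\Psi(\zeta(t)),\, b\bigr) \leq \mm_b(\qq'', \sQ'') \cdot \kappa\bigl(\Norm{\Psi(\zeta(t))}\bigr).
\]
Pushing forward by $\Phi$, and using sublinearity of $\kappa$ to compare $\kappa(\Norm{\Psi(y)})$ with $\kappa(\Norm y)$ up to a multiplicative constant, one deduces that $\zeta$ lies in a sublinear neighborhood of $\Phi \circ b$, hence of $b'$. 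The new Morse gauge $\mm_{b'}$ depends only on $(\qq,\sQ)$, on $\mm_b$, and on the quasi-isometry constants. Symmetry then gives $\Psi^\star \from \pka Y \to \pka X$, and sublinear fellow-travel of $\Psi\circ\Phi$ to the identity shows $\Psi^\star \circ \Phi^\star = \mathrm{id}$ and $\Phi^\star \circ \Psi^\star = \mathrm{id}$.

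\textbf{Continuity.} Basic neighborhoods of a ray in $\pka X$ are indexed by triples $(\qq,\sQ,\radius)$ and consist of $\kappa$--Morse rays sharing a $(\qq,\sQ)$--quasi-geodesic that tracks them out to radius $\radius$. A quasi-isometry sends such quasi-geodesics to quasi-geodesics with controlled constants, and the sublinear fellow-travel estimate above converts a basic neighborhood of $b$ in $\pka X$ into one of $b'$ in $\pka Y$; the same argument applied to $\Psi$ gives continuity of $(\Phi^\star)^{-1} = \Psi^\star$.

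\textbf{Main obstacle.} The real work is bookkeeping the sublinear error throughout. Unlike the Gromov-hyperbolic setting, the Morse bound grows as $\kappa(\Norm x)$ rather than being uniform, so every estimate must be rescaled along the ray. The key technical step is upgrading the $\kappa$--Morse property from ``endpoints exactly on $b$'' to ``endpoints in a sublinear neighborhood of $b$'', and then verifying that the linear distortion $\Norm{\Phi(x)} \asymp \Norm x$ coming from the quasi-isometry is absorbed by sublinearity of $\kappa$ into the new Morse gauge. Once this comparison of $\kappa(\Norm{\Psi(y)})$ with $\kappa(\Norm{y})$ is made uniform in $(\qq,\sQ)$, the rest of the argument is essentially the hyperbolic template with sublinear slack.
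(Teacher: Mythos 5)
Your proposal follows essentially the same route as the paper: $\Phi^\star$ is defined by pushing forward the class of a quasi-geodesic, bijectivity follows from a quasi-inverse, and the substance of the argument lies in the continuity estimate. The Arzel\`a--Ascoli detour for constructing the geodesic representative is unnecessary in the paper's framework, since Proposition~\ref{unique} already shows that every $\kappa$--Morse class contains a unique geodesic ray, so one can simply take $\Phi^\star(\bfb) = [\Phi\circ\beta]$. Your ``main obstacle'' — upgrading the $\kappa$--Morse conclusion from quasi-geodesics with endpoints exactly on $b$ to quasi-geodesics whose far endpoint is only sublinearly close to $b$ — is precisely the content of the paper's Theorem~\ref{Thm:Strong} (Strongly Morse), and this is the exact tool the paper invokes in the continuity step of Theorem~\ref{invarianttopology}: after pulling back a quasi-geodesic $\alpha$ via $\Phi$ one only learns that $\alpha|_\sR$ lies in a sublinear neighborhood of $b_X$ with a possibly large constant $\nn$, and Theorem~\ref{Thm:Strong} is what downgrades this to the tight bound $\mm_{b_X}(\qq,\sQ)$ on the smaller radius $\rr$. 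Your sketch flags this step correctly but does not supply it; if you plug in Theorem~\ref{Thm:Strong} there, the argument closes. One inaccuracy worth noting: the basic neighborhoods $\calU_\kappa(\bfb,\rr)$ of Definition~\ref{neighbourhoods} are indexed by the radius $\rr$ alone, and the condition quantifies universally over \emph{all} $(\qq,\sQ)$--quasi-geodesics in $\bfa$ with $\mm_b(\qq,\sQ)$ small compared to $\rr$ — not ``indexed by triples $(\qq,\sQ,\rr)$'' with an existential reading. This universal quantification is needed precisely because a quasi-isometry changes $(\qq,\sQ)$ into $(\qq',\sQ')$, and the definition must already accommodate the perturbed constants.
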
 

Therefore one can define the $\kappa$--Morse boundary for any 
group that acts geometrically on a \CAT space or generally any space
that is quasi-isometric to a \CAT space. 

\begin{introcor} 
If $G$ acts quasi-isometrically, discretely and co-compactly on two \CAT spaces
$X_1$ and $X_2$, then for any $\kappa$, the space 
$\partial_\kappa X_1$ is homeomorphic to $\partial_\kappa X_2$. 
Hence, the $\kappa$--Morse boundary $\partial_\kappa G$ of $G$ is well defined. 
\end{introcor}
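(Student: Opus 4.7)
The plan is to reduce the corollary directly to Theorem B by producing a quasi-isometry between $X_1$ and $X_2$ via the Milnor--\v{S}varc lemma. Since $G$ acts properly discontinuously, cocompactly, and by isometries on each proper \CAT space $X_i$, the group $G$ is finitely generated, and for any basepoint $\go_i \in X_i$ the orbit map
\[
\Phi_i \from G \to X_i, \qquad g \mapsto g \cdot \go_i,
\]
is a quasi-isometry when $G$ is equipped with a word metric with respect to some finite generating set. This is the classical Milnor--\v{S}varc lemma; its hypotheses are exactly a geometric action on a proper space, and properness is built into the proper \CAT assumption.

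Choosing a quasi-inverse $\bar\Phi_1 \from X_1 \to G$ of $\Phi_1$, the composition $\Psi = \Phi_2 \circ \bar\Phi_1 \from X_1 \to X_2$ is a quasi-isometry between proper \CAT spaces. Theorem B then supplies the desired homeomorphism $\Psi^\star \from \partial_\kappa X_1 \to \partial_\kappa X_2$, which settles the first assertion.

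For the conclusion that $\partial_\kappa G$ itself is well defined, I would check that the homeomorphism just produced is independent of the auxiliary choices (the basepoints $\go_i$, the quasi-inverse $\bar\Phi_1$, and the finite generating set of $G$). Any two quasi-isometries $\Psi, \Psi' \from X_1 \to X_2$ produced this way lie at bounded sup-distance from one another, since any two orbit maps of a proper cocompact action differ by a bounded amount. Bounded perturbation preserves the $\kappa$--Morse property (with a worsened Morse gauge) and preserves equivalence classes under the visual topology on quasi-geodesics defined in Section~\ref{Sec:Topology}, so $\Psi^\star = (\Psi')^\star$, and the identification of the two boundaries is canonical.

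The one point requiring care — and hence the main, if minor, obstacle — is precisely this last functoriality statement: that bounded sup-perturbations of a quasi-isometry act trivially on the induced map between $\kappa$--Morse boundaries. I expect this to be immediate from the $\kappa$--contraction characterization in Theorem A, since a bounded Hausdorff perturbation of a $\kappa$--contracting ray remains $\kappa$--contracting after adjusting constants, and hence represents the same point of $\partial_\kappa$. Once this is recorded, the corollary follows at once.
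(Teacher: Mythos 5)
Your argument is correct and matches the paper's approach exactly: Section 5 of the paper also applies Milnor--\v{S}varc to get a quasi-isometry $G \to X_i$, composes with a quasi-inverse to obtain a quasi-isometry $X_1 \to X_2$, and then invokes \thmref{invarianttopology}. Your extra paragraph on canonicity (independence of the induced homeomorphism from basepoints, quasi-inverse, and generating set) is not something the paper records, but it is a correct observation; for the corollary as stated, the bare homeomorphism already suffices to declare the boundary of $G$ well-defined up to homeomorphism.
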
 

Our choice of topology seems to be a natural one, especially since 
$\pka X$ has good topological properties. 

\begin{introthm} \label{Intro:Metrizable}
For every proper \CAT space $X$, $\partial_\kappa X$ is metrizable.  
\end{introthm}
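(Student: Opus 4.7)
The plan is to verify the three hypotheses of Urysohn's metrization theorem, so that regularity, Hausdorffness, and second countability together yield metrizability of $\pka X$. Throughout I will think of basic neighborhoods of $\mathbf{b} \in \pka X$ as sets of the form $\mathcal{U}(\mathbf{b}, r, \mm)$, consisting of $\kappa$--Morse rays $\mathbf{b}'$ whose Morse gauge is bounded by $\mm$ and which $\kappa$--fellow-travel $\mathbf{b}$ out to distance $r$ from $\go$; this is the natural reading of the ``visual topology on quasi-geodesics'' to be made precise in Section~\ref{Sec:Topology}.

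Hausdorffness comes nearly for free: two distinct geodesic rays $\mathbf{b} \ne \mathbf{b}'$ from $\go$ in a \CAT space satisfy $d_X(\mathbf{b}(t), \mathbf{b}'(t)) \ge ct$ for some $c > 0$ by convexity of the distance function, so sublinearity of $\kappa$ gives a radius $r$ beyond which the basic neighborhoods $\mathcal{U}(\mathbf{b}, r, \mm)$ and $\mathcal{U}(\mathbf{b}', r, \mm)$ are disjoint. For second countability I exploit properness of $X$: for each pair $n, k \in \NN$ the closed ball of radius $n$ around $\go$ admits a finite cover by balls of radius $1/k$, so the union over all $n, k$ produces a countable collection $\mathcal{C}$ of small balls in $X$. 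Pairing these with a countable cofinal family of Morse gauges $\{\mm_j\}$ yields a countable family of shadow-type sets $U(B, \mm_j)$ of $\kappa$--Morse rays through $B$ with gauge bounded by $\mm_j$. The key step is to verify that this family is a basis: given $\mathcal{U}(\mathbf{b}, r, \mm)$, a sufficiently small $B \in \mathcal{C}$ around $\mathbf{b}(r)$ and a sufficiently strong $\mm_j$ should force every member of $U(B, \mm_j)$ to $\kappa$--fellow-travel $\mathbf{b}$ on all of $[0, r]$. This is achieved by applying the $\kappa$--Morse condition of $\mathbf{b}$ to the short quasi-geodesic concatenation of $\mathbf{b}|_{[0, r]}$ with a chord across $B$ back to $\mathbf{b}'|_{[0, r]}$.

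For regularity, I would take $\mathcal{V} = \mathcal{U}(\mathbf{b}, r + 1, \mm)$ as a shrinking of $\mathcal{U}(\mathbf{b}, r, \mm)$. If rays $\mathbf{b}_i \in \mathcal{V}$ converge, they subconverge (by Arzelà--Ascoli in the proper \CAT space $X$) to a geodesic ray $\mathbf{b}_\infty$ still $\kappa$--fellow-traveling $\mathbf{b}$ on $[0, r]$. That $\mathbf{b}_\infty$ lies in $\pka X$ with a comparable gauge is where Theorem~A is most useful: the $\kappa$--contracting formulation is a closed condition on rays (the contraction constant passes to limits under local uniform convergence), and Theorem~A then upgrades this back to a Morse gauge bound. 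Hence $\overline{\mathcal{V}} \subset \mathcal{U}(\mathbf{b}, r, \mm)$, and $\pka X$ is regular.

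The main obstacle I expect is the shadow-basis assertion in the second countability step, which is the content-bearing part of the argument: one must translate the geometric hypothesis ``$\mathbf{b}'$ enters a small ball near $\mathbf{b}(r)$'' into the much stronger fellow-travel conclusion valid on the entire segment $[0, r]$. This is exactly the kind of task the $\kappa$--Morse condition is designed for, but carrying it out requires careful bookkeeping of constants and leans on sublinearity of $\kappa$ to keep the accumulated error within the allotted $\kappa$--tube around $\mathbf{b}$.
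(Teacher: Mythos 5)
Your proposal takes a genuinely different route from the paper's: you aim for Urysohn's metrization theorem (regular $+$ Hausdorff $+$ second countable $\Rightarrow$ metrizable), while the paper proves metrizability via Frink's theorem (Theorem~\ref{Thm:Metrizable-condition}), which only asks for a nested countable neighborhood system satisfying a compatibility condition of the form ``$\calU_j(\bfa)\cap\calU_j(\bfb)\neq\emptyset \Rightarrow \calU_j(\bfa)\subset\calU_i(\bfb)$'' — precisely the content of Proposition~\ref{Prop:Normal}. The difference matters: Frink's criterion delivers metrizability without any separability or second countability input, whereas Urysohn makes second countability an indispensable hypothesis.

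This is where your argument has a genuine gap. To run the second-countability step you invoke ``a countable cofinal family of Morse gauges $\{\mm_j\}$.'' No such family exists: Morse gauges are functions $\RR_+^2\to\RR_+$, and a diagonal argument produces, for any countable family $\{\mm_j\}$, a gauge $\mm$ with $\mm(j,j) > \mm_j(j,j)$ for all $j$, so that $\mm$ is dominated by no $\mm_j$. This failure of cofinality is not incidental — it is exactly the obstruction that makes the Charney--Sultan direct-limit topology on the Morse boundary non-first-countable, and a priori there is no reason the gauges $\{\mm_b : \bfb\in\pka X\}$ arising in $\pka X$ admit a countable cofinal subfamily. Consequently you have not produced a countable basis, and Urysohn cannot be applied as stated. (Separately, your basic neighborhoods $\calU(\bfb,r,\mm)$ impose a gauge bound on $\bfa$, whereas the paper's $\calU_\kappa(\bfb,\rr)$ constrain only the quasi-geodesic parameters $(\qq,\sQ)$ relative to $\mm_b$ — the paper's sets quantify over all sufficiently tame quasi-geodesics in the class $\bfa$ without reference to $\mm_a$, and it is shown that this is strictly stronger than mere fellow-traveling of the geodesics. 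So the topology you metrize is not obviously the one in the paper.) The Hausdorff and regularity sketches, by contrast, are in the right spirit — the fellow-traveling and sequential-closure arguments are recoverable from Lemma~\ref{Lem:Unique}, Theorem~\ref{Thm:Strong}, and first countability of the neighborhood filter — but these alone do not rescue the Urysohn route without the missing countable basis.
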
 

We also show that the $\kappa$--boundaries associated to different 
sublinear functions are topological subspaces of each other. 

\begin{introthm} 
If $X$ is a \CAT metric space and $\kappa \leq \kappa'$ are two sublinear functions 
then
\[
\partial_\kappa X \subseteq \partial_{\kappa'} X 
\]
where the topology of $\partial_\kappa X$ is the subspace topology 
associated to the inclusion. 
\end{introthm}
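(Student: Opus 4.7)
I would prove the statement in two parts. For the set-theoretic inclusion, let $b$ be $\kappa$-Morse with Morse gauge $\mm_b$. Then for any $(\qq, \sQ)$-quasi-geodesic segment $\zeta$ with endpoints on $b$ and any $x \in \zeta$, the hypothesis $\kappa \leq \kappa'$ gives
\[
d_X(x, b) \leq \mm_b(\qq, \sQ) \cdot \kappa(\Norm{x}) \leq \mm_b(\qq, \sQ) \cdot \kappa'(\Norm{x}),
\]
so $b$ is $\kappa'$-Morse with the same Morse gauge. Hence $\partial_\kappa X \subseteq \partial_{\kappa'} X$ as sets.

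The topological content then reduces to comparing the intrinsic $\kappa$-topology on $\partial_\kappa X$ with the subspace topology inherited from $\partial_{\kappa'} X$. By the construction of the visual topology on quasi-geodesics in Section~\ref{Sec:Topology}, a basic neighborhood of $b \in \partial_\kappa X$ in the intrinsic topology consists of rays fellow-traveling $b$ on balls of radius $R$ within error of order $\kappa(R)$, while basic neighborhoods in the $\kappa'$-topology allow the larger error $\kappa'(R) \geq \kappa(R)$. Consequently every $\kappa'$-neighborhood of $b$ contains a $\kappa$-neighborhood, so the inclusion $\partial_\kappa X \hookrightarrow \partial_{\kappa'} X$ is automatically continuous.

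The substantive direction is the converse: if a sequence $\{b_n\} \subset \partial_\kappa X$ converges to $b \in \partial_\kappa X$ in the $\kappa'$-topology, it must also converge in the intrinsic $\kappa$-topology. The key point is that $b$ is $\kappa$-Morse, not merely $\kappa'$-Morse. Given $\kappa'$-convergence, I would produce, for each large $R$, auxiliary quasi-geodesic segments connecting points of $b$ to points of $b_n$ whose images a priori stay within distance of order $\kappa'(R)$ of $b$ on the ball of radius $R$. Applying the $\kappa$-Morse property of $b$ with its fixed gauge $\mm_b$ then bounds the distance from every point $x$ on such a segment by $\mm_b(\qq, \sQ) \cdot \kappa(\Norm{x})$, upgrading the $\kappa'$-closeness guaranteed by the hypothesis to the $\kappa$-closeness required for convergence in $\partial_\kappa X$.

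The main obstacle is executing this upgrade uniformly in $n$: the auxiliary paths connecting $b$ to $b_n$ must be genuine $(\qq, \sQ)$-quasi-geodesics with constants $(\qq, \sQ)$ independent of $n$, so that the single Morse gauge $\mm_b$ applies throughout. Here the hypothesis $b_n \in \partial_\kappa X$, rather than just $\partial_{\kappa'} X$, is essential: I would combine the $\kappa$-contracting characterization from Theorem~A applied to each $b_n$ with the fellow-traveling data supplied by $\kappa'$-convergence to construct the connecting paths and verify their quasi-geodesic constants, at which point the two topologies on $\partial_\kappa X$ agree.
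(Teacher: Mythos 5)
Your overall strategy matches the paper's: the set-theoretic inclusion is immediate (and you verify it correctly), and the topological comparison rests on upgrading a $\kappa'$-closeness hypothesis to $\kappa$-closeness via the $\kappa$-Morse (or $\kappa$-contracting) property of the geodesic $b$. This upgrade is precisely what the paper's Theorem~\ref{Thm:Strong} (``strongly Morse'') provides: if a $(\qq,\sQ)$-quasi-geodesic $\eta$ is $\nn\cdot\kappa'(\sR)$-close to a $\kappa$-contracting set $Z$ at radius $\sR$, then $\eta$ is $\mm_Z(\qq,\sQ)\cdot\kappa$-close on a smaller ball of radius $\rr$. The paper invokes this theorem directly in both halves of the argument (Proposition~\ref{subspacetopology}), and you are implicitly re-deriving its content.

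However, there are two genuine gaps. First, your claim that the inclusion $\partial_\kappa X\hookrightarrow\partial_{\kappa'}X$ is ``automatically continuous'' because $\kappa\leq\kappa'$ is incorrect as reasoned. The neighborhoods $\calU_\kappa(\bfb,\rr)$ and $\calU_{\kappa'}(\bfb,\rr)$ are defined using the \emph{different} Morse gauges $\mm_b$ and $\mm'_b$, and since $\kappa\leq\kappa'$ makes $\kappa'$-contraction a weaker condition, the $\kappa'$-gauge $\mm'_b$ is generally \emph{smaller} than $\mm_b$. Thus $\alpha|_{\rr}\subset\calN_\kappa(b,\mm_b(\qq,\sQ))$ does not trivially imply $\alpha|_\rr\subset\calN_{\kappa'}(b,\mm'_b(\qq,\sQ))$; the inclusion $\calN_\kappa(b,\mm_b)\subseteq\calN_{\kappa'}(b,\mm'_b)$ fails in general. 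The paper handles \emph{both} directions of the topological comparison through Theorem~\ref{Thm:Strong}, including the one you dismiss as trivial. Second, in your ``substantive direction'' you propose to apply the $\kappa$-contracting characterization ``to each $b_n$.'' This is misplaced: the contraction/Morse hypothesis that powers the upgrade is applied to the limit geodesic $b$ (the center of the neighborhood), not to the converging sequence. The fact that $b_n\in\partial_\kappa X$ is needed only so that $b_n$ is a point of the space at all, and the uniformity in $(\qq,\sQ)$ you worry about is already built into the definition of $\calU_\kappa(\bfb,\rr)$, which quantifies over all quasi-geodesics in the class with $\mm_b(\qq,\sQ)$ small compared to $\rr$ — no auxiliary paths need to be constructed for this proposition (the surgery Lemma~\ref{Lem:surgery} is used in Proposition~\ref{Prop:Normal}, not here).
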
 

A motivation for this definition of the boundary is the study of random walks on \CAT groups. 
Given a group $G$ and a probability measure $\mu$ on it, the \emph{Poisson boundary} of 
$(G, \mu)$ is a canonical measurable $G$-space which classifies all possible asymptotic 
behaviours of a random walk on $G$ driven by $\mu$ (see the Appendix for precise definitions). 

The boundary depends on the choice of measure, and it is an important open problem 
\cite[page 153]{kaima} whether two finitely supported 
generating measures on the same group give rise to isomorphic boundaries. 
This question is the probabilistic analog of the quasi-isometry invariance question: indeed, 
two generating sets for $G$ give rise to both two quasi-isometric metrics on $G$ and to two 
finitely supported measures. 

In the case $G$ is a right-angled Artin group, in the Appendix (by Y. Qing and G. Tiozzo) we prove the following:

\begin{introthm}\label{T:intro-Poisson}
The $\sqrt{t \log t}$--boundary of $A(\Gamma)$  is a QI-invariant 
topological model for the Poisson boundary of $A(\Gamma)$ associated to 
any random walk with finite support. 
\end{introthm}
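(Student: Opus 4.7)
The plan is to build a $G$-equivariant measurable map from the random walk path space to $\partial_\kappa A(\Gamma)$ with $\kappa(t) = \sqrt{t\log t}$, verify that the pushforward hitting measure is $\mu$-stationary, and then identify the resulting $\mu$-boundary with the full Poisson boundary via a maximality criterion. QI-invariance of the topological model is then inherited from Theorem~B.

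The first main step is to show that for $\PP$-almost every sample path $(w_n)$ of the random walk on $A(\Gamma)$, the trajectory converges to a point $\xi \in \partial_\kappa A(\Gamma)$. This decomposes into two estimates. First, positive drift $\ell > 0$ follows from Kingman's subadditive ergodic theorem together with the non-elementary action of $A(\Gamma)$ on an associated hyperbolic space (the extension graph or contact graph of the Salvetti complex). Second, a sublinear tracking estimate
\[
d(w_n, \gamma_\xi) \leq C \sqrt{n \log n} \qquad \text{eventually almost surely,}
\]
where $\gamma_\xi$ is a geodesic ray toward the limit point. I would prove tracking by combining a Gou\"ezel--Mathieu--Sisto style deviation inequality for random walks with a Borel--Cantelli argument, then use the $\ell^1$ hyperplane description of the Salvetti complex to recast the metric estimate as the $\kappa$-contracting condition. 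Theorem~A then promotes $\kappa$-contracting to $\kappa$-Morse, so the limit point lies in $\partial_\kappa A(\Gamma)$.

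Once the boundary map $\mathrm{bnd}\colon \Omega \to \partial_\kappa A(\Gamma)$ is defined almost surely, it is automatically $G$-equivariant and measurable, and the pushforward $\nu := \mathrm{bnd}_{\ast}\PP$ is $\mu$-stationary, so $(\partial_\kappa A(\Gamma), \nu)$ is a $\mu$-boundary in Furstenberg's sense. To identify it with the full Poisson boundary I would apply Kaimanovich's strip criterion: the same tracking argument applied to the reflected walk yields a second point $\xi^-$ in the boundary, and the ``strip'' of group elements within bounded distance of a bi-infinite CAT(0) geodesic from $\xi^-$ to $\xi^+$ grows subexponentially in $n$ once a transverse cross-section is fixed, which by Kaimanovich's criterion forces maximality.

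The principal obstacle is proving the $\sqrt{n \log n}$ tracking rate with enough quantitative control to invoke Borel--Cantelli: the exponent is dictated by the Gaussian transverse fluctuations of the walk and requires a concentration-of-measure estimate adapted to the CAT(0) cube complex geometry of the Salvetti complex; any slower rate would fail to place the limit in $\partial_\kappa A(\Gamma)$, while a faster rate is unavailable for general finite-support measures. A secondary delicacy is verifying the strip bound for Kaimanovich's criterion, since the strip must be thin relative to the asymptotic entropy of the walk, not merely sublinear in diameter.
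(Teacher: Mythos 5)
Your overall shape is reasonable, and the $\sqrt{n\log n}$ tracking estimate is indeed the right rate, but there are two substantive issues.

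The main gap is that the tracking estimate alone does not place the limit geodesic in $\partial_\kappa A(\Gamma)$: you also need the logarithmic excursion bound for the \emph{sample path itself} in each join subcomplex, namely $\sup_J d_{s(J)}(1, w_n) \leq C\log n$ almost surely. This is a separate fact about random walks on RAAGs (the paper's Theorem~\ref{T:log-exc}, a variant of Sisto--Taylor's ``largest projections'' theorem, proved via linear progress with exponential decay in the contact graph and the bounded geodesic image theorem). Without it, one only knows $d_{s(J)}(1,w_n) \leq d(1,w_n) \approx \ell n$, and the triangle inequality with the $\sqrt{n\log n}$ tracking gives nothing useful. The paper combines both: the sample path's excursion in $J$ is $O(\log n)$, the sample path and the geodesic ray differ by $O(\sqrt{n\log n})$, so the geodesic ray's excursion in $J$ (which equals $d_{s(J)}(1, g_n)$ once the geodesic has exited $J$) is $O(\sqrt{n\log n})$, which is $\kappa(\Norm{g_n})$ for $\kappa = \sqrt{t\log t}$. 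Your phrase about ``recasting the metric estimate as the $\kappa$-contracting condition'' glosses over exactly this step; the tracking estimate and the excursion estimate are logically independent ingredients and you are missing the second.

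The secondary point is that your route to maximality via Kaimanovich's strip criterion is a genuinely different (and heavier) strategy than the paper's. The paper sidesteps the strip criterion entirely by invoking Nevo--Sageev's identification of the visual boundary of a \CAT cube complex with the Poisson boundary, then observing that a $G$-invariant full-measure subset of a Poisson boundary model is again a Poisson boundary model; since $\partial_\kappa X(\Gamma)$ is a $G$-invariant subset of the visual boundary carrying full hitting measure, it is itself a model. Your strip-criterion approach could in principle work but introduces the delicacies you yourself flag (constructing bi-infinite strips between forward and backward limit points, and controlling their growth against the entropy); the Nevo--Sageev shortcut makes all of that unnecessary here. Everything after that — promoting $\kappa$-contracting to $\kappa$-Morse via Theorem~A and invoking Theorem~B for QI-invariance — matches the paper.
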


To our knowledge, the $\kappa$--Morse boundary defined in this paper is the first 
boundary that is both invariant under quasi-isometries and a model for the Poisson 
boundary. By comparison, the visual boundary is known to be a model of the Poisson 
boundary for \CAT groups but it is not QI-invariant, while the Morse boundary
(\cite{contracting}) is quasi-isometrically invariant but has zero measure with respect 
to random walks, hence, in general, it is not a model for the Poisson boundary. 

The function $\sqrt{t \log t}$ arises from the fact that a generic trajectory of the random 
walk spends a logarithmic amount of time in each flat (\cite{relativehyperbolic}, see also 
Theorem \ref{T:log-exc}).  As shown in \cite{qt}, the same logarithmic excursion property 
also holds for generic elements with respect to the uniform measure on balls in the Cayley 
graph of $G$. This suggests that the $\kappa$--Morse boundary should have full measure 
not only with respect to the hitting measure for random walks but also with respect to the 
Patterson-Sullivan type measure obtained as a weak limit of uniform measures on balls
(in fact, since the first draft of this paper, this has been proven in \cite{GQR}).

\subsection*{History}
Our work builds on previous attempts to construct a boundary for a \CAT group
that is quasi-isometry invariant. Charney and Sultan \cite{contracting} defined a 
contracting geodesic ray in $X$ to be one such that all disjoint balls project to sets 
of diameter at most $\sD$ for some $\sD\geq 0$. They call the set of all such geodesic
rays the \emph{contracting boundary} or the \emph{Morse boundary of $X$}. They equip 
this space with a \emph{direct limit topology} and show that it is invariant under 
quasi-isometries. But this space does not have good topological properties, for example, 
it is not first countable. Cashen-Mackay \cite{cashenmackay}, 
following the work of Arzhantseva-Cashen-Gruber-Hume \cite{Hume}, 
defined a different topology on the Morse boundary of $X$.
They showed that it is Hausdorff and when there is a geometric action
by a countable group, it is also metrizable. In fact, their definition works
for every geodesic metric space. 

The approach in \cite{Hume, cashenmackay} uses a different notion of sublinearly contracting 
geodesic. In  \cite{Hume, cashenmackay}, the contraction is sublinear with respect to the radii of the disjoint 
balls. This is a natural extension of the notion of a Morse geodesic to the setting of 
general metric spaces. But this boundary is smaller than the one defined
in this paper and, in particular, cannot be used as a model for the Poisson boundary. 

It is likely that, when $\kappa=1$, $\pka X$ is the same topological space 
as the Morse boundary equipped with the topology defined in \cite{cashenmackay}. 
If so, \thmref{Intro:Metrizable} would imply that the Morse boundary of every proper \CAT
space is metrizable. 

\begin{remark} \label{Rem:Developments}
Since the first draft of this paper, there has been many developments in advancing the theory
of sublinearly Morse boundaries. For instance, the first named author and Zalloum \cite{QZ21} proved 
that a homeomorphism on the sublinearly Morse boundary comes from a quasi-isometry if and only if the map 
is quasi-m\"obius and sequentially stable. Zalloum  \cite{Zal20} proved that sublinearly Morse boundaries of 
proper \CAT spaces are visibility spaces; Furthermore, Murray, Qing and Zalloum \cite{MQZ20} also showed 
that the $\kappa$--lower divergence of a sublinearly Morse geodesic ray is superlinear. These results provide
further evidence of the similarity between sublinearly Morse boundaries and Gromov boundaries. 
In an upcoming paper \cite{GQR} the claim of Theorem~\ref{T:intro-Poisson} will be extended to all finitely 
generated \CAT groups. We also show that the generic point in the visual boundary of a \CAT space 
with respect to any random walk measure or the Patterson-Sullivan measure is sublinearly Morse. 
This shows that the sublinearly Morse directions are generic with respect to many different notions of generic. 

In a sequel to this paper \cite{QRT21}, we construct sublinearly Morse boundaries for all proper geodesic 
spaces. However, there are substantial differences between the two constructions. In the \CAT setting, many 
of the arguments are simpler and some of the key results have different statements. For example, 
\thmref{Thm:TFAE} does not hold in general. Also, we present further applications in \cite{QRT21}, proving 
statements  analogous to Theorem~\ref{T:intro-Poisson} for mapping class groups and relatively hyperbolic 
groups. 
\end{remark}

\subsection*{Outline of the paper}
Section 2 contains some needed properties of CAT(0) geometry.
In Section 3, we give several equivalent definitions for the notion of 
$\kappa$--contracting geodesic. In Section 4, we define a topology 
for $\pka X$ and establish some topological properties, including the metrizability. 
In Section 5, we define the boundary of a \CAT group, in particular we show 
that $\pka X$ is invariant under quasi-isometry. 
In the last section we examine the group $A=\ZZ \star \ZZ^{2}$ to illustrate in full detail 
the properties of sublinearly Morse boundaries for this example. In particular, we show that the 
$\log$--boundary is a metric model for the Poisson boundary of $A$. 
The Poisson boundary of right-angled Artin groups in general is 
treated in the Appendix. 

\subsection*{Acknowledgments} We thank Jason Behrstock, Ruth Charney, Matthew Cordes,
Talia Fernos, Joseph Maher, Sam Taylor, Abdul Zalloum and especially Chris Cashen and 
Giulio Tiozzo for helpful conversations and comments on earlier versions of this paper.

\section{Preliminaries}

\subsection*{Quasi-Isometry and Quasi-Isometric Embeddings}

\begin{definition}[Quasi Isometric embedding] \label{Def:Quasi-Isometry} 
Let $(X , d_X)$ and $(Y , d_Y)$ be metric spaces. For constants $\kk \geq 1$ and
$\sK \geq 0$, we say a map $\Phi \from X \to Y$ is a 
$(\kk, \sK)$--\textit{quasi-isometric embedding} if, for all $x_1, x_2 \in X$
$$
\frac{1}{\kk} d_X (x_1, x_2) - \sK  \leq d_Y \big(\Phi (x_1), \Phi (x_2)\big) 
   \leq \kk \, d_X (x_1, x_2) + \sK.
$$
If, in addition, every point in $Y$ lies in the $\sK$--neighbourhood of the image of 
$\Phi$, then $f$ is called a $(\kk, \sK)$--quasi-isometry. When such a map exists, $X$ 
and $Y$ are said to be \textit{quasi-isometric}. 

A quasi-isometric embedding $\Phi^{-1} \from Y \to X$ is called a \emph{quasi-inverse} of 
$\Phi$ if for every $x \in X$, $d_X(x, \Phi^{-1}\Phi(x))$ is uniformly bounded above. 
In fact, after replacing $\kk$ and $\sK$ with larger constants, we assume that 
$\Phi^{-1}$ is also a $(\kk, \sK)$--quasi-isometric embedding, 
\[
\forall x \in X \quad d_X\big(x, \Phi^{-1}\Phi(x)\big) \leq \sK \qquad\text{and}\qquad
\forall y \in Y \quad d_Y\big(y, \Phi\,\Phi^{-1}(x)\big) \leq \sK.
\]
\end{definition}

\begin{definition}[Quasi-Geodesics] \label{Def:Quadi-Geodesic} 
A \emph{geodesic ray} in $X$ is an isometric embedding $b \from [0, \infty) \to X$. We fix 
a base-point $\go \in X$ and always assume 
that $b(0) = \go$, that is, a geodesic ray is always assumed to start from 
this fixed base-point. A \emph{quasi-geodesic ray} is a continuous quasi-isometric 
embedding $\beta \from [0, \infty) \to X$ again starting from $\go$. 
The additional assumption that quasi-geodesics are continuous is not necessary,
but it is added for convenience and to make the exposition simpler. 

If $\beta \from [0,\infty) \to X$ is a $(\qq, \sQ)$--quasi-isometric embedding, 
and $\Phi \from X \to Y$ is a $(\kk, \sK)$--quasi-isometry then the composition 
$\Phi \circ \beta \from [t_{1}, t_{2}] \to Y$ is a quasi-isometric embedding, but it may 
not be continuous. However, one can adjust the map slightly to make it continuous 
(see  \cite[Lemma III.1.11]{CAT(0)reference}). Abusing notation, we denote the new map 
again by $\Phi \circ \beta$. Following  \cite[Lemma III.1.11]{CAT(0)reference},
we have that $\Phi \circ \beta$ is a $(\kk\qq, 2(\kk\qq + \kk \sQ + \sK))$--quasi-geodesic.  

Similar to above, a \emph{geodesic segment} is an isometric embedding 
$b \from [t_{1}, t_{2}] \to X$ and a \emph{quasi-geodesic segment} is a continuous 
quasi-isometric embedding $\beta \from [t_{1}, t_{2}] \to X$. 
\end{definition}

\subsection*{Basic properties of \CAT spaces}
A proper geodesic metric space $(X, d_X)$ is \CAT if geodesic triangles in $X$ are at 
least as thin as triangles in Euclidean space with the same side lengths. To be precise, for any 
given geodesic triangle $\triangle pqr$, consider the unique triangle 
$\triangle \overline p \overline q \overline r$ in the Euclidean plane with the same side 
lengths. For any pair of points $x, y$ on edges $[p,q]$ and $[p, r]$ of the 
triangle $\triangle pqr$, if we choose points $\overline x$ and $\overline y$  on 
edges $[\overline p, \overline q]$ and $[\overline p, \overline r]$ of 
the triangle $\triangle \overline p \overline q \overline r$ so that 
$d_X(p,x) = d_\EE(\overline p, \overline x)$ and 
$d_X(p,y) = d_\EE(\overline p, \overline y)$ then,
\[ 
d_{X} (x, y) \leq d_{\EE^{2}}(\overline x, \overline y).
\] 

For the remainder of the paper, we assume $X$ is a proper \CAT space. A metric space $X$ 
is {\it proper} if closed metric balls are compact.
Here, we list some properties of proper \CAT spaces that are needed later (see 
 \cite{CAT(0)reference}). 

\begin{lemma} \label{Lem:CAT} 
 A proper \CAT space $X$ has the following properties:
\begin{enumerate}[i.]
\item It is uniquely geodesic, that is, for any two points $x, y$ in $X$, 
there exists exactly one geodesic connecting them. Furthermore, $X$ is contractible 
via geodesic retraction to a base point in the space. 
\item The nearest-point projection from a point $x$ to a geodesic line $b$ 
is a unique point denoted $x_b$. In fact, the closest-point projection map
\[
\pi_b \from X \to b
\]
is Lipschitz. 
\end{enumerate}
\end{lemma}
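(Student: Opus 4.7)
The plan is to assemble both items from the two standard forms of the CAT(0) inequality: the triangle comparison with Euclidean triangles, and the equivalent CN--inequality of Bruhat--Tits. Detailed proofs of each ingredient appear in \cite{CAT(0)reference}; the point of the sketch is to isolate which inequality drives which conclusion.

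For (i), to prove uniqueness of geodesics I would let $c_1, c_2 \from [0, L] \to X$ be two geodesics from $x$ to $y$, parametrised by arc length, and view them as the two non-constant sides of a degenerate triangle $\triangle xyy$ whose third side is the constant path at $y$. The Euclidean comparison triangle collapses to the segment from $\overline{x}$ to $\overline{y}$, so for every $t$ the comparison points $\overline{c_1(t)}$ and $\overline{c_2(t)}$ coincide, and the CAT(0) inequality forces $d_X(c_1(t), c_2(t)) = 0$ and hence $c_1 = c_2$. Once uniqueness is available, define $H \from X \times [0, 1] \to X$ by sending $(x, s)$ to the point at distance $s \cdot \Norm{x}$ along the unique geodesic from $\go$ to $x$; this is a well-defined homotopy from the identity to the constant map at $\go$, and its continuity follows by applying the CAT(0) inequality to pairs of nearby geodesics sharing the starting point $\go$, which yields quantitative convergence of the second endpoint under perturbation.

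For (ii), uniqueness of the projection point $x_b$ is a direct consequence of the CN--inequality together with convexity of $b$: if $p_1, p_2 \in b$ both minimised the distance from $x$ to $b$, then their midpoint $m$ lies on $b$, and
\[
d_X(x, m)^2 \leq \tfrac{1}{2} d_X(x, p_1)^2 + \tfrac{1}{2} d_X(x, p_2)^2 - \tfrac{1}{4} d_X(p_1, p_2)^2
\]
forces $p_1 = p_2$. For the Lipschitz bound I would prove the sharper statement that $\pi_b$ is $1$-Lipschitz: the first-variation property in CAT(0) geometry says that the Alexandrov angle at $\pi_b(x)$ between $[\pi_b(x), x]$ and any forward direction along $b$ is at least $\pi/2$, and similarly at $\pi_b(y)$, so comparing the quadrilateral $x, \pi_b(x), \pi_b(y), y$ to its Euclidean model and using that the sum of its angles is at most $2\pi$ yields $d_X(\pi_b(x), \pi_b(y)) \leq d_X(x, y)$. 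The only non-mechanical point is this perpendicularity statement, which I would derive from the first-variation formula for Alexandrov angles, or, more directly, from the observation that any infinitesimal displacement along $b$ from $\pi_b(x)$ cannot decrease the distance to $x$ by minimality, so the expected main ``obstacle'' is really just bookkeeping rather than a genuine difficulty.
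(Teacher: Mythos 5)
The paper does not prove this lemma; it records it as a standard fact and cites Bridson--Haefliger \cite{CAT(0)reference}, where these are Propositions II.1.4 (unique geodesics, contractibility via geodesic retraction) and II.2.4 (projection to a complete convex subset). Your sketch correctly reconstructs exactly those textbook arguments, so this is the same approach the paper implicitly takes.

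Two small points worth tightening if you were to write it out in full. First, in (ii) you prove uniqueness of the nearest point via the CN inequality, but you should also note that existence of a nearest point on $b$ requires completeness of $b$ (or properness of $X$, which the lemma assumes); the CN inequality in fact gives existence as well, by showing that a minimizing sequence on $b$ is Cauchy. Second, the phrase about the ``sum of its angles is at most $2\pi$'' is not quite what drives the $1$-Lipschitz conclusion: what you actually need is that the two Alexandrov angles of the quadrilateral $x,\pi_b(x),\pi_b(y),y$ at $\pi_b(x)$ and $\pi_b(y)$ are each $\geq \pi/2$, together with the CAT(0) comparison for the two triangles into which a diagonal cuts the quadrilateral (Alexandrov's lemma); the comparison Euclidean quadrilateral then has two base angles $\geq \pi/2$, which forces the opposite side to be at least as long as the base. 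That is the content of the Bridson--Haefliger proof, and your ``perpendicularity from first variation'' remark is the right way to get the angle bound.
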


\begin{remark} \label{Rem:Projection}
Let $Z$ be a closed subset of $X$. For $x \in X$, we often denote the set of the nearest 
points in $Z$ to $x$ by $x_{Z}$. We also write $d_X(x, Z)$ to mean the distance
between $x$ and the set $Z$, that is $d_X(x, Z)=d_X(x,y)$ for any $y \in x_{Z}$. 
We often think of a geodesic or a quasi-geodesic as a subset of $X$ instead of a map. 
For example, for $x \in X$ and a quasi-geodesic $\beta$, we write 
$d_X(x, \beta)$ to mean the distance between $x$ and the image of $\beta$ in $X$. 
\end{remark} 

We show that if a geodesic segment is ``perpendicular'' to a 
quasi-geodesic, then the concatenation of the geodesic segment with the quasi-geodesic 
is also quasi-geodesic. Given a quasi-geodesic $\beta$, we use $[\param, \param]_{\beta}$ 
to denote the segment of $\beta$ between two specified points.

\begin{lemma}\label{concatenation}
Consider a point $x \in X$ and a $(\qq, \sQ)$--quasi-geodesic segment $\beta$ 
connecting a point $z \in X$ to a point $w \in X$. Let $y$ be a point in $x_\beta$, 
and let $\gamma$ be the concatenation of the geodesic segment $[x, y]$ and the 
quasi-geodesic segment $[y, z]_\beta \subset \beta$. Then 
$\gamma = [x, y] \cup [y, z]_\beta$ is a $(3\qq, \sQ)$--quasi-geodesic.
\end{lemma}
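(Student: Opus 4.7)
The plan is to parameterize the concatenation in the obvious way and then verify the quasi-geodesic inequalities by considering three cases depending on which piece of $\gamma$ the two points lie on. Concretely, let $\beta \from [0, T] \to X$ be the given $(\qq, \sQ)$-quasi-geodesic with $\beta(0) = z$, let $s_0 \in [0,T]$ satisfy $\beta(s_0) = y$, and let $\ell = d_X(x,y)$. Define $\gamma \from [0, \ell + s_0] \to X$ by setting $\gamma(t)$ to be the point on $[x,y]$ at distance $t$ from $x$ for $t \in [0, \ell]$, and $\gamma(t) = \beta(s_0 - (t - \ell))$ for $t \in [\ell, \ell + s_0]$.

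The two ``pure'' cases are immediate. If both parameters lie in $[0, \ell]$, then $\gamma$ restricts to an isometry, so the quasi-geodesic inequalities hold trivially (using $\qq \geq 1$). If both parameters lie in $[\ell, \ell + s_0]$, then $\gamma$ is just a reparameterization of a sub-quasi-geodesic of $\beta$ and the $(\qq, \sQ)$--inequalities pass through, hence a fortiori the $(3\qq, \sQ)$--inequalities hold.

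The main case is $t_1 \in [0, \ell]$, $t_2 \in [\ell, \ell + s_0]$; write $p = \gamma(t_1)$, $q = \gamma(t_2)$, $a = \ell - t_1 = d_X(p, y)$, and $b = t_2 - \ell$, so that $t_2 - t_1 = a + b$. The upper bound is routine: $d_X(p,q) \leq d_X(p,y) + d_X(y,q) \leq a + \qq b + \sQ \leq 3\qq(a+b) + \sQ$. For the lower bound I would combine two estimates. First, because $y \in x_\beta$ realizes the closest-point projection, for every point $q \in [y,z]_\beta \subset \beta$ one has $d_X(x, q) \geq d_X(x, y)$; applying the triangle inequality with $p \in [x,y]$ gives
\[
d_X(p,q) \;\geq\; d_X(x,q) - d_X(x,p) \;\geq\; d_X(x,y) - d_X(x,p) \;=\; a.
\]
Second, since $\beta$ is a $(\qq, \sQ)$--quasi-geodesic, $d_X(y, q) \geq b/\qq - \sQ$, and the triangle inequality gives $d_X(p,q) \geq d_X(y,q) - d_X(y,p) \geq b/\qq - \sQ - a$. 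Adding the first estimate to this one and dividing by $3$ yields
\[
d_X(p,q) \;\geq\; \frac{a}{3} + \frac{b}{3\qq} - \frac{\sQ}{3} \;\geq\; \frac{a+b}{3\qq} - \sQ,
\]
using $\qq \geq 1$, which is exactly the required lower bound.

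The main (mild) obstacle is just bookkeeping the constants so that the single factor of $3$ absorbs both the parameter rescaling coming from the geodesic piece and the multiplicative constant $\qq$ coming from the $\beta$ piece; the projection inequality $d_X(x,q) \geq d_X(x,y)$ is the decisive geometric input and replaces what in a general geodesic space would require a Morse-type assumption on $\beta$.
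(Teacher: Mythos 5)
Your proof is correct and hinges on the same geometric input as the paper's, namely the projection inequality $d_X(x,q) \geq d_X(x,y)$ for $q$ on $\beta$, combined with the triangle inequality and the quasi-geodesic lower bound for $\beta$; the paper organizes the algebra by case-splitting on whether $2d_X(x,y) \geq d_X(z,y)$, while you take a weighted combination of the same two estimates, and you verify the bound for general intermediate points $p,q$ where the paper checks only the endpoints $x,z$ and asserts the general case is analogous. One small slip in wording: the displayed inequality $d_X(p,q) \geq \tfrac{a}{3} + \tfrac{b}{3\qq} - \tfrac{\sQ}{3}$ comes from taking twice the first estimate plus the second and dividing by three, not from literally ``adding the first estimate to this one and dividing by $3$,'' though the inequality you wrote down is correct.
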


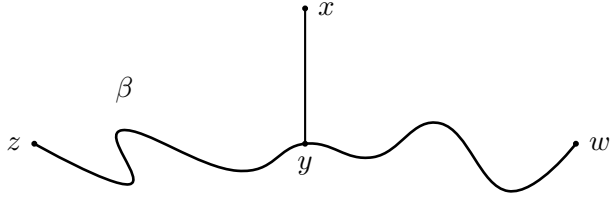
\begin{figure}[h!]
\begin{tikzpicture}[scale=0.9]
 \tikzstyle{vertex} =[circle,draw,fill=black,thick, inner sep=0pt,minimum size=.5 mm]
[thick, 
    scale=1,
    vertex/.style={circle,draw,fill=black,thick,
                   inner sep=0pt,minimum size= .5 mm},
                  
      trans/.style={thick,->, shorten >=6pt,shorten <=6pt,>=stealth},
   ]

  \node[vertex] (z) at (0,0)[label=left:$z$] {}; 
  \node[vertex] (w) at (8,0) [label=right:$w$]  {}; 
  \node[vertex] (x) at (4,2) [label=right:$x$]  {};     
  \node[vertex] (y) at (4,0) [label=below:$y$]  {};     
  \node (a) at (.9,.8) [label=right:$\beta$]  {};    
  \draw[thick]  (4, 2)--(4, 0){};
 
  \pgfsetlinewidth{1pt}
  \pgfsetplottension{.75}
  \pgfplothandlercurveto
  \pgfplotstreamstart
  \pgfplotstreampoint{\pgfpoint{0cm}{0cm}}  
  \pgfplotstreampoint{\pgfpoint{1.4cm}{-.6cm}}   
  \pgfplotstreampoint{\pgfpoint{1.3cm}{.2cm}}
  \pgfplotstreampoint{\pgfpoint{3cm}{-.4cm}}
  \pgfplotstreampoint{\pgfpoint{4cm}{0cm}}
  \pgfplotstreampoint{\pgfpoint{5cm}{-.2cm}}
  \pgfplotstreampoint{\pgfpoint{6cm}{.3cm}}
  \pgfplotstreampoint{\pgfpoint{7cm}{-.7cm}}
  \pgfplotstreampoint{\pgfpoint{8cm}{0cm}}
  \pgfplotstreamend
  \pgfusepath{stroke} 
  \end{tikzpicture}
  
\caption{For $y \in x_{\beta}$, the concatenation of the geodesic segment $[x,y]$ 
and the quasi-geodesic segment $[y,z]_\beta$ is a quasi-geodesic.}
\label{Fig:omega} 
\end{figure}

\begin{proof}
Consider $\gamma \from [t_0, t_2] \to X$ and let $t_1 \in [t_0, t_2]$ be the time
when $\gamma(t_1) = y$, the restriction of $\gamma$ to $[t_0, t_1]$ is the parametrization 
of $[x,y]$ given by arc length and the restriction of $\gamma$ to $[t_1, t_2]$ is the 
parametrization of $[y, z]_\beta$ given by $\beta$. To show that $\gamma$ is a 
quasi-geodesic, we need to estimate the distance between a point in $[x,y]$ and a point in 
$[y,z]_\beta$. However, it is enough to show that $d_X(x,z)$ 
is comparable to $|t_2-t_0|$ because the argument for any other points along $[x,y]$
and along $[y,z]_\beta$ is the same. We argue in two cases. 

\subsection*{Case 1} Suppose $2 d_{X}(x, y) \geq d_{X}(z, y)$. Then,
\[
3d_X(x, y) \geq d_X(z, y)+ d_X(x, y) 
\]
Therefore, 
\begin{align*}
d_X(x, z) &\geq d_X(x,y)  \geq \frac 13 \big(d_X(z, y)+ d_X(x, y) \big)\\
& \geq \frac 13 \left(\frac {1}{\qq} |t_2- t_1| - \sQ + |t_1-t_0|\right) \\
                & \geq \frac{1}{3\qq} |t_2-t_0| -  \frac{\sQ}{3}.\\            
\end{align*}
\subsection*{Case 2} Suppose $2 d_{X}(x, y) < d_{X}(z, y)$, then 
\[
3d_X(x,y) \leq d_X(z, y) + d_X(x,y)
\qquad\Longrightarrow\qquad
2d_X(x,y) \leq \frac 23 \big( d_X(z, y) + d_X(x,y) \big). 
\]
We have
\begin{align*}
d_X(x, z) &\geq d_X(z, y) - d_X(x, y) = d_X(z, y) + d_X(x,y) - 2d_X(x,y)\\
             & \geq \big(d_X(z, y) + d_X(x,y)\big) - \frac 23\big(d_X(z, y)  + d_X(x,y)\big)\\
                &\geq \frac 13 ( d_X(z, y)+ d_X(x,y) ) \\
                & \geq \frac 13 \left( \frac {1}{\qq} |t_2 - t_1| - \sQ+ |t_1 - t_0| \right)
                   \geq \frac{1}{3\qq} |t_2 -t_0| - \frac{\sQ}3.\\      
\end{align*}
This established the lower-bound. The upper-bound follows from the triangle inequality:
\[
d_X(x,z) \leq d_X(x,y)+ d_X(y,z) \leq |t_1-t_0| + \qq |t_2-t_1| + \sQ
\leq \qq |t_2 - t_0| + \sQ. 
\]
It follows that $\gamma$ is a $(3\qq, \sQ)$--quasi-geodesic. 
\end{proof}

\subsection*{The boundaries of \CAT spaces}
A proper \CAT space $X$ can be compactified via the \textit{visual boundary}. 
The points of the visual boundary $\partial_\infty X$ of $X$ are geodesic rays (starting 
from $\go$). Set $\overline{X} = X \bigcup \partial_{\infty} X$ where points in 
$\overline X$ can be thought of as geodesic rays or geodesic segments starting from 
$\go$. The space $\overline{X}$ is usually equipped with the \textit{cone topology}
where two geodesics are considered nearby if they fellow travel each other for a long time
(see  \cite{CAT(0)reference} for more details). 

\section{The $\kappa$--Morse geodesics of $X$}
The goal of this section is to prove \thmref{Thm:TFAE} which gives 
several equivalent characterizations of the notion of a $\kappa$--Morse geodesic
(or quasi-geodesic) ray. 

\subsection{Sublinear functions}
We fix a function 
\[
\kappa \from [0,\infty) \to [1,\infty)
\] 
that is monotone increasing, concave and sublinear, that is
\[
\lim_{t \to \infty} \frac{\kappa(t)} t = 0. 
\]
Note that using concavity, for any $a>1$, we have
\begin{equation} \label{Eq:Concave}
\kappa(a t) \leq a \left( \frac 1a \, \kappa (a t) + \left(1- \frac 1a\right) \kappa(0) \right) 
\leq a \, \kappa(t).
\end{equation}

We say a quantity $\sD$ \emph{is small compared to a radius $\rr>0$} if 
\begin{equation} \label{Eq:Small} 
\sD \leq \frac{\rr}{2\kappa(\rr)}. 
\end{equation}

\begin{remark}
The assumption that $\kappa$ is increasing and concave makes certain arguments
cleaner, otherwise they are not really needed. One can always replace any 
sublinear function $\kappa$, with another sublinear function $\overline \kappa$
so that $\kappa(t) \leq \overline \kappa(t) \leq \sC \, \kappa(t)$ for some constant $\sC$ 
and $\overline \kappa$ is monotone increasing and concave. For example, define 
\[
\overline \kappa(t) = \sup \Big\{ \lambda \kappa(u) + (1-\lambda) \kappa(v) \ST 
\ 0 \leq \lambda \leq 1, \ u,v>0, \ \text{and}\ \lambda u + (1-\lambda)v =t \Big\}.
\]
The requirement $\kappa(t) \geq 1$ is there to remove additive errors in the definition
of $\kappa$--Morse geodesics. 
\end{remark}

\begin{lemma}\label{Lem:sublinear-estimate}
For any $\sD_0>0$, there exists  $\sD_{1}, \sD_{2} > 0$ depending on $\sD_0$
and $\kappa$ so that, for $x, y \in X$,
\[
d(x, y) \leq \sD_{0} \cdot \kappa(x)
\qquad\text{implies}\qquad
\sD_{1} \kappa(x) \leq \kappa(y) \leq \sD_{2} \kappa(x).
\]
\end{lemma}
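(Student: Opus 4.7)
The plan is to reduce the statement to a direct analysis of $\kappa$ evaluated at the norms of $x$ and $y$. By the triangle inequality based at $\go$, the hypothesis gives
\[
\bigl| \Norm{x} - \Norm{y} \bigr| \leq d_X(x,y) \leq \sD_0 \, \kappa(\Norm{x}),
\]
so if we set $r = \Norm{x}$ and $s = \Norm{y}$, we know that $s$ lies in the interval $[r - \sD_0 \kappa(r), \, r + \sD_0 \kappa(r)]$. Everything then becomes a one-variable question about how $\kappa(s)$ compares to $\kappa(r)$, and I would exploit only the monotonicity, concavity, and sublinearity of $\kappa$, together with $\kappa \geq 1$.

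I would split into two regimes according to the size of $r$. Since $\kappa$ is sublinear, there is a threshold $R_0 = R_0(\sD_0, \kappa)$ such that $\sD_0 \kappa(r) \leq r/2$ for all $r \geq R_0$. In the large regime $r \geq R_0$, we therefore have $s \in [r/2, \, 3r/2]$. Using monotonicity, $\kappa(s) \leq \kappa(3r/2)$, and by the concavity estimate (\ref{Eq:Concave}) this is at most $\tfrac{3}{2}\kappa(r)$. For the lower bound, monotonicity gives $\kappa(s) \geq \kappa(r/2)$, and writing $\kappa(r) = \kappa(2 \cdot r/2) \leq 2 \kappa(r/2)$ shows $\kappa(s) \geq \tfrac{1}{2}\kappa(r)$.

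In the small regime $r < R_0$, the bound $s \leq R_0 + \sD_0 \kappa(R_0) =: R_1$ gives $1 \leq \kappa(s) \leq \kappa(R_1)$, while $1 \leq \kappa(r) \leq \kappa(R_0)$. Thus the ratio $\kappa(s)/\kappa(r)$ is trapped between the constants $1/\kappa(R_0)$ and $\kappa(R_1)$, which depend only on $\sD_0$ and $\kappa$. Taking $\sD_1$ to be the minimum and $\sD_2$ to be the maximum of the constants produced in the two regimes yields the claim.

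No genuine obstacle is anticipated: the only mild subtlety is the small-$r$ regime, where the multiplicative comparison fails in general for an arbitrary sublinear function but is rescued here by the normalization $\kappa \geq 1$ imposed in the definition of $\kappa$. Everything else is a direct application of (\ref{Eq:Concave}) and monotonicity.
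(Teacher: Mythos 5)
Your proposal is correct. It differs mildly from the paper's argument in how the sublinearity of $\kappa$ is exploited: the paper packages it into a single global linear bound $\kappa(u)\le u/(2\sD_0)+\sA$ valid for all $u$, which then gives $|\Norm x - \Norm y|\le \tfrac12\Norm x + \sD_0\sA$ and lets the two-sided estimate follow from \eqref{Eq:Concave} plus $\kappa\ge 1$ in one stroke; you instead introduce an explicit threshold $R_0$ beyond which $\sD_0\kappa(r)\le r/2$, and handle $r<R_0$ by the trivial observation that both $\kappa(r)$ and $\kappa(s)$ are pinned between $1$ and a fixed constant. These are essentially the same idea, but your version is a little more careful at the bottom end: the paper's passage from $\Norm x\le 2\Norm y + 2\sD_0\sA$ to $\kappa(x)\le (2+2\sD_0\sA)\kappa(y)$ tacitly uses $\kappa\ge 1$ to absorb the additive constant, whereas you make this normalization do its work explicitly in the small-radius regime, which is exactly the step you flagged as the ``only mild subtlety.'' Both approaches produce valid (if slightly different) constants $\sD_1,\sD_2$, so there is no gap.
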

\begin{proof}
Since $\kappa$ is sublinear, there is a constant $\sA$ such that, for every $u>0$, 
\[
\kappa(u) \leq \frac{u}{2 \sD_0} + \sA. 
\]
For $x \in X$, define $\Norm{x} = d_X(\go, x)$. Then 
\begin{equation} \label{Eq:Norm-Difference}
\Big| \Norm x - \Norm y \Big| 
\leq d_X(x, y) 
\leq \sD_{0} \cdot \kappa(x) 
\leq \sD_{0} \cdot \left(\frac{\Norm x}{2 \sD_0} + \sA\right) 
\leq \frac 12 \Norm x + \sD_0 \sA.                          
\end{equation} 
We argue in two cases. Suppose $\Norm x \geq \Norm y$. Then, 
\eqnref{Eq:Norm-Difference} implies
\[
  \Norm x \leq 2 \Norm y + 2\sD_0 \sA,
\]
and from \eqnref{Eq:Concave}, we get 
\[
  \kappa(x) \leq (2+2\sD_0 \sA )\cdot\kappa(y).
\]                              
Thus 
\[
 (2+2\sD_0 \sA)^{-1}\kappa(x) \leq \kappa(y) \leq \kappa(x).
\]
On the other hand, if $\Norm x < \Norm y$, then \eqnref{Eq:Norm-Difference} implies
\[
\Norm y \leq \frac 32 \Norm x +  \sD_0 \sA.
\]
Again, by \eqnref{Eq:Concave} we have
\[
 \kappa(y) \leq \left(\frac 32 + \sD_0 \sA \right) \cdot \kappa(x)
\] 
and hence
\[
\kappa(x) < \kappa(y) \leq  \left(\frac 32+\sD_0 \sA\right) \cdot \kappa(x).
\]
Combining the two cases, we get
\[ 
(2+2\sD_0 \sA)^{-1}\kappa(x) \leq \kappa(y) 
   \leq \left(\frac 32+\sD_0 \sA\right) \cdot\kappa(x).
\]
That is, the lemma holds for $\sD_{1} = (2+2\sD_0 \sA)^{-1}$ and 
$\sD_{2} =  \frac 32+\sD_0 \sA $.
\end{proof}

\begin{definition}[$\kappa$--neighborhood]  \label{Def:Neighborhood} 
For a closed set $Z$ and a constant $\nn$ define the $(\kappa, \nn)$--neighbourhood 
of $Z$ to be 
\[
\calN_\kappa(Z, \nn) = \Big\{ x \in X \ST 
  d_X(x, Z) \leq  \nn \cdot \kappa(x)  \Big\}.
\]

\end{definition} 

\begin{figure}[h!]
\begin{tikzpicture}
 \tikzstyle{vertex} =[circle,draw,fill=black,thick, inner sep=0pt,minimum size=.5 mm] 
[thick, 
    scale=1,
    vertex/.style={circle,draw,fill=black,thick,
                   inner sep=0pt,minimum size= .5 mm},
                  
      trans/.style={thick,->, shorten >=6pt,shorten <=6pt,>=stealth},
   ]

 \node[vertex] (a) at (0,0) {};
 \node at (-0.2,0) {$\go$};
 \node (b) at (10, 0) {};
 \node at (10.6, 0) {$b$};
 \node (c) at (6.7, 2) {};
 \node[vertex] (d) at (6.68,2) {};
 \node at (6.7, 2.4){$x$};
 \node[vertex] (e) at (6.68,0) {};
 \node at (6.7, -0.5){$x_{b}$};
 \draw [-,dashed](d)--(e);
 \draw [-,dashed](a)--(d);
 \draw [decorate,decoration={brace,amplitude=10pt},xshift=0pt,yshift=0pt]
  (6.7,2) -- (6.7,0)  node [black,midway,xshift=0pt,yshift=0pt] {};

 \node at (7.8, 1.2){$\nn \cdot \kappa(x)$};
 \node at (3.6, 0.7){$||x||$};
 \draw [thick, ->](a)--(b);
 \path[thick, densely dotted](0,0.5) edge [bend left=12] (c);
\node at (1.4, 1.9){$(\kappa, \nn)$--neighbourhood of $b$};
\end{tikzpicture}
\caption{The $(\kappa,n)$--neighbourhood of the geodesic ray $b$.}
\end{figure}
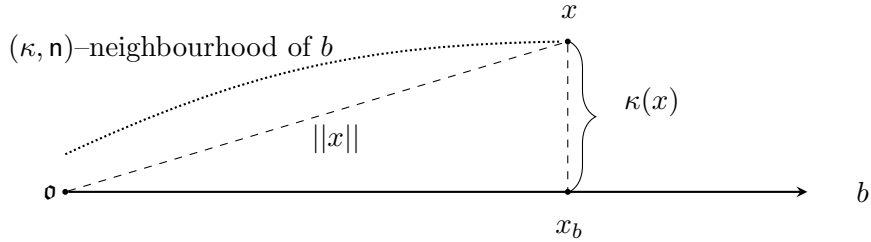

In view of \remref{Rem:Projection}, a geodesic or a quasi-geodesic can take the place 
of the set $Z$ in the above definitions. That is, we can write 
$\calN_{\kappa}(b, \nn)$ to mean the $(\kappa, \nn)$--neighborhood of the image of 
the geodesic ray $b$. Or, we can use phrases like 
``the quasi-geodesic $\beta$ is $\kappa$--contracting" or 
``the geodesic $b$ is in a $(\kappa, \nn)$--neighbourhood of the geodesic $c$". 

\begin{definition} \label{Def:Fellow-Travel}
Let $\beta$ and $\gamma$ be two quasi-geodesic rays in $X$. If $\beta$ is in some 
$\kappa$--neighbourhood of $\gamma$ and $\gamma$ is in some 
$\kappa$--neighbourhood of $\beta$, we say that $\beta$ and $\gamma$ 
\emph{$\kappa$--fellow travel} each other. This defines an equivalence
relation on the set of quasi-geodesic rays in $X$ (to obtain transitivity, one needs to change 
$\nn$ of the associated $(\kappa, \nn)$--neighbourhood). We refer to such an equivalence
class as a \emph{$\kappa$--equivalence class of quasi-geodesics}.
We denote the $\kappa$--equivalence class that contains $\beta$ by $[\beta]$ or we use 
the notation $\bfb$ for such an equivalence
class when no quasi-geodesic in the class is given. 
\end{definition}

\begin{lemma} \label{Lem:Unique}
Let $b \from [0,\infty) \to X$ be a geodesic ray in $X$. Then $b$ is the unique geodesic 
ray in any $(\kappa, \nn)$--neighbourhood of $b$ for any $\nn$. That is to say, distinct geodesic 
rays do not $\kappa$--fellow travel each other.
\end{lemma}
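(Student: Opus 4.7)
Suppose for contradiction that $c \from [0,\infty) \to X$ is a geodesic ray starting at $\go$, distinct from $b$, and lying in some $(\kappa, \nn)$--neighbourhood of $b$. Since $X$ is uniquely geodesic (\lemref{Lem:CAT}(i)), the set $\{\, t \geq 0 : b(t) = c(t)\,\}$ is an initial interval $[0, t^*]$ with $t^* < \infty$. The plan is to compare two asymptotic estimates on the function $f(s) = d_X(b(s), c(s))$: a linear lower bound coming from \CAT convexity, and a $\kappa$--sublinear upper bound coming from the neighbourhood hypothesis, and to derive a contradiction for $s$ large.

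For the lower bound I would invoke the standard \CAT fact that whenever $b$ and $c$ are geodesics issuing from the same point, $s \mapsto d_X(b(s), c(s))$ is convex (see \cite{CAT(0)reference}). Since $f(t^*) = 0$ while $f(s_0) > 0$ for some $s_0 > t^*$, convexity gives
\[
f(s) \,\geq\, \frac{f(s_0)}{s_0 - t^*}\,(s - t^*) \qquad \text{for all } s \geq s_0,
\]
so $f$ grows at least linearly in $s$.

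For the upper bound, fix $s$ large. The hypothesis $c(s) \in \calN_\kappa(b, \nn)$, together with $\Norm{c(s)} = s$, gives a point $b(t_s)$ on $b$ with $d_X(c(s), b(t_s)) \leq \nn \cdot \kappa(s)$. Since $\Norm{c(s)} = s$ and $\Norm{b(t_s)} = t_s$, the triangle inequality at the base-point $\go$ yields $|s - t_s| \leq \nn \cdot \kappa(s)$, hence
\[
f(s) \,\leq\, d_X(b(s), b(t_s)) + d_X(b(t_s), c(s)) \,\leq\, 2\,\nn \cdot \kappa(s).
\]
Combining the two estimates forces $\kappa(s)/s$ to stay bounded below by a positive constant for all large $s$, contradicting sublinearity of $\kappa$. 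Hence $c = b$. I do not foresee a serious obstacle: the only nontrivial input is the \CAT convexity of $f$, which is a well-known property of CAT(0) spaces; the rest is triangle inequality and the definition of $\calN_\kappa(b, \nn)$.
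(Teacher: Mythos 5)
Your proof is correct and rests on the same essential \CAT fact as the paper's: two distinct geodesic rays from a common base-point must diverge linearly, which is incompatible with the sublinear bound that the $(\kappa, \nn)$--neighbourhood condition imposes. The only difference is packaging: the paper applies the comparison-triangle inequality directly to the nearest-point $b(t')$ to get a linear lower bound on $d_X(c(t), b)$, whereas you invoke convexity of $s \mapsto d_X(b(s), c(s))$ and then bridge back to $d_X(c(s), b)$ via the short norm/triangle-inequality step with $|s - t_s| \leq \nn\,\kappa(s)$ --- both are valid and of comparable length.
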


\begin{proof}
Consider any other geodesic ray $c \from [0,\infty) \to X$ emanating from the same base-point. 
Then, there is a time $t_0$ where $b(t_0) \not = c(t_0)$. For a given $t \geq t_0$, let $t'$ be the time 
such that 
\[
d_X(c(t), b) = d_X\big(c(t), b(t') \big).  
\]
That is, $b(t')$ is the projection of $c(t)$ to $b$. Since $X$ is a \CAT space, we have 
\[
d_X\big(c(t), b(t')\big) \geq 
  \frac{t}{t_0} \cdot d_X\left( c(t_0), b\left(\frac{t' \, t_0}t \right)\right)
  \geq \frac{d_X\big(c(t_0), b\big)}{t_0} \cdot t.
\]
This means that the distance from $c(t)$ to $b$ grows linearly with $t$ and hence $c$ 
is not contained in any $(\kappa, \nn)$--neighborhood of $b$. 
\end{proof}

\subsection{$\kappa$--Morse and $\kappa$--contracting sets}

\begin{definition}[weakly $\kappa$--Morse] \label{Def:W-Morse} 
We say a closed subset $Z$ of $X$ is \emph{weakly $\kappa$--Morse} if there is a function
\[
\mm_Z \from \RR_+^2 \to \RR_+
\]
so that if $\beta \from [s,t] \to X$ is a $(\qq, \sQ)$--quasi-geodesic with end points 
on $Z$ then
\[
\beta[s,t]  \subset \calN_{\kappa} \big(Z,  \mm_Z(\qq, \sQ)\big). 
\]
We refer to $\mm_{Z}$ as the \emph{Morse gauge} for $Z$. We always assume
\begin{equation}
\mm_Z(\qq, \sQ) \geq \max(\qq, \sQ). 
\end{equation} 
\end{definition}

\begin{definition}[strongly $\kappa$--Morse] \label{Def:S-Morse} 
We say a closed subset $Z$ of $X$ is \emph{strongly $\kappa$--Morse} if there is a function 
$\mm_Z\from \RR^2 \to \RR$ such that, for every constants $\rr>0$, $\nn>0$ and every
sublinear function $\kappa'$, there is an $\sR= \sR(Z, \rr, \nn, \kappa')>0$ where the 
following holds: Let $\eta \from [0, \infty) \to X$ be a $(\qq, \sQ)$--quasi-geodesic ray 
so that $\mm_Z(\qq, \sQ)$ is small compared to $\rr$, let $t_\rr$ be the first time 
$\Norm{\eta(t_\rr)} = \rr$ and let $t_\sR$ be the first time $\Norm{\eta(t_\sR)} = \sR$. Then
\[
d_X\big(\eta(t_\sR), Z\big) \leq \nn \cdot \kappa'(\sR)
\quad\Longrightarrow\quad
\eta[0, t_\rr] \subset \calN_{\kappa}\big(Z, \mm_Z(\qq, \sQ)\big). 
\]
\end{definition} 

\begin{remark}
Colloquially, the strongly Morse condition can be stated as saying that if $\eta$ is in a sublinear 
neighborhood of $Z$ for any sublinear function $\kappa'$ then, in fact, it is contained in a 
$\kappa$--neighborhood of $Z$. That is, sublinear fellow traveling implies uniform sublinear fellow 
traveling. This is a natural generalization of the notion of a Morse set which can be stated as fellow 
traveling implies uniform fellow traveling.  
\end{remark}

\begin{definition}[$\kappa$--contracting] \label{Def:Contracting}
Recall that, for $x \in X$, we have $\Norm{x} = d_X(\go, x)$. 
For a closed subspace $Z$ of $X$, we say $Z$ is \emph{$\kappa$--contracting} if there 
is a constant $\cc_Z$ so that, for every $x,y \in X$
\[
d_X(x, y) \leq d_X( x, Z) \quad \Longrightarrow \quad
\diam_X \big( x_Z \cup y_Z \big) \leq \cc_Z \cdot \kappa(\Norm x).
\]
To simplify notation, we often drop $\Norm{\param}$. That is, for $x \in X$, we define
\[
\kappa(x) := \kappa(\Norm{x}). 
\]
\end{definition} 

\begin{theorem} \label{Thm:TFAE}
Let $\bfb$ be a $\kappa$--equivalence class of quasi-geodesics in $X$. 
The following properties of $\bfb$ are equivalent. 
\begin{enumerate}
\item The class $\bfb$ contains a geodesic ray $b$ that is $\kappa$--contracting. 
\item Every quasi-geodesic $\beta \in \bfb$ is $\kappa$--contracting. 
\item Every quasi-geodesic $\beta \in \bfb$ is strongly $\kappa$--Morse. 
\item There exists a quasi-geodesic $\beta \in \bfb$ that is strongly $\kappa$--Morse. 
\item Every quasi-geodesic $\beta \in \bfb$ is weakly $\kappa$--Morse. 
\item There exists a quasi-geodesic $\beta \in \bfb$ that is weakly $\kappa$--Morse. 
\item The class $\bfb$ contains a geodesic ray $b$ that is weakly $\kappa$--Morse for 
$(32,0)$--quasi-geodesics. 
\end{enumerate}
\end{theorem}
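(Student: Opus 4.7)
My plan is to prove the theorem as a cyclic chain of implications $(1)\Rightarrow(3)\Rightarrow(4)\Rightarrow(5)\Rightarrow(2)\Rightarrow(1)$. As a preliminary fact used throughout, every equivalence class $\bfb$ contains a geodesic ray $b$: given any $\beta\in\bfb$, the sequence of geodesic segments $[\go,\beta(n)]$ subconverges, by properness of $X$ and Arzelà–Ascoli, to a geodesic ray $b$, and once $\beta$ is known to have a sublinear Morse or contracting property one verifies directly that $b$ lies in a $(\kappa,\nn)$--neighborhood of $\beta$ and vice versa, so $b\in\bfb$. Given this, the implications $(3)\Rightarrow(4)$ and $(2)\Rightarrow(1)$ are immediate by applying the hypothesis to the geodesic representative $b$ (which is a $(1,0)$--quasi-geodesic). \lemref{Lem:sublinear-estimate} will be used repeatedly to compare $\kappa$--values at nearby points.

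The content lies in $(1)\Rightarrow(3)$, which is the \emph{sublinear Morse lemma}. Given the $\kappa$--contracting geodesic $b$ with constant $\cc_b$ and a $(\qq,\sQ)$--quasi-geodesic $\beta$ with endpoints on $b$, I argue by contradiction: suppose some $x=\beta(\tau)$ satisfies $d_X(x,b)>\nn\cdot\kappa(x)$ for a large constant $\nn$ to be chosen. Let $\beta|_{[\tau_1,\tau_2]}$ be the maximal subsegment through $x$ whose distance from $b$ exceeds $(\nn/2)\kappa$. Cover this subsegment by a finite chain of balls, each disjoint from $b$, and apply the $\kappa$--contracting property in sequence; \lemref{Lem:sublinear-estimate} keeps consecutive $\kappa$--values comparable, so the total shadow of $\beta|_{[\tau_1,\tau_2]}$ on $b$ is bounded by a multiple of $\cc_b\kappa(x)$. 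On the other hand the quasi-isometric embedding gives
\[
|\tau_2-\tau_1|\leq \qq\cdot d_X\bigl(\beta(\tau_1),\beta(\tau_2)\bigr)+\qq\sQ,
\]
and $d_X(\beta(\tau_1),\beta(\tau_2))$ is at most the shadow plus $O(\nn\kappa(x))$. Comparing with the lower bound on $|\tau_2-\tau_1|$ coming from the excursion depth $\nn\kappa(x)/2$, for $\nn$ chosen large compared to $\qq,\sQ,\cc_b$ the two estimates are incompatible. Thus $\nn$ may be taken as a function $\mm_b(\qq,\sQ)$ depending only on $\qq,\sQ,\cc_b$, proving $\beta\subset\calN_\kappa(b,\mm_b(\qq,\sQ))$.

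For $(4)\Rightarrow(5)$: given a $\kappa$--Morse $\beta\in\bfb$, produce $b\in\bfb$ as above. Any $(32,0)$--quasi-geodesic $\eta$ with endpoints on $b$ has endpoints $\kappa$--close to $\beta$; after adjusting the endpoints onto $\beta$, $\eta$ becomes a $(32,\sQ')$--quasi-geodesic with endpoints on $\beta$, so by the Morse property of $\beta$ it lies in a $\kappa$--neighborhood of $\beta$, and hence of $b$. For $(5)\Rightarrow(2)$, I first show $b$ is $\kappa$--contracting: suppose not, and pick $x,y$ with $d_X(x,y)\leq d_X(x,b)=R$ but $D:=\diam(x_b\cup y_b)$ very large. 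The concatenation $[x_b,x]\cup[x,y]\cup[y,y_b]$ is, by two applications of \lemref{concatenation}, a $(9,0)$--quasi-geodesic (well inside the $(32,0)$--regime), with endpoints at distance $D$ on $b$ and total length at most $3R$, so $D\leq 3R$. Applying (5), this curve lies in $\calN_\kappa(b,\mm_b(32,0))$, so $x$ itself satisfies $R\leq\mm_b(32,0)\cdot\kappa(x)$; combined with $D\leq 3R$ this gives $D\leq 3\mm_b(32,0)\kappa(x)$, establishing $\kappa$--contraction for $b$ with $\cc_b=3\mm_b(32,0)$. The conclusion $(2)$ for any $\beta\in\bfb$ then follows from $\kappa$--fellow-traveling: nearest-point projections to $b$ and to $\beta$ differ by $O(\kappa)$, so the diameter bound transfers after enlarging the contracting constant (here \lemref{Lem:sublinear-estimate} is again needed to compare $\kappa$ at the projected points).

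The main obstacle is the sublinear Morse lemma $(1)\Rightarrow(3)$: the difficulty is the tension between the varying $\kappa$--scale along the excursion and the need to chain finitely many applications of the $\kappa$--contracting property without accumulating unbounded error. The cover of $\beta|_{[\tau_1,\tau_2]}$ has to be organized so that consecutive balls sit at comparable $\kappa$--scale, which is exactly what \lemref{Lem:sublinear-estimate} delivers; once this bookkeeping is in place, the sublinearity of $\kappa$ ensures that the linear-in-$\nn\kappa$ term dominates the $O(\kappa)$ shadow term for $\nn$ sufficiently large, closing the contradiction.
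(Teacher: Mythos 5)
Your chain $(1)\Rightarrow(3)\Rightarrow(4)\Rightarrow(5)\Rightarrow(2)\Rightarrow(1)$ differs from the paper's $(3)\Rightarrow(4)\Rightarrow(5)\Rightarrow(1)\Rightarrow(2)\Rightarrow(3)$, but the serious issue is a concrete error in your $(5)\Rightarrow(1)$ step. You assert that the concatenation $[x_b,x]\cup[x,y]\cup[y,y_b]$ is a $(9,0)$--quasi-geodesic ``by two applications of Lemma~\ref{concatenation}.'' That lemma only applies when the attached geodesic segment is dropped perpendicularly onto the quasi-geodesic, i.e.\ when $x$ is the closest point on $[x,y]$ to $x_b$; nothing in your setup guarantees this. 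And the conclusion is in fact false in general: in $X=\RR^2$ with $b$ the $x$-axis, take $x=(0,100)$, $y=(1,100)$, so that $d(x,y)=1\leq d(x,b)=100$, $x_b=(0,0)$, $y_b=(1,0)$. The concatenation has arc length $201$ but its endpoints are at distance $1$, so it fails the lower quasi-geodesic inequality for any multiplicative constant less than $201$. This is precisely the obstacle that the Charney--Sultan construction (Proposition~\ref{charneysultan} in the paper) is designed to overcome: one must replace the full quadrilateral by a carefully shortened path (cutting the legs near where they approach the top side) to obtain a genuine $(32,0)$--quasi-geodesic that still witnesses the excursion $d(p,b)\gtrsim d(x_b,y_b)$. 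Your proof of $(5)\Rightarrow(1)$ cannot close without this construction or some substitute; the naive quadrilateral path is simply not a quasi-geodesic.

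A secondary point: your sketch of $(1)\Rightarrow(3)$ asserts ``the total shadow of $\beta|_{[\tau_1,\tau_2]}$ on $b$ is bounded by a multiple of $\cc_b\kappa(x)$,'' but a chain of disjoint balls along a long excursion yields a shadow bounded by $\cc_b\sum_i\kappa(\rr_i)$, and bounding $\sum_i\kappa(\rr_i)$ by $O(\kappa(x))$ is exactly the non-trivial point. The paper handles this (inside Theorem~\ref{Thm:Strong}) by a self-referential balance: choosing the neighborhood parameter $\mm_0$ large enough that the lower and upper quasi-geodesic bounds on the excursion time force $\sum_i\kappa(\rr_i)\leq\dd_0+\dd_\ell$, and only afterwards bootstrapping Lemma~\ref{Lem:sublinear-estimate} to compare $\kappa$ at the endpoints with $\kappa(x)$. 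Your outline recognizes the tension but does not show how to extract this bound, so the contradiction you claim for large $\nn$ is not actually established as written. This part is repairable along the lines of the paper's Claim, but the $(5)\Rightarrow(1)$ gap is not, without introducing the Charney--Sultan quasi-geodesic or an equivalent.
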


Note that the implications $(3) \Longrightarrow (4)$ and $(5) \Longrightarrow (6)$ are immediate. 
Later in this section, we will prove $(6) \Longrightarrow (7) \Longrightarrow (1)  \Longrightarrow (2)  
\Longrightarrow (3) \Longrightarrow (5)$ in separate statements. To prepare for the first statement, 
we study the finite geodesic segments connecting points of the $\kappa$--Morse 
quasi-geodesic.

\begin{proposition}\label{Morsegeodesic}
Let  $X$ be a proper CAT(0) space. Let $\beta \from [0, \infty) \to X$ be a $(\qq, \sQ)$--quasi-geodesic 
ray in $X$ that is $\kappa$--Morse with $\mm_\beta$ as its Morse gauge. For any given 
$T \in (0, \infty)$, let $b=b_T$ be the finite geodesic segment 
connecting $\beta(0)=\go$ and $\beta(T)$. Then $b$ is $\kappa$--Morse 
and the Morse-gauge of $b$ is independent of $T$. 
That is, there exists $\mm \from \RR^2 \to \RR$ such that for every $T \in [0, \infty)$
and for every $(\qq', \sQ')$--quasi-geodesic $\zeta \from [s,t] \to X$ with end points in 
$b=b_T$, we have
\[
\zeta[s,t] \subset \calN_{\kappa} \big(b, \mm(\qq', \sQ') \big).
\]
\end{proposition}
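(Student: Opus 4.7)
My plan is to deduce the $\kappa$-Morse property of $b = b_T$ from the $\kappa$-Morse property of $\beta$, with Morse gauge independent of $T$, by using Lemma~\ref{concatenation} to transfer quasi-geodesics between the two curves.

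The first step is essentially free: since $b_T$ is a geodesic, hence a $(1,0)$-quasi-geodesic, whose endpoints $\go=\beta(0)$ and $\beta(T)$ both lie on $\beta$, applying the $\kappa$-Morse property of $\beta$ to $b_T$ itself yields
\[
b_T \;\subset\; \calN_\kappa\!\left(\beta,\,\mm_\beta(1,0)\right),
\]
with constants independent of $T$.

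Now let $\zeta$ be an arbitrary $(\qq',\sQ')$-quasi-geodesic whose endpoints $p,q$ lie on $b_T$. Pick projections $p' \in p_\beta$ and $q' \in q_\beta$, and then pick $p'' \in p'_\zeta$ and $q'' \in q'_\zeta$. Two successive applications of Lemma~\ref{concatenation} show that
\[
\alpha \;:=\; [p',p''] \,\cup\, [p'',q'']_\zeta \,\cup\, [q'',q']
\]
is a quasi-geodesic from $p'$ to $q'$ with constants $(9\qq',\sQ')$ and with both endpoints on $\beta$. Applying the $\kappa$-Morse property of $\beta$ to $\alpha$ gives $\alpha \subset \calN_\kappa(\beta,\,\mm_\beta(9\qq',\sQ'))$, and in particular the middle segment $[p'',q'']_\zeta$ is $\kappa$-close to $\beta$. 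The two missing end-pieces $[p,p'']_\zeta$ and $[q'',q]_\zeta$ have sublinearly-bounded arc-length, since $d_X(p,p'') \leq 2\mm_\beta(1,0)\kappa(p)$ and similarly at $q$ (using that $p''$ is a closest point of $\zeta$ to $p'$ and that $p \in \zeta$), so by Lemma~\ref{Lem:sublinear-estimate} they sit in a $\kappa$-neighborhood of $p$ and $q$ on $b_T$.

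The main obstacle is to upgrade ``$[p'',q'']_\zeta$ is $\kappa$-close to $\beta$'' into ``$[p'',q'']_\zeta$ is $\kappa$-close to $b_T$.'' This reduces to the subclaim $\beta|_{[0,T]} \subset \calN_\kappa(b_T, M)$ for some $M$ depending only on $\qq,\sQ,\mm_\beta$ and $\kappa$. To establish it, for $s \in [0,T]$ one takes $x_s = \pi_{b_T}(\beta(s))$ and observes via Lemma~\ref{concatenation} that $[\beta(s),x_s] \cup [x_s,\beta(T)]_{b_T}$ is a $(3,0)$-quasi-geodesic with endpoints $\beta(s), \beta(T)$ on $\beta$, hence is contained in $\calN_\kappa(\beta,\,\mm_\beta(3,0))$. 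Combining this with the first observation (each point of $b_T$ is $\mm_\beta(1,0)\kappa$-close to $\beta$) and with the fact that the \CAT projection $\pi_{b_T}$ is $1$-Lipschitz lets one bound $d_X(\beta(s),x_s)$ by a constant multiple of $\kappa(\beta(s))$. The delicate point---and the technical heart of the proof---is keeping all the resulting constants uniform in $T$, which relies on repeated invocations of Lemma~\ref{Lem:sublinear-estimate} to compare $\kappa$-values at nearby points and thereby prevent constant-drift as $T$ grows.
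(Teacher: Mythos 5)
Your overall plan matches the paper's: first show the subclaim $\beta[0,T] \subset \calN_\kappa(b_T, M)$ with $M$ independent of $T$, then transfer a quasi-geodesic $\zeta$ with endpoints on $b_T$ to a quasi-geodesic with endpoints on $\beta$ via two applications of Lemma~\ref{concatenation}, apply the $\kappa$-Morse property of $\beta$, and combine with the subclaim. Your second half (constructing $\alpha = [p',p'']\cup[p'',q'']_\zeta\cup[q'',q']$ and handling the end-pieces via Lemma~\ref{Lem:sublinear-estimate}) is exactly what the paper does.

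However, your argument for the subclaim has a genuine gap. You apply the $\kappa$-Morse property of $\beta$ to the concatenation $[\beta(s), x_s] \cup [x_s, \beta(T)]_{b_T}$ and conclude it lies in $\calN_\kappa(\beta,\mm_\beta(3,0))$. But this is information pointing in the wrong direction: it tells you that the geodesic segment $[\beta(s), x_s]$ stays within sublinear distance of $\beta$, and in a \CAT space one even has $\kappa(\sigma(u)) \leq \kappa(\beta(s))$ along this segment by convexity of $d(\go,\cdot)$, so every point of $[\beta(s), x_s]$ is within $\mm_\beta(3,0)\kappa(\beta(s))$ of $\beta$. None of this constrains the \emph{length} $L = d_X(\beta(s), x_s)$: the quasi-geodesic $\beta$ could in principle shadow the perpendicular $[\beta(s), x_s]$ at all scales, so nothing forbids $L$ from being large. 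The claimed ``combination'' with the facts that $b_T \subset \calN_\kappa(\beta,\mm_\beta(1,0))$ and that $\pi_{b_T}$ is $1$-Lipschitz is not spelled out, and I do not see how these close the circle; the argument is essentially circular, since knowing $\beta(s)$ is near $b_T$ is what you are trying to prove.

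The paper's proof of the subclaim uses a genuinely different mechanism. From $b_T \subset \calN_\kappa(\beta,\mm_\beta(1,0))$ it gets, for each $s'$ on $b_T$, a time $t_{s'}$ with $d(b(s'),\beta(t_{s'})) \leq \mm_\beta(1,0)\kappa(s')$, normalized so $t_0 = 0$ and $t_d = T$. It then takes a unit-spaced partition $s_0,\dots,s_k$ of $[0,d]$, and for an arbitrary $t\in[0,T]$ finds $i$ with $t_{s_{i-1}}\le t\le t_{s_i}$. Bounding $|t_{s_i}-t_{s_{i-1}}|$ via the quasi-geodesic lower bound shows $\beta(t)$ is within a controlled $\kappa$-multiple of $b(s_i)$. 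The crux is this \emph{covering of the parameter interval} $[0,T]$ by the times $\{t_{s_i}\}$, a step your sketch does not have and cannot obtain from the concatenation estimate alone. To repair the proof you would need to introduce this covering argument (or some equivalent), rather than a further application of the Morse property of $\beta$.
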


\begin{proof}
We parametrize $b \from [0, \dd] \to X$ by arc length so $\dd=d_X(\beta(0), \beta(T))$. 
The geodesic segment $b$ can be considered as a $(1,0)$--quasi-geodesic with end points 
on $\beta$. Hence, for every $0 \leq s \leq \dd$, there is $t_s \in [0, \infty)$ so that 
\begin{equation} \label{Eq:t_s}
d_X\big(b(s) , \beta(t_s) \big) \leq \mm_\beta(1,0) \cdot \kappa(s). 
\end{equation}
We take $t_0 = 0$ and $t_d = T$. We show that $\beta[0,T]$ stays in some uniform 
$\kappa$--neighborhood of $b$ by arguing that the times $t_s$ nearly cover the 
interval $[0,T]$. Let $0=s_0, s_1, \dots, s_k=d$ be a set of times so that 
$|s_i - s_{i+1}| \leq 1$.  Then, for every $t \in [0,T]$ we have $t_{s_0} \leq t \leq t_{s_k}$. 
Hence there is an index $i$ such that $t_{s_{i-1}}\leq t$ and $t_{s_{i}} \geq t$ . 

\begin{figure}[ht]
\begin{tikzpicture}[scale=0.9]
 \tikzstyle{vertex} =[circle,draw,fill=black,thick, inner sep=0pt,minimum size=.5 mm]
 
[thick, 
    scale=1,
    vertex/.style={circle,draw,fill=black,thick,
                   inner sep=0pt,minimum size= .5 mm},
                  
      trans/.style={thick,->, shorten >=6pt,shorten <=6pt,>=stealth},
   ]

  \node[vertex] (a) at (-5,0) [label=180:$\go$] {}; 
  \node[vertex] (b) at (5,0) [label=-10:$\beta(T)$] {}; 
  \node[vertex] (c)  at(1,.9)  [label=90:$\beta(t_{s_{i-1}})$] {}; 
  \node[vertex] at(1, 0)  [label=below:$b(s_{i-1})$] {}; 
  \node[vertex] (d)  at (4,1.5)  [label=above right:$\beta(t_{s_i})$] {}; 
  \node[vertex] at(4, 0)  [label=below:$b(s_i)$] {}; 
  \node[vertex] at(3,1.9)  [label=above:$\beta(t)$] {}; 
  \draw [thick, blue] (a) -- (b);
  \draw[thick, red, dashed] (1, 0)--(c){};
  \draw[thick, red, dashed](4, 0) --(d){};
  \node at (-4.3,.1) [label=below:$b$]{};
  \node at (-4.2,.8) [label=above:$\beta$]{};
 
  \pgfsetlinewidth{1pt}
  \pgfsetplottension{.75}
  \pgfplothandlercurveto
  \pgfplotstreamstart
  \pgfplotstreampoint{\pgfpoint{-5cm}{0cm}}  
  \pgfplotstreampoint{\pgfpoint{-4cm}{1cm}}   
  \pgfplotstreampoint{\pgfpoint{-3cm}{.-.5cm}}
  \pgfplotstreampoint{\pgfpoint{-1cm}{.7cm}}
  \pgfplotstreampoint{\pgfpoint{-2cm}{1.2cm}}
  \pgfplotstreampoint{\pgfpoint{-1cm}{2cm}}
  \pgfplotstreampoint{\pgfpoint{1cm}{.9cm}}
  \pgfplotstreampoint{\pgfpoint{3cm}{1.9cm}}
  \pgfplotstreampoint{\pgfpoint{4cm}{1.5cm}}
  \pgfplotstreampoint{\pgfpoint{6cm}{1cm}}
  \pgfplotstreampoint{\pgfpoint{5cm}{0cm}}
  \pgfplotstreamend
  \pgfusepath{stroke}
 
\end{tikzpicture}
\caption{The index $i$ is chosen so that $t_{s_{i-1}} \leq t \leq t_{s_i}$.} 
 \end{figure}
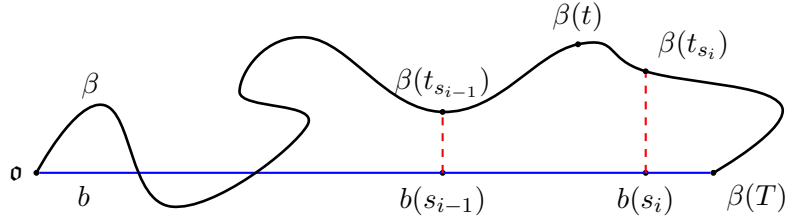

We have 
\begin{align*}
d_X(\beta(t_{s_{i-1}}), \beta(t_{s_{i}})) 
&\leq d_X(\beta(t_{s_{i-1}}), b(s_{i-1})) +d_X(b(s_{i-1}), b(s_{i})) 
     + d_X(b(s_{i}), \beta(t_{s_{i}})) \\
&\leq  \mm_\beta(1,0) \cdot \kappa(s_{i-1}) + 1 + \mm_\beta(1,0) \cdot \kappa(s_{i}).  
\end{align*}
Using the lower-bound condition for a $(\qq, \sQ)$--quasi-geodesic we have
\[
|t_{s_{i}}-t_{s_{i-1}}| \leq \qq d_X(\beta(t_{s_{i-1}}), \beta(t_{s_{i}})) + \qq \sQ
\leq \qq \big(2 \mm_\beta(1,0) \kappa(s_{i}) +1\big) + \qq \sQ.  
\]
From this and using the upper-bound condition, we get  
\begin{align*}
d_X\big(\beta(t_{s_i}), \beta(t)\big) &\leq \qq |t_{s_i} - t| + \sQ \\
 &\leq \qq |t_{s_i} - t_{s_{i-1}}| + \sQ \\
 &\leq \qq^2 (2 \mm_\beta(1,0) \, \kappa(s_{i}) +1) + \qq^2 \sQ + \sQ. 
\end{align*}
Combining this with \eqnref{Eq:t_s}, we get that there is a function 
$\mm_1 \from \RR^2 \to \RR$ depending only on the value of $\mm_\beta(1,0)$ so that 
\begin{equation} \label{Eq:m_1}
d_X(\beta(t), b(s_i)) \leq d_X(\beta(t), \beta(t_{s_i})) + d_X(\beta(t_{s_i})), b(s_{i})) \leq \mm_1(\qq, \sQ) \cdot \kappa(s_i). 
\end{equation} 
By Lemma~\ref{Lem:sublinear-estimate}, there exists $\mm_{2}$ depending only on 
$\mm_1(\qq, \sQ)$ and $\kappa$ such that 
\[
\kappa(s_{i}) = \kappa(b(s_i)) \leq \mm_{2}\cdot \kappa(\beta(t)).
\]
Thus we have 
\begin{equation} \label{Eq:m_2}
\beta[0,T] \subset \calN_{\kappa} \big(b, \mm_2(\qq, \sQ)\big). 
\end{equation}

Now consider a $(\qq', \sQ')$--quasi-geodesic $\zeta \from [s,t] \to X$ with end points on 
$b$. To show that $\zeta$ stays near $b$, we modify $\zeta$ to a 
$(9\qq', \sQ')$--quasi-geodesic $\zeta'$ with end points on $\beta$ which implies that 
$\zeta'$ stays near $\beta$ since $\beta$ is $\kappa$--Morse. The \eqnref{Eq:m_2} 
then implies that $\zeta$ stays near $b$ as well. 

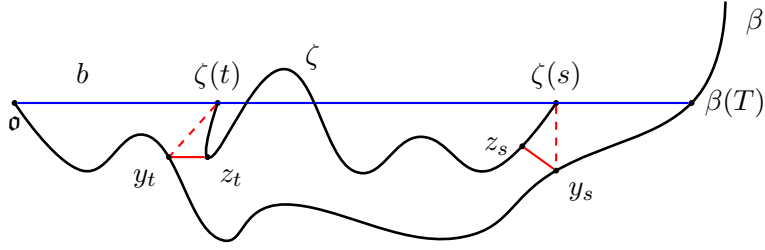
\begin{figure}[ht]
\begin{tikzpicture}[scale=0.9]
 \tikzstyle{vertex} =[circle,draw,fill=black,thick, inner sep=0pt,minimum size=.5 mm]
 
[thick, 
    scale=1,
    vertex/.style={circle,draw,fill=black,thick,
                   inner sep=0pt,minimum size= .5 mm},
                  
      trans/.style={thick,->, shorten >=6pt,shorten <=6pt,>=stealth},
   ]

  \node[vertex] (a) at (-5,0)  [label=below:$\go$] {}; 
  \node[vertex] (b) at (5,0)   [label=right:$\beta(T)$] {}; 
  \node[vertex] (xb) at (-2,0)  [label=above:$\zeta(t)$] {}; 
  \node[vertex] (xa) at (3, 0)  [label=above:$\zeta(s)$] {}; 
  \node[vertex] (za) at (2.5, -.64) [label=left:$z_s$]{};
  \node[vertex] (ya) at (3, -1) [label=below right:$y_s$]{};
  \node[vertex] (zb) at (-2.15, -.8) [label=below right:$z_t$]{};
  \node[vertex] (yb) at (-2.72, -.8) [label=below left:$y_t$]{};
  \draw [thick, blue] (a) -- (b);
  \node at (-4,0) [label=above:$b$]{};
  \node at (5.5,1.2) [label=right:$\beta$]{};
  \node at (-.6,.2) [label=above:$\zeta$]{};

  \pgfsetlinewidth{1pt}
  \pgfsetplottension{.75}
  \pgfplothandlercurveto
  \pgfplotstreamstart
  \pgfplotstreampoint{\pgfpoint{-5cm}{0cm}}  
  \pgfplotstreampoint{\pgfpoint{-4cm}{-1cm}}   
  \pgfplotstreampoint{\pgfpoint{-3cm}{-.5cm}}
  \pgfplotstreampoint{\pgfpoint{-2cm}{-2cm}}
  \pgfplotstreampoint{\pgfpoint{-1cm}{-1.5cm}}
  \pgfplotstreampoint{\pgfpoint{1.5cm}{-2cm}}
  \pgfplotstreampoint{\pgfpoint{3cm}{-1cm}}
  \pgfplotstreampoint{\pgfpoint{5cm}{0cm}}
  \pgfplotstreampoint{\pgfpoint{5.5cm}{1.5cm}}
  \pgfplotstreamend
  \pgfusepath{stroke}    
 
  \pgfsetplottension{.75}
  \pgfplothandlercurveto
  \pgfplotstreamstart
  \pgfplotstreampoint{\pgfpoint{-2cm}{0cm}}  
  \pgfplotstreampoint{\pgfpoint{-2.1cm}{-.8cm}}  
  \pgfplotstreampoint{\pgfpoint{-1cm}{.5cm}}  
  \pgfplotstreampoint{\pgfpoint{0cm}{-1cm}}   
  \pgfplotstreampoint{\pgfpoint{1cm}{-.5cm}}
  \pgfplotstreampoint{\pgfpoint{2cm}{-1cm}}
  \pgfplotstreampoint{\pgfpoint{3cm}{0cm}}
  \pgfplotstreamend
  \pgfusepath{stroke}

  \draw[thick, red, dashed] (xa)--(ya){};
  \draw[thick, red] (ya)--(za){}; 
  \draw[thick, red, dashed] (xb)--(yb){};
  \draw[thick, red] (yb)--(zb){}; 
\end{tikzpicture}
\caption{The concatenation of $[y_s, z_s]$, $[z_s, z_t]_\zeta$ and $[z_t, y_t]$ is
a quasi-geodesic with end points on $\beta$.}
 \end{figure}

Let $y_s\in \beta$ be the closest-point in $\beta$ to $\zeta(s)$ and let $z_s$ be the closest
point in $\zeta$ to $y_s$. By \lemref{concatenation} the concatenation of the 
geodesic segment $[y_s, z_s]$ and the quasi-geodesic segment $[z_s, \zeta(t)]_{\zeta}$ 
forms a $(3\qq', \sQ')$--quasi-geodesic. Similarly we can find points $y_t \in \beta$ and 
$z_t \in \zeta$ and apply \lemref{concatenation} again. Denote the concatenation of the 
geodesic segment $[y_s, z_s]$, the quasi-geodesic segment $[z_s, z_t]_{\zeta}$ and the 
geodesic segment $[z_t, y_t]$ by $\zeta'$ which is a $(9\qq', \sQ')$--quasi-geodesic. Then 
\begin{equation} \label{Eq:zeta'}
\zeta' \subset \calN_{\kappa}\big(\beta, \mm_\beta(9\qq', \sQ')\big).
\end{equation} 

We say $x$ is $\kappa$--close to $y$, if there is a constant $\cc$ depending on 
$\qq, \sQ, \qq', \sQ'$ and $\mm_\beta$ such that $d_X(x,y) \leq \cc \cdot \kappa(x)$.
It follows from Lemma~\ref{Lem:sublinear-estimate} that if $x$ is $\kappa$--close to 
$y$ and $y$ is $\kappa$--close to $z$ then $x$ is $\kappa$--close $z$. 
Thus every point in $\zeta$ is $\kappa$--close to a point in $\zeta'$. Now 
\eqnref{Eq:zeta'} and \eqnref{Eq:m_2} imply that 
\[
\zeta \subset \calN_\kappa\big(b, \mm(\qq', \sQ')\big)
\]
for some $\mm \from \RR^2 \to \RR$ depending on $\qq, \sQ$ and $\mm_\beta$ only. 
\end{proof}

\begin{proposition}[$(6) \Longrightarrow (7)$]  \label{unique}
If $\beta \from [0, \infty) \to X$ is a $\kappa$--Morse quasi-geodesic ray then 
\begin{enumerate}
 \item the class $\bfb=[\beta]$ contains a geodesic $b$, and 
 \item the geodesic $b$ is $\kappa$--Morse (in particular, for $(32,0)$--quasi-geodesics).
\end{enumerate}
\end{proposition}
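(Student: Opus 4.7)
The plan is to produce $b$ as a subsequential limit of the finite geodesic segments $b_T = [\go, \beta(T)]$ studied in \propref{Morsegeodesic}, and to transfer the uniform Morse gauge supplied there to the limit. First I would apply the Arzelà--Ascoli theorem to the family $\{b_T\}_{T > 0}$ of unit-speed geodesics starting at $\go$. Since $X$ is proper and each $b_T$ is $1$--Lipschitz, a diagonal argument yields a sequence $T_n \to \infty$ and a geodesic ray $b \from [0,\infty) \to X$ with $b(0)=\go$ such that $b_{T_n} \to b$ uniformly on every compact subset of $[0,\infty)$.

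For part (1) I would show that $b$ and $\beta$ $\kappa$--fellow travel. From \eqnref{Eq:m_2} in the proof of \propref{Morsegeodesic}, $\beta[0, T_n] \subset \calN_\kappa(b_{T_n}, \mm_2(\qq, \sQ))$ with $\mm_2$ independent of $n$. Fixing $t$ and letting $n \to \infty$ with $T_n > t$, the nearest points on $b_{T_n}$ to $\beta(t)$ lie in a bounded subset of $X$, so a subsequence of them converges to a point of $b$ within $\mm_2(\qq, \sQ) \cdot \kappa(\beta(t))$ of $\beta(t)$; hence $\beta \subset \calN_\kappa(b, \mm_2(\qq, \sQ))$. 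Conversely, each $b_{T_n}$ is a $(1,0)$--quasi-geodesic with endpoints on $\beta$, so $b_{T_n} \subset \calN_\kappa(\beta, \mm_\beta(1,0))$; a pointwise limit together with continuity of $\kappa$ and \lemref{Lem:sublinear-estimate} gives $b \subset \calN_\kappa(\beta, \mm_\beta(1,0))$. This places $b$ in $\bfb$.

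For part (2) I would repeat the second half of the proof of \propref{Morsegeodesic} with $b$ in place of $b_T$. Given a $(\qq', \sQ')$--quasi-geodesic $\zeta$ with endpoints on $b$, let $y_s, y_t \in \beta$ be nearest-point projections of the endpoints of $\zeta$ and $z_s, z_t \in \zeta$ their closest points on $\zeta$; two applications of \lemref{concatenation} make the concatenation $\zeta' = [y_s, z_s] \cup [z_s, z_t]_\zeta \cup [z_t, y_t]$ a $(9\qq', \sQ')$--quasi-geodesic with endpoints on $\beta$. The $\kappa$--Morse property of $\beta$ places $\zeta'$ in $\calN_\kappa(\beta, \mm_\beta(9\qq', \sQ'))$, and combining this with the fellow-traveling from part (1) via \lemref{Lem:sublinear-estimate} moves $\zeta'$ into a $\kappa$--neighborhood of $b$; a further $\kappa$--closeness argument for the initial and final subarcs of $\zeta$ (which have length bounded by $\kappa$ at their source points) then extends this to all of $\zeta$, with a Morse gauge depending only on $(\qq', \sQ')$ and $\mm_\beta$. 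The assertion for $(32,0)$--quasi-geodesics is the special case $(\qq', \sQ') = (32, 0)$.

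The main obstacle I anticipate is the limit passage: one must verify that uniform $(\kappa, \nn)$--neighborhood bounds for the approximants $b_{T_n}$ really survive at the limit $b$. The two facts that make this work are the independence of $\mm_2$ from $T$ in \propref{Morsegeodesic} (so that the gauge does not blow up as $n \to \infty$) and \lemref{Lem:sublinear-estimate} (so that the weight $\kappa$ at a projection point is controlled by $\kappa$ at the source point), which together let the sublinear error term be transported cleanly across the limit.
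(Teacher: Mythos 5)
Your proposal is correct and closely tracks the paper's own proof: the paper likewise produces $b$ as a subsequential limit of the segments $b_n=[\go,\beta(n)]$, passes the bound $b_n\subset\calN_\kappa(\beta,\mm_\beta(1,0))$ to the limit to place $b$ in $[\beta]$, and then extracts a uniform Morse gauge for $b$ from \propref{Morsegeodesic}. The only small differences are that you explicitly verify both inclusions in the fellow-travel claim (the paper records only $b\subset\calN_\kappa(\beta,\cdot)$, leaving the other direction implicit via \eqnref{Eq:m_2}), and for part~(2) you re-run the concatenation argument from the second half of \propref{Morsegeodesic}, whereas the paper instead slightly perturbs $\zeta$ so its endpoints lie on $b_n$ and cites \propref{Morsegeodesic} directly -- both variants are sound and rest on the same uniformity of $\mm$ across $T$.
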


\begin{proof}
For $n \in \NN$, let $b_n$ be the geodesic segment connecting $\go$ to
$\beta(n)$. Up to taking a subsequence, we can assume the geodesic segments 
$b_n$ converge to a geodesic ray $b$ in $X$. Since $\beta$
is $\kappa$--Morse, $b_n \subset \calN_{\kappa}\big(\beta, \mm_\beta(1,0)\big)$ which means
$b \subset \calN_{\kappa}\big(\beta, \mm_\beta(1,0)\big)$. That is, $b \in [\beta]$. 
But the class $[\beta]$ contains only one geodesic (\lemref{Lem:Unique}) hence
any other subsequence of $b_n$ has to also converge to $b$. 
In particular, every point in $b$ is the limit of points in $b_n$ and
every limit point of a sequence $x_n \in b_n$ is on $b$. 

The second part follows almost immediately from \propref{Morsegeodesic}. 
For every quasi-geodesic $\zeta$ with end points on $b$, there is 
$n_0$ so that for $n \geq n_0$, the end points of $\zeta$ are distance 1 from some point 
in $b_n$. Then $\zeta$ can be modified slightly to have end points
in $b_n$. \propref{Morsegeodesic} implies that $\zeta$ stays in a $\kappa$--neighborhood 
of $b_n$. But this is true for every $n \geq n_0$. Hence
$\zeta$ stays in some $\kappa$--neighborhood of $b$. 
\end{proof}

To prepare for the next step, we recall a construction of quasi-geodesics from 
\cite{contracting}. 

\begin{proposition}[\cite{contracting}]\label{charneysultan}
Given a geodesic segment (possibly infinite) $b$ and points $x, y \in X$  such that $d_X(x, y) < d_X(x, b)$, 
there exists a $(32, 0)$--quasi-geodesic $\zeta \from [s_0, s_1] \to X$ with endpoints 
on $b$ such that $\zeta(s_0)= x_{b}$, 
\[
\frac 14 d_X(x_{b}, y_{b}) \leq d_X(\zeta(s_0), \zeta(s_1)) < d_X(x_{b}, y_{b}) 
\] 
and there is a point $p = \zeta(t)$ on $\zeta$ so that 
\begin{equation}\label{case1}
d_X(p, b) \geq \frac{1}{80} d_X(x_b, y_b).
\end{equation}
\end{proposition}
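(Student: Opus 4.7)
The plan is to assemble $\zeta$ as a concatenation of three geodesic pieces: a ``descent'' $[x_b, x]$ from $b$ down to $x$, a ``traverse'' along a sub-arc of $[x, y]$ ending at a point $y^{\ast}$, and an ``ascent'' $[y^{\ast}, y^{\ast}_b]$ back to $b$. Write $d := d_X(x_b, y_b)$ and $h := d_X(x, b)$. First I would record the basic consequences of the hypothesis $d_X(x,y) < h$: the geodesic $[x,y]$ is disjoint from $b$; the projection $\pi_b$ is $1$--Lipschitz (\lemref{Lem:CAT}), so the map $\gamma \mapsto \pi_b(\gamma)$ is a continuous path on $b$ from $x_b$ to $y_b$, forcing $d \leq d_X(x,y) < h$; and the height function $t \mapsto d_X(\gamma(t), b)$ is convex along the geodesic $\gamma = [x, y]$, so heights along the traverse stay controlled by the endpoint heights.

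Next I would choose $y^{\ast}$ by an intermediate-value argument with two competing stopping criteria: walk along $[x, y]$ from $x$ and stop at the first point $y^{\ast}$ at which either the projected distance $d_X(x_b, \pi_b(y^{\ast}))$ first attains $d/2$, or the height $d_X(y^{\ast}, b)$ first drops to $h/2$, whichever occurs first. Set $\zeta := [x_b, x] \cup [x, y^{\ast}]_{[x,y]} \cup [y^{\ast}, y^{\ast}_b]$, so that $\zeta(s_0) = x_b$ and $\zeta(s_1) = y^{\ast}_b$. The Lipschitz property and continuity of $\pi_b \circ \gamma$, together with the choice of first stopping time, give $d_X(\zeta(s_0), \zeta(s_1)) \in [d/4, d)$, as required. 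To upgrade $\zeta$ to a $(32, 0)$--quasi-geodesic, I would apply \lemref{concatenation} twice: once to attach the ascent $[y^{\ast}, y^{\ast}_b]$ to the traverse (noting that $y^{\ast}_b$ is the nearest point on $b$ to $y^{\ast}$ and using convexity to see that points of $[x, y^{\ast}]$ project onto $b$ inside a controlled segment), and once more to attach the descent $[x_b, x]$ in the reverse direction. The two stopping criteria are designed so that the total length of $\zeta$ is $O(d)$: the ``height drop'' criterion prevents the ascent from being much shorter than the descent (which would ruin the lower bound), while the ``projected distance'' criterion caps the traverse's length in terms of $d$.

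Finally, the bulge point $p$ is either $x$ (when the traverse terminated via the projected-distance criterion, so $x$ still lies at height $h \geq d$) or $y^{\ast}$ (when the height dropped to $h/2$, so $h/2 \geq d/80$ will follow from the height lower bound produced by convexity at the end of the traverse). The main obstacle is the explicit constant book-keeping: one must verify that the two sub-cases chosen by the stopping criteria never produce a path whose length exceeds $32\, d_X(\zeta(s_0), \zeta(s_1))$, and that in each sub-case some designated point on $\zeta$ has height at least $d/80$ above $b$. This requires tracking how each application of \lemref{concatenation} multiplies the constant (by a factor of $3$ each time) against the worst-case ratio of path length to endpoint spacing, and using the \CAT convexity of the height function together with the orthogonality-at-projection property of $\pi_b$ to rule out degenerate configurations in which $[x, y]$ makes a close return to $b$ immediately after leaving $x$.
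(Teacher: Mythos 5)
Your construction has a fatal gap that isn't merely ``constant book-keeping'': the descent $[x_b, x]$ always has length $h = d_X(x,b)$, and $h$ can be arbitrarily large compared to $d = d_X(x_b, y_b)$. The hypothesis $d_X(x,y) < h$ gives $d \le d_X(x,y) < h$ but no upper bound on $h$ in terms of $d$. Concretely, in $X = \EE^2$ with $b$ a horizontal line, take $x=(0,1000)$ and $y=(1,999)$; then $d=1$, $h=1000$, your stopping rules select $y^\ast \approx (0.5, 999.5)$, and the resulting $\zeta$ has length $\approx 2000$ while $d_X(\zeta(s_0),\zeta(s_1)) = 1/2$. That is nowhere near a $(32,0)$--quasi-geodesic, and no repositioning of $y^\ast$ along $[x,y]$ can fix it, because you always travel the full segment $[x_b, x]$.

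The step you are missing is the one the Charney--Sultan proof leads with: before building $\zeta$, replace $x$ by a point $x'$ on $[x, x_b]$ at controlled (small) height, and likewise replace $y$ by a point $y'$ either on $[y,y_b]$ or in the interior of $[x,y]$, obtaining a quadrilateral $Q_2$ with $d_X(x'_b, y'_b) \ge \tfrac14 d_X(x_b,y_b)$ whose side lengths are uniformly bounded multiples of $\sD := d_X(x'_b,y'_b)$ (the bounds $\aa+\bb+\cc < 8$ and $\aa+\cc-\bb > 0.1$). Only with those normalized side lengths does the three-piece concatenation have length $O(\sD)$, and even then $\zeta$ is built to cut across from $[x'_b,x']$ to $[x',y']$ before reaching $x'$, not to visit $x'$ itself. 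Your approach correctly identifies the right building blocks (CAT(0) convexity of height, $1$--Lipschitz projection, concatenation via \lemref{concatenation}) and the right shape of argument, but without the preliminary descent to $x'$, the multiplicative constant you get is $\Theta(h/d)$, which is unbounded.
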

\begin{proof}[Outline of the proof of Proposition~\ref{charneysultan}]
The proof of this statement is contained in the proof of  \cite[ Theorem 2.9]{contracting}.
We now give 
the outline of the argument and a detailed reference to that proof. Given a geodesic
$b$ and points $x$ and $y$ that satisfy the assumptions, consider the following quadrilateral:
\[
Q_{1} = [x, x_{b}] \cup [x_{b}, y_{b}] \cup [y, y_{b}] \cup [x, y].
\]
We first construct a smaller quadrilateral inside $Q_{1}$ out of two points 
$x', y'$ where $x'$ on the segments $[x, x_{b}]$ and $y'$ is either in the interior of 
the geodesic segment connecting $x$ to $y$ (Theorem 2.9, Case (2)) or on 
$[y, y_{b}]$ (Theorem 2.9, Case (1)) and consider the quadrilateral
\[
Q_{2} = [x', x_{b}] \cup [x_{b}, y_{b}] \cup [y', y_{b}] \cup [x', y']
\]
with the property (in all cases) that 
\[
d_X(x'_{b}, y'_{b}) \geq \frac 14 d_X(x_{b}, y_{b}).
\]
Let $\sD = d_X(x'_{b}, y'_{b})$, and let $\aa, \bb, \cc>0$ be real numbers such that 
\begin{align*}
d_X(x', x'_{b}) &= \aa \, \sD\\
d_X(x', y') &= \bb \, \sD\\
d_X(y', y'_{b}) &= \cc \, \sD
\end{align*}
The quadrilateral $Q_{2}$ also satisfies the condition that $\aa+\cc -\bb > 0.1$ 
and $\aa+\bb+\cc < 8$ (worst case is Case (1); in Case (3) it is shown that 
$\aa+\cc -\bb > 0.2$).

Next we construct a quasi-geodesics $\zeta(t)$ that starts from $x'_{b}$ follows along 
the segment $[x'_b, x']$ until it is close to the segment $[x', y']$, then travels to $[x', y']$ 
and follows $[x', y']$  until it is close to $[y'_b, y']$, then it travels to  $[y'_b, y']$ and finally 
follows $[y'_b, y']$ until $y'_b$.  \cite[Lemma 2.7]{contracting} establishes that $\zeta(t)$ 
is a $(4(\aa+\bb+\cc), 0)$--quasi-geodesic, that is, $\zeta$ is a $(32,0)$--quasi-geodesic. 
Let $p$ be a point on $\zeta(t)$ on the segment between $x'$ and $y'$. Equation (4) of  \cite{contracting} 
states that 
\[ d_{X}(p, b) \geq \frac{\aa+\cc -\bb}{2} \sD.\]

Combining this with $\aa+\cc -\bb > 0.1$ we have 
\[
d_{X}(p, b) \geq \frac{1}{20} d_{X}(x'_{b}, y'_{b})
  \geq \frac 1{80} d_X(x_b, y_b). 
\]
This finishes the proof.
\end{proof} 

\begin{theorem}[$(7) \Longrightarrow (1)$] \label{Morseimpliescontracting}
Let $b$ be a geodesic ray in $X$  that is $\kappa$--Morse for 
$(32,0)$--quasi-geodesics. Then $b$ is $\kappa$--contracting.
In fact, $\cc_b = 82000\, \mm_b(32, 0)$.
\end{theorem}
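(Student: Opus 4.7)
The plan is to argue by contradiction, using \propref{charneysultan} as the central geometric input. Suppose, toward a contradiction, that there exist $x, y \in X$ with $d_X(x,y) \leq d_X(x, b)$ for which $\sD := d_X(x_b, y_b)$ exceeds $82000 \, \mm_b(32,0) \cdot \kappa(x)$. First, invoking \propref{charneysultan} produces a $(32,0)$--quasi-geodesic $\zeta \from [s_0, s_1] \to X$ with $\zeta(s_0) = x_b$ and $\zeta(s_1) \in b$, together with an interior point $p = \zeta(t)$ satisfying $d_X(p, b) \geq \tfrac{1}{80}\, \sD$. Since $b$ is $\kappa$--Morse for $(32,0)$--quasi-geodesics and both endpoints of $\zeta$ lie on $b$, the hypothesis applies and yields $d_X(p, b) \leq \mm_b(32,0) \cdot \kappa(p)$. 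Combining these gives
\[
\sD \leq 80 \, \mm_b(32,0) \cdot \kappa(p),
\]
so it suffices to prove $\kappa(p) \leq 1025 \, \kappa(x)$ to reach the desired contradiction.

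For this key estimate, the plan is to bound $\Norm p$ linearly in $\Norm x$ and then apply the concavity estimate \eqnref{Eq:Concave}. The nearest-point projection $\pi_b \from X \to b$ is $1$--Lipschitz (\lemref{Lem:CAT}) and fixes $\go \in b$, so $\Norm{x_b} \leq \Norm x$, and from $d_X(x,y) \leq d_X(x,b) \leq \Norm x$ it follows that $\Norm{y_b} \leq \Norm y \leq 2\Norm x$. Since $x_b$ and $y_b$ lie on the same geodesic ray, this in fact upgrades to $\sD \leq 2\Norm x$. One then controls $d_X(x_b, p)$ by $C \cdot \sD$ for an explicit constant $C$, using that $p$ lies on a $(32,0)$--quasi-geodesic starting at $x_b$ whose endpoints satisfy $d_X(\zeta(s_0), \zeta(s_1)) < \sD$. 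Putting this together yields $\Norm p \leq \Norm{x_b} + d_X(x_b, p) \leq (1+2C)\, \Norm x$, and \eqnref{Eq:Concave} then delivers $\kappa(p) \leq (1+2C) \, \kappa(x)$.

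The main obstacle is to drive $C$ small enough that the final constant lands precisely at $82000 \, \mm_b(32,0)$. A naive use of the $(32,0)$--quasi-isometric embedding inequalities applied to $\zeta$ produces $C$ on the order of $32^2$, which is too large. To sharpen this, one must look inside the proof of \propref{charneysultan}: the quasi-geodesic $\zeta$ is a concatenation of at most three geodesic segments whose total length is controlled by $(\aa+\bb+\cc)\, \sD < 8\, \sD$, and the distinguished point $p$ lies on the middle segment, which delivers the refined estimate on $d_X(x_b, p)$ that leads to the stated constant. The degenerate case $x \in b$ (in which $d_X(x,b)=0$ forces $x=y$ and both projections coincide) is trivial and can be disposed of separately.
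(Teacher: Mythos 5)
Your overall geometric plan matches the paper's exactly: invoke Proposition~\ref{charneysultan} to produce a $(32,0)$--quasi-geodesic $\zeta$ from $x_b$ with an interior point $p$ satisfying $d_X(p,b) \geq \tfrac{1}{80}\,\sD$ where $\sD = d_X(x_b, y_b)$, apply the $\kappa$--Morse hypothesis to get $d_X(p,b) \leq \mm_b(32,0)\,\kappa(p)$, and then close the loop by comparing $\kappa(p)$ with $\kappa(x)$. The contradiction framing is cosmetic; the content is identical.

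Where you go wrong is in the estimate of $\sD$, and this in turn leads you to misdiagnose your ``main obstacle.'' You bound $\Norm{y_b} \leq \Norm{y} \leq 2\Norm{x}$ and hence $\sD \leq 2\Norm{x}$, which lands you at $\Norm{p} \leq (1+2C)\Norm{x}$ and a final constant $80(1+2C)\mm_b(32,0)$. With the naive $C = 32^2 = 1024$ this overshoots $82000$, so you conclude the naive estimate cannot work and that one must unpack the proof of Proposition~\ref{charneysultan} to sharpen $C$. But the naive $C = 1024$ \emph{is} enough: one should bound $\sD$ by applying the $1$--Lipschitz projection directly to the pair $(x,y)$ and then using the hypothesis $d_X(x,y) \leq d_X(x,b)$:
\[
\sD = d_X(x_b, y_b) \leq d_X(x,y) \leq d_X(x,b) = d_X(x, x_b) \leq d_X(x,\go) = \Norm{x}.
\]
This gives $\Norm{p} \leq \Norm{x_b} + 1024\,\sD \leq (1+1024)\Norm{x} = 1025\Norm{x}$, hence $\kappa(p) \leq 1025\,\kappa(x)$ by concavity, and $80 \cdot 1025 = 82000$ exactly. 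This is the paper's argument; the extra factor of $2$ in your route comes entirely from going through $\Norm{y}$ rather than using the Lipschitz property on the pair. Your suggested remedy --- extracting a bound $d_X(x_b, p) \lesssim 8\sD$ from the internal structure of $\zeta$ --- would indeed work and would even give a better constant, but you leave it as a sketch, so as written the proof has a gap. The cleaner and complete fix is the direct $\sD \leq \Norm{x}$ estimate above.
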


\begin{proof}
Given points $x, y$ such that $d_X(x, y) < d_X(x, b)$ let 
$\zeta \from [s_0,s_1] \to X$ and $p= \zeta(t)$ 
be as in \propref{charneysultan}. Since $b$ is $\kappa$--Morse for
$(32,0)$--quasi-geodesics, we have 
\[
d_X(p, b)\leq \mm_b(32, 0) \cdot \kappa(p).
\]
On the other hand, 
\begin{align*}
\Norm p &\leq \Norm {x_{b}} + d_X\big(\zeta(s_{0}), \zeta(t)\big)\\
              &\leq \Norm {x_{b}} + 32 \cdot |s_{1}-s_{0}| 
                     &&\text{$\zeta$ is a (32, 0)--quasi-geodesic}\\
              &\leq \Norm {x_{b}} + (32)^2 \cdot d_X\big(\zeta(s_{0}), \zeta(s_{1})\big)
                     &&\text{$\zeta$ is a (32, 0)--quasi-geodesic}\\
              &\leq \Norm {x_{b}}+ 1024 \cdot d_X(x_{b}, y_{b}) \\
              &\leq\Norm {x_{b}} + 1024 \cdot d_X(x, y) 
                    &&\text{Projection to $b$ is Lipschitz.}\\
              &\leq \Norm {x_{b}}+ 1024 \cdot d_X(x, x_{b}) \\
              &\leq 1025 \cdot  \Norm {x_{b}}.
\end{align*}
Therefore,
\begin{align*} 
d_X(x_{b}, y_{b})  &\leq 80 \cdot d_X(p, b)\\
   & \leq 80 \cdot \mm(32, 0) \cdot \kappa(p)\\
    & \leq 80 \cdot \mm(32, 0) \cdot \kappa(1025 \Norm x)\\
    & \leq 82000 \cdot \mm(32, 0) \cdot \kappa(x).
\end{align*} 
That is, $b$ is a $\kappa$--contracting geodesic with $\cc_b= 82000 \cdot \mm_b(32, 0)$.
\end{proof}

\begin{proposition}[$(1) \Longrightarrow (2)$] 
Le $b$ be a geodesic ray and let $\beta$ be a quasi-geodesic ray in $\bfb=[b]$. 
Suppose that $b$ is $\kappa$--contracting. Then $\beta$ is also $\kappa$--contracting. 
\end{proposition}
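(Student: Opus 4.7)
The plan is to transfer the contracting property from $b$ to $\beta$ by passing projections through $b$. Fix constants $\nn_1, \nn_2$ so that $b \subset \calN_\kappa(\beta, \nn_1)$ and $\beta \subset \calN_\kappa(b, \nn_2)$; these exist since $\beta \in [b]$. Let $\cc_b$ be the $\kappa$--contraction constant of $b$. Since $\go \in b \cap \beta$, for any point $u \in X$ we have $d_X(u, b), d_X(u, \beta) \leq \Norm{u}$, hence $\Norm{u_b} \leq 2 \Norm{u}$ and, by \eqnref{Eq:Concave}, $\kappa(u_b) \leq 2 \kappa(u)$. Combining this with $b \subset \calN_\kappa(\beta, \nn_1)$ gives the comparison
\[
d_X(u, b) \;\leq\; d_X(u, \beta) \;\leq\; d_X(u, b) + 2 \nn_1 \kappa(u).
\]

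The first key step is the following lemma: \emph{for any $u, v \in X$ with $d_X(u, v) \leq d_X(u, \beta)$, the projections to $b$ satisfy $d_X(u_b, v_b) \leq (\cc_b + 2 \nn_1) \kappa(u)$.} To prove this, if $d_X(u, v) > d_X(u, b)$ let $v^{*}$ be the point on the geodesic segment $[u, v]$ with $d_X(u, v^{*}) = d_X(u, b)$; otherwise set $v^{*} = v$. Either way $d_X(u, v^{*}) \leq d_X(u, b)$, so the $\kappa$--contraction of $b$ yields $d_X(u_b, v^{*}_b) \leq \cc_b \kappa(u)$. The displayed comparison above shows $d_X(v, v^{*}) \leq 2 \nn_1 \kappa(u)$, and since $\pi_b$ is $1$--Lipschitz on the convex set $b$ (\lemref{Lem:CAT}), $d_X(v_b, v^{*}_b) \leq 2 \nn_1 \kappa(u)$. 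The triangle inequality gives the claim.

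Next, given $x, y \in X$ with $d_X(x, y) \leq d_X(x, \beta) =: R$, set $p = x_\beta$, $q = y_\beta$. I would apply the lemma to the three pairs $(x, y)$, $(x, p)$, and $(y, q)$; each satisfies the hypothesis ($R \leq \Norm{x}$ since $\go \in \beta$, and $d_X(y, q) = d_X(y, \beta)$). Because $R \leq \Norm{x}$, the norms $\Norm{y}, \Norm{p}, \Norm{q}$ are all at most a fixed multiple of $\Norm{x}$, so by \eqnref{Eq:Concave} the values $\kappa(y), \kappa(p), \kappa(q)$ are at most a fixed multiple of $\kappa(x)$. Thus each application of the lemma gives an $O(\kappa(x))$ bound, and the triangle inequality along $p_b, x_b, y_b, q_b$ produces $d_X(p_b, q_b) = O(\kappa(x))$. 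Adding the sublinear corrections $d_X(p, p_b) \leq \nn_2 \kappa(p)$ and $d_X(q, q_b) \leq \nn_2 \kappa(q)$ coming from $\beta \subset \calN_\kappa(b, \nn_2)$ yields $d_X(p, q) \leq \cc_\beta \kappa(x)$ for a constant $\cc_\beta$ depending only on $\cc_b, \nn_1, \nn_2$.

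The main obstacle is bookkeeping: at every step I must verify that $\kappa$ evaluated at the intermediate points $p, q, u_b, y$ is controlled by a constant multiple of $\kappa(x)$. This is handled uniformly by rooting all triangle inequalities at the common basepoint $\go \in b \cap \beta$ and invoking the concavity inequality \eqnref{Eq:Concave}, which together ensure that all the sublinear error terms in the argument collapse into a single $O(\kappa(x))$ estimate.
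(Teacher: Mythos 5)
Your proof is correct and follows the same essential strategy as the paper's: apply the contraction of $b$ to pairs of points brought within range of $b$ by interpolating along geodesic segments, and control the resulting errors via sublinearity, using that all relevant points have norm comparable to $\Norm{x}$ because the basepoint $\go$ lies on both $b$ and $\beta$. Where the paper constructs auxiliary points $z \in [x, x_\beta]$ and $y' \in [x,y]$ in an ad hoc way, you package the interpolation into a reusable sub-lemma and apply it uniformly to the three pairs $(x,y)$, $(x,x_\beta)$, $(y,y_\beta)$, which is a cleaner organization of the same idea. One small slip: the leftmost inequality in your displayed comparison, $d_X(u,b) \leq d_X(u,\beta)$, is false in general (take $u \in \beta \setminus b$, so that $d_X(u,\beta)=0$). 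Fortunately you never use it --- in the sub-lemma's proof only the rightmost inequality $d_X(u,\beta) \leq d_X(u,b) + 2\nn_1\kappa(u)$ enters, and in the branch where $v^* \neq v$ the needed positivity of $d_X(u,\beta) - d_X(u,b)$ is already implied by the case hypothesis $d_X(u,v) > d_X(u,b)$ together with $d_X(u,v) \leq d_X(u,\beta)$ --- but that first inequality should be dropped or replaced by the correct two-sided $\kappa$-comparison.
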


\begin{figure}[ht]
\begin{tikzpicture}[scale=0.6]
 \tikzstyle{vertex} =[circle,draw,fill=black,thick, inner sep=0pt,minimum size=.5 mm]
 
[thick, 
    scale=1,
    vertex/.style={circle,draw,fill=black,thick,
                   inner sep=0pt,minimum size= .5 mm},
                  
      trans/.style={thick,->, shorten >=6pt,shorten <=6pt,>=stealth},
   ]

  \node[vertex] (o) at (-5,0)  [label=left:$\go$] {}; 
  \node[vertex] (p) at (0,0)  [label=below:$x_\beta$] {};
  \node (xi) at (5.3,1) [label=below:$\beta$] {}; 
  \node (g) at (5.3,2.3) [label=below:$b$] {}; 

  \node [vertex] (c) at (.3,.88) {};
  \node at (.1,1.05)  [label=-10:$z_b$] {};
  \node [vertex] (d) at (1.5, 1.08)  [label=-10:$x_b$] {};
  \draw[thick]  (o)--(c)--(d)--(5,1.66){};
  \node[vertex] (b) at (0,2)  [label=left:$z$] {}; 
  \node[vertex] (x) at (0,6)  [label=above:$x$] {}; 
  \draw [dashed](0,0)--(b)--(x){};
  \draw [dashed](x)--(d){};
  \draw [dashed](b)--(c){};
       
  \pgfsetlinewidth{1pt}
  \pgfsetplottension{.75}
  \pgfplothandlercurveto
  \pgfplotstreamstart
  \pgfplotstreampoint{\pgfpoint{-5cm}{0cm}}  
  \pgfplotstreampoint{\pgfpoint{-4cm}{-.2cm}}   
  \pgfplotstreampoint{\pgfpoint{-3cm}{0cm}}
  \pgfplotstreampoint{\pgfpoint{-2cm}{.1cm}}
  \pgfplotstreampoint{\pgfpoint{-1cm}{-.05cm}}
  \pgfplotstreampoint{\pgfpoint{0cm}{0cm}}
  \pgfplotstreampoint{\pgfpoint{1cm}{.2cm}}
  \pgfplotstreampoint{\pgfpoint{2cm}{0cm}}
  \pgfplotstreampoint{\pgfpoint{3cm}{.1cm}}
  \pgfplotstreampoint{\pgfpoint{4cm}{-.2cm}}
  \pgfplotstreampoint{\pgfpoint{5cm}{.4cm}}
  \pgfplotstreamend 
  \pgfusepath{stroke}
       
  \end{tikzpicture}
 \end{figure}

\begin{proof}
Since $\beta$ and $b$ are in the same class, there exists $\nn$ such that 
\[
\beta \subset \calN_{\kappa}(b, \nn)
\qquad\text{and}\qquad 
b \subset \calN_{\kappa}(\beta, \nn).
\]
Let $x, y$ be points in $X$ so that $d_X(x,y) \leq d_X(x, \beta)$. We need to find an 
upper-bound for $d_X(x', y')$, where $x' \in \pi_{\beta} (x), y' \in \pi_{\beta} (y)$. For the remainder of the proof, we use $x_{\beta}$ to denote a point in the set $\pi_{\beta} (x)$ and $y_{\beta}$ to denote a point in $\pi_{\beta}(y)$. The upper-bound certainly exists if $x \in \calN_{\kappa}(\beta, \nn)$. Thus assume $d(x, \beta) \geq \nn \kappa(x)$.

We claim that there is a point $z$ along the 
geodesic segment $[x, x_\beta]$ such that 
\[
d_X(x, z) \leq d_X(x, b)
\qquad\text{and}\qquad 
d_X(z, b) \leq 3 \nn \cdot \kappa(x).
\]
To see this, note that 
\begin{equation} \label{Eq:b-beta}
d_X(x, \beta) \leq d_X(x, x_b) + d_X(x_b, \beta) 
  \leq d_X(x, x_b) + \nn \cdot \kappa(x_b).
\end{equation}
Meanwhile, the projection of the segment $[\go, x]$ to the geodesic $b$ is the segment $[\go, x_{b}]$. Since projections in \CAT spaces are Lipschitz, $\Norm{x_b} \leq \Norm{x}$. Thus $\kappa( x_{b} ) \leq \kappa (x).$ Therefore, if we choose $z$ to have 
distance $\nn \cdot \kappa(x)$ from $x_\beta$, we are sure to have
$d_X(z,x) \leq d_X(x, b)$.  Also, 
\begin{equation} \label{Eq:b}
d_X(z, b) \leq d_X(z, z_\beta) + d_X(z_\beta, b) 
  \leq \nn \cdot \kappa(x) + \nn \cdot \kappa(x_\beta). 
\end{equation}
Now, note that
\[
\Norm{x_\beta} \leq \Norm{x} + d_X(x, x_\beta) \leq 2 \Norm{x}. 
\] 
Hence, $\kappa({x_\beta}) \leq 2 \kappa(x)$. This and \eqnref{Eq:b}
imply the second assertion in the claim. 

Now, since $b$ is contracting, 
\[
d_X(z_b, x_b) \leq \cc_b \cdot \kappa(x). 
\]
Therefore, 
\begin{align}
d_X(x_b, x_\beta) 
 &\leq d_X(x_b, z) + d_X(z, x_\beta) \notag \\
 &\leq 3\nn \cdot \kappa(x) + \nn \cdot \kappa(x)
    =4 \nn \cdot \kappa(x). \label{Eq:4n}
\end{align}
                      
Now let $x, y \in X$ be such that $d_X(x, y) \leq d_X(x, \beta)$. Note that,  
\[
\Norm{y} \leq \Norm{x} + d_X(x,y)
 \leq \Norm{x} + d_X(x,\beta)  \leq 2 \Norm x. 
\]
Hence, applying \eqnref{Eq:4n}
to $x$ and $y$ we have 
\[
d_X(x_b, x_\beta) \leq 4 \nn \cdot \kappa(x)
\qquad\text{and}\qquad 
d_X(y_b, y_\beta) \leq 4 \nn \cdot \kappa(y) \leq 8 \nn \cdot \kappa(x). 
\]
Also, from \eqnref{Eq:b-beta}, we have 
\[
d_X(x, b) \geq d_X(x, \beta) - \nn \cdot \kappa(x_b)
  \geq d_X(x, y) - \nn \cdot \kappa(x). 
\]
Therefore, there is a point $y' \in [x,y]$ with 
\[
d_X(x, y') \leq d_X(x, b) 
\qquad\text{and}\qquad 
d(y, y') \leq \nn \cdot \kappa(x). 
\]
Thus, since closest-point projection is distance non-increasing,
\begin{align*}
d_X(x_\beta, y_\beta)
 & \leq d_X(x_b, y_b) + d_X(x_b, x_\beta) + d_X(y_b, y_\beta)\\
 & \leq d_X(x_b, y'_b) + d_X(y'_b, y_b)
     + 12 \cdot \nn \cdot \kappa( x)\\
 & \leq \cc_b \cdot \kappa(x) + \nn \cdot \kappa( x) + 
 12 \cdot \nn \cdot \kappa( x)\\
 & \leq (\cc_b + 13 \nn) \cdot\kappa( x). 
\end{align*} 
That is $\beta$ is $\kappa$--contracting with $\cc_\beta = (\cc_b + 13 \nn)$.
\end{proof}

We now prove that every $\kappa$-contracting set is also strongly $\kappa$-Morse. This
in particular proves the implication $(2) \Longrightarrow (3)$.

\begin{theorem}[Contracting implies strongly $\kappa$--Morse] \label{Thm:Strong}
Let $Z$ be a closed subspace that is $\kappa$--contracting. Then $Z$ is 
strongly $\kappa$-Morse. 
\end{theorem} 

\begin{proof}
Let $\cc_Z$ be the contracting constants for $Z$. Set 
\begin{equation} \label{Eq:Conditions}
\mm_0 = \qq\big( (\qq+1) + \qq \cc_Z + \sQ\big)
\qquad\text{and}\qquad
\mm_1= q\cc_Z +\qq + \sQ. 
\end{equation} 

\begin{claim*}
Consider a time interval $[s,s']$ during which $\eta$ is outside of 
$\calN_{\kappa}(Z, \mm_0)$. Then 
\begin{equation}  \label{Eq:End-Point} 
|s'-s| \leq \mm_1 \big( d_X\big(\eta(s), Z\big)+ d_X\big(\eta(s'), Z\big) \big). 
\end{equation} 
\end{claim*} 

\begin{proof}[Proof of the Claim] \renewcommand{\qedsymbol}{$\blacksquare$}
Let 
\[
s = t_{0} < t_{1} < t_{2}< \dots < t_{\ell} = s'
\]
be a sequence of times such that, for $i=0, \dots, {\ell-2}$, we have $t_{i+1}$ is a first time after $t_i$ where 
\[
d_X\big(\eta(t_i), \eta(t_{i+1}) \big) = d_X(\eta(t_i), Z)
\quad\text{and}\quad 
d_X\big(\eta(t_{\ell-1}), \eta(t_\ell) \big) \leq d_X(\eta(t_{\ell-1}), Z).
\]
To simplify the notation, we define
\[
\eta_i = \eta(t_i), \qquad \rr_i= \Norm{\eta(t_i)}, \qquad
\dd_i= d_X(\eta_i, Z)
\qquad\text{and}\qquad
\pi_i = (\eta_i)_Z.
\]
Recall that $(\eta_i)_Z$ is the set of the closest points in $Z$ to $\eta_i$. 
Note that, by assumption
\[
\dd_i \geq \mm_0 \cdot \kappa(\rr_i ).
\]

Since $Z$ is contracting, 
\[
d_X\big( \pi_0 , \pi_\ell \big) \leq 
\sum_{i=0}^{\ell-1} \diam_X \big( \pi_i  , \pi_{i+1}  \big) 
\leq \sum_{i=0}^{\ell-1} \cc_Z \cdot \kappa(\rr_i). 
\]
But $\eta$ is a $(\qq, \sQ)$--quasi-geodesic, hence, 
\begin{align} 
|s'-s| & \leq \qq \, d_X(\eta_0, \eta_\ell) + \sQ \notag \\
  & \leq \qq \left( \dd_0 + d_X\big( \pi_0 , \pi_\ell \big)  + \dd_\ell \right) + \sQ 
  \label{Eq:Upper} \\
  & \leq  \qq \, \cc_Z  \left( \sum_{i=1}^{\ell-1}  \kappa(\rr_i) \right) +  
    \qq \, (\dd_0 + \dd_\ell) + \sQ.  \notag
\end{align}
On the other hand, 
\[
|s'-s| = \sum_{i=0}^{\ell-1} |t_{i+1}- t_i| \ge 
\sum_{i=0}^{\ell-1} \left( \frac 1\qq d_X(\eta_i, \eta_{i+1}) -\sQ \right). 
\]
But, for $i=0, \dots, (\ell-2)$ we have $d_X(\eta_i, \eta_{i+1}) = \dd_i$ and 
\[
d_X(\eta_{\ell-1}, \eta_\ell) \geq \dd_{\ell-1} - \dd_\ell.
\]
Hence, 
\begin{align}\label{Eq:Lower}
|s'-s| & \geq 
  \sum_{i=0}^{\ell-1} \left( \frac{\mm_0}\qq \cdot \kappa(\rr_i) -\sQ \right) 
   -  \frac{\dd_\ell}\qq.
\end{align}
Combining \eqnref{Eq:Upper} and \eqnref{Eq:Lower} we get 
\[
\qq \, (\dd_0 + \dd_\ell) + \sQ + \frac{\dd_\ell}\qq \geq  
  \left( \frac{\mm_0}{\qq} - \qq \, \cc_Z - \sQ \right)  \sum_{i=0}^{\ell-1} \kappa(\rr_i).
\]
But, from \eqref{Eq:Conditions}, we have $\sQ \leq \mm_0 \leq \rr_0$ and 
\[
\qq \, (\dd_0 + \dd_\ell) + \sQ + \frac{\dd_\ell}\qq  \leq (\qq + 1)(\dd_0 + \dd_\ell) 
\qquad\text{and}\quad
\left(\frac{\mm_0}{\qq} - \qq \, \cc_Z - \sQ\right)  \geq  (\qq+1). 
\]
Which implies
\begin{equation*}
\sum_{i=0}^{\ell-1} \kappa(\rr_i) \leq \dd_0 + \dd_\ell
\qquad\text{and by \eqnref{Eq:Upper}}\qquad
|s'-s| \leq \mm_1 (\dd_0 + \dd_\ell). 
\end{equation*}
This proves the claim. 
\end{proof} 

\begin{figure}[h!]
\begin{tikzpicture}[scale=0.9]
 \tikzstyle{vertex} =[circle,draw,fill=black,thick, inner sep=0pt,minimum size=.5 mm]
[thick, 
    scale=1,
    vertex/.style={circle,draw,fill=black,thick,
                   inner sep=0pt,minimum size= .5 mm},
                  
      trans/.style={thick,->, shorten >=6pt,shorten <=6pt,>=stealth},
   ]

  \node[vertex] (o) at (0,0)[label=left:$\go$] {}; 
  \node(a) at (11,0)[label=right:$Z$] {}; 
  
  \draw (o)--(a){};
  \draw [dashed] (0, 0.4) to [bend left = 10] (10,2){};
  \node at (8.5,2.3){$\mm_{0} \cdot \kappa(\sR)$};
  \draw [dashed] (0, 1.4) to [bend left = 10] (5,3){};
  \node at (3.2,3.2){$\mm_{Z}(\qq, \sQ) \cdot \kappa(\rr)$};
     
 \draw [dashed] (10, 4.5) to (10,0) {};
 \node at (10,0)[label=below:$\sR$] {};
 \draw [dashed] (5, 4.5) to (5,0){};
 \node at (5,0)[label=below:$\rr$] {};
        
  \draw [decorate,decoration={brace,amplitude=10pt},xshift=0pt,yshift=0pt]
  (10,3.3) -- (10,0)  node [thick, black,midway,xshift=0pt,yshift=0pt] {};       
 \node at (11.3,1.7) {$\nn \cdot \kappa'(\sR)$};
        
 \node[vertex] at (5.47,1.8)[label=below right:$t_{\rm last}$] {}; 
 \node[vertex] at (2.04, 1.05)[label=below right:$s$] {}; 
 \node[vertex] at (4.34,1.58)[label=above right:$s'$] {}; 
 
  \pgfsetlinewidth{1pt}
  \pgfsetplottension{.75}
  \pgfplothandlercurveto
  \pgfplotstreamstart
  \pgfplotstreampoint{\pgfpoint{0cm}{0cm}}  
  \pgfplotstreampoint{\pgfpoint{1cm}{-.6cm}}   
  \pgfplotstreampoint{\pgfpoint{2cm}{1cm}}
  \pgfplotstreampoint{\pgfpoint{3cm}{2.4cm}}
  \pgfplotstreampoint{\pgfpoint{4cm}{2cm}}
  \pgfplotstreampoint{\pgfpoint{5cm}{1cm}}
  \pgfplotstreampoint{\pgfpoint{6cm}{3cm}}
  \pgfplotstreampoint{\pgfpoint{7cm}{3.2cm}}
  \pgfplotstreampoint{\pgfpoint{8cm}{3.6cm}}
  \pgfplotstreampoint{\pgfpoint{9cm}{3.1cm}}
  \pgfplotstreampoint{\pgfpoint{10cm}{3.3cm}}
  \pgfplotstreamend
  \pgfusepath{stroke} 
  \end{tikzpicture}
  
\caption{The concatenation of a geodesic segment $[x,y]$ and the 
quasi-geodesic segment $[y,z_1]$ is a quasi-geodesic.}
\label{Fig:Strong} 
\end{figure}
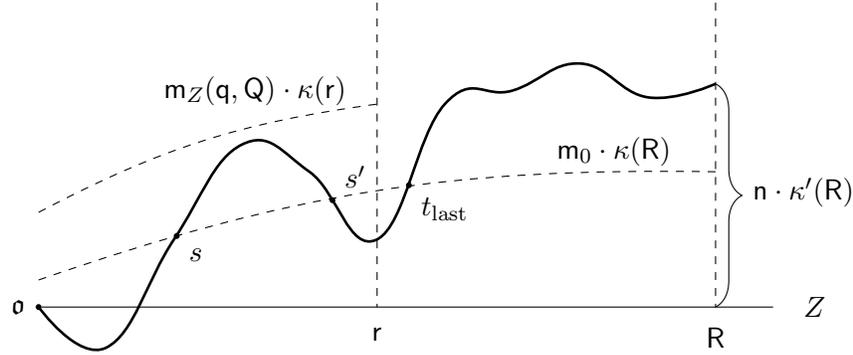

Now let $t_{\rm last}$ be the last time $\eta$ is in $\calN_{\kappa}(Z, \mm_0)$ and 
consider the quasi-geodesic path $\eta[t_{\rm last}, t_\sR]$. Since this path is 
outside of $\calN_\kappa(Z, \mm_0)$, we can use \eqnref{Eq:End-Point} to get 
\[
|\sR-t_{\rm last} | \leq \mm_1 \big( d_X(\eta(t_{\rm last}), Z)+ d_X(\eta(t_\sR), Z) \big). 
\]
But 
\[
d_X(\eta(t_{\rm last}), Z) \leq \mm_0 \cdot \kappa(\eta(t_{\rm last})) 
  \leq \mm_0 \cdot \kappa(\sR)
  \qquad\text{and}\qquad
 d_X(\eta(t_\sR), Z) \leq  \nn \cdot \kappa'(\sR).
\]
Therefore,
\[
|\sR-t_{\rm last} | \leq \mm_0 \cdot \mm_1\cdot \kappa(\sR)+ \nn \cdot \kappa'(\sR)
\]
Since $\mm_0$, $\mm_1$ and $\nn$ are given and $\kappa$ and $\kappa'$ are 
sublinear, there is a value of $\sR$ depending on $\mm_0$, $\mm_1$, $\nn$, $\rr$, 
$\kappa$ and $\kappa'$ such that 
\begin{equation} \label{Eq:R-defined}
 \mm_0 \cdot \mm_1 \cdot \kappa(\sR)+ \nn \cdot \kappa'(\sR) \leq \sR - \rr.
\end{equation}
For any such $\sR$, we then have 
\[t_{\rm last} \geq \rr.\] 

We show that $\eta[0, t_{\rm last}]$ stays in a larger $\kappa$--neighborhood
of $Z$. Consider any other subinterval $[s,s'] \subset [0, t_{\rm last}]$ where 
$\eta$ exits $\calN_\kappa(Z, \mm_0)$. By taking $[s,s']$ as large as possible, 
we can assume $\eta(s), \eta(s') \in \calN_\kappa(Z, \mm_0)$. 
In this case, 
\[
d_X(\eta(s), Z) \leq \mm_0 \cdot \kappa(\eta(s)) 
\qquad\text{and}\qquad
d_X(\eta(s'), Z) \leq \mm_0 \cdot \kappa(\eta(s')).  
\]
again applying \eqnref{Eq:End-Point}, we get 
\[
|s'-s| \leq \mm_0 \, \mm_1 \cdot \big(\kappa(\eta(s)) + \kappa(\eta(s'))\big).
\]
and thus 
\begin{align*}
\Big| \Norm{\eta(s')} - \Norm{\eta(s)} \Big|
 &\leq \qq \, \mm_0 \, \mm_1 \cdot \big(\kappa(\eta(s)) + \kappa(\eta(s'))\big) + \sQ\\
 &\leq (\qq \, \mm_0 \, \mm_1 +\sQ)\cdot \big(\kappa(\eta(s)) + \kappa(\eta(s'))\big)\\
 &\leq 2 (\qq \, \mm_0 \, \mm_1 +\sQ)\cdot \max \big(\kappa(\eta(s)) ,\kappa(\eta(s'))\big).
\end{align*}
Applying \lemref{Lem:sublinear-estimate}, we have that 
\[
\kappa(\eta(s')) \leq \mm_2 \cdot \kappa(\eta(s)),
\]
for some $\mm_2$ depending on $\cc_Z$, $\qq$, $\sQ$ and $\kappa$. 
Therefore, for any $t \in [s, s']$
\begin{equation} \label{Eq:t-s}
|t-s| \leq \mm_0 \, \mm_1 (1+ \mm_2) \cdot \kappa(\eta(s)).
\end{equation}
As before, this implies, 
\[
\Big| \Norm{\eta(t)} - \Norm{\eta(s)} \Big|
 \leq \qq \, \mm_0 \, \mm_1 (1+ \mm_2) \cdot \kappa(\eta(s)) + \sQ
 \leq (\qq \, \mm_0 \, \mm_1 (1+ \mm_2) +\sQ)\cdot \kappa(\eta(s)).
\]
Applying \lemref{Lem:sublinear-estimate} again, we have 
\begin{equation} \label{Eq:m3}
\kappa(\eta(s)) \leq \mm_3 \cdot \kappa(\eta(t)),
\end{equation}
for some $\mm_3$ depending on $\cc_Z$, $\qq$, $\sQ$ and $\kappa$. 

Now, for any $t \in [s, s']$ we have
\begin{align*}
d_X(\eta(t), Z)& \leq d_X(\eta(t), \eta(s)) + \rr_0 \\
& \leq \qq |t-s| + \sQ + \mm_0 \cdot \kappa(\eta(s))\\
& \leq (\qq \mm_0\, \mm_1 (1+\mm_2) + \sQ + \mm_0) \cdot \kappa(\eta(s))
  \tag{\eqnref{Eq:t-s}}\\
& \leq (\qq \mm_0\, \mm_1 (1+\mm_2) + \sQ + \mm_0) \, \mm_3 \cdot \kappa(\eta(t)).  
  \tag{\eqnref{Eq:m3}}
\end{align*}
Now setting 
\begin{equation}\label{mz}
\mm_Z(\qq, \sQ) = (\qq \mm_0\, \mm_1 (1+\mm_2) + \sQ + \mm_0) \, \mm_3
\end{equation}
we have that
\[
\eta[s, s'] \subset \calN_{\kappa}\big(Z, \mm_Z(\qq, \sQ)\big)
\qquad\text{and hence}\qquad 
\eta[0, t_{\rm last}] \subset \calN_{\kappa}\big(Z, \mm_Z(\qq, \sQ)\big).
\]
The $\sR$ we have chosen depends on the value of $\qq$ and $\sQ$. However, 
the assumption that $\mm_Z(\qq, \sQ)$ is small compared to $\rr$ (see \eqnref{Eq:Small}) 
gives an upper-bound for the values of $\qq$ and $\sQ$. Hence, we can choose $\sR$ to be the 
radius associated to the largest possible value 
for $\qq$ and the largest possible value for $\sQ$. This finishes the proof. 

Note that, the assumption that $\mm_Z(\qq, \sQ)$ is small compared to $\rr$ is not 
really needed here and any upper-bound on the values of $\qq$ and $\sQ$ would
suffice. But this is the assumption we will have later on and hence it is natural to state the
theorem this way. 
\end{proof}

\begin{remark}
As can be seen in \eqnref{Eq:R-defined}, the value of $\sR$ can be calculated explicitly, namely, 
$\sR$ depends on $\kappa$, $\kappa'$, $\qq$, $\sQ$, $\cc_Z$ and $\nn$. That is, all we need to 
know from the set $Z$ is the function $\kappa$ and the value of the constant $\cc_Z$. 
\end{remark}

We now show when $Z$ is the image of a quasi-geodesics ray, the notion of strongly $\kappa$-Morse 
is indeed stronger than the notion of weakly $\kappa$-Morse hence proving $(3) \Longrightarrow (5)$. 

\begin{lemma}[Strongly $\kappa$--Morse implies weakly $\kappa$-Morse] \label{Lem:Strong-Weak}
Let $Z$ be the image of a $(\qq_0, \sQ_0)$--quasi-geodesic ray $\beta$. If $Z$ is strongly $\kappa$--Morse then 
$Z$ is weakly $\kappa$--Morse.
\end{lemma} 

\begin{figure}[h!]
\begin{tikzpicture}
 \tikzstyle{vertex} =[circle,draw,fill=black,thick, inner sep=0pt,minimum size=.5 mm]
 
[thick, 
    scale=1,
    vertex/.style={circle,draw,fill=black,thick,
                   inner sep=0pt,minimum size= .5 mm},
                  
      trans/.style={thick,->, shorten >=6pt,shorten <=6pt,>=stealth},
   ]
   
    \node[vertex] (a) at (0,0) [label=$\go$]  {};
     \node[vertex] (b) at (1.13, 0.43) [label=above left:$\go_{\gamma}$]  {};
     \node[vertex]  at (3,0) [label=above left: $\gamma(s)$]  {};
     \node[vertex]  at (3,1.5) [label=$\gamma(u_{s})$]  {};
      \draw [dashed](3,1.5)--(3,0);
        \node[vertex]  at (6,0) [label=above right: $\gamma(t)$]  {};
        \node[vertex]  at (6,1.5) [label=$\gamma(u_{t})$]  {};
         \draw [dashed](6,1.5)--(6,0);
    \node (b1) at (13, 0) {};
       \node [vertex](c) at (12, 0) [label=$x$]   {};
        \node[vertex] (d) at (7.88, 0.43) [label=above right: $x_{\gamma}$]  {};
 \draw [dashed](c)--(d);
  \draw [thick ](a)--(b1);
    \draw [dashed](a)--(b);

    \draw[red, thick] (3,0) arc (330:90:1);
       \draw[red, thick] (6,0) arc (210:450:1);
    \draw [red, thick] (2,1.5) -- (7, 1.5);
       
    \node at (13.2, 0) {$\beta$};
    \node at (5, 1.2) {$\gamma_{1}$};    
     \node at (2, -0.7) {$\gamma_{0}$};    
      \node at (7, -0.7) {$\gamma_{2}$};    
\end{tikzpicture}
\caption{The quasi-geodesic segment $\gamma = \gamma_{0}\cup \gamma_{1} \cup \gamma_{2}$ is in a sublinear neighbourhood of $\beta$.}
\label{Fig:YoYo}
\end{figure}

\begin{proof}
Let $\gamma\from [s,t] \to X$ be a $(\qq, \sQ)$--quasi-geodesic with end points in $Z$. 
Assume $\gamma(s) = \beta(s')$ and $\gamma(t) = \beta(t')$. Let $\rr = \max \norm{\gamma(u)}$ for $u \in [s,t]$
and let $x \in Z$ be a point such that $\sR=\norm{x}$ is much larger than $\rr$ (to be determined later). 
Consider the points $x_\gamma$ (a point in the projection of $x$ to $\gamma$) and $\go_\gamma$
(a point in the projection of $\go$ to $\gamma$) and write $\gamma$ as a concatenation of 
\[
\gamma_0 = [\gamma(s), \go_\gamma]_\gamma, \qquad 
\gamma_1=[\go_\gamma, x_\gamma]_\gamma \qquad\text{and} \qquad
\gamma_2 = [x_\gamma, \gamma(t)]_\gamma.
\]
Since the projection map is coarsely Lipschitz, the shadow of $\gamma_1$  to $\beta$ coarsely 
covers $[\beta(s'), \beta(t')]_\beta$. That is, there is a constant $\sL >0$ depending only on $\sQ_0$ and $\qq_0$ 
and points $\gamma(u_s)$ and $\gamma(u_t)$ along $\gamma_1$ such that $\gamma(u_s)$ projects 
$\sL$--close to $\beta(s')$ and $\gamma(u_t)$ projects $\sL$--close to $\beta(t')$. 

Now, applying \lemref{Lem:surgery} twice, we have that the path 
\[
\gamma'=[\go, \go_\gamma]\cup [\go_\gamma, x_\gamma]_\gamma\cup[x_\gamma, x]
\]
is a $(81 \qq, \sQ)$--quasi-geodesic. For $\sR$ large enough, the condition of $\kappa$-strongly Morse implies 
that $\gamma'$ is contained in the $\kappa$--neighborhood $\calN_\kappa(Z, \mm_\beta(81 \qq, \sQ))$ 
of $\beta$. In particular, 
\[
d_X(\beta(s'), \gamma(u_s)) \leq  \mm_\beta(81 \qq, \sQ) \cdot \kappa( \gamma(u_s))
\quad\text{and}\quad  
d_X(\beta(s'), \gamma(u_s)) \leq \mm_\beta(81 \qq, \sQ) \cdot \kappa( \gamma(u_t)). 
\]
But $\gamma_0$ and $\gamma_2$ are $(\qq, \sQ)$--quasi-geodesics. Therefore, 
there is a constant $\sD$ depending on $\beta$, $\qq$ and $\sQ$ such that 
\[
|u_s-s| \leq \sD \cdot \kappa(\gamma(u_s)) 
\qquad\text{and}\qquad 
|t-u_t| \leq \sD \cdot \kappa(\gamma(u_s)). 
\]
Thus, $\gamma_0$ and $\gamma_1$ are not too long and they are entirely contained in 
a $\kappa$--neighborhood of $\beta$. And we have already shown that $\gamma_1$ which is a subsegment of 
$\gamma'$ is contained in a $\kappa$--neighborhood of $\beta$. Therefore, 
$\gamma$ itself is contained in a $\kappa$ neighborhood of $\beta$. 
\end{proof}

This concludes the proof of \thmref{Thm:TFAE}. We finish with a couple of corollaries of \thmref{Thm:Strong}. 
Recall that, 
a $(\qq, \sQ)$--quasi-geodesic $\beta$ is in $\bfb$ if $\beta$ 
is contained in some $(\kappa, \nn)$--neighborhood of the geodesic ray $b\in \bfb$. 
A priori, it might be possible for the constant $\nn$ to go to infinity even 
as $\qq$ and $\sQ$ remain bounded. However, this does not happen. 

\begin{corollary} \label{Cor:m_b}
Let $b$ be a $\kappa$--contracting geodesic ray and let $\mm_b$ be as in 
\thmref{Thm:Strong} (where $Z$ is the image of $b$). Then, for any 
$(\qq, \sQ)$--quasi-geodesic $\beta \in [b]$, we have 
\[
\beta \subset \calN_{\kappa} \big(b, \mm_b(\qq, \sQ)\big)
\qquad\text{and}\qquad
b \subset \calN_{\kappa} \big(\beta, 2 \mm_b(\qq, \sQ)\big).
\]
\end{corollary}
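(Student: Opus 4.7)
The plan is to derive both inclusions from the strongly Morse property (\thmref{Thm:Strong}) together with continuity of closest-point projection in the \CAT setting. For the first inclusion, I would apply \thmref{Thm:Strong} with $Z$ the image of $b$ and $\eta = \beta$. Since $\beta \in [b]$, there is some $\nn>0$ with $\beta \subset \calN_\kappa(b, \nn)$, so the endpoint condition $d_X(\beta(t_\sR), b) \leq \nn \cdot \kappa(\sR)$ required by the theorem is automatic (taking $\kappa' = \kappa$). For any $\rr$ large enough that $\mm_b(\qq, \sQ)$ is small compared to $\rr$, the theorem then yields $\beta[0, t_\rr] \subset \calN_\kappa(b, \mm_b(\qq, \sQ))$, and letting $\rr \to \infty$ covers every point of $\beta$.

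For the second inclusion, I would use the fact that $t \mapsto \Norm{\beta(t)}$ is continuous with $\Norm{\beta(0)} = 0$ and $\Norm{\beta(t)} \to \infty$, so for each $s \geq 0$ there is a time $t_s$ with $\Norm{\beta(t_s)} = s$. Let $b(s') = \pi_b(\beta(t_s))$; by the first inclusion,
\[
d_X(\beta(t_s), b(s')) \leq \mm_b(\qq, \sQ) \cdot \kappa(s).
\]
Since the closest-point projection $\pi_b$ is $1$--Lipschitz (\lemref{Lem:CAT}) and fixes $\go$, we have $s' \leq s$; meanwhile the reverse triangle inequality $s = \Norm{\beta(t_s)} \leq s' + d_X(\beta(t_s), b(s'))$ gives $s - s' \leq \mm_b(\qq, \sQ) \cdot \kappa(s)$. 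Combining these two estimates,
\[
d_X(b(s), \beta) \leq d_X(b(s), b(s')) + d_X(b(s'), \beta(t_s)) \leq 2 \mm_b(\qq, \sQ) \cdot \kappa(b(s)),
\]
which is exactly the second inclusion; the factor $2$ arises naturally from the two estimates.

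The main obstacle is the mismatch in the first step: the hypothesis of \thmref{Thm:Strong} requires $\mm_b(\qq, \sQ)$ to be small compared to $\rr$, which fails for small $\rr$. I would handle this by noting that sublinearity of $\kappa$ guarantees $\mm_b(\qq, \sQ)$ is small compared to $\rr$ for all sufficiently large $\rr$, and any fixed point of $\beta$ lies in $\beta[0, t_\rr]$ for all such $\rr$; so the pointwise conclusion over arbitrarily large $\rr$ covers all of $\beta$ uniformly with the same constant $\mm_b(\qq, \sQ)$. Once this first inclusion is established, the second inclusion follows cleanly from the one-step projection argument above, so the bulk of the work really lies in invoking \thmref{Thm:Strong}.
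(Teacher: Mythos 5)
Your proof is correct and follows essentially the same route as the paper: both inclusions are obtained by feeding $\kappa' = \kappa$ into \thmref{Thm:Strong} (using $\beta \subset \calN_\kappa(b,\nn)$ for the endpoint hypothesis), and then projecting $\beta(t_\rr)$ to $b$ and using that the projection is $1$--Lipschitz and fixes $\go$ to get the second inclusion with the factor $2$. You are slightly more careful than the paper in explicitly noting that the ``$\mm_b(\qq,\sQ)$ small compared to $\rr$'' hypothesis of \thmref{Thm:Strong} holds for all sufficiently large $\rr$ by sublinearity, which is enough since every point of $\beta$ lies in $\beta[0,t_\rr]$ for such $\rr$.
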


\begin{proof} 
Since $\beta \in [b]$, there is a constant $\nn$ so that $\beta \subset \calN_{\kappa}(b, \nn)$. 
For every $\rr$, let $t_\rr$ be the first time when $\beta(t_\rr)$ has norm $\rr$. 
We have 
\[
d_X(\beta(t_\sR), b) \leq \nn\cdot \kappa(\sR)
\] 
for every $\sR$. Now \thmref{Thm:Strong} implies that 
\[
\beta[0, t_\rr] \subset  \calN_{\kappa}\big(b, \mm_b(\qq, \sQ)\big)
\]
for every $\rr$. This proves the first assertion. 

To see the second assertion, consider a point $b_\rr = b(\rr)$, let 
$\beta_\rr=\beta(t_\rr)$ and let $q= \pi_b(\beta_\rr)$. Then,
the first assertion implies 
\[
d_X(\beta_\rr, q) \leq \mm_b(\qq, \sQ) \cdot \kappa(\rr). 
\]
Hence, 
\begin{align*}
d_X(b_\rr, q) & \leq \rr - d_X(\go, q)\\
  & \leq \rr - \big(d_X(\go, \beta_\rr) - d_X(\beta_\rr, q) \big) 
      \leq \mm_b(\qq, \sQ) \cdot \kappa(\rr). 
\end{align*}
Therefore, 
\[
d_X(b_\rr,  \beta) \leq d_X(b_\rr, \beta_\rr) \leq d_X(b_\rr,q) + d_X(q, \beta_\rr)
\leq 2\mm_b(\qq, \sQ) \cdot \kappa(\rr),
\]
which implies $b \subset \calN_{\kappa}\big(\beta, 2 \mm_b(\qq, \sQ)\big)$. 
\end{proof} 

\begin{corollary} \label{Cor:m_beta}
If $\beta \in \bfb$ is a $(\qq, \sQ)$--quasi-geodesic, then the function
\[
\mm_\beta(\param, \param) \leq \mm_b(\param, \param) + 2 \mm_b(\qq, \sQ)
\]
is a Morse gauge for $\beta$. In particular, the Morse gauge depends only on
$\mm_b$, $\qq$ and $\sQ$ and not on the particular quasi-geodesic $\beta$. 
\end{corollary}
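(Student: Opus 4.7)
The plan is to combine the two inclusions from \corref{Cor:m_b} with the strongly Morse property of $b$ established in \thmref{Thm:Strong}. Given a $(\qq', \sQ')$--quasi-geodesic $\zeta \from [s,t] \to X$ with endpoints on $\beta$, I would show that every point on $\zeta$ lies within $(\mm_b(\qq', \sQ') + 2\mm_b(\qq, \sQ)) \cdot \kappa$ of $\beta$.

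First I would establish the intermediate inclusion $\zeta \subset \calN_\kappa(b, \mm_b(\qq', \sQ'))$. By \corref{Cor:m_b}, the endpoints $\zeta(s), \zeta(t) \in \beta$ are within $\mm_b(\qq, \sQ) \cdot \kappa$ of $b$, so they are sublinearly close to $b$ but may not lie on it. The proof of \thmref{Thm:Strong} in fact goes through verbatim in this setting: the key inequality \eqnref{Eq:End-Point} derived in the claim depends only on the $\kappa$--contracting property of $b$ together with the endpoint distances $d_X(\eta(s), Z)$ and $d_X(\eta(s'), Z)$, not on those endpoints actually lying on $Z$. Alternatively, one can form a concatenated path $\zeta^{\star} = [p, \zeta(s)] \cup \zeta \cup [\zeta(t), q]$, where $p = \pi_b(\zeta(s))$ and $q = \pi_b(\zeta(t))$, and verify via \lemref{concatenation} applied at each endpoint that $\zeta^{\star}$ is a quasi-geodesic with endpoints genuinely on $b$, so the remark after \thmref{Thm:Strong} applies directly.

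Second, I would convert the bound on $d_X(w,b)$ into a bound on $d_X(w,\beta)$ for $w \in \zeta$. Let $w' = \pi_b(w)$ be the closest-point projection of $w$ onto $b$. By the first step, $d_X(w, w') \leq \mm_b(\qq', \sQ') \cdot \kappa(w)$. Since closest-point projection onto the convex set $b$ is $1$--Lipschitz by \lemref{Lem:CAT} and fixes $\go \in b$, we have $\Norm{w'} \leq \Norm{w}$ and hence $\kappa(w') \leq \kappa(w)$ by monotonicity of $\kappa$. The second inclusion of \corref{Cor:m_b} then gives
\[
d_X(w', \beta) \leq 2\mm_b(\qq, \sQ) \cdot \kappa(w') \leq 2\mm_b(\qq, \sQ) \cdot \kappa(w),
\]
and the triangle inequality yields
\[
d_X(w, \beta) \leq d_X(w, w') + d_X(w', \beta) \leq \bigl(\mm_b(\qq', \sQ') + 2\mm_b(\qq, \sQ)\bigr) \cdot \kappa(w),
\]
which is the desired Morse gauge estimate for $\beta$.

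The main obstacle is the rigorous execution of the first step: transferring the strongly Morse estimate from $b$ to $\zeta$ when the endpoints of $\zeta$ lie only sublinearly close to $b$. The direct route is to observe that inequality \eqnref{Eq:End-Point} in the proof of \thmref{Thm:Strong} never uses that the endpoints lie on $Z$, so the same length accounting applies with the endpoint terms $\mm_b(\qq,\sQ)\cdot\kappa$ absorbed into the existing sublinear error; the concatenation fix via \lemref{concatenation} acts as a safety net, at the cost of slightly worse quasi-isometry constants that are uniformly controlled by $\qq, \sQ, \qq', \sQ'$ and therefore do not affect the stated form of the conclusion.
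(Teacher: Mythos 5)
Your second step — project $w \in \zeta$ to $b$ to get $w'$, bound $d_X(w,w')$ by the Morse gauge of $b$, bound $d_X(w',\beta)$ by the second half of \corref{Cor:m_b}, and add — is exactly the computation the paper performs. The difference lies entirely in step one, and it reveals that you and the paper are not quite proving the same statement. The paper takes the test object to be a $(\qq',\sQ')$--\emph{quasi-geodesic ray} $\beta' \in \bfb$; then the first part of \corref{Cor:m_b} gives $\beta' \subset \calN_\kappa(b, \mm_b(\qq',\sQ'))$ \emph{for free}, and the triangle inequality does the rest with no need for \thmref{Thm:Strong}. You take the test object to be a $(\qq',\sQ')$--quasi-geodesic \emph{segment} with endpoints on $\beta$, which is what \defref{Def:Morse} literally asks for, and this forces the extra work of your step one.

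Your step one as written has a concrete gap. \lemref{concatenation} requires that the junction point be a \emph{nearest} point of the quasi-geodesic to the off-geodesic point: it concatenates $[x,y]$ with $[y,z]_\beta$ where $y \in x_\beta$. In your proposed concatenation $\zeta^\star = [p,\zeta(s)] \cup \zeta \cup [\zeta(t),q]$, the point $p$ is the nearest point of $b$ to $\zeta(s)$, not the other way around; there is no reason for $\zeta(s)$ to be the nearest point of $\zeta$ to $p$, so the lemma does not apply at that junction. The correct fix — exactly the one used in the proof of \propref{Morsegeodesic} — is to set $z_s$ to be the nearest point of $\zeta$ to $p$, concatenate $[p,z_s] \cup [z_s,z_t]_\zeta \cup [z_t,q]$, and then separately argue that the discarded tails $[\zeta(s),z_s]_\zeta$, $[z_t,\zeta(t)]_\zeta$ are themselves $\kappa$-small. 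This repair also degrades the quasi-geodesic constants to $(9\qq',\sQ')$, which contradicts your claim that the worse constants ``do not affect the stated form of the conclusion'': you would obtain a bound of the form $\mm_b(9\qq',\sQ') + 2\mm_b(\qq,\sQ)$, not the exact expression $\mm_b(\qq',\sQ') + 2\mm_b(\qq,\sQ)$ in the corollary. The cleaner reading that makes the precise constant come out, and the one the paper's proof actually supports, is the ray formulation: that is what lets \corref{Cor:m_b} be applied directly without any concatenation or appeal to \thmref{Thm:Strong}.
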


\begin{proof}
Let $\beta' \in \bfb$ be a $(\qq', \sQ')$--quasi-geodesic. Let $\beta'_\rr$ be 
a point along $\beta'$ with norm $\rr$, let $p=\pi_b(\beta'_\rr)$ and let $q$ be the 
closest point in $\beta$ to $p$. Note that $\Norm p \leq \rr$. Hence, 
\begin{align*} 
d_X(\beta'_\rr, \beta) &\leq d_X(\beta'_\rr, p) + d_X(p, q) \\
  &\leq \mm_b(\qq', \sQ') \cdot \kappa(\rr) + 2 \mm_b(\qq, \sQ) \cdot \kappa(p)
  \leq \big( \mm_b(\qq', \sQ')+ 2 \mm_b(\qq, \sQ)\big) \cdot \kappa(\rr)
\end{align*} 
This finishes the proof. 
\end{proof}

\section{$\kappa$--Morse boundary} \label{Sec:Topology} 

Recall the definition of $\kappa$--fellow traveling (\defref{Def:Fellow-Travel}) which 
defines an equivalence relation on the set of all quasi-geodesic rays in $X$. Recall 
also that all geodesic rays and quasi-geodesic rays are assumed to start from the fixed 
base-point $\go$. 

\begin{definition}[$\kappa$--Morse boundary set] \label{Def:Boundary}
The $\kappa$--Morse boundary of $X$, $\partial_\kappa X$, is the set of 
$\kappa$--equivalence classes quasi-geodesic rays in $X$ that satisfy any one
of the equivalent properties given in \thmref{Thm:TFAE}. 
Since each class contains a unique geodesic which is $\kappa$--contracting 
(again, by \thmref{Thm:TFAE}) we could also define $\partial_\kappa X$ to be 
the set of $\kappa$--contracting geodesic rays in $X$. 
\end{definition}

We equip $\partial_\kappa X$ with a topology which is 
a coarse version of the visual topology. Roughly speaking, we think of a point 
$\bfa \in \partial_\kappa X$ as being in a small neighborhood of $\bfb \in \partial_\kappa X$ 
if, for some large radius $\rr$, every $(\qq, \sQ)$--quasi-geodesic $\alpha \in \bfa$,
where $\mm_b(\qq,\sQ)$ is small compared to the radius $\rr$, 
fellow travels the geodesic $b \in \bfb$ up to the radius $\rr$. 
As we shall see, this is strictly stronger than assuming that the geodesics $a \in \bfa$ 
and $b \in \bfb$ fellow travel each other for a long time. 

We introduce the following notations. Let $\beta$ be a $(\qq, \sQ)$--quasi-geodesic ray that is $\kappa$--Morse and let 
$\mm_\beta$ be the associated Morse gauge functions as in \thmref{Thm:Strong}. 
For $\rr>0$, let $t_\rr$ be the first time where $\Norm{\beta(t)}=\rr$ and define:
\[
\beta_\rr = \beta(t_\rr)
\qquad\text{and}\qquad
\beta|_{\rr} = \beta{[0,t_\rr]}
\] 

which we consider as a subset of $X$.
\begin{definition}[neighbourhoods]\label{neighbourhoods}
 Let $\bfb \in \partial_\kappa X$ and $b \in \bfb$ 
be the unique geodesic in the class $\bfb$. Define $\calU_\kappa(\bfb, \rr)$ to be the set 
of points $\bfa \in \partial_\kappa X$ such that, for any $(\qq, \sQ)$--quasi-geodesic 
$\alpha \in \bfa$ where $\mm_b(\qq, \sQ)$ is small compared to $\rr$ 
(see \eqnref{Eq:Small}) we have 
\[
\alpha|_{\rr} \subset \calN_{\kappa}\big(b, \mm_b(\qq, \sQ)\big). 
\]

\end{definition}

\subsection{Neighborhood system} 
In this sub-section we show that the sets $\calU_{\kappa}(\gamma, \rr)$ generate a neighborhood 
system which can be used to define a topology for $\pka X$.
We start with a technical lemma.

\begin{lemma}\label{Lem:surgery}
Let  $X$ be a proper, complete metric space. Let $b$ be a geodesic ray and $\gamma$ be a $(\qq, \sQ)$--quasi-geodesic ray. 
For $\rr>0$, assume that $d_X(b_\rr, \gamma)\leq \rr/2$. Then, there exists a
$(9\qq,\sQ)$--quasi-geodesic $\gamma'$ so that 
\[
\gamma' \in [b], \qquad\text{and}\qquad \gamma|_{\rr/2} = \gamma'|_{\rr/2}. 
\]
\end{lemma}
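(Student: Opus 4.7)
\emph{Proof plan.} The construction I have in mind is a direct surgery: follow $\gamma$ out to a well-chosen point $q$, jump across to $b_\rr$, then continue along $b$. Concretely, let $q\in\gamma$ be a nearest point of $\gamma$ to $b_\rr$, so $d_X(q,b_\rr)\leq \rr/2$ by hypothesis, and therefore $\Norm{q}\geq \Norm{b_\rr}-d_X(q,b_\rr)\geq \rr/2$. Writing $t_q$ for the parameter of $q$ along $\gamma$, set
\[
\gamma' \;:=\; \gamma\big|_{[0,\,t_q]}\;\cup\;[q,b_\rr]\;\cup\;b\big|_{[\rr,\infty)}.
\]
Because $\Norm{q}\geq \rr/2$, the initial arc of $\gamma'$ up to its first crossing of the sphere of radius $\rr/2$ is identical to that of $\gamma$, which is exactly the coincidence condition $\gamma'|_{\rr/2}=\gamma|_{\rr/2}$.

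To verify that $\gamma'$ is a $(9\qq,\sQ)$-quasi-geodesic, I would invoke \lemref{concatenation} twice. A first application with $x=b_\rr$, $\beta=\gamma$, $y=q=\pi_\gamma(b_\rr)$ and $z=\go$ shows that $\beta_1:=\gamma|_{[0,t_q]}\cup[q,b_\rr]$ is a $(3\qq,\sQ)$-quasi-geodesic from $\go$ to $b_\rr$. For the second application, I take an arbitrary far point $b(T)$ with $T>\rr$, take $\beta=\beta_1$, and aim to conclude that appending $b|_{[\rr,T]}$ onto $\beta_1$ produces a $(9\qq,\sQ)$-quasi-geodesic. The key step, and the one I expect to be the main technical obstacle, is verifying that $b_\rr$ realizes the nearest-point projection of $b(T)$ onto $\beta_1$. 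I would attack this via the facts that $\pi_b$ is $1$-Lipschitz and fixes $\go$ (so $\Norm{\pi_b(w)}\leq \Norm{w}$ for every $w\in X$), together with the CAT(0) convexity of the distance function along the short leg $[q,b_\rr]$; failing a clean global argument, a case split on the location of $x_1\in\beta_1$ relative to $b_\rr$, combined with the quasi-geodesic inequality for $\beta_1$ and the triangle inequality through $b_\rr$, should still yield the stated $(9\qq,\sQ)$ constants.

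Finally, to establish $\gamma'\in[b]$, note that the tail of $\gamma'$ past $b_\rr$ lies on $b$ itself, so $\kappa$-fellow traveling is automatic there. The prefix $\beta_1$ is a $(3\qq,\sQ)$-quasi-geodesic with both endpoints on $b$; in the context of this section $b\in\partial_\kappa X$ is $\kappa$-Morse (equivalently $\kappa$-contracting, by \thmref{Thm:TFAE}), so \thmref{Thm:Strong} applies and gives
\[
\beta_1 \;\subset\; \calN_{\kappa}\bigl(b,\;\mm_b(3\qq,\sQ)\bigr).
\]
The reverse neighborhood inclusion $b|_{[0,\rr]}\subset\calN_{\kappa}(\gamma',\cdot)$ is then supplied by \corref{Cor:m_b} applied to $\beta_1$. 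Together these show that $\gamma'$ and $b$ $\kappa$-fellow travel, so $\gamma'\in[b]$, completing the proof.
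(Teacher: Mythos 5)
Your construction has a genuine gap at exactly the step you flag as the ``main technical obstacle,'' and the proposed workarounds (Lipschitz projection, convexity, case split) cannot rescue it: the claim that $b_\rr$ realizes the nearest-point projection of $b(T)$ onto $\beta_1=\gamma|_{[0,t_q]}\cup[q,b_\rr]$ is simply false in general. The quasi-geodesic $\gamma$ is permitted to wander far outside the ball $B(\go,\rr)$ before returning to the point $q$ that is nearest to $b_\rr$. For instance, for $\qq$ large enough $\gamma$ may shadow $b$ closely out to some norm $M\approx\qq^2\rr$, then double back to $q$ near $b_\rr$; the hypothesis $d_X(b_\rr,\gamma)\le\rr/2$ is still satisfied. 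In that scenario, for $T$ between $\rr$ and $M$ the nearest point of $\beta_1$ to $b(T)$ lies on the outbound arc of $\gamma$, not at $b_\rr$, and the concatenation $\gamma'=\beta_1\cup b|_{[\rr,\infty)}$ fails the lower quasi-geodesic inequality: a point $p\in\gamma$ near $b(M)$ and the point $b(M)$ itself are within bounded distance in $X$, yet separated by a parameter interval of order $\rr$ along $\gamma'$.

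The paper sidesteps this by inflating the radius before making the jump to $b$: it chooses $\sR$ large enough that the entire prefix $[\go,q]_\gamma$ lies inside $B(\go,\sR)$, takes $q'$ to be the point of $[\go,q]_\gamma$ nearest to $b_\sR$, and forms $\zeta=[\go,q']_\gamma\cup[q',b_\sR]$. This guarantees $\zeta\subset B(\go,\sR)$, so for any $t>\sR$ one has $d(b(t),z)\ge t-\Norm{z}\ge t-\sR=d(b(t),b_\sR)$ for every $z\in\zeta$; hence $b_\sR$ is a nearest point of $\zeta$ to every point of $b|_{[\sR,\infty)}$ and \lemref{concatenation} applies cleanly. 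The lower bound $\Norm{q'}\ge\rr/2$ (obtained by the chain $\Norm{q'}\ge\sR-d(b_\sR,q)\ge\sR-(\sR-\rr)-\rr/2$) then gives the coincidence $\gamma|_{\rr/2}=\gamma'|_{\rr/2}$, matching your first observation. Separately, your argument for $\gamma'\in[b]$ invokes \thmref{Thm:Strong} and \corref{Cor:m_b}, which require $b$ to be $\kappa$-Morse --- but the lemma assumes only that $b$ is a geodesic ray. No such machinery is needed: $\gamma'$ coincides with $b$ beyond $b_\sR$, and on the bounded prefix both curves lie in $B(\go,\sR)$, so (using $\kappa\ge1$) each lies in a $(\kappa,\sR)$-neighborhood of the other.
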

\begin{proof}
Let $q$ be a point in $\gamma$ that is closest to $b_{\rr}$ and let $\sR > 0$ be such that  the ball of radius $\sR$ centered at $\go$ contains $[\go, q]_{\gamma}$. Now let $q' $ be the point in $[0, q]_{\gamma}$ closest to $b_{\sR}$. Then
\begin{align*}
\Norm {q'} &\geq \Norm{b_\sR} - d_{X}(b_{\sR}, q')\\   
               & \geq \sR - d_{X}(b_{\sR}, q)\\   
               & \geq \sR - \Big( d_{X}(b_{\sR}, b_{\rr}) + d_{X}(b_{\rr}, q)\Big)\\ 
               & \geq \sR - (\sR - \rr) - \frac \rr2 = \frac \rr 2
 \end{align*}

Applying \lemref{concatenation}, we have a 
$(3\qq, \sQ)$--quasi-geodesic 
segment
\[
\zeta = [\go, q']_\gamma \cup [q', b_{\sR}].
\]
Furthermore, by construction, $\Norm {q'} \leq \sR = \Norm {b_{\sR}}$. Therefore, the 
projection of any point on the geodesic $b[\sR, \infty)$ to $\zeta$ is the point 
$b_{\sR}$. Applying \lemref{concatenation} again we have that the concatenation 
\[
\gamma'=\zeta \cup b[\sR, \infty)
\]
is a $(9\qq, \sQ)$--quasi-geodesic ray.
\begin{figure}[H]
\begin{tikzpicture}[scale=0.6]
 \tikzstyle{vertex} =[circle,draw,fill=black,thick, inner sep=0pt,minimum size=.5 mm]
 
[thick, 
    scale=1,
    vertex/.style={circle,draw,fill=black,thick,
                   inner sep=0pt,minimum size= .5 mm},
                  
      trans/.style={thick,->, shorten >=6pt,shorten <=6pt,>=stealth},
   ]

  \node[vertex] (o) at (-5,0)  [label=left:$\go$] {}; 
  \node[vertex] (q) at (1.3,0.15)  [red, label=above:$q$] {};
   \node[vertex] (q1) at (2,-0.2)  [red, label=right:$q'$] {};
  \node (xi) at (5.3,1) [label=below:$\gamma$] {}; 
  \node (g) at (5.3,2.3) [label=below:$b$] {}; 

    \node [vertex] (p) at (1-1,0.85)  [label=above right:$b_{\rr}$] {};
     \node [vertex] (p1) at (2+0.8,1.3)  [label=above right:$b_{\sR}$] {};
    \draw[thick]  (o)--(5,1.66){};

     \draw [dashed](p)--(q){};
        \draw[dashed] (p1)--(q1){};
     \draw [-, dashed] (1-1.3, -2) to [bend right = 20] (1-1.3, 2){};
     \draw [-, dashed] (2+0.6, -2) to [bend right = 20] (2+0.6, 2){};
     \node at (0.5-1, -2) [label=right:$\rr$] {};
          \node at (2.4, -2) [label=right:$\sR$] {};

  \pgfsetlinewidth{1pt}
  \pgfsetplottension{.75}
  \pgfplothandlercurveto
  \pgfplotstreamstart
  \pgfplotstreampoint{\pgfpoint{-5cm}{0cm}}  
  \pgfplotstreampoint{\pgfpoint{-4cm}{-.4cm}}   
  \pgfplotstreampoint{\pgfpoint{-3cm}{-0.4cm}}
  \pgfplotstreampoint{\pgfpoint{-2cm}{-0.1cm}}
  \pgfplotstreampoint{\pgfpoint{-1cm}{-.35cm}}
  \pgfplotstreampoint{\pgfpoint{0cm}{-0.5cm}}
  \pgfplotstreampoint{\pgfpoint{1cm}{-.9cm}}
  \pgfplotstreampoint{\pgfpoint{2cm}{-0.2cm}}
   \pgfplotstreampoint{\pgfpoint{1cm}{-0.2cm}}
      \pgfplotstreampoint{\pgfpoint{1.5cm}{0.2cm}}
  \pgfplotstreampoint{\pgfpoint{3cm}{.2cm}}
   \pgfplotstreampoint{\pgfpoint{4cm}{-.2cm}}
  \pgfplotstreampoint{\pgfpoint{5cm}{.4cm}}
  \pgfplotstreamend 
    \pgfusepath{stroke}
       
    \end{tikzpicture}
    \caption{The concatenation of $[\go, q']_\gamma$, $[q', b_\sR]$ and $b[\sR, \infty)$
    is a $(9\qq, \sQ)$--quasi-geodesic in the class $\bfb$.}
 \end{figure}
 
Lastly, since  $\Norm {q'} \geq \rr/2$, we have 
$\gamma|_{\rr/2} = \zeta|_{\rr/2} = \gamma'|_{\rr/2}$.
\end{proof}

\begin{proposition}\label{Prop:Normal}
For each $\bfb \in \partial_\kappa X$ and $\rr > 0$, there exists a radius 
$\rr_\bfb$ such that 
\begin{enumerate} 
\item for any point $\bfa$ there exists $\rr_\bfa$ so that 
\[
\bfa \in \calU_{\kappa}(\bfb, \rr_\bfb) 
\qquad\Longrightarrow\qquad
\calU_{\kappa}(\bfa, \rr_\bfa) \subset \calU_{\kappa}(\bfb, \rr).
\]
\item for any point $\bfa$ there exists $\rr_\bfa$ so that
\[
\bfa  \notin \calU_{\kappa}(\bfb, \rr) 
\qquad\Longrightarrow\qquad
\calU_{\kappa}(\bfa, \rr_\bfa) \cap \calU_{\kappa}(\bfb , \rr_\bfb) = \emptyset. 
\]
\end{enumerate} 
\end{proposition}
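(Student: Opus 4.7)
The plan is to use \thmref{Thm:Strong} as the main engine, applied along a chain of closeness statements that connects a quasi-geodesic $\gamma$ in a third class $\bfc$ to the geodesic $b \in \bfb$ via the geodesic $a \in \bfa$. The core principle is that if $\gamma$ is merely sublinearly close to $b$ at one sufficiently large radius $\sR$, then \thmref{Thm:Strong} upgrades this to $\gamma|_\rr \subset \calN_\kappa(b, \mm_b(\qq, \sQ))$ for the target radius $\rr < \sR$, which is exactly the conclusion required by \defref{neighbourhoods}.

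For part (1), since the admissibility condition ``$\mm_b(\qq, \sQ)$ small compared to $\rr$'' bounds $(\qq, \sQ)$ from above, there is a uniform upper bound on $\mm_b(\qq, \sQ)$, and I will argue that a uniform bound $M$ on $\mm_a(\qq, \sQ)$ holds as well for every $\bfa \in \calU_\kappa(\bfb, \rr_\bfb)$ and admissible $(\qq, \sQ)$. I then apply \thmref{Thm:Strong} to $Z = b$ with target $\rr$, $\kappa' = \kappa$, and $\nn = M + 2\mm_b(1, 0)$ to extract a radius $\sR_0$, and set $\rr_\bfb \geq \sR_0 + M\kappa(\sR_0)$. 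Given $\bfa$, take $\rr_\bfa \geq \sR_0$ large enough that $\mm_a(\qq, \sQ)$ is small compared to $\rr_\bfa$. Then for $\bfc \in \calU_\kappa(\bfa, \rr_\bfa)$ and $\gamma \in \bfc$ with admissible constants, the definitions yield $\gamma|_{\rr_\bfa} \subset \calN_\kappa(a, \mm_a(\qq, \sQ))$ and $a|_{\rr_\bfb} \subset \calN_\kappa(b, \mm_b(1, 0))$. Tracing the triangle at radius $\sR_0$ and using \lemref{Lem:sublinear-estimate} to compare $\kappa$-values at nearby radii produces $d_X(\gamma(t_{\sR_0}), b) \leq \nn \cdot \kappa(\sR_0)$, and a final invocation of \thmref{Thm:Strong} concludes $\gamma|_\rr \subset \calN_\kappa(b, \mm_b(\qq, \sQ))$, so $\bfc \in \calU_\kappa(\bfb, \rr)$.

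For part (2), I reduce to the same chain but applied to the geodesic $c \in \bfc$ in place of a quasi-geodesic $\gamma$. If $\bfc \in \calU_\kappa(\bfa, \rr_\bfa) \cap \calU_\kappa(\bfb, \rr_\bfb)$, then $c|_{\rr_\bfa} \subset \calN_\kappa(a, \mm_a(1, 0))$ and $c|_{\rr_\bfb} \subset \calN_\kappa(b, \mm_b(1, 0))$. Using continuity and the monotonicity of norm along the closest-point projection $\pi_a \colon X \to a$ restricted to $c|_{\rr_\bfa}$, I will cover an initial segment of $a$ by projections of $c$ and hence conclude that each $a_\sR$ up to roughly $\rr_\bfa$ is within $\mm_a(1, 0) \cdot \kappa$ of $c$. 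Chaining with the bound for $c$ near $b$ produces $d_X(a_{\sR_0}, b) \leq \nn \cdot \kappa(\sR_0)$ at the radius $\sR_0$ supplied by \thmref{Thm:Strong}, and applying \thmref{Thm:Strong} once more with $\eta = a$ and $Z = b$ yields $a|_\rr \subset \calN_\kappa(b, \mm_b(1, 0))$, i.e.\ $\bfa \in \calU_\kappa(\bfb, \rr)$.

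The principal obstacle is the need to bound $\mm_a(\qq, \sQ)$ uniformly over all $\bfa \in \calU_\kappa(\bfb, \rr_\bfb)$, since $\rr_\bfb$ must be fixed before $\bfa$ is specified but the radius $\sR_0$ coming out of \thmref{Thm:Strong} depends on this bound. The resolution should exploit that $a|_{\rr_\bfb} \subset \calN_\kappa(b, \mm_b(1, 0))$ forces $a$ to mimic $b$ on a long initial segment, so that for bounded quasi-isometry constants the Morse gauge $\mm_a$ is controlled by $\mm_b$ with additive error. A secondary technical point, appearing in part (2), is converting a neighborhood containment $c|_{\rr_\bfa} \subset \calN_\kappa(a, \mm_a(1, 0))$ into the reverse inclusion ``each point of $a$ is close to $c$''; here the shared basepoint $\go$ and monotonicity of the norm along $\pi_a \circ c$ are what make the covering argument go through.
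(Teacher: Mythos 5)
Your plan hinges on the claim, in part (1), that there is a \emph{uniform} bound $M$ on $\mm_a(\qq,\sQ)$ over all $\bfa \in \calU_\kappa(\bfb, \rr_\bfb)$, which you propose to justify by noting that $a|_{\rr_\bfb}$ tracks $b$. This does not work: the Morse gauge $\mm_a$ is a global invariant of the geodesic $a$, and two geodesics that agree on an arbitrarily long initial segment can have wildly different Morse gauges afterward (in the tree-of-flats example of Section 6, rays in any neighborhood $\calU_\kappa(\bfb,\rr_\bfb)$ can spend arbitrarily long times in flats far from $\go$). Moreover, even if such an $M$ existed for fixed $\rr_\bfb$, you need $\rr_\bfb$ to be chosen \emph{before} $\bfa$, while $M$ depends on the set of $\bfa$, which depends on $\rr_\bfb$, which in your scheme depends on $M$ via $\sR_0$ — a circularity that your outline does not break. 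There is also a secondary gap in part (2): you conclude $a|_\rr \subset \calN_\kappa(b, \mm_b(1,0))$ and assert $\bfa \in \calU_\kappa(\bfb,\rr)$, but the definition of $\calU_\kappa(\bfb,\rr)$ requires the containment for \emph{every} admissible quasi-geodesic $\alpha \in \bfa$, not just for the geodesic $a$.

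The missing idea is \lemref{Lem:surgery}. The paper never tries to bound $\mm_a$ at all, and never chains $\gamma\ \text{near}\ a\ \text{near}\ b$ by adding errors. Instead, given $\gamma \in \bfc$ close to $a$ at radius $\rr_\bfa$, it performs surgery to produce $\gamma' \in \bfa$ that is a $(9\qq,\sQ)$-quasi-geodesic agreeing with $\gamma$ up to radius $\rr_\bfa/2 \geq \rr_\bfb$. Because $\gamma'$ now lies \emph{in the class} $\bfa$, the hypothesis $\bfa \in \calU_\kappa(\bfb,\rr_\bfb)$ applies to $\gamma'$ directly and yields $\gamma|_{\rr_\bfb} = \gamma'|_{\rr_\bfb} \subset \calN_\kappa(b, \mm_b(9\qq,\sQ))$ — a bound involving only $\mm_b$, not $\mm_a$. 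Only then does \thmref{Thm:Strong} (with $Z=b$, $\nn = \mm_b(9\qq,\sQ)$, and $\kappa'=\kappa$) upgrade this single-radius estimate to $\gamma|_\rr \subset \calN_\kappa(b, \mm_b(\qq,\sQ))$. Part (2) works symmetrically: an arbitrary $\alpha \in \bfa$ is modified by surgery into $\alpha' \in \bfc$, and the hypothesis $\bfc \in \calU_\kappa(\bfb,\rr_\bfb)$ controls $\alpha'|_{\rr_\bfb} = \alpha|_{\rr_\bfb}$. This class-swapping surgery is what lets the argument use only the fixed gauge $\mm_b$ throughout, closing both gaps in your outline.
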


\begin{proof}
For the rest of this proof, we assume $\qq$ and $\sQ$ are such that if 
$\mm_a(\qq', \sQ')$ is small compared to $\rr$ then $\qq' \leq \qq$ and $\sQ' \leq \sQ$. 
Hence, if we prove a statement for all $(\qq, \sQ)$--quasi-geodesics, we have also 
shown the statement for all $(\qq', \sQ')$--quasi-geodesics where $\mm_a(\qq', \sQ')$ 
is small compared to $\rr$. 

Let $b \in \bfb$ be the unique geodesic ray in $\bfb$. We choose $\rr_\bfb$ such that
\[
\rr_\bfb \geq 2 \rr 
\qquad\text{and}\qquad
\mm_b(9\qq, \sQ) \leq \frac{\rr_\bfb}{2\kappa(\rr_\bfb)}. 
\]
Also, letting $\nn(\qq, \sQ) = \mm_b(9\qq, \sQ)$, we require that 
$\rr_\bfb \geq \sR$ where $\sR=\sR(b, \rr, \nn, \kappa)$ is as in \thmref{Thm:Strong}. 

\subsection*{Proof of Part (1)} Let  $\bfa \in \calU_{\kappa}(\bfb, \rr_\bfb)$ and let $a \in \bfa$ be 
the unique geodesic ray in $\bfa$. Choose $\rr_\bfa$ such that, 
\[
\rr_\bfa \geq 2 \rr_\bfb
\qquad\text{and}\qquad
\mm_a(\qq, \sQ) \leq \frac{\rr_\bfa}{4\kappa(\rr_\bfa)}.
\]
Now consider $\bfc \in \calU_{\kappa}(\bfa, \rr_\bfa)$ and let $\gamma \in \bfc$ be a
$(\qq, \sQ)$--quasi-geodesic. 
The proof of \corref{Cor:m_b} also shows that 
\[
d_X(a(\rr_\bfa), \gamma|_{\rr_\bfa}) \leq 2 \mm_a(\qq, \sQ) \cdot \kappa(\rr_\bfa)
\leq \frac{\rr_\bfa}2. 
\]
We apply \lemref{Lem:surgery}, with radius being $\rr_\bfa$, to modify $\gamma$ 
to a $(9\qq, \sQ)$--quasi-geodesic $\gamma' \in \bfa$. Since, $\rr_\bfb \leq \rr_\bfa/2$, 
we have $\gamma|_{\rr_\bfb} = \gamma'|_{\rr_\bfb}$. Also, 
$\bfa \in \calU_{\kappa}(\bfb, \rr_\bfb)$ and $\mm_b(9\qq, \sQ)$ is small 
compare to $\rr_\bfb$, therefore
\[
 \gamma|_{\rr_\bfb} =\gamma' |_{\rr_\bfb} \subset \calN_\kappa\big(a, \mm_a(9\qq, \sQ)\big). 
\]
But  $\gamma|_{\rr_\bfa} $ is actually a $(\qq, \sQ)$--quasi-geodesic. Hence, 
\thmref{Thm:Strong} (with  $\nn(\qq, \sQ) = \mm_a(9\qq, \sQ)$) implies that 
\[
\gamma|_\rr \subseteq \calN_\kappa\big(a, \mm_a(\qq, \sQ)\big). 
\]
This holds for every such $\gamma \in \bfc$, thus $\bfc \in \calU_{\kappa}(\bfb, \rr)$.
And this argument holds for every $\bfc \in \calU_{\kappa}(\bfa, \rr_\bfa)$, 
therefore $\calU_{\kappa}(\bfa, \rr_\bfa) \subset \calU_{\kappa}(\bfb, \rr)$.

\subsection*{Proof of Part (2)} In view of \corref{Cor:m_beta}, there exists a
constant $\uu>0$, depending on $\qq$ and $\sQ$, such that, for any $(\qq, \sQ)$--quasi-geodesic $\alpha \in \bfa$ 
we have
\[
 \mm_\alpha(1,0)+ 2\mm_a(\qq, \sQ) \leq \uu.
\]
Choose $\rr_\bfa$ large enough so that 
\[
\rr_\bfa \geq \max \big( 2\uu \cdot \kappa(\rr_\bfa) , 2 \rr_\bfb \big).
\]
Assume $\calU_{\kappa}(\bfa, \rr_\bfa) \cap \calU_{\kappa}(\bfb , \rr_\bfb)$ is non-empty and consider 
a point $\bfc$ in this set. Let $c \in \bfc$ be the unique geodesic ray in this class. 
We have to show $\bfa \in \calU_{\kappa}(\bfb, \rr)$. 

Consider a $(\qq, \sQ)$--quasi-geodesic $\alpha \in \bfa$. Since, 
$\bfc \in \calU_{\kappa}(\bfa, \rr_\bfa)$,
\[
d_X(c(\rr_\bfa), a) \leq \mm_a(1,0) \cdot \kappa(\rr_\bfa).
\] 
Defining $p= \pi_a(c(\rr_\bfa))$, we have $\Norm{p} \leq \rr_\bfa$. Therefore, 
the second assertion in \corref{Cor:m_b} implies 
\[
d_X(p, \alpha) \leq 2 \mm_a(\qq,\sQ) \cdot \kappa(p)
\leq 2 \mm_a(\qq,\sQ) \cdot \kappa(\rr_\bfa).
\]  
Hence,
\[
d_X(c(\rr_\bfa), \alpha) \leq
  d_X(c(\rr_\bfa), p) + d_X(p, \alpha) \leq 
  \uu \cdot \kappa(\rr_\bfa) \leq \frac{\rr_\bfa}2. 
\]
We can now apply \lemref{Lem:surgery} to $\alpha$ and $c$ with radius 
$\rr_\bfa$ to obtain a $(9\qq, \sQ)$--quasi-geodesic $\alpha' \in \bfc$
where (using $\frac{\rr_\bfa}{2} \geq \rr_\bfb$), $\alpha'|_{\rr_\bfb} = \alpha|_{\rr_\bfb}$. 

\begin{figure}[H]
\begin{tikzpicture}[scale=0.6]
 \tikzstyle{vertex} =[circle,draw,fill=black,thick, inner sep=0pt,minimum size=.5 mm]
 
[thick, 
    scale=1,
    vertex/.style={circle,draw,fill=black,thick,
                   inner sep=0pt,minimum size= .5 mm},
                  
      trans/.style={thick,->, shorten >=6pt,shorten <=6pt,>=stealth},
   ]

  \node[vertex] (o) at (-5,0)  [label=left:$\go$] {}; 
  \node (g) at (5.3,2.5) [label=center:$b$] {}; 
  \node (h) at (5.3,1.2) [label=center:$c$] {}; 
  \node (i) at (5.3,0) [label=center:$a$] {}; 
  \node (j) at (5.3,-1) [label=center:$\alpha$] {}; 

  \draw[thick]  (o)--(g){};
  \draw[thick]  (o)--(h){};
  \draw[thick]  (o)--(i){};

  \draw [-, dashed] (-3, -2) to [bend right = 20] (-3, 2){};
  \draw [-, dashed] (-1, -2) to [bend right = 20] (-1, 2){};
  \draw [-, dashed] (1, -2) to [bend right = 20] (1, 2){};
  \draw [-, dashed] (3, -2) to [bend right = 20] (3, 2){};

  \node at (-3, 2) [label=above:$\rr$] {};
  \node at (-1, 2) [label=above:$\rr_{\bfb}$] {};
  \node at (1, 2) [label=above:$\rr_{\bfa}/2$] {};
  \node at (3, 2) [label=above:$\rr_{\bfa}$] {};
  \node[vertex, red] (a) at (3.3, 1){};
  \node[vertex] (b) at (3.2, 0){};
  \draw [dashed](a)--(b){};
  
  \node at (3.1, 0) [label=below:$p$]{};  
  
  \draw [dashed](3.2, 0)--(2.5, -1){};
       
  \pgfsetlinewidth{1pt}
  \pgfsetplottension{.75}
  \pgfplothandlercurveto
  \pgfplotstreamstart
  \pgfplotstreampoint{\pgfpoint{-5cm}{0cm}}  
  \pgfplotstreampoint{\pgfpoint{-4cm}{-.7cm}}   
  \pgfplotstreampoint{\pgfpoint{-3cm}{-.5cm}}
  \pgfplotstreampoint{\pgfpoint{-2cm}{-.6cm}}
  \pgfplotstreampoint{\pgfpoint{-1cm}{-.55cm}}
  \pgfplotstreampoint{\pgfpoint{0cm}{-1cm}}
  \pgfplotstreampoint{\pgfpoint{1cm}{-1.9cm}}
  \pgfplotstreampoint{\pgfpoint{2.3cm}{-1.05cm}}
  \pgfplotstreampoint{\pgfpoint{2.9cm}{-1.6cm}}
  \pgfplotstreampoint{\pgfpoint{4cm}{-1.2cm}}
  \pgfplotstreampoint{\pgfpoint{5cm}{-1.1cm}}
  \pgfplotstreamend 
  \pgfusepath{stroke}
       
  \pgfsetlinewidth{1pt}
  \color{red}
  \pgfsetplottension{.75}
  \pgfplothandlercurveto
  \pgfplotstreamstart
  \pgfplotstreampoint{\pgfpoint{-5cm}{0cm}}  
  \pgfplotstreampoint{\pgfpoint{-4cm}{-.7cm}}   
  \pgfplotstreampoint{\pgfpoint{-3cm}{-.5cm}}
  \pgfplotstreampoint{\pgfpoint{-2cm}{-.6cm}}
  \pgfplotstreampoint{\pgfpoint{-1cm}{-.55cm}}
  \pgfplotstreampoint{\pgfpoint{0cm}{-1cm}}
  \pgfplotstreampoint{\pgfpoint{1cm}{-1.9cm}}
  \pgfplotstreampoint{\pgfpoint{2cm}{-1.2cm}}
  \pgfplotstreampoint{\pgfpoint{2.4cm}{-1cm}}

  \pgfplotstreamend 
  \pgfusepath{stroke}     

  \node[vertex, red] at (2.4, -1.05){};
  \draw [-,red] (2.4, -1.05) to (a){};
  \draw [red] (a) to  (5.1, 1.22){};
  
  \end{tikzpicture}
\caption{The quasi-geodesic $\alpha'$ (in red) is in the class $\bfc$ which is 
contained in $\calU_{\kappa}(\bfb, \rr)$. Therefore, $\alpha|_\rr= \alpha'|_\rr$ is near $b \in \bfb$.}
\end{figure}
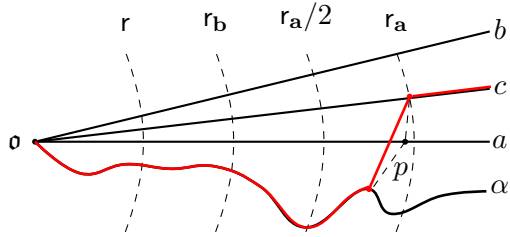

Since $\bfc \in \calU_{\kappa}(\bfb, \rr_\bfb)$, we have
\[
\alpha|_{\rr_\bfb}=\alpha'|_{\rr_\bfb} \subset \calN_\kappa\big(b, \mm_b(9\qq, \sQ)\big).
\]
But $\alpha|_{\rr_\bfb}$ is really a $(\qq, \sQ)$--quasi-geodesic. Hence, 
letting $\nn(\qq, \sQ) = \mm_b(9\qq, \sQ)$, \thmref{Thm:Strong} implies that 
\[
\alpha|_\rr\subset \calN_\kappa\big(b, \mm_b(\qq, \sQ)\big).
\]
But this holds for every such $\alpha$, thus $\bfa \in \calU_{\kappa}(\bfb, \rr)$. 
This finishes the proof. 
\end{proof}

\begin{remark} \label{Rem:r} 
Let $\phi \from \pka X \times \RR \to \RR$ be a map so that $\rr_\bfb = \phi(\bfb, \rr)$
as above. We can define a similar map for $\rr_\bfa$. Note that, in either part 
of \propref{Prop:Normal}, the radius $\rr_\bfa$ does not really depend on $\bfb$ or $\rr$. 
It depends on $\bfa$, $\rr_\bfb$ and the maximum value of $\qq$ and $\sQ$ so that 
$\mm_b(\qq, \sQ)$ is small compared to $\rr$. But such an upper-bound always exists,
for example, $\qq, \sQ \leq \mm_b(\qq, \sQ) \leq \rr \leq \rr_\bfb$. 
Hence, there are maps $\psi_1, \psi_2 \from \pka X \times \RR \to \RR$
where $\rr_\bfa = \psi_1(\bfa, \rr_\bfb)$ in the first part of \propref{Prop:Normal}
and $\rr_\bfa = \psi_2(\bfa, \rr_\bfb)$ in the second part. These maps make the
dependence of constants more clear and we will refer to these map in the proof of 
\thmref{Thm:Metrizable}. Using this notation, \propref{Prop:Normal}
can be written as
\begin{equation} \label{Eq:psi_1}
\bfa \in \calU_{\kappa}\big(\bfb, \phi(\bfb, \rr)\big) 
\qquad\Longrightarrow\qquad
\calU_{\kappa}\Big(\bfa, \psi_1\big(\bfa, \phi(\bfb, \rr) \big) \Big) 
   \subset \calU_\kappa(\bfb, \rr).
\end{equation} 
and 
\begin{equation} \label{Eq:psi_2}
\calU_{\kappa}\Big(\bfa, \psi_2\big(\bfa, \phi(\bfb, \rr)   \big) \Big) 
   \cap \calU_{\kappa}\big(\bfb , \phi(\bfb, \rr)\big) \not= \emptyset. 
\qquad\Longrightarrow\qquad
\bfa  \in \calU_{\kappa}(\bfb, \rr).
\end{equation} 
\end{remark}

\subsection*{A fundamental system of neighborhoods}
We will show that the sets $\calU_{\kappa}(\bfb, \rr)$ form a fundamental system of
neighborhoods for $\partial_\kappa X$ that can be used to define a topology on 
$\pka X$. For $\bfb \in \partial_\kappa X$, define 
\[
\calB(\bfb) = \Big\{ \calV \subset \partial_\kappa X \ST 
\calU(\bfb, \rr) \subset \calV \quad \text{for some $\rr>0$}  \Big\}. 
\]
We would like to equip $\pka X$ with a topology where $\calB(\bfb)$ is the set of 
neighborhoods of $\bfb$. Recall that $\calV$ is a neighborhood of $\bfb$ if it contains
an open set that includes $\bfb$. We need to check that $\calB(\bfb)$ has 
certain properties. 

\begin{lemma}
For every $\bfb \in \partial_s X$, the set $\calB(\bfb)$ satisfies the following properties:
\begin{enumerate}[(i)]
\item Every subset of $\pka X$ which contains a set belonging to $\calB(\bfb)$ 
itself belongs to $\calB(\bfb)$.
\item Every finite intersection of sets of $\calB(\bfb)$ belongs to $\calB(\bfb)$.
\item The element $\bfb$ is in every set of $\calB(\bfb)$.
\item If $\calV \in \calB(\bfb)$ then there is $\calW \in \calB(\bfb)$ such that, 
for every $\bfa \in \calW$, we have $\calV \in \calB(\bfa)$.
\end{enumerate}
\end{lemma}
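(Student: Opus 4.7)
The plan is to verify the four properties directly from the definition of $\calB(\bfb)$, using two ingredients: the monotonicity of the basic sets $\calU_\kappa(\bfb,\rr)$ in $\rr$, and the two parts of \propref{Prop:Normal}. Property (i) is immediate from the definition of $\calB(\bfb)$: a subset containing $\calU_\kappa(\bfb,\rr)$ automatically contains it.

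The main preliminary observation, needed for (ii), is that $\rr_1 \leq \rr_2$ implies $\calU_\kappa(\bfb,\rr_2) \subset \calU_\kappa(\bfb,\rr_1)$. To see this I would verify two things. First, by concavity and monotonicity of $\kappa$, the smallness threshold $\rr/(2\kappa(\rr))$ appearing in \eqnref{Eq:Small} is non-decreasing in $\rr$, so any pair $(\qq,\sQ)$ with $\mm_b(\qq,\sQ)$ small compared to $\rr_1$ is also small compared to $\rr_2$. Second, for such a pair, the containment $\alpha|_{\rr_2} \subset \calN_\kappa(b,\mm_b(\qq,\sQ))$ trivially restricts to $\alpha|_{\rr_1} \subset \calN_\kappa(b,\mm_b(\qq,\sQ))$ since $\alpha|_{\rr_1} \subset \alpha|_{\rr_2}$. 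Given monotonicity, (ii) is settled as follows: if $\calV_i \supset \calU_\kappa(\bfb,\rr_i)$ for $i=1,\dots,k$, then with $\rr = \max_i \rr_i$ we have $\calU_\kappa(\bfb,\rr) \subset \bigcap_i \calU_\kappa(\bfb,\rr_i) \subset \bigcap_i \calV_i$.

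Property (iii), that $\bfb \in \calU_\kappa(\bfb,\rr)$ for every $\rr>0$, follows from \corref{Cor:m_b}: each $(\qq,\sQ)$--quasi-geodesic $\beta \in \bfb$ is contained in $\calN_\kappa(b, \mm_b(\qq,\sQ))$, so the initial segment $\beta|_\rr$ also is, which is exactly the defining condition for membership in $\calU_\kappa(\bfb,\rr)$.

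Finally, property (iv) is essentially a repackaging of \propref{Prop:Normal}(1), together with \remref{Rem:r}. Given $\calV \in \calB(\bfb)$, choose $\rr$ with $\calU_\kappa(\bfb,\rr) \subset \calV$ and set $\rr_\bfb = \phi(\bfb,\rr)$ and $\calW = \calU_\kappa(\bfb,\rr_\bfb)$; then $\calW \in \calB(\bfb)$. For any $\bfa \in \calW$, the implication \eqnref{Eq:psi_1} produces $\rr_\bfa = \psi_1(\bfa,\rr_\bfb)$ with $\calU_\kappa(\bfa,\rr_\bfa) \subset \calU_\kappa(\bfb,\rr) \subset \calV$, so $\calV \in \calB(\bfa)$. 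No genuine obstacle is anticipated; the only step requiring a short argument rather than a direct appeal is the monotonicity claim, which reduces to the concavity of $\kappa$.
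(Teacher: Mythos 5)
Your proof is correct and follows the same route as the paper: (i) is definitional, (ii) via monotonicity of $\calU_\kappa(\bfb,\rr)$ in $\rr$ and taking the maximum radius, (iii) via \corref{Cor:m_b}, and (iv) via \propref{Prop:Normal}(1). The one place you add something is the justification of the monotonicity of $\rr \mapsto \rr/(2\kappa(\rr))$, which the paper dismisses as "by definition" — your observation that this reduces to concavity of $\kappa$ (together with $\kappa(0)\ge 0$, which gives $\kappa(t_1)/t_1 \ge \kappa(t_2)/t_2$ for $t_1 \le t_2$) is a legitimate and worthwhile clarification, since the smallness constraint in \defref{neighbourhoods} does vary with $\rr$.
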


\begin{proof}
Property (i) is immediate from the definition of $\calB(\bfb)$. To see (ii), consider
sets $\calV_1, \dots, \calV_k \in \calB(\bfb)$ and let $\rr_i$ be such that 
$\calU_{\kappa}(\bfb, \rr_i) \subset \calV_i$ and let $\rr = \max \rr_i$. 
Note that $\calU_{\kappa}(\bfb, \rr)  \subset \calU_{\kappa}(\bfb, \rr_i)$ by definition. Therefore, 
\[
\calU_{\kappa}(\bfb, \rr) \subset \bigcap_i \calV_i
\]
and hence the intersection is in $\calB(\bfb)$. Property (iii) holds since, 
by \corref{Cor:m_b}, every $(\qq, \sQ)$--quasi-geodesic $\beta \in \bfb$ lies inside 
$\calN_\kappa \big(b, \mm_b(\qq, \sQ)\big)$ and hence $\bfb \in \calU_{\kappa}(\bfb, \rr)$ for every $\rr$. 
Property (iv) follows from the first part of \propref{Prop:Normal}. 
\end{proof}

These properties for $\calB(\bfb)$ are characteristic of the set of neighborhoods
of $\bfb$. That is, 

\begin{proposition}[\cite{Bourbaki} Proposition 2]
If to each elements $\bfb \in \pka X$ there corresponds a set $\calB(\bfb)$ of
subsets of $\pka X$ such that properties (i) to (iv) above are satisfied, then there is a
unique topological structure on $\pka X$ such that for each $\bfb \in \pka X$, 
$\calB(\bfb)$ is the set of neighborhoods of $\bfb$ in this topology. 
\end{proposition}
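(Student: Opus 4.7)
The statement is a standard result in general topology (as the citation indicates), so the plan is to reproduce the classical argument adapted to the set-up above. The plan has four steps.

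First, I would \emph{define} the topology directly from the neighborhood data. Declare a subset $U \subseteq \pka X$ to be \emph{open} if and only if $U \in \calB(\bfb)$ for every $\bfb \in U$; equivalently, for each $\bfb \in U$ there is an $\rr > 0$ such that $\calU_\kappa(\bfb, \rr) \subset U$. Let $\tau$ be the collection of all such sets.

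Second, I would \emph{verify the axioms of a topology} for $\tau$. The empty set is vacuously open, and $\pka X \in \calB(\bfb)$ for all $\bfb$ by property (iii) together with (i), so $\pka X \in \tau$. Arbitrary unions are handled by property (i): if $U = \bigcup_i U_i$ with each $U_i \in \tau$, then any $\bfb \in U$ lies in some $U_{i_0}$, whence $U_{i_0} \in \calB(\bfb)$, and by (i) the larger set $U$ also belongs to $\calB(\bfb)$. Finite intersections are handled by property (ii) in the same way.

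Third, I would show that in the topology $\tau$, the neighborhood filter of each $\bfb$ equals $\calB(\bfb)$. One direction is easy: if $\calV$ is a neighborhood of $\bfb$ in $\tau$, then $\calV$ contains an open $U \ni \bfb$, so $U \in \calB(\bfb)$, and then $\calV \in \calB(\bfb)$ by (i). The reverse direction is the main subtle step and is where property (iv) is essential. Given $\calV \in \calB(\bfb)$, define
\[
U = \bigl\{ \bfa \in \calV \ST \calV \in \calB(\bfa) \bigr\}.
\]
Then $\bfb \in U$ by hypothesis, and it suffices to prove $U \in \tau$. For any $\bfa \in U$, property (iv) gives $\calW \in \calB(\bfa)$ such that $\calV \in \calB(\bfc)$ for every $\bfc \in \calW$. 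Replacing $\calW$ by $\calW \cap \calV$ (which is still in $\calB(\bfa)$ by (i), (ii), since $\calV \in \calB(\bfa)$), we have $\calW \subseteq U$, so $U \in \calB(\bfa)$ by (i). Thus $U$ is open, proving $\calV$ is a neighborhood of $\bfb$.

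Finally, uniqueness is automatic: in any topology the open sets are precisely those sets which are neighborhoods of each of their own points, so two topologies that induce the same neighborhood filter at every point must coincide. The main obstacle in the plan is the construction of the open kernel $U$ in step three; the bookkeeping between (i), (ii) and (iv) to close $U$ under its defining property is the only place where a proof requires thought, and everything else is direct verification.
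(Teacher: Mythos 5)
The paper does not give its own proof of this proposition; it simply cites it as Proposition 2 in Bourbaki's \emph{General Topology}, Chapter 1. Your argument is a correct and faithful reproduction of Bourbaki's classical proof, including the key step of constructing the ``open kernel'' $U = \{\bfa \in \calV \mid \calV \in \calB(\bfa)\}$ and using property (iv) together with the filter properties (i), (ii) to show $U$ is open. (One small point of hygiene: to get $\pka X \in \calB(\bfb)$ you need $\calB(\bfb) \neq \emptyset$, which (iii) as stated does not literally assert; in the paper's concrete setting it holds because $\calU_\kappa(\bfb,\rr) \in \calB(\bfb)$, and in Bourbaki's abstract version it follows from the empty-intersection convention in (ii).)
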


We now equip $\pka X$ with this topological structure. Then a set 
$\calW \subset \pka X$ is open if for every $\bfb \in \calW$ there is 
$\rr>0$ such that $\calU_{\kappa}(\bfb, \rr) \subset \calW$. We refer to this topology as the 
\emph{visual topology on quasi-geodesics} and from now on we consider
 $\partial_\kappa X$ to be a topological space. 

\subsection*{Properties of the topology}
In this section, we establish some topological properties of $\pka X$. 
We will show that $\pka X$ is metrizable and, for $\kappa' \prec \kappa$, we 
show that the inclusion $\partial_{\kappa'} X \subset \pka X$ is a topological embedding. 

We make use the following criterion for a topological space to be metrizable. 

\begin{theorem}[Theorem 3, \cite{Frink}] \label{Thm:Metrizable-condition} 
Assume, for every point $\bfb$ of a topological space, there exists a monotonic 
decreasing sequence $\calU_1(\bfb), \calU_2(\bfb), \cdots, \calU_i(\bfb), \cdots$
of neighborhoods whose intersection is $\bfb$ and such that the following holds: 
For every point $\bfb$ of the neighborhood space and every integer $i$, there exists 
an integer $j=j(\bfb,i) >i$ such that if $\bfa$ is any point for which $\calU_j(\bfa)$ and 
$\calU_j(\bfb)$ have a point in common then $\calU_j(\bfa) \subset \calU_i(\bfb)$. 
Then the space is homeomorphic to a metric space.
\end{theorem}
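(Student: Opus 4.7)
The plan is to use Frink's original chain construction to build an explicit compatible metric. The underlying idea is that the decreasing basis $\{\calU_i(\bfb)\}$ yields a natural symmetric gauge on pairs that will generally fail the triangle inequality; one cures this by taking an infimum over chains, and the stated separation axiom is precisely what is needed to guarantee that the resulting length pseudo-metric is non-degenerate and still detects the original gauge up to a multiplicative constant.

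I would proceed in three steps. First, define a symmetric gauge
\[
\delta(\bfa,\bfb) = \inf\bigl\{\,2^{-i} : \bfa \in \calU_i(\bfb) \text{ and } \bfb \in \calU_i(\bfa)\,\bigr\},
\]
with $\inf\varnothing = 1$. Monotonicity of the basis and $\bigcap_i \calU_i(\bfb) = \{\bfb\}$ give $\delta(\bfa,\bfb)=0$ if and only if $\bfa=\bfb$. Next set
\[
d(\bfa,\bfb) = \inf\Big\{\,\sum_{k=0}^{n-1}\delta(x_k,x_{k+1}) : n\geq 1,\ x_0=\bfa,\ x_n=\bfb\,\Big\}.
\]
By construction $d$ is symmetric, vanishes on the diagonal, and satisfies the triangle inequality via concatenation of chains. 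The crux is then to show $d$ is a genuine metric that generates the given topology.

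The main obstacle, and the heart of Frink's argument, is the chain comparison $d(\bfa,\bfb) \geq c\,\delta(\bfa,\bfb)$ for a universal constant $c>0$, which I would prove by induction on chain length. Given a chain $\bfa = x_0,\ldots,x_n=\bfb$ of total weight $s$, split at the largest index $k$ with $\sum_{i<k}\delta(x_i,x_{i+1}) \leq s/2$, so that both sub-chains $\bfa\to x_k$ and $x_{k+1}\to\bfb$ have weight at most $s/2$. The inductive hypothesis yields $\bfa \in \calU_{j_1}(x_k)$, $x_{k+1} \in \calU_{j_2}(\bfb)$, and $x_k \in \calU_{j_3}(x_{k+1})$ with each $2^{-j_\ell} \lesssim s$. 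Two successive applications of the Frink axiom then collapse the three-link chain into a single containment $\bfa \in \calU_i(\bfb)$ with $2^{-i} \lesssim s$: first apply the axiom with basepoint $\bfb$ and ``$\bfa$''-parameter $x_k$, using $x_{k+1}$ as a witness in $\calU_j(x_k)\cap \calU_j(\bfb)$ to obtain $x_k \in \calU_{i'}(\bfb)$; then apply it again with basepoint $\bfb$ and ``$\bfa$''-parameter $\bfa$, using $x_k$ as a witness in $\calU_{j'}(\bfa)\cap \calU_{j'}(\bfb)$ to conclude $\bfa \in \calU_i(\bfb)$. The allowed dependence of $j$ on the basepoint is harmless, since in every invocation the basepoint is $\bfb$, which is fixed throughout the induction.

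Finally, one verifies that the metric topology agrees with the original one. The inequality $d \leq \delta$ shows every $d$-ball is open in the original topology, while the chain comparison gives $\{x : d(\bfb,x) < c\cdot 2^{-i}\} \subseteq \calU_i(\bfb)$. Thus the $d$-balls and the $\calU_i(\bfb)$ form cofinal neighborhood bases at each point, so the two topologies coincide and the space is homeomorphic to a metric space. The only nontrivial ingredient in the entire argument is the chain induction of the third step; everything else is routine manipulation of the definitions.
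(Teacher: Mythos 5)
The paper does not prove this statement; it cites it directly from Frink's 1937 paper, so the relevant question is simply whether your argument is correct.

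There is a genuine gap in the chain-comparison step, and it is precisely at the point you try to wave off as ``harmless.'' Your inductive step asks for $\bfa \in \calU_{j_1}(x_k)$, $x_{k+1} \in \calU_{j_2}(\bfb)$ and $x_k \in \calU_{j_3}(x_{k+1})$, all with $2^{-j_\ell}$ comparable to the total weight $s$, and then applies the Frink axiom at $\bfb$ with a target index $i$ satisfying $2^{-i} \asymp s$. To use $x_{k+1}$ as a common witness in $\calU_{j}(x_k) \cap \calU_{j}(\bfb)$ with $j = j(\bfb,i)$, you need $j \leq j_2$ and $j \leq j_3$. But $j_2, j_3$ are roughly $i+1, i+2$, while the hypothesis gives no quantitative control on $j(\bfb, i)-i$: it may be $i+100$, $2^i$, or anything else. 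This is the entire difference between Frink's Theorem 2 (where $j$ depends on $i$ alone, so one can re-index once and for all to arrange $j(i)=i+1$, after which the chain argument with constant $c=1/4$ goes through cleanly) and his Theorem 3 as cited here (where $j$ depends on the basepoint, so no single re-indexing works for the pair $\calU_j(\bfa), \calU_j(\bfb)$ appearing with the same index $j$). Your proof is, in effect, a correct proof of Theorem 2 applied to the wrong hypothesis.

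The claim that the basepoint dependence is harmless because ``in every invocation the basepoint is $\bfb$'' also does not hold up: while the two explicit applications of the Frink axiom are indeed at $\bfb$, the inductive hypothesis on the sub-chain $\bfa \to x_k$ produces the containment $\bfa \in \calU_{j_1}(x_k)$, i.e.\ a containment in a neighborhood of $x_k$, and the single middle link produces a containment at $x_{k+1}$. These are arbitrary intermediate points, so the universal constant in the chain comparison is implicitly uniform over all basepoints --- exactly what the hypothesis does not supply. Frink's actual route to Theorem~3 passes through his Theorem~1 (a metrization criterion via a symmetric distance function with a non-uniform coherence condition), not through the naive chain construction, and that reduction is the content you are missing.
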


We check this condition for $\pka X$. 

\begin{theorem}\label{Thm:Metrizable}
The space $\pka X$ is metrizable. 
\end{theorem}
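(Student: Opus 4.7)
The plan is to apply Frink's criterion (\thmref{Thm:Metrizable-condition}). For each $\bfa \in \pka X$ I will construct a countable decreasing sequence of neighborhoods $\calU_j(\bfa) := \calU_\kappa(\bfa, \rho_j(\bfa))$ whose intersection is $\{\bfa\}$ and whose radii grow fast enough to absorb the $\bfa$-dependent radii $\psi_1(\bfa, \cdot), \psi_2(\bfa, \cdot)$ from \remref{Rem:r}. After replacing $\phi, \psi_1, \psi_2$ by monotone increasing majorants in their second argument (legitimate since enlarging the output radii only strengthens both parts of \propref{Prop:Normal}), I would set $\rho_0(\bfa) = 0$ and
\[
\rho_{j+1}(\bfa) = \max\Big( j+1,\ \phi(\bfa, \rho_j(\bfa)+j),\ \psi_1(\bfa, \rho_j(\bfa)+j),\ \psi_2(\bfa, \rho_j(\bfa)+j) \Big).
\]
The sequence $\calU_j(\bfa)$ is then decreasing, and $\bigcap_j \calU_j(\bfa) = \{\bfa\}$: any $\bfc$ in every $\calU_\kappa(\bfa, \rho_j(\bfa))$ has its unique geodesic $c$ contained in $\calN_\kappa(a, \mm_a(1,0))$ at every radius by \corref{Cor:m_b}, hence globally, and \lemref{Lem:Unique} forces $c = a$.

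For the compatibility condition, given $\bfb$ and $i$ I would pick $j \geq \max(i+2,\ \lceil \rho_{i+2}(\bfb)\rceil + 1)$. Suppose $\bfa$ satisfies $\calU_j(\bfa) \cap \calU_j(\bfb) \neq \emptyset$. The choice of $j$ together with the recursion and the monotonicity of $\psi_1, \psi_2$ force
\[
\rho_j(\bfa) \geq \psi_1(\bfa, \rho_{i+1}(\bfb)), \quad \rho_j(\bfa) \geq \psi_2(\bfa, \rho_{i+2}(\bfb)), \quad \rho_j(\bfb) \geq \rho_{i+2}(\bfb).
\]
I then apply \propref{Prop:Normal} twice. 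First, \eqnref{Eq:psi_2} with $r = \rho_{i+1}(\bfb)$ upgrades the nonempty intersection to $\bfa \in \calU_\kappa(\bfb, \rho_{i+1}(\bfb))$. Next, \eqnref{Eq:psi_1} with $r = \rho_i(\bfb)$ gives
\[
\calU_j(\bfa) \subset \calU_\kappa\big(\bfa, \psi_1(\bfa, \rho_{i+1}(\bfb))\big) \subset \calU_\kappa(\bfb, \rho_i(\bfb)) = \calU_i(\bfb),
\]
which is exactly Frink's condition.

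The main obstacle is the bookkeeping around the $\bfa$-dependence of $\psi_1, \psi_2$: Frink's theorem requires the sequence $\{\calU_j(\bfa)\}$ to be fixed before $\bfb$ and $i$ are revealed, yet the radii $\psi_k(\bfa, \rho_{i+2}(\bfb))$ that must be dominated depend on both. The recursion above handles this by baking the dependence into each $\rho_{j+1}(\bfa)$ via the additive shift $+j$, so that once the integer $j$ exceeds the fixed $\bfb$-side threshold $\rho_{i+2}(\bfb)$, the bound $\rho_j(\bfa) \geq \psi_k(\bfa, \rho_{i+2}(\bfb))$ holds for \emph{every} $\bfa$ by monotonicity. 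Once this is in place, Frink's theorem immediately produces a compatible metric on $\pka X$.
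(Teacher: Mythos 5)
Your proof is correct and follows essentially the same strategy as the paper: both apply Frink's metrization criterion (Theorem~\ref{Thm:Metrizable-condition}) to the neighborhood basis $\calU_\kappa(\cdot, \cdot)$, using the two halves of Proposition~\ref{Prop:Normal} in exactly the same pattern to verify the compatibility condition. The differences are cosmetic. The paper takes the direct definition $\rr_i(\bfa) = \max\big(i, \psi_1(\bfa, i), \psi_2(\bfa, i)\big)$ together with $j = \lceil\phi(\bfb,\phi(\bfb,\rr_i(\bfb)))\rceil$, whereas you build the radii via a recursion with an additive shift $+j$. Your recursion works, but it is heavier than needed: the $\bfa$-dependence you flag as the "main obstacle" is already resolved in the paper's direct scheme, because $\rr_j(\bfa)$ is built uniformly from $\psi_1(\bfa, j)$ and $\psi_2(\bfa, j)$ so that once the $\bfa$-independent threshold $j \geq \phi(\bfb,\phi(\bfb,\rr_i(\bfb)))$ is met, monotonicity of $\psi_k(\bfa,\cdot)$ in the second argument is all you need. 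Frink's theorem does allow the sequence $\{\calU_j(\bfa)\}$ to depend on $\bfa$, so there is no conflict to engineer around. Two things you do more carefully than the paper: you explicitly replace $\phi, \psi_1, \psi_2$ by monotone majorants (the paper uses the monotonicity implicitly in the line $\psi_1(\bfa, \phi(\bfb, \phi(\bfb, \rr_i(\bfb)))) \geq \psi_1(\bfa, \phi(\bfb, \rr_i(\bfb)))$), and you explicitly verify $\bigcap_j \calU_j(\bfa) = \{\bfa\}$ via Corollary~\ref{Cor:m_b} and Lemma~\ref{Lem:Unique}, a hypothesis of Frink's theorem the paper leaves unchecked. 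Both of those are genuine improvements in rigor, even though the overall mechanism of the argument is the same.
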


\begin{proof}
Recall the maps $\phi, \psi_1, \psi_2 \from \pka X \times \RR \to \RR$ from 
\remref{Rem:r}. For $i \in \NN$ and $\bfa \in \pka X$, define
\[
\calU_i(\bfa) = \calU_{\kappa}(\bfa, \rr_i(\bfa)),
\qquad\text{where}\qquad
\rr_i(\bfa)=\max\big(i, \psi_1(\bfa, i), \psi_2(\bfa, i)\big).
\]
Also, given $\bfb$ and $i$, we define 
\[
j= j(\bfb, i) = \Big\lceil \phi\big( \bfb, \phi(\bfb, \rr_i(\bfb))\big)\Big\rceil. 
\]

Assume $\calU_j(\bfa)$ and $\calU_j(\bfb)$ have a point in common, that is, 
\[
\calU_{\kappa}\big(\bfa, \rr_j(\bfa)\big) \cap \calU_{\kappa}\big( \bfb, \rr_j(\bfb)\big) \not = \emptyset. 
\]
Since, 
\[
\rr_j(\bfa) \geq \psi_2(\bfa, j) \geq \psi_2\Big(\bfa, \phi\big(\bfb, \phi(\bfb, \rr_i(\bfb))\big) \Big)
\qquad\text{and}\qquad
\rr_j(\bfb) \geq j \geq \phi(\bfb, \phi(\bfb, \rr_i(\bfb))) 
\]
\eqnref{Eq:psi_2} implies
\[
\bfa \in \calU_{\kappa}\Big(\bfb, \phi\big(\bfb,\rr_i(\bfb)\big)\Big). 
\]
Now, \eqnref{Eq:psi_1} implies 
\[
\calU_{\kappa} \Big(\bfa, \psi_1\big(\bfa, \phi(\bfb, \rr_i(\bfb)) \big) \Big) 
\subset \calU_{\kappa}\big(\bfb, \rr_i(\bfb)\big). 
\]
But
\[
\rr_j(\bfa) \geq  \psi_1(\bfa, \phi(\bfb, \phi(\bfb, \rr_i(\bfb))) ) 
\geq \psi_1(\bfa, \phi(\bfb, \rr_i(\bfb)) ). 
\]
Therefore, 
\[
\calU_{\kappa} \big(\bfa, \rr_j(\bfa)\big) \subset \calU_{\kappa}\big(\bfb, \rr_i(\bfb)\big). 
\]
Which is to say $\calU_j(\bfa) \subset \calU_i(\bfb)$. 
The theorem follows from \thmref{Thm:Metrizable-condition}. 
\end{proof}

Lastly, we prove that different boundaries associated with different sublinear functions are nested.

\begin{proposition}\label{subspacetopology}
Let $\kappa, \kappa'$ be sublinear functions such that, for some $\sM>0$, 
\begin{equation}\label{nested}
\kappa'(t)  \leq \sM \cdot \kappa(t), \qquad \forall t>0. 
\end{equation}
Then, $\partial_{\kappa'} X \subseteq \partial_{\kappa} X$ as a subspace with 
the subspace topology.
\end{proposition}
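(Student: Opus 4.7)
The plan is to first establish the set-level inclusion $\partial_{\kappa'} X \subseteq \partial_\kappa X$, and then show that the two neighborhood systems --- $\{\calU_{\kappa'}(\bfb, \rr)\}$ intrinsically, versus $\{\calU_\kappa(\bfb, \rr) \cap \partial_{\kappa'} X\}$ as a subspace of $\partial_\kappa X$ --- refine each other at every point, so the two topologies on $\partial_{\kappa'} X$ coincide.

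The set inclusion is immediate from the observation that, since $\kappa'(t) \leq \sM \kappa(t)$, one has $\calN_{\kappa'}(Z, \nn) \subseteq \calN_\kappa(Z, \sM \nn)$ for any closed $Z$. Thus if $b$ is $\kappa'$--Morse with gauge $\mm_b^{\kappa'}$, it is $\kappa$--Morse with gauge $\mm_b^\kappa := \sM \cdot \mm_b^{\kappa'}$. The easy direction of the topology comparison is $\calU_{\kappa'}(\bfb, \rr) \subseteq \calU_\kappa(\bfb, \rr)$ for every $\bfb \in \partial_{\kappa'} X$ and every $\rr$: any $(\qq, \sQ)$ with $\mm_b^\kappa(\qq, \sQ)$ small compared to $\rr$ in the $\kappa$--sense automatically has $\mm_b^{\kappa'}(\qq, \sQ) = \mm_b^\kappa(\qq, \sQ)/\sM$ small compared to $\rr$ in the $\kappa'$--sense (since $\kappa'(\rr) \leq \sM \kappa(\rr)$), so any $(\qq, \sQ)$--quasi-geodesic eligible for the $\calU_\kappa$ test is eligible for $\calU_{\kappa'}$, and the resulting $\kappa'$--containment upgrades to a $\kappa$--containment through the displayed neighborhood inclusion.

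The substantive direction is showing that for each $\rr'$ there exists $\rr$ with $\calU_\kappa(\bfb, \rr) \cap \partial_{\kappa'} X \subseteq \calU_{\kappa'}(\bfb, \rr')$, i.e., upgrading a coarser ($\kappa$--) closeness out to some large radius $\rr$ into a finer ($\kappa'$--) closeness up to radius $\rr'$. This is not a formal consequence of the inequality $\kappa' \leq \sM \kappa$ alone, since a $\kappa$--neighborhood is \emph{larger} than the corresponding $\kappa'$--neighborhood. The tool for exactly this upgrade is the strongly Morse property of \thmref{Thm:Strong}, applied to $b$ viewed as $\kappa'$--contracting, with $\kappa$ itself playing the role of the auxiliary sublinear function in that statement. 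Specifically, set $\nn = \sM \rr'/(2\kappa'(\rr'))$, which is an upper bound on $\mm_b^\kappa(\qq, \sQ)$ over all $(\qq, \sQ)$ with $\mm_b^{\kappa'}(\qq, \sQ)$ small compared to $\rr'$; let $\sR = \sR(b, \rr', \nn, \kappa)$ be supplied by \thmref{Thm:Strong}, and pick $\rr \geq \sR$ large enough that $\mm_b^\kappa(\qq, \sQ) \leq \rr/(2\kappa(\rr))$ for every admissible $(\qq, \sQ)$, which is possible by sublinearity of $\kappa$. Then for $\bfa \in \calU_\kappa(\bfb, \rr) \cap \partial_{\kappa'} X$ and any eligible $(\qq, \sQ)$--quasi-geodesic $\alpha \in \bfa$, the $\calU_\kappa$--hypothesis yields $d_X(\alpha(t_\sR), b) \leq \mm_b^\kappa(\qq, \sQ)\cdot \kappa(\sR) \leq \nn \cdot \kappa(\sR)$, which is exactly the endpoint condition in \thmref{Thm:Strong}; the theorem then delivers $\alpha|_{\rr'} \subseteq \calN_{\kappa'}(b, \mm_b^{\kappa'}(\qq, \sQ))$, so $\bfa \in \calU_{\kappa'}(\bfb, \rr')$. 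The main obstacle is the orchestration of constants: $\nn$, $\sR$, and $\rr$ must be chosen in that order so that both the two smallness requirements and the endpoint hypothesis of \thmref{Thm:Strong} are simultaneously satisfied.
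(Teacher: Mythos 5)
Your overall strategy is sound and your substantive direction (showing that for each $\rr'$ there exists $\rr$ with $\calU_\kappa(\bfb, \rr) \cap \partial_{\kappa'} X \subseteq \calU_{\kappa'}(\bfb, \rr')$, applying Theorem~\ref{Thm:Strong} with $b$ viewed as $\kappa'$-contracting and $\kappa$ as the auxiliary sublinear function) is exactly what the paper does in the second half of its proof, with the same orchestration of constants. However, your ``easy direction'' --- the claimed inclusion $\calU_{\kappa'}(\bfb, \rr) \subseteq \calU_\kappa(\bfb, \rr)$ at the \emph{same} radius --- has a genuine gap. You write $\mm_b^{\kappa}(\qq, \sQ) = \sM \cdot \mm_b^{\kappa'}(\qq, \sQ)$ as if the two canonical Morse gauges were related by that scalar. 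They are not: the gauge $\mm_b$ in Definition~\ref{neighbourhoods} is the specific one constructed in Theorem~\ref{Thm:Strong} (eq.~\eqref{mz}) from the contracting constant $\cc_b$, and a ray that is $\kappa'$-contracting with constant $\cc$ may well be $\kappa$-contracting with a constant \emph{much smaller} than $\sM\cc$ --- for instance a ray that is $1$-contracting is $\kappa$-contracting with the same constant for every $\kappa$. If $\mm_b^{\kappa} < \sM\,\mm_b^{\kappa'}$, both of your deductions break: a $(\qq,\sQ)$ with $\mm_b^{\kappa}(\qq,\sQ)$ small compared to $\rr$ in the $\kappa$-sense need not have $\mm_b^{\kappa'}(\qq,\sQ)$ small compared to $\rr$ in the $\kappa'$-sense, and the containment $\calN_{\kappa'}(b, \mm_b^{\kappa'}(\qq,\sQ)) \subseteq \calN_\kappa(b, \mm_b^{\kappa}(\qq,\sQ))$ would require $\sM\,\mm_b^{\kappa'}(\qq,\sQ) \leq \mm_b^{\kappa}(\qq,\sQ)$, which you have not established.

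The paper avoids this issue by \emph{not} trying to compare $\calU_{\kappa'}(\bfb,\rr)$ with $\calU_\kappa(\bfb,\rr)$ at the same radius. Instead it applies Theorem~\ref{Thm:Strong} symmetrically in both directions. For the direction you call ``easy,'' the paper fixes $\rr$ and produces a possibly much larger $\sR$ with $\calU_{\kappa'}(\bfb,\sR) \subseteq \calU_\kappa(\bfb,\rr)$: here $b$ is viewed as $\kappa$-contracting, $\kappa'$ plays the auxiliary role, $\nn = \mm_b'(\qq,\sQ)$, and the hypothesis $\bfa \in \calU_{\kappa'}(\bfb,\sR)$ delivers exactly the endpoint estimate $d_X(\alpha(t_\sR), b) \leq \mm_b'(\qq,\sQ)\cdot\kappa'(\sR)$ needed to trigger the theorem's conclusion $\alpha|_\rr \subset \calN_\kappa(b, \mm_b(\qq,\sQ))$. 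This is entirely parallel to your substantive direction with the roles of $\kappa$ and $\kappa'$ swapped, and it requires no comparison between $\mm_b^\kappa$ and $\mm_b^{\kappa'}$. You should replace your ``easy direction'' with this symmetric application of Theorem~\ref{Thm:Strong}; once you do, your proof will agree with the paper's.
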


\begin{proof}
It is immediate from the definition that $\partial_{\kappa'} X$ is a subset of 
$\partial_{\kappa} X$. First we have to show that the intersection of an open set in 
$\partial_{\kappa}X$ with $\partial_{\kappa'} X$ is open in $\partial_{\kappa'} X$. 

Let $\calV$ be an open set in $\pka X$ and consider 
$\bfb \in \calV \cap \partial_{\kappa'} X$. Let $\mm_b$ be the $\kappa$--Morse gauge 
for $b$ and let $\mm_b'$ be the $\kappa'$--Morse gauge for $b$. 
Let radius $\rr>0$ be such that  $\calU_{\kappa}(\bfb, \rr) \subset \calV$. 
We need to find radius $\sR$ so that 
$\calU_{\kappa'}(\bb, \sR) \subset \calU_{\kappa}(\bfb, \rr)$. 
For any $\qq, \sQ$, where $\mm_b(\qq, \sQ)$ is small compared to $\rr$, 
there is $\sR=\sR(b, \rr, \mm'_b, \kappa'(\qq, \sQ))$ as in \thmref{Thm:Strong}. 
We denote the maximum such radius again with $\sR$. 

Let $\bfa \in \calU_{\kappa'}(\bb, \sR)$ and let $\alpha \in \bfa$ be a 
$(\qq, \sQ)$--quasi-geodesic such that $\mm_b(\qq, \sQ)$ is small compared to $\rr$. 
Taking $\sR$ even larger if needed, we can assume that $\mm_b'(\qq, \sQ)$ is small 
compare to $\sR$. Then, $\bfa \in \calU_{\kappa'}(\bb, \sR)$ implies that
\[
d_X(\alpha_\sR, b) \leq \mm_b'(\qq, \sQ) \cdot \kappa'(\sR). 
\]
By \thmref{Thm:Strong},
\[
\alpha|_{\rr} \subset \calN_\kappa\big(b, \mm_b(\qq, \sQ)\big). 
\]
Since this holds for every such $\alpha \in \bfa$, we have 
$\bfa \in \calU_\kappa(\bfb, \rr)$. Therefore, 
\[
\calU_{\kappa'}(\bfb, \sR) \subset \calU_\kappa(\bfb, \rr) \subset \calV.
\] 
That is, every such point $\bfb$ is in the interior of $\calV \cap \partial_{\kappa'} X$
and $\calV \cap \partial_{\kappa'} X$ is open in $\partial_{\kappa'} X$.

Next we show that every open set in $\partial_{\kappa'} X$ is the intersection of an open set of 
$\partial_{\kappa} X$ with $\partial_{\kappa'} X$. It suffices to show that given an open set 
$\calV' \subset \partial_{\kappa'} X$, and a point $\bfc \in \calV'$, there exists a neighbourhood 
$\calU_{\kappa}(\bfc, \rr)$ such that $\calU_{\kappa}(\bfc, \rr) \cap \partial_{\kappa'} X \subset \calV'$. 
By definition of the topology there exists an open set $\calU_{\kappa'}(\bfc, \rr_{c}) $ such that 
\[ \bfc \in \calU_{\kappa'}(\bfc, \rr_c) \subset \calV'.\]

 Now by Theorem~\ref{Thm:Strong}, there exists $\sR_c$ such that for any $(\qq, \sQ)$--quasi-geodesic 
 $\eta$ where $(\qq, \sQ)$ is small compared to $\rr_c$,
\[
d_X\big(\eta(t_\sR), c\big) \leq  \kappa(\sR_c)
\quad\Longrightarrow\quad
\eta[0, t_{\rr_{c}}] \subset \calN_{\kappa'}\big(c, \mm_c(\qq, \sQ)\big). 
\]
That is to say, 
\[
\calU_{\kappa}(c, \sR) \cap \partial_{\kappa'} X\subset \calU_{\kappa'}(c, \rr_{c})
\]
Let $\calW_c$ be the interior of $\calU_{\kappa}(c, \sR)$ which is an open set in $\partial_\kappa X$
and still contains $\bfc$. We have
\[
\calW_c \cap \partial_{\kappa'} X\subset \calU_{\kappa'}(c, \rr_{c}),
\]
and therefore, 
\[
 \calV' = \bigcup_{ \bfc \in \calV'} (\calW_c \cap \partial_{\kappa'} X )
  =\left(\bigcup_{ \bfc \in \calV'} \calW_c \right) \cap \partial_{\kappa'} X.
\]
This finishes the proof. 
\end{proof}

\section{Boundary of a \CAT group}
Let $G$ be a finitely generated group that acts geometrically on $X$, that is, properly 
discontinuously, co-compactly and by isometries. Let $\go$ denote the base-point of $X$.
Equip $G$ with the word length associated to some generating set. Also, 
given an element $g \in G$, denote the image of $\go$ under the action of 
$g$ by $g \go$. Then the map 
\[
\Psi \from G \to X, \qquad \Psi(g) = g \go 
\]
defines a quasi-isometry between $G$ and $X$ which means there is an association 
between quasi-geodesics in $G$ and in $X$. Hence, we can define 
$\pka G$ to be $\pka X$. Namely, consider a path $P=\{ g_i\}_{i=0}^\infty$ 
in $G$ such that $g_0 = id$ and $g_i$ and $g_{i+1}$ differ by a generator. 
Define $\beta_P$ to be the ray in $X$ that is a concatenation of 
geodesic segments $[g_i, g_{i+1}]$. If $\beta_P$ is a $\kappa$-Morse 
quasi-geodesic in $X$, then we say $g_i \to [\beta_P]$. In other words, 
$\pka G$ is the set of $\kappa$--equivalence classes of quasi-geodesic rays in $G$ 
so that the associated quasi-geodesic in $X$ is $\kappa$-Morse. 

However, $G$ may act geometrically on different \CAT spaces. To show 
$\pka G$ is well defined, we need to show different such spaces give 
the same boundary for $G$. We show, more generally, that $\pka X$ is invariant 
under quasi-isometry. 

\begin{theorem}\label{invarianttopology}
Consider proper \CAT metric spaces $X$ and $Y$ and let $\Phi \from X \to Y$ be a 
$(\kk, \sK)$--quasi-isometry. Then $\Phi$ induces a homeomorphism 
$\Phi^\star \from \partial_\kappa X \to \partial_\kappa Y$ 
for every sublinear function $\kappa$ where, for $\bfb \in \pka X$ and $\beta \in \bfb$, 
\[
\Phi^\star(\bfb) = [\Phi \circ \beta]. 
\]
\end{theorem}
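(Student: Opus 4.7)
The plan is to establish four properties in sequence: well-definedness of $\Phi^\star$ on equivalence classes with image in $\pka Y$, bijectivity, continuity, and continuity of the inverse. The last of these will follow by applying the same argument to the quasi-inverse $\Phi^{-1}$, so only the first three require separate work.

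For well-definedness, I first show that if $\beta$ is a $\kappa$--Morse $(\qq,\sQ)$--quasi-geodesic in $X$, then $\Phi \circ \beta$ (after the continuity adjustment of \defref{Def:Quadi-Geodesic}) is a $\kappa$--Morse quasi-geodesic in $Y$. Given a $(\qq', \sQ')$--quasi-geodesic $\zeta$ in $Y$ with endpoints on $\Phi \circ \beta$, the composition $\Phi^{-1} \circ \zeta$ is a quasi-geodesic in $X$ whose endpoints sit within $\sK$ of $\beta$; prepending and appending short geodesic segments produces a quasi-geodesic with endpoints on $\beta$ to which the Morse property of $\beta$ applies. Pushing the resulting $\kappa$--neighborhood estimate forward via $\Phi$, and using \lemref{Lem:sublinear-estimate} together with the inequality $\Norm{\Phi(x)}_Y \le \kk \Norm{x}_X + \sK$ to compare $\kappa$ computed in $X$ with $\kappa$ computed in $Y$, yields a Morse gauge for $\Phi \circ \beta$ that depends only on $\mm_\beta$, $\kk$ and $\sK$. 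The same estimates show that $\kappa$--fellow traveling is preserved by $\Phi$, so $\Phi^\star$ descends to equivalence classes and takes values in $\pka Y$.

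Bijectivity follows from running the same construction on $\Phi^{-1}$: for any $\beta$, the path $\Phi^{-1} \circ \Phi \circ \beta$ stays within bounded distance of $\beta$, so $(\Phi^{-1})^\star \circ \Phi^\star$ and $\Phi^\star \circ (\Phi^{-1})^\star$ are the identity on classes. To prove continuity at $\bfb \in \pka X$, write $\bfb' = \Phi^\star(\bfb)$, with unique geodesic rays $b$ in $X$ and $b'$ in $Y$. Given $\rr > 0$, I want to produce $\rr'$ so that $\calU_\kappa(\bfb, \rr') \subset (\Phi^\star)^{-1} \calU_\kappa(\bfb', \rr)$. For $\bfa \in \calU_\kappa(\bfb, \rr')$ and a $(\qq_Y, \sQ_Y)$--quasi-geodesic $\alpha'$ representing $\Phi^\star(\bfa)$ with $\mm_{b'}(\qq_Y, \sQ_Y)$ small compared to $\rr$, I use \lemref{Lem:surgery} and the quasi-inverse to replace $\alpha'$ on the ball of radius $\rr$ by $\Phi \circ \alpha$ for some $\alpha \in \bfa$ with controlled constants. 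Points of $\alpha'|_\rr$ correspond to points of $\alpha$ up to a radius $\sR$ in $X$ comparable to $\kk \rr + \sK$, so if $\rr'$ is chosen large enough, the hypothesis $\bfa \in \calU_\kappa(\bfb, \rr')$ together with \thmref{Thm:Strong} in $X$ shows that $\alpha|_\sR$ lies in a $\kappa$--neighborhood of $b$. Pushing forward and using that $\Phi \circ b \in \bfb'$ lies in a bounded $\kappa$--neighborhood of $b'$ (\corref{Cor:m_b}), I get that $\alpha'|_\rr$ is near $b'$, and a final application of \thmref{Thm:Strong} in $Y$ sharpens the constant to the required Morse gauge $\mm_{b'}(\qq_Y, \sQ_Y)$.

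The hard part is continuity, where one must simultaneously track the multiplicative and additive QI distortions, the bounded distortion $\kappa$ suffers when comparing $\Norm{\cdot}_X$ with $\Norm{\cdot}_Y$, and the fact that the \emph{small compared to radius} condition appearing in \defref{neighbourhoods} lives in a different space for each boundary. The enabling mechanism is the two-sided use of \thmref{Thm:Strong}: in $X$ it converts the hypothesis $\bfa \in \calU_\kappa(\bfb, \rr')$ into fellow-traveling up to a prescribed intermediate radius, and in $Y$ it upgrades a coarse-gauge conclusion into the precise Morse gauge demanded by the neighborhood system of $\pka Y$.
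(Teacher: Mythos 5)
Your proof follows the paper's overall architecture—well-definedness, bijectivity, continuity in one direction, continuity in the other by passing to the quasi-inverse—and both rely on \thmref{Thm:Strong} as the enabling tool. The substantive difference is the direction chosen for continuity. The paper proves $(\Phi^\star)^{-1}$ continuous: to show $\bfa_X \in \calU_\kappa(\bfb_X, \rr)$ it takes an arbitrary quasi-geodesic $\alpha \in \bfa_X$, pushes it forward, applies the hypothesis $\bfa_Y \in \calU_\kappa(\bfb_Y, \rr')$ to $\Phi\alpha$, pulls the resulting estimate back through $\Phi^{-1}$, and then applies \thmref{Thm:Strong} once, in $X$. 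Because the ``to show'' quantifies over quasi-geodesics in $X$, the push-forward $\Phi\alpha$ is automatically the object to which the hypothesis applies, so no surgery is needed and \thmref{Thm:Strong} is invoked only once. You instead prove $\Phi^\star$ continuous directly, so you must start from an arbitrary $\alpha'\in\Phi^\star(\bfa)$ that need not be of the form $\Phi\alpha$; this is why you reach for \lemref{Lem:surgery} and use \thmref{Thm:Strong} twice. Your route is valid, but can be simplified even within its chosen direction: setting $\bar\alpha := \Phi^{-1}\circ\alpha'$ gives a quasi-geodesic in $\bfa$, and after absorbing the $(\kk,\sK)$--distortion of quasi-geodesic constants into the choice of $\rr'$ (which you must do regardless), the hypothesis $\bfa\in\calU_\kappa(\bfb,\rr')$ applies to $\bar\alpha$ directly; this eliminates both the surgery and the first use of \thmref{Thm:Strong} in $X$. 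The single application of \thmref{Thm:Strong} that is genuinely needed is in $Y$, to upgrade the pushed-forward coarse bound—distorted by factors of $\kk,\sK$ and by the bounded $\kappa$--offset between $\Phi b$ and $b'$ from \corref{Cor:m_b}—into the exact Morse gauge $\mm_{b'}(\qq_Y,\sQ_Y)$ demanded by $\calU_\kappa(\bfb',\rr)$.
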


\begin{proof}
For a quasi-geodesic ray $\zeta \from [0, \infty) \to X$ in $X$ let 
$\Phi \zeta$ be a quasi-geodesic ray in $Y$ constructed from the composition of 
$\zeta$ and $\Phi$ as in \defref{Def:Quadi-Geodesic}. It is immediate
from the definition that two quasi-geodesics $\zeta$ and $\xi$ in $X$ $\kappa$--fellow 
travel each other if and only if $\Phi \zeta$ and $\Phi \xi$ $\kappa$--fellow travel 
each other in $Y$. Also (again immediate from the definition) the property of being 
$\kappa$--Morse is preserved under a quasi-isometry. Hence, 
$[\zeta] \in  \partial_\kappa X$ if and only if $[\Phi \zeta] \in \partial_\kappa Y$.  
Therefore, $\Phi^\star$ defined as above gives a bijection between $\partial_\kappa X$ 
and $\partial_\kappa Y$. We need to show that $(\Phi^\star)^{-1}$ is continuous. Then, the 
same argument applied in the other direction will show that $\Phi^\star$ is also continuous 
which means $\Phi^\star$ is a homeomorphism. 

Let $\calV$ be an open set in $\pka X$, $\bfb_X \in \calV$ and 
$\calU_{\kappa}(\bfb_X, \rr)$ be a neighborhood $\bfb$ that is contained in $\calV$. 
Let $\bfb_Y = \Phi^\star(\bfb_X)$. We need to show that there is a constant $\rr'$ 
such that, for every point $\bfa_Y \in \calU_\kappa(\bfb_Y, \rr')$, we have 
\[
\bfa_X = (\Phi^\star)^{-1}(\bfa_Y) \in \calU_\kappa(\bfb_X, \rr).
\]
Let $\qq'$ and $\sQ'$ be constants (depending on $\qq, \sQ, \kk$ and $\sK$)
such that if $\zeta$ is a $(\qq, \sQ)$--quasi-geodesic where
$\mm_{b_X}(\qq, \sQ)$ is small compared to $\rr$ then $\Phi \zeta$ is a 
$(\qq', \sQ')$--quasi-geodesic. Let $b_X$ be the unique geodesic ray in $\bfb_X$, 
let $b_Y$ be the unique geodesic ray in $\bfb_Y$ and let $\mm_{b_X}$ and 
$\mm_{b_Y}$ be their Morse gauges respectively. 
By \corref{Cor:m_b}, there is
a constant $\nn_1$ depending on $\kk$, $\sK$ and $\mm_{b_Y}$ such that 
\[
\Phi b_X \subset \calN_\kappa(b_Y, \nn_1). 
\]
For 
\[
\nn = \kk \big( \mm_{b_Y}(\qq', \sQ') + \nn_1\big) (\kk + \sK) + \sK
\]
let $\sR = \sR(b_X, \rr, \nn, \kappa)$ as in \thmref{Thm:Strong} and 
choose $\rr'$ such that $\rr' \geq \kk \, \sR + \sK$ and $\mm_{b_Y}(\qq', \sQ')$ is small 
compare to $\rr'$.

Let $\alpha \in \bfa_X$ be a $(\qq, \sQ)$--quasi-geodesic where 
$\mm_{b_X}(\qq, \sQ)$ is small compared to $\rr$ such that 
$\Phi \alpha \in \calU_{\kappa}(\bfb_Y, \rr')$. By our choice of $\rr'$, 
$\mm_{b_Y}(\qq', \sQ')$ is small compared to $\rr'$. Hence, 
\[
\Phi \alpha|_{\rr'} \subset \calN_\kappa\big(b_Y, \mm_{b_Y}(\qq', \sQ')\big)
\] 
Pick $x \in \alpha_X|_{\sR}$. Then $\Phi x \in \Phi \alpha|_{\rr'}$ and we have 
\begin{align*} 
d_X(x, b_X) &\leq \kk (d_Y(\Phi(x), \Phi b_X) + \sK\\
& \leq \kk \Big(d_Y(\Phi(x), b_Y) + \nn_1 \cdot \kappa(\Phi x) \Big) + \sK\\
& \leq \kk \big( \mm_{b_Y}(\qq', \sQ') + \nn_1\big) \cdot \kappa(\Phi x) + \sK
\end{align*} 
This and 
\[
\kappa(\Phi x) \leq \kk \kappa(x) + \sK \leq (\kk + \sK) \kappa(x)
\]
imply that 
\[
\alpha|_{\sR} \subset \calN_\kappa(b_X, \nn). 
\]
Now, \thmref{Thm:Strong} implies that 
\[
\alpha|_\rr \subset \calN_\kappa(b_X, \mm_{b_X}). 
\]
Therefore, $\bfa_X \in \calU_{\kappa}(\bfb_X, \rr)$ and
\[
(\Phi^\star)^{-1} \calU_{\kappa}(\bfb_Y, \rr') \subset \calU_{\kappa}(\bfb_X, \rr). 
\]
But $ \calU_{\kappa}(\bfb_Y, \rr') $ contains an open neighborhood of $\bfb_Y$, therefore, 
$\bfb_Y$ is in the interior of $\Phi \calV$. This finishes the proof. 
\end{proof}

\section{Examples}
\subsection*{A tree of flats} \label{Sec:example}
In this section we examine $\kappa$--boundaries of a few simple examples to illustrate 
several typical properties of $\kappa$--boundaries of \CAT spaces. 
Consider the right-angled Artin group
\[
A = \ZZ^{2} \ast \ZZ = \Big\langle g_{1} ,g_{2} ,g_{3} \ST [g_{1}, g_{2}] \Big\rangle.
\]
Let $X_A$ be the universal cover of the Salvetti complex of $\ZZ^{2} \ast \ZZ$, or simply the 
\emph{universal Salvetti complex}, as in Definition~\ref{salvetti}. 
We observe that $X_A$ is a tree of flats. The flats are associated to orbits of
conjugate copies of the subgroup 
\[\big \langle g_{1}, g_{2} \st  [g_{1}, g_{2}] \big\rangle \simeq \ZZ^2. \] 
The oriented edges that are outside of these flats are labelled $g_{3}$. 

We equip $X_A$ with a metric so that each flat is isometric to the Euclidean
plane $\EE^2$, the axes of $g_{1}$ and $g_{2}$ intersect 
at a 90-degree angle and edges labeled $g_3$ are attached at the lattice points. 
The space is simply connected and the metric on $X_A$ is \CAT. 
The closest-point projection of any flat to any other flat is a single point. 
Also, since flats are convex subspaces, given a geodesic ray in $X_A$, 
there is a well-defined \emph{itinerary of flats} that the geodesic passes through. 
Choose a base-point $\go$ where an edge labeled $g_3$ is attached to
a flat and let $Y_0$ be the flat that contains $\go$. 
As before, we always assume a geodesic ray starts at $\go$. 
 
We give a characterization of the $\kappa$-contracting rays in $X_A$. 
First we need the following lemma:

\begin{lemma}\label{uniqueflat}
Let $b$ be a geodesic ray in $X_A$. Given any ball  $B$ disjoint from $b$. 
The projection $\pi_b(B)$ of $B$ to $b$ lies inside a unique flat.
\end{lemma}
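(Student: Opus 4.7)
The plan is to exploit the tree-of-flats structure of $X_A$ together with continuity of closest-point projection. Since $B$ is a metric ball in a \CAT space it is convex and in particular connected, and by \lemref{Lem:CAT}(ii) the projection $\pi_b \from X_A \to b$ is Lipschitz. Therefore $\pi_b(B)$ is a connected subset of the geodesic ray $b$, which is isometric to $[0,\infty)$, so $\pi_b(B)$ is a subinterval $I$ of $b$.

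Next, I would unpack how $b$ sits inside $X_A$. Since each flat is convex, $b$ meets every flat of its itinerary $F_0 = Y_0, F_1, F_2, \dots$ in a (possibly degenerate) convex sub-arc, and consecutive flats $F_{j-1}$ and $F_j$ are joined along $b$ by a single $g_3$-edge $e_j$ whose two endpoints are a lattice point $q_{j-1} \in F_{j-1}$ and a lattice point $p_j \in F_j$. The crucial local observation is that since the generator $g_3$ does not commute with any other generator of $A = \ZZ^2 \ast \ZZ$, no $2$-cell of the Salvetti complex is glued along $g_3$; consequently no $2$-cell of $X_A$ is attached along the interior of any $e_j$. A small neighborhood in $X_A$ of an interior point $m \in e_j$ is therefore just an open sub-arc of $e_j$ itself, so any point whose nearest point on $b$ is $m$ already lies on $b$. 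This yields $\pi_b^{-1}(m) = \{m\}$ for every interior point $m$ of every $e_j$.

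To finish, I would combine the two facts. Because $B \cap b = \emptyset$, the interval $I = \pi_b(B)$ contains no interior point of any edge $e_j$. If $I$ contained a point of $b \cap F_j$ together with a point of $b \cap F_k$ for some $j \ne k$, then connectedness of $I$ would force $I$ to contain the entire portion of $b$ between them, in particular an interior point of some intermediate $g_3$-edge, contradicting the previous sentence. Hence $I$ is contained in the closure of $b \cap F_j$ for a single index $j$, that is, inside the single flat $F_j$.

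The step most deserving of care, and the main place where the specific presentation of $A = \ZZ^2 \ast \ZZ$ is used, is the identification $\pi_b^{-1}(m) = \{m\}$ for interior points $m$ of $g_3$-edges; this rests on $g_3$ belonging to no commuting relation of $A$, and in a more general RAAG one would need to reanalyse each edge type.
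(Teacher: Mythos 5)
Your proof is correct and follows essentially the same strategy as the paper's: the crux in both is that interior points of $g_3$-edges cannot be hit by $\pi_b$ from any point off $b$, and then one is confined to a single flat. The paper reaches the key step by directly comparing $d(x, w)$ to $d(x, v_1)$ using the cut-point structure, whereas you phrase it as $\pi_b^{-1}(m) = \{m\}$ via the local structure of $X_A$ near an edge-interior point — logically equivalent, though your "so any point whose nearest point on $b$ is $m$ already lies on $b$" slides quickly over the reason (one must note that the geodesic $[x,m]$ enters the small neighborhood and hence hits a point of $e_j \subset b$ strictly closer to $x$, contradicting minimality). On the other hand, you spell out the connectedness step (convexity of balls plus Lipschitz projection implies $\pi_b(B)$ is a subinterval of $b$), which the paper invokes only implicitly in its final sentence, so on that point your write-up is slightly more complete than the published proof.
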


\begin{proof}
Assume for contradiction that $\pi_b(B)$ contains a point $w$ in the interior of an 
edge $e = (v_{1}, v_{2})$ labelled by $g_{3}$. Then $b$ traverses $e$.
The point $w$ is a cut-point of $X_A$. Let $v_1$ be the vertex of the edge
$e$ that is in the same component as the center of the ball $B$.  
Since $\pi_b(B)$ contains a point $w$, there exists a point $x \in B$ where $w= x_b$. 
However, we have 
\begin{equation}\label{noedge}
d(x, w)  > d(x, v_{1}) \geq d(x, b).
\end{equation}
This contradicts the assumption that $w$ is a nearest-point projection from $x$ to $b$. 
This holds for every edge labeled $g_3$. Hence $\pi_b(B)$ in contained a single flat.
\end{proof}

\begin{lemma}\label{characterization}
A unit speed geodesic ray $b$ in $X_A$ is $\kappa$--contracting, if and only if, 
there exists a constant $\cc$ such that if $b[t_1, t_2]$ is contained in a flat, then
\[ |t_{1}- t_{2}| \leq \cc \cdot \kappa(t_1). \] 
\end{lemma}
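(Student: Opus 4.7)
I would prove the two directions separately, leveraging the tree-of-flats structure of $X_A$. For the $(\Leftarrow)$ direction, let $B$ be any ball centered at $x$ that is disjoint from $b$. By \lemref{uniqueflat}, $\pi_b(B)$ lies inside a single flat $F$, hence in $b \cap F$. Since $F$ is convex and $X_A$ is uniquely geodesic, $b \cap F$ is a single connected interval; write $b \cap F = b[t_1, t_2]$ for the maximal such interval. Because closest-point projection in a \CAT space is $1$-Lipschitz (\lemref{Lem:CAT}), $t_1 \leq \|\pi_b(x)\| \leq \|x\|$, so $\kappa(t_1) \leq \kappa(x)$. Then
\[
\diam_X(\pi_b(B)) \leq t_2 - t_1 \leq \cc \cdot \kappa(t_1) \leq \cc \cdot \kappa(x),
\]
so $b$ is $\kappa$-contracting with constant $\cc$.

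For the $(\Rightarrow)$ direction, assume $b$ is $\kappa$-contracting with constant $\cc_b$ and let $b[t_1, t_2]$ be a maximal segment of $b$ lying inside a flat $F$, of length $L = t_2 - t_1$. Identify $F \cong \EE^2$ so that $b[t_1, t_2]$ runs from $(0,0)$ to $(L, 0)$, and set $x = (L/2,\, L/2 + 1)$, $B = B(x, L/2)$. Within $F$, $B \cap F$ projects onto the entire segment $[0, L] \times \{0\}$, so $\diam_X(\pi_b(B)) \geq L$. To verify that $B$ is disjoint from $b$ in all of $X_A$: inside $F$ the distance is $L/2 + 1 > L/2$; meanwhile, the tree-of-flats structure forces $b$ to exit $F$ only at $b(t_1)$ or $b(t_2)$ and never return, so a geodesic from $x$ to any point of $b$ outside $F$ must pass through one of these attaching vertices, whose distance from $x$ equals $\sqrt{(L/2)^2 + (L/2+1)^2} > L/2$ by Pythagoras. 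Hence $\kappa$-contraction yields $L \leq \cc_b \cdot \kappa(\|x\|)$, and the triangle inequality $\|x\| \leq t_1 + L + 1$ gives $L \leq \cc_b \cdot \kappa(t_1 + L + 1)$. Using the sublinearity bound $\kappa(s) \leq s/(2\cc_b) + A$ for some $A$, this extracts $L \leq t_1 + K$ for a constant $K = K(\cc_b, \kappa)$; then $t_1 + L + 1 \leq 2 t_1 + K + 1$, and by concavity together with $\kappa \geq 1$ (alternatively via \lemref{Lem:sublinear-estimate}), $\kappa(2 t_1 + K + 1) \leq \sC \cdot \kappa(t_1)$ for some $\sC$, giving $L \leq \cc \cdot \kappa(t_1)$ with $\cc = \cc_b \sC$.

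The main obstacle is the disjointness verification in the $(\Rightarrow)$ direction: one needs the tree-of-flats fact that $b$ cannot re-enter $F$ after leaving it, which localizes the external portion of $b$ past a single attaching vertex, and then a Pythagorean estimate shows $d(x, b) > L/2$. After that, \lemref{uniqueflat} and routine sublinearity manipulations handle everything else.
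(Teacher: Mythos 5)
Your proof is correct and takes essentially the same approach as the paper's in both directions: the $(\Leftarrow)$ direction uses \lemref{uniqueflat} to localize the projection of a disjoint ball into a single flat and then applies the hypothesis, and the $(\Rightarrow)$ direction builds a disjoint ball perpendicular to the in-flat segment, applies the contracting property, and finishes with a sublinearity estimate. The only cosmetic difference is that in the $(\Rightarrow)$ direction the paper centers its ball above $b(t_1)$ at distance $L$ with radius $L$ and invokes \lemref{Lem:sublinear-estimate} to pass from $\kappa(t_2)$ to $\kappa(t_1)$, whereas you center the ball above the midpoint with radius $L/2$ and unroll the sublinearity computation explicitly; both choices work.
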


\begin{figure}[H]
\begin{tikzpicture}[scale=0.3]
\tikzstyle{vertex} =[circle,draw,fill=black,thick, inner sep=0pt,minimum size=.5 mm]
 
[thick, 
    scale=1,
    vertex/.style={circle,draw,fill=black,thick,
                   inner sep=0pt,minimum size= .5 mm},
                  
      trans/.style={thick,->, shorten >=6pt,shorten <=6pt,>=stealth},
   ]

\begin{scope}
        \myGlobalTransformation{-57}{0};
        \draw [black!50, step=2cm] grid (14, 10);
         \draw [thick, draw=black, fill=yellow, fill opacity=0.5]
       (0,0) -- (0,10) -- (14,10) --(14,0)--cycle;
\end{scope}

\draw[thick, red] (9, 0.6)--(4, 1.2)--(4, 3.8){};

\node [vertex] at (9, 0.6)[label=right:$\go$] {}; 
\node  at (18, 1)[label=left:$Y_{0}$] {}; 

\draw (12, 2.4) to (12, 5.9){};

\begin{scope}
        \myGlobalTransformation{256}{150};
        \draw [black!50, step=2cm] grid (8, 8);
        \draw [thick, draw=black, fill=yellow, fill opacity=0.5]
       (0,0) -- (0,8) -- (8,8) --(8,0)--cycle;   
\end{scope}

\node [vertex] at (12, 2.4){};
\node [vertex] at (12, 4.1){};
\node [vertex] at (12, 5.9){};


\begin{scope}
        \myGlobalTransformation{-171}{75};
        \draw [black!50, step=2cm] (0, -2) grid (12, 8);
        \draw [thick, draw=black, fill=yellow, fill opacity=0.5]
       (0,-2) -- (0,8) -- (12,8) --(12,-2)--cycle;
\end{scope}

\draw[thick, red] (4, 3.8)--(1, 4.45)--(1, 7.15){};

\node at (2.8, 3.4) [label=left:${b[ t_1, t_2] }$] {}; 

\node [vertex] at (4, 1.2){};
\node [vertex] at (4, 3.8){};


\begin{scope}
        \myGlobalTransformation{-285}{150};
        \draw [black!50, step=2cm] grid (12, 8);
        \draw [thick, draw=black, fill=yellow, fill opacity=0.5]
       (0,0) -- (0,8) -- (12,8) --(12,0)--cycle;
\end{scope}

\draw[thick, red] (1, 7.15)--(-4, 6){};

\draw[->, red] (1, 7.15)--(-4, 6){};
\draw[dashed, red] (-4, 6)--(-6, 5.5) {};

\node [vertex] at (1, 4.45){};
\node [vertex] at (1, 7.15){};

\end{tikzpicture}
\end{figure}

\begin{proof}
First we consider the ``if'' direction. Let $\{Y_i \}$ be 
the sequence of flats visited by $b$. By Lemma~\ref{uniqueflat}, if a ball $B$ is disjoint 
from $b$ then $\pi_b(B)$ is contained in some $Y_i$. Let $[t_1, t_2]$ be the interval 
of time where the image of $b$ is in $Y_i$. 

Let $x$ be the center of the ball $B$ and $y$ be any other point in $B$. 
By Lemma~\ref{uniqueflat}, $x_{b} = b(t)$ for $t \in [t_1, t_2]$. Therefore, 
 \[
\Norm x \geq \Norm{x_b} \geq t_{1}.\]
Thus we have 
\[
d_{X_A}(x_b, y_b) \leq |t_{2} - t_{1}| \leq \cc \cdot \kappa(t_{1}),
\]
which means $b$ is $\kappa$--contracting and we can set $\cc_b=\cc$. 
 
For the ``only if'' direction, assume $b$ is $\kappa$-contracting with $\cc_b$ as a contracting
constant. For an interval $b[t_1, t_2]$ that stays in a flat, consider the ball $B$ whose 
center $x$ is at a distance $|t_{2} - t_{1}|$ from the point $b(t_1)$ in a perpendicular direction
from the segment $b[t_1, t_2]$ and with radius $(t_2 -t_1)$. Then 
$\pi_b(B)=b[t_1, t_2]$. The definition of $\kappa$-contracting geodesic ray dictates that 
 \[ |t_2 - t_1| \leq \cc_{b} \cdot \kappa(x) \leq \cc_{b} \cdot \kappa(t_{1}+(t_2-t_{1})) = \cc_{b} \cdot \kappa(t_2) . \]
 By Lemma~\ref{Lem:sublinear-estimate}, 
 \[
 \kappa(t_2) \leq \cc' \cdot \kappa(t_1),
 \]
for some $\cc'$ depending on $\kappa$ and $\cc_b$. 
Thus we have 
\[ |t_2 - t_1| \leq \cc_{b} \cdot \kappa(t_2) \leq \cc_{b} \cc' \cdot \kappa(t_1).\]
 \end{proof}

\begin{proposition}\label{Prop:strict}
If $\kappa(t), \kappa'(t)$ are two sublinear functions such that,
\[
\lim_{t \to \infty} \frac{\kappa'(t)}{\kappa(t)} = 0
\]
Then $\partial_{\kappa'} X_A \subsetneq \partial_{\kappa} X_A$, that is to say, 
$ \partial_{\kappa} X_A$ strictly contains $\partial_{\kappa'} X_A$.
\end{proposition}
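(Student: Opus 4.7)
The inclusion $\partial_{\kappa'} X_A \subseteq \partial_\kappa X_A$ will follow from Proposition \ref{subspacetopology} once I observe that the hypothesis $\kappa'(t)/\kappa(t) \to 0$ implies this ratio is globally bounded by some $\sM$: it is bounded at infinity by assumption, and on any compact interval $[0,T]$ by continuity of both functions together with $\kappa \geq 1$. Hence $\kappa'(t) \leq \sM \cdot \kappa(t)$ for every $t$.

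For the strictness, my plan is to construct a single geodesic ray $b$ belonging to $\partial_\kappa X_A \setminus \partial_{\kappa'} X_A$ by exploiting the flat-sojourn characterization in Lemma \ref{characterization}. Note first that $\kappa(t) \to \infty$: otherwise $\kappa$ would be bounded, forcing $\kappa' \to 0$, contradicting $\kappa' \geq 1$. I will define $b$ inductively as the concatenation of in-flat geodesic segments connected by $g_3$-edges. Starting from $\go \in Y_0$ with $t_0 = 0$, at step $i$, having entered flat $Y_i$ at a lattice point $p_i$ at time $t_i$, I pick a lattice point $p_{i+1} \in Y_i$ with $\kappa(t_i) \leq d(p_i, p_{i+1}) \leq 2\kappa(t_i)$ whose outgoing $g_3$-edge leads into a flat $Y_{i+1}$ not yet visited. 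Such a $p_{i+1}$ exists because each flat carries a $\ZZ^2$-lattice of attachment points of spacing $1$ and $\kappa(t_i) \geq 1$. Letting $s_i = d(p_i, p_{i+1})$ and $t_{i+1} = t_i + s_i + 1$, the resulting ray is globally geodesic because, as in Lemma \ref{uniqueflat}, the projection between any two distinct flats is a single point, so the concatenation never backtracks.

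The verification is then immediate from Lemma \ref{characterization}. By construction $b$ spends time exactly $s_i \leq 2\kappa(t_i)$ inside each $Y_i$, so the lemma yields $b \in \partial_\kappa X_A$ with constant $\cc = 2$. Conversely, if $b \in \partial_{\kappa'} X_A$, the lemma produces a constant $\cc'$ with $s_i \leq \cc' \cdot \kappa'(t_i)$ for every $i$; combining with the built-in lower bound $s_i \geq \kappa(t_i)$ gives
\[
\frac{\kappa(t_i)}{\kappa'(t_i)} \leq \cc' \quad \text{for every } i.
\]
But $t_i \to \infty$ (since $s_i + 1 \geq 2$), and the hypothesis $\kappa'(t)/\kappa(t) \to 0$ forces $\kappa(t_i)/\kappa'(t_i) \to \infty$, a contradiction. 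Hence $[b] \in \partial_\kappa X_A \setminus \partial_{\kappa'} X_A$.

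The main obstacle is the geometric realization: one must verify that the prescribed sojourn distances $s_i \in [\kappa(t_i), 2\kappa(t_i)]$ can be achieved by genuine attachment points within each flat, and that the concatenation of such segments is a globally unit-speed geodesic of $X_A$. Both issues are handled uniformly by the tree-of-flats structure combined with the density of the $\ZZ^2$-lattice of attachment points inside each flat; once these are in place, the conclusion is a straightforward double application of Lemma \ref{characterization}.
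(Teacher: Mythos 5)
Your proof is correct and follows essentially the same strategy as the paper: construct a geodesic ray whose excursion in each flat is on the order of $\kappa(t)$ (hence $\kappa$-contracting by Lemma~\ref{characterization}) but strictly exceeds any constant multiple of $\kappa'(t)$ (hence not $\kappa'$-contracting by the same lemma). The paper uses a fixed exponential schedule with long vertical chains of $g_3$-edges between flats, while you use a tighter recursive schedule with one $g_3$-edge per step, and you are more careful than the paper in deriving the global bound $\kappa' \leq \sM\kappa$ needed to invoke Proposition~\ref{subspacetopology}; these are cosmetic differences.
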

\begin{proof}
The fact that $\partial_{\kappa'} X \subseteq \partial_{\kappa} X$ follows 
Proposition~\ref{subspacetopology}. We give a specific construction of a geodesic ray 
$b$ that is in $\pka X$ but not $\partial_{\kappa'} X$. The ray $b$ is the concatenation
of vertical segments $v_i$ consisting of edges labeled $g_3$ and horizontal segments 
$h_i$ that are contained in a single flat. 

Let $i_0$ be an integer so that $2^{i_0} \geq \kappa(2^{i_0})$ and let $b_{i_0}$ 
be a vertical segment of length $2^{i_0}$. For $i>i_0$, assume a segment $b_{i-1}$
is given. Continue $b_{i-1}$ along a horizontal segment $h_i$ of length 
$\lfloor \kappa(2^{i}) \rfloor$, then along a vertical segment $v_i$ of length 
$\lceil 2^{i} -\kappa(2^{i})\rceil$ and denote the resulting segment by $b_i$. 
We see inductively that $|b_i|= 2^{i-1}$ because, 
\[
|b_i| = |b_{i-1}| + |h_i|+ |v_i|  = 
2^i + \lfloor \kappa(2^{i}) \rfloor + 
\lceil 2^{n} -\kappa(2^{i})\rceil = 2^{i+1}. 
\]
Also, 
\[
\lfloor \kappa(2^{i}) \rfloor = |h_i| \leq \kappa(|b_{i-1}|). 
\]
That is, if we let the ray $b$ be the union of the segments $b_i$, then $b$ 
satisfies \lemref{characterization} for the sublinear function $\kappa$ but not for
$\kappa'$. Hence, $[b] \in (\pka X - \partial_{\kappa'} X)$. 
\end{proof}

As an easy consequence, we have

\begin{corollary}
There exists two \CAT spaces that are not distinguishable by their Charney-Sultan 
contracting boundaries \cite{contracting} but are distinguishable by their sublinear Morse boundaries.
\end{corollary}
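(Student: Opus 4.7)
The strategy is to pair $X = X_A$ with a Gromov hyperbolic \CAT space $Y$ having the same Charney--Sultan boundary as $X$. A natural candidate is the CAT$(-1)$ analog of $X_A$: replace each Euclidean flat $\EE^2$ with a hyperbolic plane $\HH^2$, glued along the same combinatorial pattern of $g_3$-edges dictated by the Bass--Serre tree of $A = \ZZ^2 \ast \ZZ$. Since $Y$ is $\delta$-hyperbolic, $\partial_\kappa Y$ coincides with the Gromov boundary $\partial Y$ for every sublinear $\kappa$ (as noted in the introduction); in particular $\partial^{\mathrm{CS}} Y$ is homeomorphic to $\partial_\kappa Y$ for every $\kappa$.

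\lemref{characterization} characterizes Morse rays in $X_A$ as those with bounded residence time in each flat (the case $\kappa \equiv 1$), and the same combinatorial description applies to $Y$ by an analogous argument once flats are replaced by hyperbolic planes (which are themselves $\delta$-hyperbolic). The common tree-of-pieces structure produces a bijection between the two sets of Morse rays, and a compatibility check on the Charney--Sultan direct-limit topology promotes it to a homeomorphism $\partial^{\mathrm{CS}} X \cong \partial^{\mathrm{CS}} Y$.

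Granting this identification, the corollary follows from \propref{Prop:strict}: taking $\kappa' \equiv 1$ and any unbounded sublinear $\kappa$ yields $\partial^{\mathrm{CS}} X \subsetneq \partial_\kappa X$ as sets, and the inclusion is a topological embedding by \propref{subspacetopology}. Meanwhile $\partial_\kappa Y = \partial^{\mathrm{CS}} Y$ consists entirely of Morse points. The explicit non-Morse rays built in the proof of \propref{Prop:strict} (with horizontal excursions of length $\lfloor \kappa(2^i)\rfloor$) arise as $\kappa$-topological limits of Morse rays while themselves failing the CS criterion, furnishing a local topological feature in $\partial_\kappa X$ that is absent in $\partial_\kappa Y$; this blocks any homeomorphism between the two.

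The main technical obstacle is the identification $\partial^{\mathrm{CS}} X \cong \partial^{\mathrm{CS}} Y$ in the Charney--Sultan direct-limit topology, since the direct-limit structure is sensitive to the Morse gauges and requires careful bookkeeping of how finite-radius neighborhoods match under the tree-of-pieces bijection. The rest of the argument is a direct application of \propref{Prop:strict} together with the hyperbolic rigidity of $\partial_\kappa Y$.
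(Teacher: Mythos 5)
Your construction does not work as stated because the claimed identification $\partial^{\mathrm{CS}} X_A \cong \partial^{\mathrm{CS}} Y$ is false, not merely technically delicate. Since $Y$ is Gromov hyperbolic, \emph{every} geodesic ray in $Y$ is Morse, so $\partial^{\mathrm{CS}} Y$ is the full Gromov boundary $\partial Y$; because $Y$ contains totally geodesic copies of $\HH^2$, $\partial Y$ contains arcs (subsets of the circle $\partial \HH^2$) and is therefore not totally disconnected. By contrast, $\partial^{\mathrm{CS}} X_A$ consists only of rays with uniformly bounded residence time in flats, and as a subspace of $\pka X_A$ it is totally disconnected (\propref{totallydisconnected} together with \propref{subspacetopology}); the direct-limit topology of \cite{contracting} is at least as fine, so it too is totally disconnected. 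Thus the two CS boundaries are not homeomorphic, and the example does not exhibit two spaces ``not distinguishable by CS boundaries.'' Separately, even granting the identification, the last step --- asserting that the presence of non-Morse limit points in $\pka X_A$ ``blocks any homeomorphism'' to $\pka Y$ --- is only a heuristic; you would need an actual topological invariant, which you do not produce.

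The paper's argument avoids both problems by modifying the \emph{metric} on $X_A$ itself rather than changing the local geometry of the pieces. Taking $X_{\sqrt{\param}}$ and $X_{\log}$ to be $X_A$ with flats at height $n$ rescaled by $\sqrt n$ and $\log n$ respectively, both spaces remain trees of flats with the same underlying combinatorics, so their CS boundaries are the same countable set of rays that eventually travel only in the $g_3$-direction (identified by the equivariant map). But \lemref{characterization} shows that the $\log$--boundary of $X_{\sqrt{\param}}$ consists only of those eventually-$g_3$ rays (countable), while the $\log$--boundary of $X_{\log}$ contains rays visiting infinitely many flats (uncountable). Cardinality is then the distinguishing topological invariant, cleanly and without any topology-matching. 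If you want to keep your hyperbolic-replacement idea, you would at minimum need a $Y$ whose \emph{full} Gromov boundary is totally disconnected and homeomorphic (with the direct-limit topology) to $\partial^{\mathrm{CS}} X_A$ --- the hyperbolic-plane substitution does not give this --- and you would still need a concrete invariant separating $\pka X_A$ from $\pka Y$.
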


\begin{proof}
We can adjust the metric on $X_A$ by changing the lengths of the edges.
We say a flat $Y$ is at \emph{height $n$}, if the geodesic segment connecting $\go$
to any point in $Y$ traverses through $n$ edges labeled by $g_{3}$ (in either direction). 
Let $X_{\sqrt \param}$ be the space obtained from $X_A$ where the side lengths of unit 
squares in flats at height $n$ is scaled to $\sqrt{n}$. Similarly, let $X_{\log}$ be 
the space obtained from $X_A$ the side lengths of unit squares in flats at height $n$ 
is scaled to $\log(n)$. 

Since $\sqrt{n}$ grows faster than $\log n$, the $\log$--boundary of $X_{\sqrt{\param}}$ is 
a set that contains geodesic rays that eventually cannot travel even one edge in any flat. That is to say, after a finite time, this geodesic ray will travel along the $g_{3}$ direction only. The number of such geodesic rays is countable. 

On the other hand, we see from Lemma~\ref{characterization} that the $\log$--boundary 
of $X_{\log}$ consists of geodesic rays whose projections to any flat are bounded by 
$\log$ of the time they enter the flat. A geodesic in this boundary therefore can travel in 
infinitely many flats. Therefore, the $\log$--boundary of $X_{\log}$ is an uncountable set.

Meanwhile, the Morse boundaries of both $X_{\sqrt{\param}}$ and  $X_{\log}$ consist of 
geodesic rays that  eventually travel along the $g_{3}$ direction only. By the previous 
argument the Morse boundary of $X_{\sqrt{n}}$ and the Morse boundary of $X_{\log}$ 
are homeomorphic via the equivariant map of the group.
\end{proof}

\subsection*{Random Walks}

As mentioned in the introduction, one motivation for constructing 
the $\kappa$--Morse boundary is to study random walks on a group. (For details of 
construction of random walks on groups, see the Appendix.) In the setting
of the group $A$ acting on $X_A$, Theorem~\ref{T:log-exc} tells us that, for
almost every sample path in $X_A$, the maximum amount of time spent on a given 
flat after $n$ steps is bounded by $\cc \cdot \log n$. 

Furthermore, since $X_A$ is \CAT, by \cite{Kar-Mar}, almost every sample path $w=\{ w_n\}$ tracks 
a geodesic ray in $X_A$ which we denote $b_w$. That is, there is $\uu>0$ such that 
the distance between $w_n(\go)$ and $b(\uu \cdot n)$ grows sublinearly with $n$. 
Therefore, for every flat $Y$, the projection of $w_n(\go)$ to $Y$ is eventually the same 
as the projection of $b(\uu \cdot n)$ to $Y$, which is the point in $Y$ where $b_w$ exits 
$Y$. In fact, if $n$ is larger than a fixed multiple of $d_{X_A}(\go, Y)$, then $w_n(\go)$ is 
closer to $b_w$ than to $Y$, and hence the path connecting $w_n(\go)$ to $b_w$ is 
disjoint from $Y$ and projects to a point in $Y$. 

By the above theorem, $d_{Y} \big(\pi_{Y}(1), \pi_{Y}(w_{n})\big)$ grows only 
logarithmically. Hence, the time $b_w$ spends in $Y$ is less than
a multiple of the distance between $Y$ and $\go$. That is, $b_w$ satisfies the 
condition of \lemref{characterization} and hence $[b_w] \in \partial_{\log} X_A$. 

By  \cite{hitting}, the visual boundary of the Salvetti complex of a right-angled Artin group together with the hitting measure constitutes a 
metric model for the Poisson boundaries of the group. Since 
almost every sample path converges to a point in $\partial_{\log} X_A$,
we have:

\begin{corollary}
Let $\mu$ be a symmetric, finitely supported probability measure on $A = \ZZ \ast \ZZ^{2}$. 
Then $\partial_{\log} X_A$ is a metric model for the Poisson boundary 
$(\ZZ^{2} \ast \ZZ , \mu)$. 
\end{corollary}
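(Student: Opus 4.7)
The plan is to combine three ingredients already in hand: the Karlsson--Margulis tracking theorem for \CAT spaces \cite{Kar-Mar}, the logarithmic excursion estimate (Theorem~\ref{T:log-exc}), and the internal characterization of $\log$--contracting rays in $X_A$ (Lemma~\ref{characterization}). The goal is to show that the hitting measure associated with the random walk is concentrated on $\partial_{\log} X_A$; combined with the fact that the visual boundary of the Salvetti complex is already a Poisson model \cite{hitting}, this will upgrade to the stated conclusion.

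First, I would invoke Karlsson--Margulis: since $X_A$ is proper \CAT, for $\mu$--a.e.\ sample path $w = \{w_n\}$ there exists a geodesic ray $b_w$ from $\go$ and a drift $\uu > 0$ such that $d_{X_A}(w_n \go,\, b_w(\uu n)) = o(n)$. The measurable map $w \mapsto [b_w]$ pushes the step-space measure forward to the hitting measure $\nu$ on $\partial_\infty X_A$, and by \cite{hitting} the pair $(\partial_\infty X_A, \nu)$ is a model for the Poisson boundary of $(A,\mu)$.

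Second, and this is the heart of the argument, I would show $[b_w]\in \partial_{\log} X_A$ almost surely. By Lemma~\ref{characterization} it suffices to produce a constant $\cc_w$ such that for every flat $Y$ traversed by $b_w$ on a time interval $[t_1,t_2]$, one has $|t_2-t_1|\leq \cc_w \cdot \log(t_1)$. Fix such a $Y$ and set $\rr = d_{X_A}(\go,Y)$. Because projections onto a flat are single points except when a geodesic actually crosses the flat, sublinear tracking guarantees that for all $n$ sufficiently larger than $\rr/\uu$ the sample point $w_n\go$ is closer to $b_w$ than to $Y$, so $\pi_Y(w_n\go)$ coincides with the endpoints of the $b_w$--excursion through $Y$. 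Theorem~\ref{T:log-exc} bounds the diameter of the sample-path projection by $\cc \log n$, and choosing $n$ comparable to $\rr/\uu$ yields the required bound $|t_2-t_1|\lesssim \log \rr \leq \cc_w \cdot \log t_1$.

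Third, once the hitting measure $\nu$ has been shown to sit on the measurable subset $\partial_{\log} X_A \subset \partial_\infty X_A$, the restriction $(\partial_{\log} X_A, \nu)$ inherits the Poisson-boundary property from the visual boundary. Together with Theorem~\ref{Intro:Metrizable} (metrizability) and Theorem~\ref{invarianttopology} (quasi-isometry invariance) this exhibits $\partial_{\log} X_A$ as a QI-invariant metric model. The main obstacle is the uniformity step above: the constant $\cc_w$ must control excursions through all of the (countably many) flats visited by $b_w$ with no dependence on $Y$. I expect this to require a Borel--Cantelli amalgamation, applying Theorem~\ref{T:log-exc} to a countable family of flats hit by the walk and using the sublinear tracking error to absorb the slack between the discrete-step diameter bound and the continuous excursion length of $b_w$.
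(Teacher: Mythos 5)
Your argument follows the same route as the paper: Karlsson--Margulis tracking, the logarithmic excursion bound of Theorem~\ref{T:log-exc}, the flat-excursion characterization of Lemma~\ref{characterization}, and then the Nevo--Sageev identification of the visual boundary as a Poisson model. The uniformity you flag at the end is not an additional gap to be filled: the second statement of Theorem~\ref{T:log-exc} already supplies, for almost every sample path, a \emph{single} constant $C$ with $\sup_J d_{s(J)}(1, w_n) \le C \log n$ for \emph{all} $n$; the supremum over joins is inside the probability, and the Borel--Cantelli over $n$ has already been carried out in the proof of that theorem. So no further amalgamation over the countably many flats is needed. Once you take $n$ a fixed multiple of $d_{X_A}(\go, Y)$ so that $w_n\go$ is separated from $Y$ by $b_w$, the excursion through $Y$ is bounded by $C\log n \lesssim \log d_{X_A}(\go, Y) = \log t_1$ with that one constant, which is exactly the hypothesis of Lemma~\ref{characterization}.
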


In the Appendix, this is generalized to the class of all right-angled Artin groups. 

\subsection*{Other topological properties of $\pka X_A$}
We have shown that $\pka X$ is metrizable which implies that it is, Hausdorff, normal 
and paracompact. However, $\pka X$ is often not compact. For the example
given in this section, we have:

\begin{proposition}\label{totallydisconnected}
The topological space $\pka X_A$ is non-compact, totally disconnected and 
with no isolated points.
\end{proposition}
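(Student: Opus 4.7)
The plan is to prove the three properties separately, leaning on the tree-of-flats description of $X_A$ and on \lemref{characterization}, which characterizes $\kappa$-contracting rays by the condition that the dwell time inside each visited flat is bounded by $\cc\cdot\kappa(t_{\rm entry})$. The backbone of every argument is the fact that the midpoint of each $g_3$-edge is a cut-point of $X_A$; combined with the fact that geodesic rays are isometric embeddings, this prevents a ray from doubling back across such a point.

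For non-compactness, I will exhibit a sequence with no convergent subsequence, invoking the metrizability of $\pka X_A$ from \thmref{Thm:Metrizable}. Define $b_n$ to be the ray that travels in $Y_0$ from $\go$ along the $g_1$-axis to the lattice point $(n,0)$ and then follows successive $g_3$-edges forever; the tree-of-flats structure shows $b_n$ is a genuine geodesic ray, and \lemref{characterization} certifies it as $\kappa$-contracting with constant $n$, the only positive-time flat visit being $Y_0$ at entry time $0$. If a subsequence $[b_{n_k}]$ converged in $\pka X_A$ to some $\bfc=[c]$, then for every radius $r$ and $n_k>r$ the neighborhood condition would pin $(r,0)$ within $o(r)$ of $c$; a short case analysis (exiting $Y_0$ is ruled out because the tree-of-flats distance from $c(t)$ to $(r,0)$ would grow linearly) forces $c$ to be the positive $g_1$-axis ray inside $Y_0$. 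But this ray lingers in $Y_0$ forever and hence fails \lemref{characterization}, contradicting $\bfc\in\pka X_A$.

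For total disconnectedness, I will construct a clopen separator for any two distinct classes $\bfa\neq\bfb$. By \thmref{Thm:TFAE} the classes contain unique distinct geodesic rays $a\neq b$, which must exit some common flat $Y_k$ through distinct $g_3$-edges $e_a,e_b$; set $U=\{\bfc\in\pka X_A:\;c\text{ crosses }e_a\}$. Openness is straightforward: crossing $e_a$ places $c(r)$ deep inside the far component $H_a$ at distance linear in $r$ from $\text{midpoint}(e_a)$, and the $o(r)$ perturbations coming from fellow-traveling cannot change the component. Closedness is the delicate step: when $c$ does not cross $e_a$, \lemref{characterization} forces $c$ to exit $Y_k$ in finite time through a different edge $e_c$ into a component $H_c$ disjoint from $H_a$; picking $r$ larger than $c$'s exit time, any nearby $c'$ sits in $H_c$ at radius $r$, and since $c'$ is an isometric embedding it cannot traverse the cut-point $\text{midpoint}(e_c)$ twice and therefore cannot return to $Y_k$ or reach $e_a$ later. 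Ruling out this late re-entry is the main obstacle of this part.

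For no isolated points, given $\bfb$ and $r>0$ I perturb $b$ inside a far-away flat. Choose $k$ so that $b$'s entry time $t_k$ into $Y_k$ is much larger than $r$, let $v=b(t_k)$, pick a lattice point $v''\in Y_k$ adjacent to $v$ and distinct from $b$'s exit point, and let $a$ coincide with $b$ up to $v$, take the unit edge $[v,v'']\subset Y_k$, and then leave via the $g_3$-edge at $v''$ to follow successive $g_3$-edges forever. The tree-of-flats structure makes $a$ a genuine geodesic ray distinct from $b$, and \lemref{characterization} still gives $\kappa$-contracting since only a unit excursion at entry time $t_k$ has been added. Verifying $\bfa\in\calU_\kappa(\bfb,r)$ is the final technical step: for each admissible $(\qq,\sQ)$-quasi-geodesic $\alpha\in[a]$, \corref{Cor:m_b} controls $\alpha$'s distance to $a$, and since $a\equiv b$ on $[0,t_k]$, the strong-Morse property \thmref{Thm:Strong} applied with $Z=b$ at an intermediate radius $\sR<t_k$ delivers the required inclusion $\alpha|_r\subset\calN_\kappa(b,\mm_b(\qq,\sQ))$, provided $t_k$ was chosen large enough in terms of $r$ and $\mm_b$.
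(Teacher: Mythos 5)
Your proposal follows the paper's strategy: all three claims are organized around the sets $\calW(e)=\{\bfc : c \text{ traverses } e\}$ for vertical ($g_3$-labeled) edges $e$, and the paper also uses the sequence of rays that leave $Y_0$ at $(j,0)$ and then go straight in the $g_3$-direction to exhibit non-compactness.

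The places where you deviate are in the level of detail rather than in strategy, and a couple of them are actually improvements. For \emph{no isolated points}, the paper simply picks arbitrary $b_i\in\calW(e_i)\setminus\{b\}$ and asserts that $\cap_i\calW(e_i)=\{b\}$ forces $b_i\to b$; on its face this requires the $\calW(e_i)$ to be a neighborhood basis of $\bfb$, which is not argued and is not clear a priori since the Morse gauge $\mm_c$ of an arbitrary $c\in\calW(e_i)$ is not uniformly controlled. Your construction of a specific perturbation $a$ (agreeing with $b$ up to $t_k$, then a unit detour, then $g_3$-direction forever) with uniformly bounded contraction constant, followed by an application of \thmref{Thm:Strong}, fills that gap cleanly. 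For \emph{non-compactness}, the paper's observation that no single $e$ can lie on infinitely many $b_j$ (so $\{b_j\}$ can have no limit) is shorter than your contradiction argument, but both are valid. For \emph{total disconnectedness}, your closedness step has a small imprecision: if $c$ does not cross $e_a$ it need not reach the flat $Y_k$ at all, so one should say that $c$ at some point takes a $g_3$-edge $e'$ whose far component is disjoint from that of $e_a$, and then $\calW(e')$ gives the required open set in the complement; also, two distinct rays need not share a common flat through which they both exit, so the statement should be phrased in terms of the first $g_3$-edge where their itineraries diverge, exactly as the paper does. These are easily repaired and do not affect the soundness of the approach.
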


\begin{proof}
Let $e$ be any vertical edge in $X$, i.e. an edge labeled $g_{3}$. Define
$\calW(e) $ to be the set all points $\bfb \in \pka X$ where the geodesic ray $b \in \bfb$
traverses $e$. For any $\bfb \in \calW(e)$ and $\rr$ large enough, 
$\calU_{\kappa}(\bfb, \rr) \subset \calW(e)$. This is because, if $\bfa \in  \calU_{\kappa}(\bfb, \rr)$, then
the geodesic ray $a \in \bfa$ stays in a $\kappa$--neighborhood of $b$ for distance $\rr$,
namely $a|_\rr \subset \calN_\kappa\big(b, \mm_b(1,0)\big)$, and hence has to also traverse $e$. 
Therefore, $\calW(e)$ is an open set in $\pka X$. 

But $\pka X - \calW(e)$ is also open because it can be written as a union of sets of the form 
$\calW(e')$. For any $b' \not = b$ in $\pka X$, let $e$ be an edge traversed by $b$ and 
not by $b'$. Then $b \in \calW(e)$ and $b' \in \pka X - \calW(e)$ which are both open. 
Thus, $\pka X$ is totally disconnected. 

All sets $\calW(e)$ are homeomorphic to each other and contain more than one point in 
$\pka X$. Let $\{e_i\}$ be the set of vertical segments along $b$ and let $b_i \in \calW(e_i)$
be a point not equal to $b$. Since $\cap_i \calW(e_i)= b$, we have $b_i \to b$. 
That is, $\pka X$ has no isolated points. 

To see that $\pka X$ is not compact, consider a sequence of geodesics $\{ b_j \}$ where
each $b_j$ leaves the flat $Y_{0}$ at coordinate $(j, 0)$ and then follows the 
$g_{3}$--direction indefinitely. All geodesic rays $b_j$ are $\kappa$--contracting. 
But, the point-wise limit of this sequence is the geodesic that lies in $Y_{0}$
which is not contracting for any $\kappa$. In fact, $b_j$ has no limit point in $\pka X$ 
because if $b_j \to b$ then infinity many $b_j$ have to be contained $\calW(e)$ for some
$e$ along $b$. But this does not hold for any $e$. Therefore, $\pka X$ is not compact. 
\end{proof}

However, the boundary does not always have to be totally disconnected. 
In \cite{Jasoncounterexample}, Behrstock constructed a family of
right-angled Coxeter groups where the Morse boundary is not totally disconnected. 
And, since the Morse boundary is a topological subspace of $\pka X$
(see \lemref{subspacetopology}), the same holds for $\pka X$.

\appendix

\section{Poisson boundaries of Right-angled Artin groups} 

\begin{center}
\textsc{Yulan Qing}\footnote{Shanghai Center for Mathematical Sciences, Fudan University, Shanghai, China, \url{qingyulan@fudan.edu.cn}.} \textsc{and Giulio Tiozzo}\footnote{Department of Mathematics, University of Toronto, Toronto, ON, \url{tiozzo@math.utoronto.ca}.}
\end{center}
\bigskip 

As an application of sublinearly Morse boundaries, we show that when 
$\kappa = \sqrt{t \log t}$, the $\kappa$--Morse boundary of the universal Salvetti complex 
is a model for the Poisson boundary of a right-angled Artin group. This establishes Theorem 
\ref{T:intro-Poisson} in the introduction.

Let $\Gamma$ be a finite graph, and let $A(\Gamma)$ be the right-angled Artin group associated to $\Gamma$, which is defined by the presentation
\[ \A := \big \langle v \text{ is a vertex in }\Gamma \ | \ [v, w] = 1, (v, w) \text{ is an edge in }\Gamma \big \rangle. \]
That is to say, there is an infinite order generator for each vertex, and a pair of generators 
commute if and only if there is an edge between the two corresponding vertices in $\Gamma$. 

Each right-angled Artin group is associated with a cube complex known as its 
\emph{Salvetti complex}, and its universal cover $X(\Gamma)$ 
is a proper CAT(0) space on which $\A$ acts cocompactly. We call $X(\Gamma)$ the 
\emph{universal Salvetti complex}. The main theorem of this appendix is the following.

\begin{theorem} \label{T:Poisson-raag}
Let $\mu$ be a finitely supported, generating measure on an irreducible right-angled Artin group $A(\Gamma)$. Then the $\sqrt{t \log t}$--Morse boundary of  $X(\Gamma)$  is a 
QI-invariant topological model for the Poisson boundary of $(A(\Gamma), \mu)$.
\end{theorem}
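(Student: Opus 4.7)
The plan is to prove Theorem \ref{T:Poisson-raag} by combining three ingredients: sublinear tracking of sample paths by geodesic rays, a geometric characterization of $\kappa$--contracting rays in $X(\Gamma)$, and the fact (\cite{hitting}) that the visual boundary of $X(\Gamma)$ together with the hitting measure is already a model for the Poisson boundary of $(A(\Gamma),\mu)$. The overall strategy is to show that, for $\kappa(t)=\sqrt{t\log t}$, almost every sample path converges to a point of $\partial_{\kappa} X(\Gamma)$, and that pushing the hitting measure forward under this convergence produces a $\mu$--boundary that coincides with the full Poisson boundary.

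First, I would extend Lemma \ref{characterization} from the special case $A=\ZZ\ast\ZZ^{2}$ to a general right-angled Artin group by characterizing $\kappa$--contracting geodesic rays in $X(\Gamma)$ in terms of their itinerary through the standard flats of the universal Salvetti complex. The key geometric input is a generalization of Lemma \ref{uniqueflat}: in $X(\Gamma)$, the projection of any metric ball disjoint from a geodesic ray $b$ onto $b$ is contained in a single standard flat. Consequently, $b$ is $\kappa$--contracting if and only if, for every subsegment $b[t_{1},t_{2}]$ contained in a standard flat, one has $|t_{2}-t_{1}|\le c\cdot\kappa(t_{1})$ for a uniform constant $c$.

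Next, I would invoke the Karlsson--Margulis theorem to produce, for almost every sample path $w=(w_{n})$, a tracking geodesic ray $b_{w}$ in $X(\Gamma)$ emanating from $\go$ that is sublinearly shadowed by the orbit $w_{n}\go$. The logarithmic-excursion result (Theorem \ref{T:log-exc}) then says that the total time $w$ spends in any single standard flat $Y$ is at most $C\log n$ after $n$ steps, and the standard diffusive estimate for the walk restricted to $Y$ gives a displacement of order $\sqrt{\log n}$ between entry and exit. Sublinear tracking propagates this to $b_{w}$: the length of $b_{w}\cap Y$ is $O(\sqrt{\log t})$, where $t$ is the entry distance of $b_{w}$ into $Y$, hence a fortiori $O(\kappa(t))$. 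By the characterization of the previous paragraph, this shows $[b_{w}]\in\partial_{\kappa}X(\Gamma)$ almost surely, so the hitting measure $\nu$ is concentrated on $\partial_{\kappa}X(\Gamma)$. QI--invariance of the boundary as a topological space is immediate from Theorem \ref{invarianttopology}.

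Finally, to identify $(\partial_{\kappa}X(\Gamma),\nu)$ with the Poisson boundary I would invoke \cite{hitting}, which asserts that the visual boundary of $X(\Gamma)$ with its hitting measure is a Poisson model for $(A(\Gamma),\mu)$. Since $\partial_{\kappa}X(\Gamma)$ carries full hitting measure inside the visual boundary, the equivariant Borel inclusion $\partial_{\kappa}X(\Gamma)\hookrightarrow\partial_{\infty}X(\Gamma)$ is a measure-space isomorphism onto a conull set, and $(\partial_{\kappa}X(\Gamma),\nu)$ inherits the Poisson-model property. The main technical obstacle, which constitutes the real content of the argument, lies in the second step: one must uniformly convert the probabilistic log-excursion statement into a geometric length estimate for the tracking geodesic across the infinitely many flats it traverses, ensuring that the sublinear tracking error does not swamp the flat-by-flat control and that the estimates hold simultaneously for all flats at all scales.
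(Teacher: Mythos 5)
Your overall strategy mirrors the paper's (tracking $+$ excursion control $+$ a geometric criterion for $\kappa$--contracting $+$ the Nevo--Sageev identification of the visual boundary with the Poisson boundary), but there are two concrete gaps, and one of them is serious enough that the argument as written would not go through.

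\textbf{The ``single flat'' lemma does not generalize.} You propose to extend Lemma~\ref{uniqueflat} (that the projection of a disjoint ball to a geodesic lies in a single standard flat) from $\ZZ\ast\ZZ^{2}$ to arbitrary irreducible $A(\Gamma)$. That lemma is proved using the cut--point structure of the tree of flats: every interior point of a $g_{3}$--edge separates $X_{A}$. In a general universal Salvetti complex, standard flats and join subcomplexes overlap in large convex subcomplexes, there are no such cut points, and a projection can straddle several flats. The paper does \emph{not} try to control projections to flats; it works with projections to maximal \emph{joins} and their sub--Salvetti complexes, defines $\kappa$--excursion geodesics via those projections, and proves Proposition~\ref{excursioniscontracting} (``excursion $\Rightarrow$ contracting'') using the contact graph, strongly separated hyperplanes, bridges (Lemma~\ref{SSH}), and the factor system / bounded geodesic image theorem (Lemma~\ref{L:bounded-geodesic}, Lemma~\ref{factorproperties}). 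This is a genuinely different, and substantially heavier, piece of machinery; the flat--by--flat characterization is specific to the tree--of--flats example in Section~6.

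\textbf{The quantitative estimate is wrong, and it misidentifies the origin of $\sqrt{t\log t}$.} Theorem~\ref{T:log-exc} controls the \emph{displacement} $d_{s(J)}(1,w_{n})\leq C\log n$ in each join, not the time spent there, and no ``diffusive'' $\sqrt{\log n}$ estimate is used or needed. More importantly, the $\sqrt{t}$ factor in $\kappa(t)=\sqrt{t\log t}$ does not come from the in--flat excursion of the sample path at all; it comes from the \emph{tracking error} between the sample path and the limiting geodesic ray. Proposition~\ref{P:deviation} gives $d_{X}(w_{n},\gamma(\ell n))\lesssim\sqrt{n\log n}$, and the paper's final computation is
\[
d_{s(J)}(1,g_{n}) \;\leq\; C_{1}\log(n/\ell) \;+\; C_{2}\sqrt{(n/\ell)\log(n/\ell)} \;\leq\; C_{3}\sqrt{n\log n},
\]
so the tracking term \emph{dominates}. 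Your claim that ``the length of $b_{w}\cap Y$ is $O(\sqrt{\log t})$'' is far too strong; if it were true the natural boundary would be $\partial_{\sqrt{\log t}}$, not $\partial_{\sqrt{t\log t}}$, and the theorem would read differently. In the last paragraph you correctly flag the tracking error as the danger point, but you frame it as something that must be shown \emph{not to swamp} the flat estimate --- whereas in fact it does swamp it, and that is precisely what fixes $\kappa$.
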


Let us now recall some background material and fundamental definitions.

\subsection*{Random walks and the Poisson boundary}

Let $G$ be a countable group of isometries of a metric space $X$, and let $\mu$ be a probability measure on $G$.
A measure $\mu$ is \emph{generating} if the semigroup generated by the support of $\mu$ equals $G$.
We define the \emph{random walk} associated to $(G, \mu)$ as the stochastic process
$$w_n := g_1 \dots g_n$$
where $(g_n)_{n \geq 1}$ is a sequence of $G$-valued i.i.d. random variables, each with distribution $\mu$. Let us fix a base point $x \in X$. 
The sequence $(w_n x)_{n \geq 1}$ is called a \emph{sample path} for the random walk. 

In most interesting situations, almost every sample path converges to a point in a suitable boundary $\partial X$; in that case, 
we define the \emph{hitting measure} $\nu$ on $\partial X$ as 
$$\nu(A) := \mathbb{P}\left( \lim_{n \to \infty} w_n x \in A \right).$$

A function $f : G \to \mathbb{R}$ is $\mu$-harmonic if it satisfies a discrete version of the mean value property; namely, $f(g) = \sum_{h \in G} \mu(h) f(gh)$ for any $g \in G$. We denote the space of bounded, $\mu$-harmonic functions as $H^\infty(G, \mu)$. 
Now, the \emph{Poisson transform} $\Phi : L^\infty(\partial X, \nu) \to H^\infty(G, \mu)$ is defined as $$\Phi(f)(g) := \int_{\partial X} f(g(x)) \ d\nu(x)$$
and the space $(\partial X, \nu)$ is the \emph{Poisson boundary} if $\Phi$ is an isomorphism. 

That is, the Poisson boundary is the natural space where all bounded harmonic functions can be represented. It is well-defined as a measurable $G$-space. 
For groups acting on \CAT metric spaces, an identification of the Poisson boundary is given as follows.

\begin{theorem}[\cite{Kar-Mar},  \cite{hitting}] \label{T:converge}
Let $G$ be a countable group of isometries of a \CAT proper metric space such that its action has bounded exponential growth, 
and let $\mu$ be a nonelementary measure on $G$ with finite first moment. Then: if the drift is zero, the Poisson boundary of $(G, \mu)$ is trivial; 
if the drift is positive, almost every sample path converges to the visual boundary of $X$, and the visual boundary with the hitting measure is a model for the Poisson boundary of $(G, \mu)$. 
\end{theorem}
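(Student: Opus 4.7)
\medskip

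The plan is to use Theorem~\ref{T:converge} as a black-box identification of the visual boundary with the Poisson boundary, and then upgrade this to the $\sqrt{t\log t}$--Morse boundary by showing that the hitting measure $\nu$ is concentrated on the subset $\partial_{\sqrt{t\log t}}X(\Gamma)\subset \partial_\infty X(\Gamma)$. Indeed, $A(\Gamma)$ acts geometrically on the proper \CAT complex $X(\Gamma)$; when $\Gamma$ is irreducible and $\mu$ is finitely supported and generating, $\mu$ is nonelementary with finite first moment and the action has bounded exponential growth, so Theorem~\ref{T:converge} applies and $(\partial_\infty X(\Gamma),\nu)$ is a model for the Poisson boundary of $(A(\Gamma),\mu)$. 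If we show $\nu\bigl(\partial_{\sqrt{t\log t}}X(\Gamma)\bigr)=1$, then the measurable $A(\Gamma)$--equivariant inclusion $\partial_{\sqrt{t\log t}}X(\Gamma)\hookrightarrow \partial_\infty X(\Gamma)$ identifies the sublinearly Morse boundary with the induced measure as a Poisson model as well. QI--invariance of the underlying topological space is then immediate from Theorem~\ref{invarianttopology}.

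The proof combines three ingredients. First, a \emph{deviation bound}: by a Karlsson--Margulis type argument strengthened by a deviation inequality in the \CAT cube setting, $\mu$--almost every sample path $w_n\go$ tracks a unique geodesic ray $b_w$ from $\go$ with $d_X(w_n\go,b_w(\uu n)) = o(\sqrt{n\log n})$, where $\uu$ is the drift. Second, a \emph{logarithmic excursion bound} (Theorem~\ref{T:log-exc}): the total number of steps the walk spends in any single standard flat $Y$ during times $1,\dots,n$ is $O(\log n)$ almost surely. Third, a \emph{geometric characterization} of $\kappa$--contracting rays generalizing Lemma~\ref{characterization}: since standard flats in $X(\Gamma)$ are convex, closest-point projections are $1$--Lipschitz, and the analog of Lemma~\ref{uniqueflat} holds (a ball disjoint from $b$ projects into a single standard flat), a ray $b$ is $\kappa$--contracting if and only if $|t_2-t_1|\leq \cc\,\kappa(t_1)$ for every maximal subsegment $b[t_1,t_2]$ contained in a standard flat.

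Granted these, fix a sample path $w$ on which both probabilistic conclusions hold, let $b_w$ be its tracking geodesic, and suppose $b_w[t_1,t_2]$ is a maximal segment inside a standard flat $Y$. Set $n_i:=\lceil t_i/\uu\rceil$. Since $\pi_Y$ is distance non-increasing, the deviation bound gives
\[
d_Y\bigl(\pi_Y(w_{n_i}\go),\,b_w(t_i)\bigr)\ =\ o\bigl(\sqrt{n_i\log n_i}\bigr),\qquad i=1,2.
\]
On the other hand, $\pi_Y\circ w$ can only move while $w$ is inside $Y$, and by the excursion bound the total number of such steps up to time $n_2$ is $O(\log n_2)$; since $\mu$ has bounded support, the net displacement of $\pi_Y\circ w$ between times $n_1$ and $n_2$ is $O(\log n_2)$. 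Combining,
\[
|t_2-t_1|\ =\ d_Y\bigl(b_w(t_1),b_w(t_2)\bigr)\ \leq\ O(\log n_2)+o(\sqrt{n_2\log n_2})\ =\ o\bigl(\sqrt{t_1\log t_1}\bigr),
\]
so by the third ingredient $b_w$ is $\sqrt{t\log t}$--contracting, and Theorem~\ref{Thm:TFAE} places $[b_w]$ in $\partial_{\sqrt{t\log t}}X(\Gamma)$ for $\mu$--a.e.\ sample path, completing the reduction.

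The main obstacle is the logarithmic excursion bound (Theorem~\ref{T:log-exc}) itself. Its proof must exploit the fact that for irreducible $\Gamma$, $A(\Gamma)$ is relatively hyperbolic with respect to the parabolic subgroups generated by maximal cliques (which correspond to the stabilizers of standard flats), or equivalently use the hyperplane/wall combinatorics in $X(\Gamma)$, together with a Sisto-style exponential decay of peripheral excursions; irreducibility is essential to prevent the walk from being trapped in a single flat. The deviation bound is more standard and any rate strictly faster than $\sqrt{n\log n}$ suffices, so its precise exponent is not critical; the generalization of Lemma~\ref{characterization} in the third ingredient is bookkeeping, replacing the tree-of-flats picture with the hyperplane decomposition of $X(\Gamma)$ and using convexity of standard flats.
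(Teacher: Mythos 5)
There is a genuine gap here: your proposal does not prove the statement at all, because its very first step is to ``use Theorem~\ref{T:converge} as a black-box identification of the visual boundary with the Poisson boundary.'' But Theorem~\ref{T:converge} \emph{is} the statement you were asked to prove, so the argument is circular with respect to the task. What you actually sketch is the downstream result in the appendix (Theorems~\ref{T:full-measure} and~\ref{T:Poisson-raag}): namely that for an irreducible right-angled Artin group the hitting measure concentrates on the $\sqrt{t\log t}$--Morse boundary, via the logarithmic excursion bound (Theorem~\ref{T:log-exc}), the $\sqrt{n\log n}$ tracking estimate (Proposition~\ref{P:deviation}), and the excursion characterization of $\kappa$--contracting rays. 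That is essentially the paper's own argument for those appendix theorems, but it is a different statement from the one in question.

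For the record, the paper does not prove Theorem~\ref{T:converge} either; it imports it from Karlsson--Margulis \cite{Kar-Mar} and Nevo--Sageev \cite{hitting}. A genuine proof would have to establish two things that your proposal never addresses for a general countable group of isometries of a proper \CAT space: (i) almost sure convergence of $w_n\go$ to a point of the visual boundary, which in \cite{Kar-Mar} comes from the multiplicative ergodic theorem producing, under the finite first moment hypothesis, a geodesic ray tracked sublinearly by the sample path (nonelementarity ensuring positive drift so that the ray is nondegenerate and the limit point well defined); and (ii) maximality of the resulting $\mu$--boundary $(\partial_\infty X,\nu)$ among $\mu$--boundaries, which is where the bounded exponential growth hypothesis enters, via Kaimanovich-type ray or strip criteria as implemented in \cite{hitting}. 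Neither the sublinear tracking statement nor the maximality argument can be extracted from the RAAG-specific estimates you invoke, so the proposal leaves the actual theorem unproved.
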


For more general measures, convergence to the visual boundary has been recently proven in \cite{FLM}. In this appendix, we prove: 

\begin{theorem} \label{T:full-measure}
Let $G = A(\Gamma)$ be an irreducible right-angled Artin group, let $\mu$ be a finitely supported, generating measure on $G$, and let $\nu$ 
be the hitting measure for the corresponding random walk. Then the $\kappa$--Morse boundary with $\kappa(t) =  \sqrt{t \log t}$ is a $G$-invariant subset of the visual boundary of full $\nu$-measure. 
\end{theorem}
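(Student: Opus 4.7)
The strategy is to show that, for $\mu$-a.e.\ sample path $w=(w_n)_{n\ge 0}$, the geodesic ray $b_w$ sublinearly tracked by $w_n\go$ is $\kappa$--Morse with $\kappa(t)=\sqrt{t\log t}$; this, together with the essentially tautological $G$-invariance (isometries preserve $\kappa$--Morseness, and the bounded shift in basepoint induced by $g\in G$ is absorbed by the sublinearity of $\kappa$), will give the theorem.

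First I would invoke Theorem~\ref{T:converge}: as $\mu$ is finitely supported and generating on the irreducible RAAG $A(\Gamma)$ acting properly and cocompactly on the proper \CAT space $X(\Gamma)$, $\mu$-a.e.\ orbit $w_n\go$ converges to a point $\xi(w)\in\partial_\infty X(\Gamma)$, and, writing $\ell>0$ for the drift, the ray $b_w$ from $\go$ to $\xi(w)$ is sublinearly tracked: $d_X(w_n\go,b_w(\ell n))=o(n)$ a.s. Next I would apply the logarithmic excursion theorem (\cite{relativehyperbolic}): for $\mu$-a.e.\ $w$ there is $\cc=\cc(w)$ such that for every standard flat $Y\subset X(\Gamma)$ (the convex subcomplex lifting a maximal-clique sub-torus of the Salvetti complex),
\[
\diam\bigl(\pi_Y\{w_k\go : k\leq n\}\bigr)\leq \cc\log n.
\]
If $b_w$ traverses $Y$ during $[t_1,t_2]$, then, using the log-excursion bound first to narrow the set of indices $k$ for which $\pi_Y(w_k\go)$ is near $b_w(t_2)$ to a window of length $O(\log n)$ about $n\asymp t_2/\ell$, the $o(n)$ tracking error becomes an $o(\log n)$ additive error inside this window. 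The triangle inequality on the $1$-Lipschitz projection then yields $|t_2-t_1|=O(\log t_1)$.

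Finally, I would extend \lemref{characterization} from the toy example $\ZZ^2\ast\ZZ$ to an arbitrary RAAG: a unit-speed geodesic $b$ in $X(\Gamma)$ is $\kappa$--contracting if and only if there is $\cc$ such that whenever $b[t_1,t_2]$ lies in a standard flat, $|t_2-t_1|\leq \cc\cdot\kappa(t_1)$. The argument parallels the one given for $\ZZ^2\ast\ZZ$, with the ``unique flat'' part replaced by the observation that a ball disjoint from $b$ in $X(\Gamma)$ projects into a single standard flat, which is a coarse separation statement about the Salvetti complex. Combining with the previous step and the fact that $\log t_1=o(\sqrt{t_1\log t_1})$, one concludes that $b_w\in\pka X(\Gamma)$ for $\mu$-a.e.\ $w$, and so $\nu(\pka X(\Gamma))=1$. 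The main obstacle is the transfer step: the Karlsson--Margulis error $o(n)$ is a priori much larger than the target $\log n$, so a naive triangle inequality cannot succeed; using the log-excursion estimate twice---first to localize the relevant time window and only then to apply tracking---is essential. A secondary technical point is the extension of \lemref{characterization}, where overlapping maximal cliques in $\Gamma$ require a more combinatorial separation argument than the literal cut-point argument available for $\ZZ^2\ast\ZZ$.
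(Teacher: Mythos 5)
There are two genuine gaps, both concerning the parts of your argument you flagged as delicate.

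\textbf{The transfer from sample-path excursion to geodesic excursion.} You correctly observe that the Karlsson--Margulis $o(n)$ tracking is too coarse to combine naively with the $\log n$ excursion bound for the random walk, but your proposed fix does not close the gap. Saying you use the log-excursion estimate ``to narrow the set of indices $k$ for which $\pi_Y(w_k\go)$ is near $b_w(t_2)$ to a window of length $O(\log n)$'' and then that ``the $o(n)$ tracking error becomes an $o(\log n)$ additive error inside this window'' conflates two unrelated quantities: the diameter of the window of time indices and the magnitude of the tracking error. The tracking error $d_X(w_k\go, b_w(\ell k))$ at any index $k\asymp n$ remains $o(n)$, however small the window; restricting $k$ to a short interval does not shrink it. The paper's actual route is to prove the sharper tracking estimate $d_X(w_n\go, \gamma(\ell n)) = O(\sqrt{n\log n})$ (Proposition~\ref{P:deviation}, which rests on Theorem~\ref{T:raag-track}, a RAAG version of Sisto's mapping-class-group tracking argument relying on the distance formula, bounded geodesic image, and quadratic divergence). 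Then the excursion of the tracked combinatorial geodesic is bounded by $C_1\log n + C_2\sqrt{n\log n} = O(\sqrt{n\log n})$, and this $\sqrt{t\log t}$ is precisely where the function $\kappa$ in the theorem comes from. If your ``$O(\log t_1)$'' bound for the geodesic excursion were provable, the theorem would in fact hold with $\kappa=\log t$ for all irreducible RAAGs, a strictly stronger statement than the paper proves (it is established in Section~6 only for $\ZZ^2\ast\ZZ$, and crucially via the tree-of-flats structure there).

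\textbf{Extending the excursion characterization to general RAAGs.} Your proposed analogue of \lemref{characterization} relies on the claim that ``a ball disjoint from $b$ in $X(\Gamma)$ projects into a single standard flat,'' i.e.\ an analogue of \lemref{uniqueflat}. That lemma genuinely uses the fact that $X_A = X(\ZZ^2\ast\ZZ)$ is a tree of flats in which the edges labeled $g_3$ yield cut-points separating distinct flats; in a general RAAG the maximal join subcomplexes overlap (they share hyperplanes), there are no such cut-points, and the conclusion fails as stated. The paper avoids this entirely: it defines a $\kappa$-excursion geodesic by bounding $d_{s(J)}(\Gg_{s(J)}(g_0), \Gg_{s(J)}(g_n))$ over all maximal joins $J$ (Eq.~\eqref{excursion}), and then proves Proposition~\ref{excursioniscontracting} (``excursion implies contracting'') by a chain of lemmas about strongly separated hyperplanes, the bridge between them, and the resulting itinerary of joins (Lemmas~\ref{SSH}, \ref{SSHsequence}, \ref{finiteprojection}, \ref{L:bridge}). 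That machinery replaces, and is substantially more involved than, the cut-point separation argument you hoped to carry over.

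The parts you have right: convergence to the visual boundary via Theorem~\ref{T:converge}, the use of the logarithmic excursion estimate for the random walk (Theorem~\ref{T:log-exc}), and the observation that $G$-invariance is essentially automatic. But without the $\sqrt{n\log n}$ tracking estimate and the bridge argument for Proposition~\ref{excursioniscontracting}, the proof does not go through.
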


Theorem \ref{T:full-measure} and Theorem \ref{T:converge} immediately imply Theorem \ref{T:Poisson-raag}, which is the same as Theorem \ref{T:intro-Poisson} in the introduction.

\subsection*{Background on cube complexes}

For all basic definitions related to right-angled Artin groups and the associated \CAT cube complex $X(\Gamma)$, we follow  \cite{intro}. 

\begin{definition}\label{salvetti}
Associated to a right-angled Artin group $\A$ is an infinite and locally finite cube complex called the \emph{Salvetti complex}, constructed as follows: associated to each vertex of $\A$ is a simple closed loop of unit length. If two vertices form an edge in $\A$ then attach to the two associated loops a square torus generated by the two loops intersecting at a right angle. More generally, given a complete subgraph on $k$ vertices, consider a unit $k$-torus generated by $k$ loops intersecting at right angles. The \emph{universal Salvetti complex} associated to $\A$, denoted as $X(\Gamma)$, is then the universal cover of this tori-complex. Notice that the $0$ and $1$-skeleta of $X(\Gamma)$ are isomorphic, respectively, to the $0$ and $1$-skeleton of the Cayley graph of $\A$ with this specific presentation.

\end{definition}
The universal Salvetti complex $X(\Gamma)$ is a \emph{\CAT cube complex}  \cite{Hag14}, which we discuss now.  A \emph{cube complex} is a polyhedral complex in which the cells
are Euclidean cubes of side length one. The attaching maps are isometries identifying the
faces of a given cube with cubes of lower dimension and the intersection of two cubes is a
common face of each. 
Cubes of dimension $0$, $1$ and $2$ are also referred to as vertices, edges and squares. 
A cube complex is finite dimensional if there is an upper bound on the dimension of its cubes.
Finally, a \emph{CAT(0) cube complex} is a simply connected cube complex in which the link of each vertex is a flag simplicial complex.

\subsection*{Hyperplanes and contact graph}

In a \CAT cube complex, consider the equivalence relation on the set of mid-cubes generated by the rule that two mid-cubes are related if they share a face. 
Then a \emph{hyperplane} $H$ is defined as the union of the mid-cubes in a single equivalence class. Every hyperplane $H$ is a geodesic subspace of $X(\Gamma)$ which separates $X(\Gamma)$ into two components. 
We shall refer to the each of these two components as a \emph{half-space}, and denote them as $\{ H^{+}, H^{-} \}$. Two hyperplanes provide four possible half-space intersections; the hyperplanes
\emph{intersect} if and only if each of these four half-space intersections is non-empty. In contrast, we say two convex subcomplexes $F_{1}, F_{2}$ are \emph{parallel} (and we denote it as $F_1 \sim F_2$) if, given any other hyperplane $H'$,
\[
F_{1} \cap H' \neq \emptyset \Leftrightarrow F_{2} \cap H' \neq \emptyset.
\]

We say a hyperplane $H$ \emph{separates} two hyperplanes $H_1, H_2$ if, given any pair of points $x \in H_1, y \in H_2$, all geodesics connecting $x$ and $y$ have non-empty intersection with $H$. 
Lastly, we say a (combinatorial) geodesic \emph{crosses} a hyperplane $H$ if there exists two consecutive vertices on the geodesic such that one belongs to $H^{+}$ and the other belongs to $H^{-}$. 

Given a finite graph $\Gamma$, a \emph{join} 
$J \subset \Gamma$ is an induced subgraph whose vertices can be partitioned into two sets $A$, $B$ such that all edges of the form $\{ (a, b) \ : \  a \in A, b \in B\}$ are edges of $J$. Recall a right-angled Artin group  $A(\Gamma)$ is \emph{irreducible} if $\Gamma$ itself is not a join.
Let $\calJ$ denote the set of all maximal joins of $\Gamma$, where maximality is defined by containment. 

\begin{remark}\label{joincontainslink}
By definition, every join between a vertex and its link is contained in a maximal join.
\end{remark}

\begin{definition}
The \emph{contact graph} $\calC(X)$ of a \CAT cube complex $X$ is a graph whose vertex set is the set of hyperplanes of $X$. 
Moreover, two vertices are adjacent if the corresponding hyperplanes $H_{1}, H_{2}$ satisfy one of the following:
\begin{itemize}
\item either $H_{1}$ intersects $H_{2}$ nontrivially; or
\item $H_{1}$ and $H_{2}$ are not separated by a third hyperplane.
\end{itemize}
It is known that the contact graph is always hyperbolic (in fact, a quasi-tree  \cite{Hag14}).
\end{definition}

\subsection*{Gates and projections}

Given a point  $x$ and a convex subset $Z$ of $X$, the nearest-point projection of $x$ to $Z$ exists and is unique by \CAT geometry. We denote it as $x_{Z}$. 

\begin{definition}
If $K \subset X$ is convex, then for all $x \in X^{(0)}$, there exists a unique closest 0-cube $\Gg_{K}(x) \in K$, called the \emph{gate} of $x$ in $K$. 
\end{definition}

The gate is characterized by the property that any hyperplane $H$ separates $\Gg_{K}(x)$ from $x$ if and only if $H$ separates $x$ from $K$. 

The convexity of $K$ allows us to extend the map $x \to \Gg_{K}(x)$ to a projection $\Gg_{K} \from X \to K$, which is a cubical map defined as follows.
Let $c$ be a $d$-dimensional cube of $X$ and let $H_{1}, H_{2} . . . , H_{d}$ be the collection of (pairwise-crossing) hyperplanes which cross $c$. 
Suppose that these are labeled so that $H_{1}, H_{2}...,H_{s}$ cross $K$, for some $0 \leq s \leq d$, and that $H_{s+1}, H_{s+2}...,H_{d}$ do not cross $K$. Then the 0-cubes of $c$ map by $\Gg_{K}$ to the 0-cubes of a uniquely determined $s$-dimensional cube $\Gg_{K}(c)$ of $K$ in which the hyperplanes $H_{1}, H_{2}...,H_{s}$ intersect, and there is a cubical collapsing map $c \simeq [-1, 1]^{d} \to [-1, 1]^{s} \simeq \Gg_{K}(c)$  extending the gate map on the 0-skeleton. 

\begin{definition}[Projection to the contact graph]  \label{D:contact-proj}
Let $K$ be a convex subcomplex of $X$.  Given a hyperplane $H$, let $\calN_\kappa(H)$ denote its \emph{carrier}, i.e., the union of all closed cubes intersecting $H$. 
For each $0$-cube $k \in K$, let $\{ H_i \}_{i \in \calI}$ be the collection of hyperplanes such that $k \in \calN_\kappa(H_i)$, and 
define $\rho_{K} \from K \to 2^{\mathcal{C}(K)}$ by setting $\rho_{K} (k) = \{ H_{i} \cap K\}_{{i \in \calI }}$. 
Let us now define the projection map $\pi_K : X \to 2^{\mathcal{C}(K)}$ by setting $\pi_{K} := \rho_{K} \circ \Gg_{K}$, where $\Gg_{K}(x)$ is the gate of $x$ in $K$.
\end{definition}

The following version of the bounded geodesic image theorem is inspired by  \cite[Proposition 4.2]{HHS1}.
Given a set $S \subseteq C(X)$, we use the notation $B_1(S)$ to denote the $1$-neighborhood of $S$.

\begin{lemma}[Bounded geodesic image theorem] \label{L:bounded-geodesic}
Let $X = X(\Gamma)$ be a universal Salvetti complex, let $J$ be a join and let $K \subseteq X(J)$  be a sub-Salvetti complex. Then if a path $\gamma$ in $X$
satisfies $\pi_X(\gamma) \cap B_1(\pi_X(J)) = \emptyset$, we have $\textup{diam } \pi_K(\gamma) \leq 1$.
\end{lemma}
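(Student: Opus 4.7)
The strategy is to show that the gate map $\mathfrak{g}_J \from X \to J$ collapses every vertex of $\gamma$ to one and the same $0$-cube of $J$. Since $K \subseteq J$ is itself convex, the gate maps factor as $\mathfrak{g}_K = \mathfrak{g}_K \circ \mathfrak{g}_J$, and $\pi_K$ factors through $\mathfrak{g}_K$ by Definition~\ref{D:contact-proj}, so this collapse will force $\pi_K(\gamma)$ into a single clique of $\calC(K)$, hence into diameter at most $1$.

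First I would prove a local version: for every hyperplane $H$ appearing as a vertex of $\gamma$, the image $\mathfrak{g}_J(N(H))$ is a single $0$-cube of $J$. The assumption $H \notin B_1(\pi_X(J))$ rules out two things simultaneously: $H$ itself does not cross $J$ (else $H \in \pi_X(J)$), and no hyperplane $W$ which crosses $J$ is adjacent to $H$ in $\calC(X)$ (else $H \in B_1(W) \subseteq B_1(\pi_X(J))$). Consequently no hyperplane crossing $J$ crosses the carrier $N(H)$, so any two $0$-cubes of $N(H)$ are separated only by hyperplanes disjoint from $J$, and hence share the same gate. Therefore $\mathfrak{g}_J(N(H))$ is a single $0$-cube $v_H$.

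Next I would show that $v_H$ does not depend on $H$ along $\gamma$. Consecutive vertices $H_i, H_{i+1}$ of $\gamma$ are adjacent in $\calC(X)$, which in a \CAT cube complex forces $N(H_i) \cap N(H_{i+1}) \neq \emptyset$: if the two hyperplanes cross, they share the corners of a common square; if they osculate, no hyperplane separates them and so their carriers meet in a $0$-cube. Picking $x \in N(H_i) \cap N(H_{i+1})$ yields $v_{H_i} = \mathfrak{g}_J(x) = v_{H_{i+1}}$. By induction, every vertex of $\gamma$ has the same gate $v \in J$.

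Finally, setting $w := \mathfrak{g}_K(v)$ and using the factorization $\mathfrak{g}_K = \mathfrak{g}_K \circ \mathfrak{g}_J$, we obtain $\mathfrak{g}_K(N(H)) = \{w\}$ for every $H$ on $\gamma$. Hence $\pi_K(H) = \rho_K(w)$ is the same set for every vertex of $\gamma$, namely the collection of hyperplanes of $K$ whose carriers all contain the single $0$-cube $w$. This set is a clique in $\calC(K)$ and so has diameter at most $1$, proving the lemma. The main delicate point is the single-gate step of the first paragraph: one must really use the full $1$-neighborhood $B_1(\pi_X(J))$, not merely $\pi_X(J)$, in order to exclude every hyperplane that crosses $J$ and is adjacent in $\calC(X)$ to some hyperplane on $\gamma$, since such a hyperplane could otherwise cross $N(H)$ and make $\mathfrak{g}_J(N(H))$ higher-dimensional, breaking the argument.
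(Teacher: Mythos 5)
Your proof is correct and rests on the same key idea as the paper's one‑paragraph argument: any hyperplane of $K$ that could separate two gate images extends to a hyperplane of $X$ crossing $J$, which the hypothesis $\gamma \cap B_1(\pi_X(J)) = \emptyset$ forbids from contacting any carrier along $\gamma$. You present this directly rather than by contradiction, taking care to interpret $\pi_K$ of a vertex of $C(X)$ as the projection of its carrier, to show $\mathfrak{g}_J$ is constant on each such carrier, and to use the factorization $\mathfrak{g}_K = \mathfrak{g}_K \circ \mathfrak{g}_J$; these are presentation refinements rather than a genuinely different route.
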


\begin{proof}
Let $x, y$ be two points on $\gamma$. If $\pi_K(x) \neq \pi_K(y)$, then there exists a hyperplane $H$ in $K$ which separates $\mathfrak{g}_K(x)$ 
and $\mathfrak{g}_K(y)$. Let $H'$ be a hyperplane in $X$ such that $H = H' \cap K$. Then by convexity $H$ also separates $x$ and $y$, hence 
its projection to the contact graph $C(X)$ intersects the projection of $\gamma$. Since $H$ also intersects $J$, this contradicts 
the condition $\pi_X(\gamma) \cap B_1(\pi_X(J))  = \emptyset$.
\end{proof}

We also recall the notion of \emph{factor system} from  \cite{HHS1}.

\begin{definition} [\cite{HHS1}, Definition 8.1](Factor system). Let $X = X(\Gamma)$. 
A set of sub-complexes of $X$, denoted $\GF$, which satisfies the following is
called a \emph{factor system} in $X$:
\begin{enumerate}
\item $X \in \GF$.
\item Each $F \in  \GF$ is a nonempty convex sub-complex of $X$.
\item There exists $\delta \geq 1$ such that for all $x \in X^{(0)}$, at most $\delta$ elements of  $\GF$ contain $x$.
\item Every nontrivial convex sub-complex parallel to a combinatorial hyperplane of $X$ is
in $ \GF$.
\item There exists $\xi \geq 0$ such that for all $F, F' \in  \GF$, either $\Gg_{F}(F') \in  \GF$ or $\textup{diam}(\Gg_{F}(F') )\leq \xi$.
\end{enumerate}
\end{definition}

Associated with a factor $F \in \GF$ is a \emph{factored contact graph} $\hat{C}F$ defined as the contact graph of $F$ with each subgraph that is the contact graph of some smaller element
of $\GF$ coned off.

\begin{lemma}[\cite{HHS1}, Lemmas 2.6 and 8.19] \label{factorproperties}
Let $F, F'$ be two convex subcomplexes. Then:
\begin{enumerate}[i)]
\item \label{projectionisapoint}$\Gg_{F}(F')$ and $\Gg_{F'}(F)$ are parallel subcomplexes.
\item\label{nsim} If $F$ is not parallel to a subcomplex of $F'$, then 
\[ \textup{diam}_{\hat{C}F} (\pi_{F}(F')) \leq \xi+2.\]
\end{enumerate}
\end{lemma}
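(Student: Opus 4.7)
The plan is to derive both parts from the fundamental hyperplane-crossing characterization of parallelism and of gates in a \CAT cube complex: two convex subcomplexes are parallel exactly when they are crossed by the same set of hyperplanes, and the gate map $\Gg_F$ is characterized combinatorially as the unique cubical collapse which, on each cube $c$, kills the directions corresponding to the hyperplanes crossing $c$ but not crossing $F$.

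For (i), I would show that both $\Gg_F(F')$ and $\Gg_{F'}(F)$ are crossed by exactly the same collection of hyperplanes, namely those which cross \emph{both} $F$ and $F'$. A hyperplane $H$ crosses $\Gg_F(F')$ iff some cube $c \subset F'$ is crossed by $H$ and the $H$-direction of $c$ survives the cubical collapse to $F$, which happens iff $H$ crosses both $F'$ and $F$. The symmetric argument applies verbatim to $\Gg_{F'}(F)$, so the two subcomplexes cross exactly the same hyperplanes and are therefore parallel.

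For (ii), by Definition~\ref{D:contact-proj} the set $\pi_F(F') = \rho_F(\Gg_F(F'))$ consists precisely of the hyperplanes of $F$ that meet $\Gg_F(F')$. The hypothesis that $F$ is not parallel to a subcomplex of $F'$ means, via (i), that $\Gg_F(F')$ is not parallel to $F$ itself, so axiom (5) of a factor system forces one of two alternatives: either $\Gg_F(F') \in \GF$, in which case the contact graph $C(\Gg_F(F'))$ is coned off inside $\hat{C}F$ and any two points of $\pi_F(F')$ are joined by a path of length $2$ through the cone point; or $\diam\bigl(\Gg_F(F')\bigr) \leq \xi$, in which case the set of hyperplanes meeting $\Gg_F(F')$ has diameter at most $\xi$ in $\hat{C}F$. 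The remaining additive $+2$ absorbs the passage between a convex subcomplex and its hyperplane-carrier that is built into the definition of $\rho_F$.

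The main obstacle I expect is carefully verifying the hyperplane-crossing characterization of the gate used in (i); once that is established, (ii) reduces to bookkeeping by combining the two alternatives in axiom (5) with the cone-off construction of $\hat{C}F$. A subtler point is ensuring that the additive constant matches exactly the stated $\xi+2$ rather than some larger bookkeeping constant hidden in the transition between $\Gg_F(F')$ and its set of incident hyperplanes.
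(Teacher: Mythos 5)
The paper does not prove this lemma; it is cited directly from \cite{HHS1} (Lemmas 2.6 and 8.19) with no argument given, so there is no in-paper proof to compare against and your proposal is a reconstruction from scratch.

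Your argument for (i) is the standard one and is correct: a hyperplane crosses $\Gg_F(F')$ if and only if it crosses both $F$ and $F'$ (one direction from the separation characterization of gates, the other from convexity), and symmetrically for $\Gg_{F'}(F)$, so the two gates are crossed by the same hyperplanes and are therefore parallel.

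For (ii) the skeleton is right --- use (i) and the hypothesis to conclude that $\Gg_F(F')$ is not parallel to $F$, then split on the dichotomy in axiom (5) --- but there are two real gaps beyond constant-chasing. First, axiom (5) only applies when $F, F' \in \GF$; the lemma as stated allows arbitrary convex subcomplexes, so you are silently strengthening the hypothesis (this matches how the paper and \cite{HHS1} actually use it, but it should be said). Second, and more substantively, $\pi_F(F') = \rho_F\bigl(\Gg_F(F')\bigr)$ consists by Definition~\ref{D:contact-proj} of hyperplanes of $F$ whose \emph{carrier} meets $\Gg_F(F')$, not only those that \emph{cross} it. In the case $\Gg_F(F') \in \GF$, the cone-off collapses only $C\bigl(\Gg_F(F')\bigr)$, i.e.\ the crossing hyperplanes; getting from there to a diameter bound on all of $\pi_F(F')$ requires the extra observation that each carrier-touching hyperplane contacts some crossing hyperplane, costing $1$ on each side. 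Similarly in the other case, $\diam\bigl(\Gg_F(F')\bigr) \leq \xi$ is a bound in the cube complex, and translating it into a $\hat{C}F$-distance bound on the carrier-touching hyperplanes needs a sentence. You flag the constant as bookkeeping, which is fair, but the passage from the gate itself to $\pi_F(F')$ is a genuine step, not mere bookkeeping, and as written the proof of (ii) is incomplete.
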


Let us remark that if $F$ and $F'$ are isometric, then ii) is true under the (seemingly weaker) assumption that $F$ is not parallel to $F'$.

\subsection*{Excursion geodesics}
It follows from Theorem \ref{T:converge} that almost every sample path $(w_{n})$ of a random walk on an irreducible right-angled Artin group converges to exactly one point $\xi$ in the visual boundary, and there is a unique \CAT geodesic ray $\gamma$ which connects the base-point with $\xi$.
In this case, we say that the sample path \emph{tracks} the geodesic ray $\gamma$. To build the connection between a sample path and the associated geodesic ray, we characterize geodesics by bounding their excursions.  

We say a geodesic ray $\gamma = \{ g_0, g_{1}, g_{2}, \dots , g_{n}, \dots \}$ with respect to the word metric   in $A(\Gamma)$ is a \emph{$\kappa$-excursion geodesic} if there exists a function $\kappa$ and a constant $C$ such that
 its projection to every maximal join $J$ subcomplex is bounded above by $C  \kappa(t)$. That is, we have: 
 \begin{equation}\label{excursion}
 \sup_{J} d_{s(J)} (\Gg_{s(J)}(g_{0}), \Gg_{s(J)}(g_{n})) \leq C  \kappa(\Vert g_n \Vert) 
  \end{equation}
where the supremum is taken over all maximal join subcomplexes $J \subseteq X(\Gamma)$.
The main result of this section is the following. 

\begin{proposition}\label{excursioniscontracting}
For any sublinear function $\kappa$, a $\kappa$--excursion geodesic is also a $\kappa$--contracting geodesic. 
\end{proposition}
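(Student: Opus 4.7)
The plan is to exploit the fact that a $\kappa$-excursion geodesic in a universal Salvetti complex admits a tree-like structure via the contact graph $\calC(X(\Gamma))$, which is hyperbolic. Concretely, given a ball $B$ centered at $x$ and disjoint from $\gamma$, I will bound the diameter of $\pi_\gamma(B)$ by $c\cdot\kappa(\Norm{x})$ in two stages: first pass to the contact graph to conclude that the hyperplanes separating the relevant projections lie in a bounded-diameter region of $\calC(X)$, then convert this into a bound on the number of maximal joins involved, and finally invoke the $\kappa$-excursion hypothesis on each such join.

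For $x, y \in X$ with $d_X(x,y)\le d_X(x,\gamma)$, set $x':=\pi_\gamma(x)$ and $y':=\pi_\gamma(y)$. The projection of $\gamma$ to $\calC(X)$ is a quasi-geodesic $\hat\gamma$, and since $B$ does not meet $\gamma$, the image of $B$ in $\calC(X)$ has bounded projection to $\hat\gamma$ --- this is the standard contracting property in a hyperbolic graph, made quantitative by the bounded geodesic image theorem (Lemma~\ref{L:bounded-geodesic}) applied to the sub-Salvetti complexes of maximal joins through which $\gamma$ does not pass. Consequently the set of hyperplanes crossed by $\gamma|_{[x',y']}$ lies in a $\calC(X)$-ball of universal diameter $D$.

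Next, I use that two hyperplanes adjacent in $\calC(X)$ have links supported in a common join of $\Gamma$, hence in a common maximal join (Remark~\ref{joincontainslink}). Therefore the hyperplanes crossed by $\gamma|_{[x',y']}$ are partitioned into at most $D+1$ subsets, each supported in a single maximal join $J_i$. For each $J_i$, the number of its hyperplanes crossed by $\gamma|_{[x',y']}$ is at most
$$d_{s(J_i)}\bigl(\Gg_{s(J_i)}(x'),\Gg_{s(J_i)}(y')\bigr),$$
which by the $\kappa$-excursion hypothesis applied to the endpoints along $\gamma$ is at most $C\cdot\kappa(\Norm{x})$, after using Lemma~\ref{Lem:sublinear-estimate} to absorb the discrepancy between $\Norm{x}$, $\Norm{x'}$ and the parameter appearing in \eqref{excursion}. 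Summing over $i\le D+1$ yields $d_X(x',y')\le (D+1)C\cdot\kappa(\Norm{x})$, so $\gamma$ is $\kappa$-contracting with constant $c_\gamma=(D+1)C$.

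The delicate step is the second paragraph, specifically the reduction from ``$B$ is disjoint from $\gamma$'' to ``the hyperplanes separating $x'$ from $y'$ are $\calC(X)$-close to one another''. This requires a careful application of the bounded geodesic image theorem in the hierarchical setting of $X(\Gamma)$: for every maximal join $J$ along $\gamma$ whose contact-graph projection is far from that of $B$, the projection of $B$ to the sub-Salvetti complex $s(J)$ must have diameter at most one, so that the only joins contributing non-trivially to $\pi_\gamma(B)$ are those in a bounded $\calC(X)$-neighborhood of $\pi_{\calC(X)}(B)$. The factor-system bookkeeping (Lemma~\ref{factorproperties}) ensures that this reasoning is uniform in $B$, which is essential to obtain a constant $c_\gamma$ independent of $x$ and the radius of $B$.
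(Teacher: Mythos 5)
Your approach via the contact graph $\calC(X)$ is genuinely different from the paper's, but it has a critical gap in the second paragraph. The claim that ``since $B$ does not meet $\gamma$, the image of $B$ in $\calC(X)$ has bounded projection to $\hat\gamma$'' does not follow from disjointness in $X$. The contact graph drastically collapses distances: for instance, all hyperplanes crossing a single flat lie within $\calC(X)$-distance $2$ of one another, so a ball $B$ of large radius disjoint from $\gamma$ in $X$ can still contain hyperplanes that are $\calC(X)$-close to the hyperplanes along $\gamma$. Thus ``disjoint in $X$'' does not translate into ``far from $\hat\gamma$ in $\calC(X)$,'' which is the hypothesis you need to invoke a contraction/bounded-projection property in a hyperbolic graph. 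Moreover, the quasi-geodesicity of $\hat\gamma$ is asserted but never established: the $\kappa$-excursion condition controls the projection to each individual join, but that alone does not give a bilipschitz comparison between arc length along $\gamma$ and distance in $\calC(X)$. Without both of these, the conclusion that the hyperplanes crossed by $\gamma|_{[x',y']}$ lie in a $\calC(X)$-ball of \emph{universal} diameter (independent of $x$ and the ball radius) has no justification --- indeed, bounding the length of $\gamma|_{[x',y']}$ is precisely what the proposition is trying to prove, so assuming this interval is contained in a bounded region of $\calC(X)$ is close to circular.

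The paper avoids contact-graph hyperbolicity entirely. It builds, from the maximal sequence of strongly separated hyperplanes crossed by $\gamma$ (Lemma~\ref{SSHsequence}), an itinerary of joins $\{J_i\}$ in which $J_i$ and $J_{i+5}$ project to single points in each other (Corollary~\ref{finiteprojection}). Then, for a ball centered at $x \in J_i$ and a far-off point $y \in J_{i+k}$ with $k>6$, the gate $p$ of $x$ in $J_{i+6}$ enjoys the additive identity $d(x,y)=d(x,p)+d(p,y)$; since $p$ lies $\kappa(\Norm{x})$-close to a bridge point on $\gamma$ (Lemma~\ref{L:bridge}), the hypothesis $d(x,y)<d(x,\gamma)$ forces $d(p,y)<d(p,\gamma)\le C\kappa(\Norm{x})$, and the Lipschitz property of \CAT projection finishes the argument. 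Note also that Lemma~\ref{L:bounded-geodesic} is used in the paper only to prove the log-excursion bound for random walks (Theorem~\ref{T:log-exc}), not in the proof of this proposition --- you are borrowing the tool from the wrong part of the argument. To salvage your route, you would need a different bridge between ``disjoint in $X$'' and the contact-graph picture, which is essentially what the gate-and-bridge argument supplies.
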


In order to discuss the proof of this Proposition, let us recall that two hyperplanes $H_{1}, H_{2}$ are \emph{strongly separated} if there does not exist a hyperplane $H$ that intersects both $H_{1}$ and $H_{2}$. 
Given two hyperplanes $H_1$ and $H_2$, the \emph{bridge} $B$ between them is the union of all geodesic segments of minimal length between $H_1$ and $H_2$. 
We need the following properties about hyperplanes in the Salvetti complex:

\begin{lemma}[Properties of Strongly Separated Hyperplanes] \label{SSH}
 Let $u, v, w$ be vertices of $\Gamma$, and let $H_{u}, H_{v}, H_{w}$ be the associated hyperplanes that are dual to edges incident to the base-point of $X(\Gamma)$. Let $L_{v}$ denote the stabilizer of $H_{v}$, i.e. the group generated by the link $lk(v)$.
 \begin{enumerate}[1)]
\item Let $H_{1} = g_{1}H_{v}$ and $H_{2} = g_{2} H_{w}$. Then\label{stronglyseparated},
\begin{enumerate}
\item $H_{1}$ intersects $H_{2} \Leftrightarrow v, w$ commute and $g^{-1}_{1} g_{2} \in  L_{v} L_{w}$.
\item There exists $H_{3}$ intersecting both $H_{1}$ and $H_{2} \Leftrightarrow \exists  u \in st(v) \cap st(w)$ such that
$g^{-1}_{1} g_{2} \in  L_{v} L_{u}L_{w}$.
\end{enumerate}
\item Let $H_{1}, H_{2}$ be strongly separated hyperplanes in a universal Salvetti complex. \label{uniquegeodesic} The bridge $B$ between $H_1$ and $H_2$
consists of a single geodesic from $H_{1}$ to $H_{2}$. 

\item \label{closetobridge} There is a universal constant $C >1$, depending only on the dimension of $X(\Gamma)$, such that 
for any $x \in H_{1}$  and $ y \in H_{2}$,
\[ d(x, y) \geq \frac{1}{C} \left(d(x, B) + d(y, B) \right) - d(H_{1}, H_{2}) -4.\]
\end{enumerate}
\end{lemma}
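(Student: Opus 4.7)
The plan is to handle the three parts in order, exploiting the explicit combinatorial structure of the Salvetti complex and its hyperplanes.

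For Part~1, I would start from the observation that the $A(\Gamma)$-orbit of the standard hyperplane $H_v$ is parametrised by the coset space $A(\Gamma)/L_v$, where $L_v=\langle \mathrm{lk}(v)\rangle$ is the setwise stabiliser of $H_v$. Two translates $g_1 H_v$ and $g_2 H_w$ cross if and only if there is a square in $X(\Gamma)$ with an edge labelled $v$ lying in $g_1 H_v$ and an edge labelled $w$ lying in $g_2 H_w$; by a standard normal-form argument, such a square exists precisely when $(v,w)$ is an edge of $\Gamma$ and $g_1^{-1}g_2$ can be rewritten in the form $\ell_v \ell_w$ with $\ell_v \in L_v$ and $\ell_w \in L_w$. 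This establishes (a). Part (b) then follows by applying (a) twice: the existence of $H_3=g_3 H_u$ crossing both $H_1$ and $H_2$ is equivalent to $u\in \mathrm{st}(v)\cap \mathrm{st}(w)$ together with $g_1^{-1}g_3 \in L_v L_u$ and $g_3^{-1}g_2 \in L_u L_w$, whose product recovers $g_1^{-1}g_2 \in L_v L_u L_w$.

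For Part~2, consider the bridge $B=B(H_1,H_2)$, i.e.\ the convex hull of all minimising geodesics between $H_1$ and $H_2$. Standard cube-complex theory (as in Lemma~\ref{factorproperties}) identifies $B$ with a parallelism product $\mathfrak{g}_{H_2}(H_1)\times [0,d(H_1,H_2)]$, so $B$ fails to be a single geodesic precisely when the gate $\mathfrak{g}_{H_2}(H_1)$ contains an edge $e$. But then the hyperplane $H$ dual to $e$ would be a hyperplane of $H_2$, and its parallel translate through the product structure of $B$ would be a hyperplane crossing both $H_1$ and $H_2$, contradicting strong separation. Hence the gate is a single $0$-cube, and $B$ collapses to a unique geodesic segment, the one connecting the two closest $0$-cubes of $H_1$ and $H_2$.

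For Part~3, I would pair a combinatorial hyperplane count with a Euclidean comparison to pay for diagonal travel through cubes. Let $b_1\in H_1$ and $b_2\in H_2$ be the endpoints of the bridge produced in Part~2. By strong separation, any hyperplane which separates $x$ from $b_1$ inside $H_1$ must also separate $x$ from $y$, since it cannot cross $H_1$ again before reaching $H_2$; the symmetric statement holds for $y$ and $b_2$; and the $d(H_1,H_2)$ hyperplanes crossed by $B$ are counted as well, and are disjoint from the other two families. Hence the combinatorial $\ell_1$-distance obeys
\[
d_1(x,y) \;\geq\; d_1(x,b_1)\;+\;d(H_1,H_2)\;+\;d_1(b_2,y).
\]
Passing from $d_1$ to the CAT(0) metric costs at most a factor of $\sqrt{\dim X(\Gamma)}$ per summand, since within a single $d$-cube the ratio of $\ell_1$ to $\ell_2$ is at most $\sqrt d$; this yields the universal constant~$C$. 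The additive $-d(H_1,H_2)-4$ in the statement absorbs the residual $\ell_1$-vs-$\ell_2$ discrepancy at the two endpoint cubes and along the bridge.

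The main obstacle I expect is the bookkeeping in Part~3: one has to check that the three families of separating hyperplanes (inside $H_1$, across $B$, and inside $H_2$) are genuinely disjoint, which is exactly where strong separation is used in its full strength, and that the $\ell_1$-to-$\ell_2$ comparison only loses a dimensional factor rather than accumulating errors along the whole path. Parts~1 and~2 are essentially manipulations of words in the RAAG together with standard facts about gates and bridges, and their difficulty is mostly notational, in particular fixing conventions so that $L_v$, rather than the star stabiliser, is the relevant subgroup for the crossing criterion.
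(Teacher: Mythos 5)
The paper does not actually prove this lemma from scratch: parts~1) and 2) are quoted directly from Behrstock--Charney (\cite{ssh}, Lemmas~2.2 and 3.1), and part~3) is quoted from \cite{ssh} Lemma~2.3 in the combinatorial (word-metric) setting, after which the paper inserts a one-sentence conversion: every \CAT geodesic lies within distance $1$ of a word-metric geodesic that is a $(2,0)$-quasi-geodesic in the \CAT metric, and the bridge is simultaneously a \CAT and a word-metric geodesic, so the inequality carries over with a worse constant. Your proposal is a genuinely different route in the sense that it attempts to \emph{reprove} the cited facts; for parts~1) and~2) what you write is indeed a correct sketch of the arguments in \cite{ssh} (with a small caveat: edges are \emph{dual} to hyperplanes, not ``lying in'' them, and for $1(b)$ you should note explicitly that both directions work, e.g.\ by taking $g_3 = g_1\ell_v$ when $g_1^{-1}g_2 = \ell_v\ell_u\ell_w$).

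For part~3), however, there is a gap that you flag but do not close, and it is precisely the step the paper's conversion sentence is there to handle. Your hyperplane-counting inequality
\[
d_1(x,y)\;\geq\;d_1(x,b_1)+d_1(H_1,H_2)+d_1(b_2,y)
\]
is correct for $0$-cubes, and the three families of separating hyperplanes are indeed disjoint by strong separation; but the statement concerns arbitrary \CAT points $x\in H_1$, $y\in H_2$, and a hyperplane is a union of midcubes, not a subcomplex, so $x$, $y$, $b_1$, $b_2$ need not be $0$-cubes. You must first round each to a nearest $0$-cube in the appropriate carrier and track the resulting additive error, which is where the $-4$ (and arguably some of the slack absorbed by $-d(H_1,H_2)$) actually comes from, and only then run the $\ell_1$-vs-\CAT comparison by the dimensional factor. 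Without that step your chain of inequalities does not type-check, since $d_1$ is not defined on the points in question. So the idea is right and the ingredients are the right ones, but the rounding bookkeeping is a real missing step, not merely notational; it is also exactly the content of the paper's ``$1$-neighbourhood $+$ $(2,0)$-quasi-geodesic'' conversion, which you would in effect be re-deriving.
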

\begin{proof}
1) and 2) are proven in (\cite{ssh}, Lemma 2.2 and Lemma 3.1).
3) is proven for word-metric geodesics in (\cite{ssh}, Lemma 2.3). However, for every \CAT geodesic, there exists a word-metric geodesic that lies in 
a $1$-neighbourhood of it and is a $(2,0)$-quasi-geodesic in the \CAT metric. Combined with the fact that a bridge is both a \CAT geodesic and a word-metric geodesic, 3) holds with a larger multiplicative constant.
\end{proof}

\begin{lemma}\label{SSHsequence}
Let $\gamma$ be a geodesic ray. Let $\{ S_{i} \}$ denote a maximal sequence of strongly separated hyperplanes crossed by $\gamma$. Then there exists a sequence of joins, denoted $\{J_{k}\}$,
 travelled by $\gamma$ such that for all $i$, if $S_{i} \in J_{k}$, then $S_{i+1} \in \bigcup_{l =1,2,3} J_{k+l}$.
\end{lemma}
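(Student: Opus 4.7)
The plan is to view $\gamma$ as passing through a well-ordered sequence of maximal joins and to exploit maximality of the sequence $\{S_i\}$: if $S_{i+1}$ were too many joins past $S_i$, one could insert an additional strongly separated hyperplane between them, contradicting maximality.

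First, I would formalize ``joins travelled by $\gamma$''. Each edge crossed by $\gamma$ is dual to a hyperplane, and by Remark~\ref{joincontainslink} the star of the corresponding generator is contained in at least one maximal join. Recording in order the maximal joins visited by $\gamma$ (grouping together consecutive edges that share a maximal join) produces a well-defined sequence $\{J_k\}$, and each hyperplane $S_i$ receives an index $k(i)$ via its enclosing maximal join $J_{k(i)}$. Because consecutive $J_k$ are genuinely distinct maximal joins, $\gamma$ must cross at least one hyperplane whose star lies in $J_{k+1}$ but not in $J_k$ as it passes from $J_k$ into $J_{k+1}$.

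Next I would argue by contradiction. Assume $S_i \in J_k$ and $S_{i+1} \in J_{k+l}$ with $l \geq 4$. Pick any hyperplane $H$ crossed by $\gamma$ with enclosing join $J_{k+2}$. I claim the triple $(S_i, H, S_{i+1})$ is strongly separated, which refines the sequence and contradicts maximality. To verify strong separation of $S_i$ and $H$, write $S_i = g_i H_{v_i}$ and $H = g_H H_{v_H}$ and apply Lemma~\ref{SSH}(1b): failure of strong separation requires a common vertex $u \in \mathrm{st}(v_i) \cap \mathrm{st}(v_H)$ with $g_i^{-1} g_H \in L_{v_i} L_u L_{v_H}$. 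Such a $u$ forces $\{v_i, u, v_H\}$ to span a single join (hence to lie in one maximal join), and moreover forces a geodesic representative of $g_i^{-1} g_H$ to live inside that same join. This contradicts the fact that $\gamma$ enters the intermediate distinct maximal join $J_{k+1}$ between crossing $S_i$ and crossing $H$. A symmetric argument using the buffer join $J_{k+3}$ (distinct from both $J_{k+2}$ and $J_{k+l}$ since $l \geq 4$) shows $H$ and $S_{i+1}$ are strongly separated.

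The main obstacle will be the algebraic bookkeeping in the preceding step, namely turning ``$\gamma$ enters a genuinely distinct maximal join between $S_i$ and $H$'' into a concrete obstruction to the product form $g^{-1} g' \in L_v L_u L_w$. This likely relies on the normal form theory for right-angled Artin groups: any minimal representative of an element in $L_v L_u L_w$ is supported on the join spanned by $\{v, u, w\}$, so a geodesic traversing such an element cannot exit and re-enter a different maximal join along the way. The threshold $l = 3$ in the statement arises precisely from needing one buffer join between $S_i$ and $H$ and another between $H$ and $S_{i+1}$, hence at least $1 + 1 + 1 = 3$ intermediate joins before the insertion is forced; if $l \in \{1, 2, 3\}$ there is simply not enough room to produce an intermediate strongly separated hyperplane, so the conclusion $S_{i+1} \in J_{k+1} \cup J_{k+2} \cup J_{k+3}$ follows.
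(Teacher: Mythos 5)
Your argument takes a genuinely different route from the paper's: the paper gives a \emph{direct} count (using that $S_{i+1}$ is the \emph{first} hyperplane after $S_i$ strongly separated from it, so the hyperplane just before $S_{i+1}$ fails strong separation with $S_i$, yielding the $L_vL_uL_w$ decomposition and hence a bound on the number of intervening joins), whereas you argue by \emph{contradiction}, inserting an intermediate hyperplane $H$ and deducing strong separation on both sides. That logical reversal is fine in principle, but there is a genuine gap in the key step. You assert that $g_i^{-1}g_H \in L_{v_i}L_uL_{v_H}$ ``forces a geodesic representative of $g_i^{-1}g_H$ to live inside the join spanned by $\{v_i,u,v_H\}$.'' This is false: $L_{v_i}$ is the subgroup generated by the \emph{entire link} $lk(v_i)$, which can contain many vertices lying outside $st(u)\cup st(v_H)$, so the product $L_{v_i}L_uL_{v_H}$ is supported on $lk(v_i)\cup lk(u)\cup lk(v_H)$, which is generally \emph{not} contained in any single join. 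What Remark~\ref{joincontainslink} actually gives (and what the paper uses) is only that each link $lk(v)$ lies in \emph{some} maximal join, because $st(v)$ is a join; thus the product decomposition is covered by at most \emph{three} maximal joins, one per factor. With the corrected estimate, a single buffer join per side is no longer enough: failure of strong separation only constrains $S_i$ and $H$ to lie at most roughly three joins apart, so forcing strong separation on both sides of a single inserted $H$ would require $l$ to be substantially larger than $4$, and the constant $3$ in the lemma is out of reach by this route.

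A secondary, subtler issue: your opening construction of $\{J_k\}$ --- ``recording in order the maximal joins visited by $\gamma$'' --- is not canonically well-defined in a general universal Salvetti complex. Unlike the tree-of-flats example of Section~6, maximal joins in $X(\Gamma)$ can overlap extensively and a single edge can lie in several of them, so there is no unique ``next'' maximal join along $\gamma$ and no unambiguous grouping. The paper builds the sequence $\{J_k\}$ \emph{a posteriori} from the $L_vL_uL_w$ decompositions produced by failures of strong separation, which sidesteps this ambiguity; your fixed, a priori itinerary of joins may not be the sequence for which the conclusion of the lemma holds.
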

\begin{proof}
Consider the sequence $(H_{1}, H_{2}, H_{3}, \dots)$ of hyperplanes crossed by $\gamma$. Let $H_{k}$ be the first hyperplane that is strongly separated from $H_{1} = H_{v}$.
 By Lemma~\ref{SSH}\eqref{stronglyseparated}, suppose $g_{k-1}H_{w} = H_{k-1}$, then $g_{k-1}$ lies in $L_{v} L_{u}L_{w}$ where $u \in  st(v) \cap st(w)$. Since $H_{w}$ is the next hyperplane,
  then $g_{k}$ lies in $L_{v} L_{u} L_{w} s_{w}$. By Remark~\ref{joincontainslink}, each link is contained in a join, thus there exists a sequence of joins travelled consecutively by $\gamma$
  such that if $H_{i} \in J_{i}$ then $H_{k} \in J_{i+2}$. Now repeat the process between $H_{k}$ and the first hyperplane that is strongly separated from $H_{k}$, say $H_{k'}$. It is possible that in this case the 
  three joins connecting $H_{k}$ and and $H_{k'}$ do not overlap with the joins that connect $H_{1}$ and $H_{k}$. In that case, consider $H_{k}$ to be the wall that is in both joins. Therefore from $H_{1}$ to $H_{k'}$ the ray $\gamma$ crosses $6$ joins, satisfying the claim that $S_{i+1} \in \bigcup_{l =1,2,3} J_{k+l}$.
\end{proof}

\begin{corollary}\label{finiteprojection}
Consider the sequence of joins produced in Lemma~\ref{SSHsequence} and denote it  $\{ J_{i}\}$. Then the projection of $J_{i}$ to $J_{i+5}$ is a point.
\end{corollary}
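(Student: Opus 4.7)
The plan is to prove the corollary by exhibiting a pair of strongly separated hyperplanes that geometrically separate $J_i$ from $J_{i+5}$, and then invoking the fact that strongly separated hyperplanes prevent any hyperplane from crossing both subcomplexes, which in turn forces the gate projection to collapse to a single 0-cube.

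\textbf{Step 1 (locating strongly separated hyperplanes between the joins).} By Lemma~\ref{SSHsequence}, the geodesic $\gamma$ crosses a maximal sequence of strongly separated hyperplanes $\{S_j\}$ with the property that if $S_j$ belongs to $J_k$, then $S_{j+1}$ belongs to $J_{k+l}$ for some $l\in\{1,2,3\}$. The block $J_i, J_{i+1}, \dots, J_{i+5}$ contains six consecutive joins, which is enough room to extract a pair of consecutive indices $S_j, S_{j+1}$ with $S_j$ crossed by $\gamma$ strictly after it has left $J_i$ and $S_{j+1}$ crossed strictly before $\gamma$ reaches $J_{i+5}$. In particular, $S_j$ does not cross $J_i$ and $S_{j+1}$ does not cross $J_{i+5}$, so that $J_i$ lies entirely in one half-space of $S_j$ and $J_{i+5}$ lies entirely in the opposite half-space of $S_{j+1}$.

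\textbf{Step 2 (strong separation rules out common hyperplanes).} Suppose, towards a contradiction, that some hyperplane $H$ crosses both $J_i$ and $J_{i+5}$. Since $J_i$ lies on one side of $S_j$ and $J_{i+5}$ lies on the opposite side of $S_{j+1}$ (hence on the far side of $S_j$ as well), the hyperplane $H$ would contain points in both half-spaces of $S_j$, forcing $H$ to cross $S_j$. By the symmetric argument, $H$ must also cross $S_{j+1}$. But this directly contradicts the strong separation of $S_j$ and $S_{j+1}$, which by definition excludes the existence of a hyperplane intersecting both.

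\textbf{Step 3 (conclusion via gates).} The gate $\Gg_{J_{i+5}}(J_i)$ is a cubical subcomplex of $J_{i+5}$ whose dimension equals the maximum number of pairwise-crossing hyperplanes that cross both $J_i$ and $J_{i+5}$. By Step 2 there are no such hyperplanes at all, so $\Gg_{J_{i+5}}(J_i)$ consists of a single $0$-cube. Using Lemma~\ref{factorproperties}(i), $\Gg_{J_i}(J_{i+5})$ is parallel to it and hence is also a single $0$-cube; passing through the map $\rho$ from Definition~\ref{D:contact-proj}, the projection $\pi_{J_{i+5}}(J_i)$ is therefore a single point (equivalently, has diameter bounded by $1$ in the contact graph).

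\textbf{Main obstacle.} The delicate point is Step~1: one must carefully place the strongly separated pair $S_j, S_{j+1}$ strictly inside the interval, so that $S_j$ avoids $J_i$ and $S_{j+1}$ avoids $J_{i+5}$. The ``$1, 2, 3$'' stepping guaranteed by Lemma~\ref{SSHsequence}, combined with the six-join window, provides exactly enough slack to do this; verifying that the extremal cases (when $S_j$ happens to lie in $J_i$ itself, or when consecutive strongly separated hyperplanes are as far apart as possible) do not obstruct the argument is the most technical piece and is what justifies the specific choice of offset $5$ in the statement.
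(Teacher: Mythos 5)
Your proposal is correct and takes essentially the same route as the paper: both rely on Lemma~\ref{SSHsequence} to place a consecutive pair of strongly separated hyperplanes in the six-join window, and both use strong separation to collapse the gate projection to a single $0$-cube. The only stylistic difference is that the paper invokes Lemma~\ref{factorproperties}(i) (parallelism of mutual gates) while you argue directly that no hyperplane can cross both $J_i$ and $J_{i+5}$; your Step~2 actually spells out the mechanism more explicitly than the paper's terse one-line justification, and your flagged Step~1 (ensuring $S_j$ misses $J_i$ and $S_{j+1}$ misses $J_{i+5}$) is precisely the same point the paper also leaves somewhat implicit.
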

\begin{proof}
By Lemma~\ref{SSHsequence}, any geodesic connecting $J_{i}$ to its projection onto $J_{i+5}$ passes through at least 2 strongly separated hyperplanes. By Lemma~\ref{factorproperties}(\ref{projectionisapoint}), 
the projections of a pair of strongly separated hyperplanes to one another are parallel. But strong separability implies that both projections consist of a single point. 
Therefore, the projection of $J_{i}$ to $J_{i+5}$  is a point.
\end{proof}

\begin{lemma}[An excursion geodesic travels close to bridges] \label{L:bridge}
Fix a sublinear function $\kappa$, and let $\gamma$ be a $\kappa$-excursion geodesic ray with itinerary $\{ J_{i}\}$ as produced in Lemma~\ref{SSHsequence}.
Let $\{ S_{i}\}$ be the sequence of strongly separated hyperplanes in Corollary~\ref{SSHsequence}, and let $\calB_{i, j}$ be the bridge between $S_i$ and $S_{j}$. Let $b_{i}(j)$ denote the intersection point of $\calB_{i, j}$ with $S_{i}$. Also let $x_{i}$ be any point in the intersection $\gamma \cap S_{i}$.  Then, if $|i-j| =1$ we have
\[ d(x_{i}, b_{i}(j)) \leq C  \kappa(\Vert x_i \Vert). \] 
\end{lemma}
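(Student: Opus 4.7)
My plan is to locate both $x_i$ and $b_i(i+1)$ inside the Salvetti subcomplex $s(J_k)$, where $J_k$ is the join from the itinerary of \lemref{SSHsequence} with $S_i \in J_k$, and then combine the $\kappa$--excursion bound with the transitivity and monotonicity of the gate map. The case $j=i-1$ is symmetric, so I treat $j=i+1$.

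The starting observation is that strong separation of $S_i$ and $S_{i+1}$ forces $\Gg_{S_i}(x_{i+1})=b_i(i+1)$: any hyperplane crossing $S_i$ fails to cross $S_{i+1}$ (by strong separation), so $S_{i+1}$ lies entirely on one side of it, and the hyperplane characterization of the gate makes $\Gg_{S_i}(y)$ a single $0$--cube for every $y\in S_{i+1}$, namely the bridge endpoint. Since $S_i\subseteq s(J_k)$, transitivity of the gate for nested convex subcomplexes yields
\[
b_i(i+1)\;=\;\Gg_{S_i}(x_{i+1})\;=\;\Gg_{S_i}\!\big(\Gg_{s(J_k)}(x_{i+1})\big),
\]
while $x_i \in S_i \subseteq s(J_k)$ gives $\Gg_{s(J_k)}(x_i)=x_i=\Gg_{S_i}(x_i)$. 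Applying the $1$--Lipschitz property of $\Gg_{S_i}$,
\[
d(x_i, b_i(i+1))\;\leq\;d\!\big(x_i,\Gg_{s(J_k)}(x_{i+1})\big).
\]

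To bound the right-hand side, I use the monotonicity of the gate of a \CAT geodesic in a cube complex: a hyperplane of $X(\Gamma)$ advances $\Gg_{s(J_k)}(\gamma(t))$ by one combinatorial step exactly when $\gamma$ crosses it and it meets $s(J_k)$, and since $\gamma$ is a geodesic, each hyperplane is crossed at most once. Consequently the gate path is monotone and passes through $\Gg_{s(J_k)}(g_0)$, $x_i = \Gg_{s(J_k)}(x_i)$, and $\Gg_{s(J_k)}(x_{i+1})$ in order. Together with the $\kappa$--excursion property \eqref{excursion} applied at $J_k$, this gives
\[
d\!\big(x_i,\Gg_{s(J_k)}(x_{i+1})\big)\;\leq\;d_{s(J_k)}\!\big(\Gg_{s(J_k)}(g_0),\Gg_{s(J_k)}(x_{i+1})\big)\;\leq\;C\,\kappa(\|x_{i+1}\|).
\]

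To convert $\kappa(\|x_{i+1}\|)$ into $\kappa(\|x_i\|)$, I invoke \lemref{SSHsequence}: the hyperplanes $S_i$ and $S_{i+1}$ lie in joins of the itinerary at most three apart, and applying the excursion bound to each intervening join forces $d(x_i,x_{i+1})\leq C'\kappa(\|x_{i+1}\|)$. Sublinearity of $\kappa$ then bootstraps to $\|x_{i+1}\|=O(\|x_i\|)$, and \lemref{Lem:sublinear-estimate} gives $\kappa(\|x_{i+1}\|)\leq D\cdot\kappa(\|x_i\|)$ for a uniform $D$, producing the required bound $d(x_i,b_i(i+1))\leq CD\cdot\kappa(\|x_i\|)$. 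The principal obstacle I expect is justifying the monotonicity of the gate along a \CAT (rather than combinatorial) geodesic and the transitivity of the gate for non-$0$--cube points: both hold in \CAT cube complexes via the hyperplane structure, but their formulation outside the combinatorial setting must be handled with care.
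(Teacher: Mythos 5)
Your proof takes a genuinely different route from the paper's, and it appears to work. The paper's argument is quantitative and short: it bounds $|\calB_{i,i+1}|$ and $d(x_i, x_{i+1})$ by $O(\kappa(\|x_i\|))$ via the excursion hypothesis and Lemma~\ref{SSHsequence}, then invokes the Behrstock--Charney divergence estimate of Lemma~\ref{SSH}(\ref{closetobridge}), which says $d(x,y) \geq \frac{1}{C}\big(d(x,\calB)+d(y,\calB)\big) - |\calB| - 4$ for strongly separated hyperplanes, to bound $d(x_i, \calB_{i,i+1})$ and $d(x_{i+1}, \calB_{i,i+1})$ simultaneously. You instead work entirely through the cubical gate machinery: strong separation forces $\Gg_{S_i}(S_{i+1})$ to be the single point $b_i(i+1)$, transitivity of gates lets you factor through $s(J_k)$, and then the $1$--Lipschitz property converts the problem into an excursion bound inside $s(J_k)$. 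What each buys: the paper's route leans on a black-box inequality from \cite{ssh}, while yours is more self-contained within the cube-complex formalism already deployed in the appendix (gates, Definition~\ref{D:contact-proj}, Lemma~\ref{L:bounded-geodesic}).

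Three points to tighten. First, the excursion bound \eqref{excursion} is a supremum over \emph{maximal} joins, whereas your $J_k$ from Lemma~\ref{SSHsequence} need not be maximal; you should add the one-line remark that if $J_k \subseteq J^{\max}$ then $\Gg_{s(J_k)} = \Gg_{s(J_k)} \circ \Gg_{s(J^{\max})}$ and the $1$--Lipschitz property gives $\textup{diam}\,\Gg_{s(J_k)}(A) \leq \textup{diam}\,\Gg_{s(J^{\max})}(A)$. Second, the monotonicity of the gate path along a \CAT geodesic, which you flag as the main technical obstacle, is actually avoidable: since $\Gg_{s(J_k)}(x_i) = x_i$, the triangle inequality already yields
\[
d\big(x_i, \Gg_{s(J_k)}(x_{i+1})\big) \leq d\big(x_i, \Gg_{s(J_k)}(g_0)\big) + d\big(\Gg_{s(J_k)}(g_0), \Gg_{s(J_k)}(x_{i+1})\big),
\]
and both terms are controlled directly by \eqref{excursion} evaluated at $\|x_i\|$ and $\|x_{i+1}\|$. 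Third, the discrepancy between the \CAT geodesic $\gamma$ and the combinatorial geodesic on which \eqref{excursion} is stated still needs a sentence, but the distance is at most $1$ and is absorbed by the constant, as you indicate.
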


\begin{proof}
By Lemma~\ref{SSH}(\ref{uniquegeodesic}),  the bridge $\calB_{i, i+1}$ is a geodesic segment. By definition the length of a bridge is shorter than the distance between any other pair of points in $S_{i}$ and $S_{i+1}$. Since $\{ J_{i}\}$ is a $\kappa(t)$-itinerary, the lengths of bridges are bounded above by the lengths $d( x_{i}, x_{i+1})$, which is bounded by a constant multiple of $\kappa(t)$. Let that constant be $C$.
Since $\gamma$ is a $\kappa(t)$-excursion geodesic, $d(x_{i}, x_{i+1}) \leq \kappa(\Vert x_i \Vert)$. By Lemma~\ref{SSH}(\ref{closetobridge}), 
\begin{align*}
\frac{1}{C} (d(x_{i}, b_{i}(i+1)) + d(x_{i+1}, b_{i+1}(i)) - |\calB_{i,i+1}| -4 &\leq d( x_{i}, x_{i+1})   \leq    \kappa(\Vert x_i \Vert)  \\
\frac{1}{C} (d(x_{i}, b_{i}(i+1)) + d(x_{i+1}, b_{i+1}(i)) &\leq |\calB_{i,i+1}| + 4  +   \kappa( \Vert x_i \Vert)  \\
                                                                                    & \leq C  \kappa( \Vert x_i \Vert)                                                             
\end{align*}
Therefore $d(x_{i}, b_{i}(i+1))$ and $d(x_{i+1}, b_{i+1}(i)) $ are both bounded by $C  \kappa(\Vert x_i \Vert)$.
\end{proof}

Now we are ready to prove that the set of all $\kappa$-excursion geodesics is a subset of the $\kappa$--Morse boundary. 
We first replace an excursion geodesic with a geodesic in the \CAT metric that enters and leaves each maximal join at the same pair of points.

\begin{proof}[Proof of Proposition \ref{excursioniscontracting}]
Let $\gamma$ be a $\kappa(t)$--excursion geodesic and let $\{ J_{i}\}$ be the associated itinerary of joins produced in Lemma \ref{SSHsequence}. Let $x$ be in a maximal join $J_{i}$ with $x \notin \gamma$, let 
\[A : = \bigcup_{k=0}^{5} J_{i+k},\]
 and $\overline{A} : = A \cup J_{i-1} \cup J_{i+6}$.  Consider now a metric ball $\Sigma := \{ y \in X(\Gamma) \ : \ d(x,  y) < d(x, \gamma) \}$ which is disjoint from $\gamma$. Our goal is to prove 
 that for any $y \in \Sigma$ we have $d(x_\gamma, y_\gamma) \leq C \kappa( \Vert x \Vert )$, where $x_\gamma$ denotes the closest-point projection of the point $x$ to $\gamma$.
 
Let $y \in \Sigma$. If $y \in A$, then there exists a constant $C_{1}$ such that
\[ d(x_{\gamma}, y_{\gamma}) \leq C_{1} \kappa( \Vert x \Vert). \]
Otherwise, consider $y$ in $J_{i+k}$, $k \geq 6$.  There exists points $p \in J_{i + k}$ and closest to $x$ such that
\begin{equation}\label{gatepoint}
d(x, y) = d(x, p) + d(p, y).
\end{equation}
 
That is to say $p \in \Gg_{J_{i+6}} (J_{i})$. By Corollary~\ref{finiteprojection}, $p$ is unique and therefore $p = \calB_{i+6} ({i+5})$, thus by Lemma~\ref{L:bridge}, there exists constant $C_{2}$ such that
\[
d(p, \gamma) \leq C_{2 } \kappa(\Norm x ).
\]
By way of contradiction, suppose $d(p, y) \geq d(p, \gamma)$. Then
\begin{align*}
d(x, y) &= d(x, p) + d(p, y) \qquad \text{by eq.~\eqref{gatepoint}} \\
           &\geq d(x, p) + d(p, \gamma)\\
           &\geq d(x, \gamma).
\end{align*}
This is contrary to our assumption that $y \in \Sigma$. Therefore, $d(p, y) < d(p, \gamma)$, hence $d(p, y) < C_{2} \kappa( \Norm x)$.
 By the Lipschitz property of \CAT projections, we have
\[
d(p_{\gamma}, y_{\gamma}) \leq C_{2} \kappa(\Norm x).
\]
Since 
$d(x_{\gamma}, p_{\gamma}) \leq C_{1} \kappa(\Norm x)$, then 
 \[d(x_{\gamma}, y_{\gamma})  \leq  d(x_{\gamma}, p_{\gamma})  +  d (p_{\gamma}, y_{\gamma})  \leq (C_{1}+C_{2}) \kappa(\Norm x) .\]
\end{proof}

\subsection*{Excursion of random geodesics}

To show that the $\kappa$--Morse boundary has full measure, we need to control the excursion of the random walk in each sub-join of the Salvetti complex. 
We will use the following variation of the main theorem in  \cite{relativehyperbolic} (we thank Sam Taylor for suggesting the argument).

\begin{theorem}  \label{T:log-exc}
Let $\mu$ be a finitely supported, generating probability measure on an irreducible right-angled Artin group $A(\Gamma)$. 
Then for any $k > 0$ there exists $C > 0$ such that for all $n$ we have
$$\mathbb{P}\left(\sup_J d_{s(J)}(1, w_n) \geq C \log n \right) \leq C n^{-k},$$
where the supremum is taken over all join subcomplexes of $X(\Gamma)$.

As a consequence, for almost every sample path there exists $C > 0$ such that for all $n$
$$\sup_J d_{s(J)}(1, w_n) \leq C \log n.$$
\end{theorem}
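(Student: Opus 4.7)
The strategy combines three ingredients: linear progress with exponential deviations for the projection of $(w_n)$ to the hyperbolic contact graph $\calC(X)$, the bounded geodesic image property (Lemma~\ref{L:bounded-geodesic}), and a polynomial count of the maximal joins that can meaningfully contribute to the supremum at time $n$. The point is that although there are infinitely many translates of any sub-Salvetti $s(J)$, only polynomially many of them lie near a contact-graph geodesic from $1$ to $w_n$, and for each of them the probability of an excursion of length $T$ in $d_{s(J)}$ decays exponentially in $T$.

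First I would set up progress on the contact graph. Since $A(\Gamma)$ is irreducible, $\Gamma$ is not a join, so $A(\Gamma)$ contains two independent loxodromic elements for its action on the quasi-tree $\calC(X)$; since $\mu$ is finitely supported and generating, its pushforward to $\calC(X)$ is non-elementary with all moments finite. By the Maher--Tiozzo theorem on random walks in Gromov hyperbolic spaces, there exist $\ell, c, C_0 > 0$ such that
\[
\PP\bigl(d_{\calC(X)}(1, w_n) \leq \ell n\bigr) \leq C_0 e^{-cn},
\]
and the $\calC(X)$-displacement concentrates linearly. This will be used to control the ``shape'' of a random trajectory in $\calC(X)$.

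Next, fix a maximal join $J$ and set $K = s(J)$. By Lemma~\ref{L:bounded-geodesic}, if $d_K(1, w_n) > 1$ then every $\calC(X)$--geodesic from $1$ to $w_n$ intersects $B_1(\pi_{\calC(X)}(J))$. A Sisto-type deviation inequality for random walks on hyperbolic spaces then yields
\[
\PP\bigl(d_K(1, w_n) \geq T\bigr) \leq C_1 e^{-\alpha T},
\]
uniformly in $J$ and $n$. The underlying mechanism is to split the walk at time $n/2$: if the projection to $K$ is large at time $n$, one of the two halves forces the $\calC(X)$--trajectory to linger near $\pi_{\calC(X)}(J)$ for comparable time, an event of exponentially small probability by the first step applied to a half-walk. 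Finally, the translates $gK$ whose projection to $\calC(X)$ lies in a ball of radius $R$ grow at most polynomially in $R$, using the geometric action on $X$ together with the bounded diameter/intersection properties of the factor system (Lemma~\ref{factorproperties}). Hence at most $\mathrm{poly}(n)$ joins can affect the supremum at time $n$. Choosing $C$ so that $\alpha C$ exceeds this polynomial degree plus $k$, a union bound gives
\[
\PP\Bigl(\sup_J d_{s(J)}(1, w_n) \geq C \log n\Bigr) \leq C n^{-k},
\]
and the almost-sure consequence then follows from the Borel--Cantelli lemma (applied with $k \geq 2$ for summability).

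The main obstacle is the uniform exponential tail in the second step. In the relatively hyperbolic setting of \cite{relativehyperbolic} this is obtained by projecting to peripheral cosets and exploiting hyperbolicity of the coned-off Cayley graph; in the right-angled Artin case one does not have a globally hyperbolic ambient space, so one must leverage the hierarchically hyperbolic structure of $X(\Gamma)$, the hyperbolicity of the contact graph $\calC(X)$, and a careful application of Lemma~\ref{L:bounded-geodesic}, to reduce the event $\{d_K(1,w_n) \geq T\}$ to a contact-graph ``fellow-travel'' event for which the Maher--Tiozzo-style tail bound applies uniformly in the choice of $K$.
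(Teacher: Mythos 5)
Your proposal captures the right ingredients (linear progress with exponential decay in the contact graph, the bounded geodesic image lemma, and a union bound with Borel--Cantelli), but it diverges from the paper at the crucial union-bound step, and the place where it diverges is exactly where your argument has a gap. You try to union-bound \emph{over joins}, which forces you to establish two non-trivial facts: (a) a uniform exponential tail $\PP(d_{s(J)}(1,w_n) \geq T) \leq C_1 e^{-\alpha T}$ for each fixed $J$, and (b) that only polynomially many translates $gK$ can contribute at time $n$. You yourself flag (a) as ``the main obstacle,'' and (b) is asserted but not established --- it is not at all clear that the number of joins whose contact-graph projection meets a ball of radius $R$ grows only polynomially, since a single vertex of the contact graph may be adjacent to infinitely many hyperplanes far out in $X$. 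Neither (a) nor (b) is proved in your sketch.

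The paper avoids both issues with a different decomposition. It observes that by Lemma~\ref{L:bounded-geodesic} and the distance formula, if $d_{s(J)}(1, w_n) \geq C\log n$ then, letting $i_1$ and $i_2$ be the first and last times $i\leq n$ with $d_{C(X)}(w_i, J) \leq 1$, one gets
\[
C\log n \leq d_{s(J)}(1, w_{i_1}) + d_{s(J)}(w_{i_1}, w_{i_2}) + d_{s(J)}(w_{i_2}, w_n) \leq D(i_2 - i_1) + 2C_2,
\]
so $i_2 - i_1 \geq \tfrac{C}{2D}\log n$ and $d_{C(X)}(w_{i_1}, w_{i_2}) \leq 2$. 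The event ``$\exists J: d_{s(J)}(1,w_n)\geq C\log n$'' is thus contained in the event that \emph{some pair of times} $i_1 \leq i_2 \leq n$ with $i_2 - i_1 \geq \tfrac{C}{2D}\log n$ satisfies $d_{C(X)}(w_{i_1}, w_{i_2}) \leq 2$. That event has at most $n^2$ constituent pieces, each with probability $\leq C_1 n^{-C/(2DC_1)}$ by stationarity and the linear-progress estimate applied to a walk of length $\geq \tfrac{C}{2D}\log n$. Choosing $C$ large relative to $D$, $C_1$, and $k$ kills the $n^2$ and gives the stated polynomial decay. No counting of joins and no per-$J$ tail bound is ever needed. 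If you want to salvage your version, you would have to actually prove (a) and (b); otherwise you should switch to the paper's time-interval union bound, which is both simpler and the standard trick in this setting (it is also how \cite{relativehyperbolic} handles the analogous bound for peripheral cosets).
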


\begin{proof}
The idea of the proof is that in order to make progress in $s(J)$, the sample path must project close to the projection of $J$ in the contact graph $C(X)$. 
However, linear progress with exponential decay implies that the sample path can stay close to the projection of $J$ only for a time of order $\log n$, 
which completes the proof. 

Let us see the details. By linear progress with exponential decay  \cite[Theorem 1.2]{Maher-Exp}, there exists $L > 0$ and $C_1$ such that
$$\mathbb{P}(d_{C(X)}(1, w_n) \leq L n) \leq C e^{-n/C}$$
for all $n$, so for any $A > 0$ and $n \geq e^{2/LA}$ we have 
\begin{equation} \label{E:exp-decay}
\mathbb{P}(d_{C(X)}(1, w_{A \log n}) \leq 2) \leq C n^{-A/C}.
\end{equation}

By the distance formula  \cite[Theorem 9.1]{HHS1}, for any $B > 0$ there exist $C_2$ such that 
for any join $J$
$$d_{s(J)}(x, y) \leq C_2 \sum_{K \subseteq J} \{ d_{C(K)}(x, y) \}_B + C_2$$
where $\{ x \}_B = x$ if $x \geq B$, and $\{ x \}_B =  0$ otherwise. In particular, by Lemma \ref{L:bounded-geodesic} there exists $C_2$ such that if a path $\gamma = [x, y]$ projects far from $J$ in $C(X)$ then 
$$d_{s(J)}(x, y) \leq C_2.$$

Consider now the path of vertices $(w_i)_{i \leq n}$ in $X(\Gamma)$, and suppose that for a join $J$ we have $d_{s(J)}(1, w_n) \geq C \log n$.
Let 
\[
i_1 := \min \{ 0 \leq i \leq n \ : \ d_{C(X)}(w_i, J) \leq 1 \},
\] 
\[
i_2 := \max \{ 0 \leq i \leq n \ : \ d_{C(X)}(w_i, J) \leq 1 \},
\] 
and 
\[
D := \max\{d_{X(\Gamma)}(1, g) \ : \ g \in \supp \ \mu \}.
\] 
Then 
$$C \log n  \leq d_{s(J)}(1, w_n) \leq d_{s(J)}(1, w_{i_1}) + d_{s(J)}(w_{i_1}, w_{i_2}) + d_{s(J)}(w_{i_2}, w_n) \leq D (i_2 - i_1) + 2 C_2$$
hence, for $n$ large enough, 
$$|i_1 - i_2| \geq \frac{C \log n}{2 D}.$$
Hence
$$\mathbb{P}(\exists J : \ d_{s(J)}(1, w_n) \geq C \log n) \leq \mathbb{P}\left(\exists i_1 \leq i_2 \leq n,  i_2 - i_1 \geq \frac{C}{2D} \log n \ : \ d_{C(X)}(w_{i_1}, w_{i_2}) \leq 2 \right)$$
and by \eqref{E:exp-decay} this is bounded above by 
$$\leq n^2 \cdot C_1 n^{- \frac{C}{2D C_1}}$$
which tends to $0$ for $n \to \infty$ as long as $C > 4DC_1$. 

The second claim follows immediately from the first one for $k = 2$ by Borel-Cantelli.
\end{proof}

\subsection*{Sublinear deviation between geodesics and sample paths}
What remains is to understand the Hausdorff distance between a sample path and the geodesic ray that it is tracking. 
We first recall the following result:

\begin{theorem}[\cite{sisto}, Theorem 5.2] \label{T:mcg-track}
Let $S$ be a connected, orientable surface of finite type,
with empty boundary and complexity at least 2. Let $M(S)$ be its mapping
class group and let $\{w_{n}\}$ be a random walk on $M(S)$ driven by a finitely supported measure $\mu$. Then for any $k \geq 1$ 
there exists a constant $C$ such that 
$$\mathbb{P}\left( \sup d_{\rm Haus}(\{w_{i}\}_{i \leq n}, \gamma(w_{n}) ) 
  \geq C\sqrt{n \log n} \right) \leq C n^{-k}$$
where the supremum is taken over all geodesics in a given word metric and hierarchy paths $\gamma(w_{n})$ from 1 to $w_{n}$.
\end{theorem}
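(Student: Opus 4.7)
The plan is to adapt the strategy from the proof of Theorem~\ref{T:log-exc}, replacing projections to joins in the Salvetti complex by Masur--Minsky subsurface projections in the mapping class group. The fundamental inputs are: the Masur--Minsky distance formula, which encodes the word metric on $M(S)$ in terms of subsurface projection distances; Maher's theorem of linear progress with exponential decay for the sample-path projection to the curve complex $C(S)$; and the bounded geodesic image theorem, which links $C(S)$-geodesics to subsurface projections. By the distance formula, word-metric geodesics and hierarchy paths between the same pair of endpoints uniformly fellow-travel, so it suffices to prove the bound for hierarchy paths.

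I would first establish the mapping-class-group analogue of Theorem~\ref{T:log-exc}:
$$\mathbb{P}\left( \sup_Y d_Y(1, w_n) \geq C \log n \right) \leq C n^{-k},$$
where the supremum ranges over proper essential subsurfaces $Y \subsetneq S$. The argument mirrors the proof of Theorem~\ref{T:log-exc}: if $d_Y(1, w_n)$ exceeds $C \log n$, then by bounded geodesic image the $C(S)$-projection of the sample path must linger in a bounded neighborhood of $\pi_{C(S)}(\partial Y)$ for time at least a multiple of $\log n$, but exponential decay of linear progress in $C(S)$ bounds the probability of any such excursion by $n^{-k}$ once $C$ is large enough.

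Next, fix a hierarchy path $\gamma$ from $1$ to $w_n$ and an intermediate time $i$. The distance formula gives
$$d_{M(S)}(w_i, \gamma) \lesssim \sum_Y \bigl\{ d_Y(w_i, \gamma) \bigr\}_B,$$
and only subsurfaces along which $\gamma$ records progress can contribute. By the preceding step each summand is at most $C \log n$. The remaining task is to bound the accumulated fluctuation. Each contributing subsurface corresponds to an excursion of the curve-complex track of the walk through a neighborhood of $\pi_{C(S)}(\partial Y)$, and by linear $C(S)$-progress the number of such excursions, each of length $\asymp \log n$, is at most $n/\log n$. Treating the per-excursion projection errors as bounded-increment martingale differences and applying Azuma--Hoeffding to their partial sums yields fluctuations of order $\sqrt{(n/\log n)\cdot (\log n)^2} = \sqrt{n \log n}$, with the stretched-exponential tail easily dominating the prescribed polynomial rate $n^{-k}$.

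The main obstacle is simultaneous control over all $0 \leq i \leq n$: a naive union bound over $i$ would cost an extra factor of $n$. I would handle this with a Doob-type maximal inequality applied to the martingale built from the incremental subsurface projection differences, so that the supremum in $i$ of the tracking error obeys essentially the same tail bound as the error at a single time. The symmetric direction of the Hausdorff distance, bounding how closely each point of $\gamma$ is shadowed by the sample path, is handled by reversing time and running the same argument on $(w_n w_{n-i}^{-1})$. Finally, Borel--Cantelli applied with $k = 2$ upgrades the polynomial tail bound to the almost-sure $O(\sqrt{n \log n})$ tracking statement used in the main body of the paper.
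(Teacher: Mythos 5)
The paper does not prove this theorem; it is quoted verbatim from Sisto's \emph{Tracking rates of random walks} and cited as \cite[Theorem 5.2]{sisto}, so there is no in-paper proof to compare against. What the paper \emph{does} say, immediately after stating the result, is that ``the proof in \cite{sisto} uses\dots the bounded geodesic image theorem, the distance formula, and quadratic divergence (where the $\sqrt{n}$ function comes from).'' Your sketch correctly identifies the distance formula, bounded geodesic image, and Maher's linear progress with exponential decay, and the first reduction (logarithmic subsurface projections with polynomial failure probability, mirroring the proof of Theorem~\ref{T:log-exc}) is sound. But you omit the ingredient the paper explicitly attributes the $\sqrt{n}$ to, namely quadratic divergence, and replace it with a martingale / Azuma--Hoeffding heuristic that is not how Sisto's argument runs.

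That substitution has real gaps. First, the ``per-excursion projection errors'' are not a martingale difference sequence: which subsurfaces are visited, and for how long, is determined by the entire sample path, so there is no adapted, conditionally mean-zero structure to which Azuma--Hoeffding applies, and no attempt is made to construct one. Second, Azuma--Hoeffding controls fluctuations about a mean, but the quantity $d_{M(S)}(w_i,\gamma)$ is assembled from nonnegative contributions with no cancellation built in, so even if a martingale could be set up, the dominant term would be the drift, not the $\sqrt{\text{variance}}$ you invoke; this is exactly the role quadratic divergence plays in Sisto's argument, converting a deviation of size $D$ from the geodesic into a controlled geometric cost, rather than relying on probabilistic cancellation. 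Third, the bound ``number of excursions $\leq n/\log n$, each of length $\asymp \log n$'' is asserted rather than derived; nothing rules out many short excursions, and the counting must be tied to the linear progress statement more carefully than a one-line appeal allows. Finally, the Doob maximal inequality step inherits all these problems since it presupposes the very martingale that was never constructed. If you want a self-contained proof, the right move is to follow Sisto's geometric route: use quadratic divergence to bound the pointwise deviation in terms of a small number of large subsurface projections, then feed in the $O(\log n)$ excursion bound.
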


It turns out that the proof in \cite{sisto} uses all ingredients that are known for right-angled Artin groups, namely 
the bounded geodesic image theorem, the distance formula, and quadratic divergence (where the $\sqrt{n}$ function comes from). 
Hence the same proof as in \cite{sisto} yields: 

\begin{theorem} \label{T:raag-track}
Let $G$ be an irreducible right-angled Artin group and let $\{w_{n}\}$ be a random walk on $G$ driven by a finitely supported, generating measure $\mu$. 
Then for any $k \geq 1$ there exists a constant $C$ such that 
$$\mathbb{P}\left( \sup d_{\rm Haus}\big(\{w_{i}\}_{i \leq n}, \gamma(w_{n}) \big) 
  \geq C\sqrt{n \log n} \right) \leq C n^{-k}$$
where the supremum is taken over all geodesics in a given word metric from 1 to $w_{n}$.
\end{theorem}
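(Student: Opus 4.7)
The plan is to adapt Sisto's proof of Theorem \ref{T:mcg-track} to the right-angled Artin setting by systematically replacing each tool for mapping class groups with its analogue for $X(\Gamma)$. The three inputs Sisto uses are: (i) the bounded geodesic image theorem, (ii) the distance formula, and (iii) quadratic divergence. All three are available for $X(\Gamma)$: (i) is Lemma \ref{L:bounded-geodesic}; (ii) is the distance formula of Behrstock--Hagen--Sisto \cite[Theorem 9.1]{HHS1} applied to the factor system of sub-Salvetti complexes; and (iii) is a consequence of the fact that a combinatorial geodesic in $X(\Gamma)$ that leaves every flat sublinearly has quadratic lower divergence (and in any case, the divergence in a RAAG is at most quadratic, cf. the same references used for Theorem \ref{T:log-exc}). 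In addition, we will use linear progress with exponential decay of the projection $w_n \mapsto \pi_{C(X)}(w_n)$ in the contact graph, which follows from \cite{Maher-Exp}.

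The main argument goes as follows. Fix $k \geq 1$. Combining Theorem \ref{T:log-exc} (applied to all times $i \leq n$ by a union bound, absorbed into a larger exponent) with linear progress with exponential decay in $C(X)$, outside an event of probability $O(n^{-k})$ we may assume that for every $i \leq n$ and every join subcomplex $J$,
\[
d_{s(J)}(1, w_i) \leq C_0 \log n, \qquad d_{C(X)}(1, w_n) \geq L n.
\]
Assume for contradiction that on this good event there is an index $i \leq n$ with $d_{X(\Gamma)}(w_i, \gamma(w_n)) \geq C \sqrt{n \log n}$. By the distance formula (with an appropriate threshold), this forces the existence of a join subcomplex $J^\ast$ with $d_{s(J^\ast)}(w_i, \gamma(w_n))$ comparable to $\sqrt{n \log n}$, or a large collection of subcomplexes whose projections sum to this order. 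By the bounded geodesic image theorem (Lemma \ref{L:bounded-geodesic}), both the sample-path segment $[1, w_i]$ and its continuation $[w_i, w_n]$ must pass close in $C(X)$ to $\pi_{C(X)}(J^\ast)$ in order to register such a large projection. Quadratic divergence then implies that if the sample path deviates from the geodesic by distance $d$ at step $i$, the sub-path $[1,w_i,w_n]$ has length at least of order $d^2 / \log n$, because the distance formula contribution inside $J^\ast$ is at most $C_0 \log n$ by the good event.

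Putting the estimates together, the path length $n$ must satisfy
\[
n \;\geq\; c\,\frac{(C\sqrt{n \log n})^2}{\log n} \;=\; c\,C^2\, n,
\]
which is a contradiction once $C$ is chosen large enough in terms of $c$ and the constants from the distance formula. Thus on the good event the Hausdorff deviation is at most $C\sqrt{n\log n}$, and the complement has probability $O(n^{-k})$ as required.

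The main obstacle will be the careful bookkeeping of constants: the HHS distance formula only gives equality up to additive and multiplicative error, involves a sum over (potentially many) subcomplexes, and the threshold $B$ must be chosen large enough so that only finitely many subcomplexes contribute nontrivially along the relevant segment. Controlling this requires using that the logarithmic excursion bound of Theorem \ref{T:log-exc} is \emph{uniform} over all joins, so that only $O(\log n)$-projections arise in the sum with high probability. Once this is done the quadratic-divergence estimate and the $\sqrt{n\log n}$ scaling emerge exactly as in \cite{sisto}, because every subsurface-projection step in that argument has a verbatim analogue for factor subcomplexes of $X(\Gamma)$.
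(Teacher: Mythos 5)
Your proposal matches the paper's argument exactly: the paper's proof of Theorem~\ref{T:raag-track} consists solely of the observation that Sisto's proof of Theorem~\ref{T:mcg-track} uses only the bounded geodesic image theorem, the distance formula, and quadratic divergence, all of which hold for $X(\Gamma)$, and therefore carries over verbatim. Your opening paragraph is precisely the paper's proof; the detailed sketch you then supply (log-excursion bound plus linear progress sets up a good event, distance formula locates a large join projection, BGIT and quadratic divergence turn a $C\sqrt{n\log n}$ deviation into a length lower bound of order $C^2 n$, contradiction) goes beyond what the paper writes but is a faithful rendering of the structure of Sisto's argument that the paper is invoking.
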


As a consequence, we obtain the following tracking estimate. 

\begin{proposition} \label{P:deviation}
Let $\mu$ be a finitely supported, generating measure on an irreducible right-angled Artin group $G = A(\Gamma)$, with universal Salvetti complex $X = X(\Gamma)$. 
Then there exists $\ell > 0$ such that for almost every sample path $(w_n)$ there exists a \CAT geodesic ray $\gamma$ in $X$ starting 
at the base-point such that 
$$\limsup_{n \to \infty} \frac{d_X(w_n, \gamma(\ell n))}{\sqrt{n \log n} } < +\infty.$$
\end{proposition}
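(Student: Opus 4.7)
The approach is to combine Karlsson--Margulis convergence, the Gaussian-type tracking of Theorem~\ref{T:raag-track}, and \CAT convexity in a dyadic telescoping argument to upgrade sublinear tracking of the limit ray $\gamma$ by the sample path to a $\sqrt{n \log n}$ rate. By Theorem~\ref{T:converge} applied to the action of $A(\Gamma)$ on $X(\Gamma)$ (with $\mu$ nonelementary since it is generating and $A(\Gamma)$ irreducible), almost every sample path $(w_n)$ converges to a unique $\xi \in \partial_\infty X$, and there is a unique \CAT geodesic ray $\gamma \from [0,\infty) \to X$ from $\go$ to $\xi$. Kingman's subadditive ergodic theorem produces a deterministic $\ell > 0$ with $\Norm{w_n}/n \to \ell$ almost surely, and Azuma--Hoeffding applied to the bounded-step walk, combined with Borel--Cantelli, gives $|\Norm{w_n} - \ell n| = O(\sqrt{n \log n})$ almost surely.

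Let $\gamma_N \from [0, \Norm{w_N}] \to X$ denote the \CAT geodesic segment from $\go$ to $w_N$. Theorem~\ref{T:raag-track} applied at scales $N = 2^k n$ for all $k \geq 0$, together with a Borel--Cantelli argument on the polynomial tail and the bi-Lipschitz equivalence between word and \CAT metrics on the orbit, yields almost surely and simultaneously for all $k$ that $d_X(w_{2^k n}, \gamma_{2^{k+1} n}) = O(\sqrt{2^{k+1} n \log(2^{k+1} n)})$. In particular, at $k = 0$ this gives $d_X(w_n, \gamma_{2n}) = O(\sqrt{n \log n})$, and since the foot of the perpendicular from $w_n$ to $\gamma_{2n}$ has norm within $O(\sqrt{n \log n})$ of $\Norm{w_n} = \ell n + O(\sqrt{n \log n})$, we obtain $d_X(w_n, \gamma_{2n}(\ell n)) = O(\sqrt{n \log n})$. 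To complete the argument we bound $d_X(\gamma_{2n}(\ell n), \gamma(\ell n))$ by telescoping. In a \CAT space, two geodesics $\alpha, \beta$ emanating from $\go$ satisfy that $d_X(\alpha(t), \beta(t))$ is convex in $t$ and vanishes at $t=0$, so $t \mapsto d_X(\alpha(t), \beta(t))/t$ is non-decreasing on the common parameter interval. Applying this to $\alpha = \gamma_{2^k n}$ and $\beta = \gamma_{2^{k+1} n}$ at parameter $\ell n$ and using the tracking estimate above gives
\[
d_X\big(\gamma_{2^k n}(\ell n), \gamma_{2^{k+1} n}(\ell n)\big)
\leq \frac{\ell n}{\Norm{w_{2^k n}}} \cdot O\big(\sqrt{2^{k+1} n \log(2^{k+1} n)}\big)
= O\big(2^{-k/2} \sqrt{n \log(2^k n)}\big).
\]
The telescoping series $\sum_{k \geq 1} 2^{-k/2} \sqrt{n \log(2^k n)}$ converges to $O(\sqrt{n \log n})$, and since $\gamma_{2^k n}(\ell n) \to \gamma(\ell n)$ as $k \to \infty$ by Karlsson--Margulis convergence, we conclude $d_X(\gamma_{2n}(\ell n), \gamma(\ell n)) = O(\sqrt{n \log n})$. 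The triangle inequality then yields $d_X(w_n, \gamma(\ell n)) = O(\sqrt{n \log n})$.

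The hard part is the \CAT telescoping across dyadic scales: Karlsson--Margulis alone gives only sublinear tracking of $\gamma$, so the $\sqrt{n \log n}$ rate must be imported from the finite-scale tracking of Theorem~\ref{T:raag-track} and propagated to the infinite ray $\gamma$ via monotonicity of $d_X(\gamma_N(t), \gamma(t))/t$, with contributions at all dyadic scales summing to a convergent geometric-type series. A secondary technical point is the transfer from the word-metric tracking of Theorem~\ref{T:raag-track} to the \CAT geodesic segments $\gamma_N$ on which the convexity argument operates, which uses the bi-Lipschitz equivalence of the two metrics on the orbit.
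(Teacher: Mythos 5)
Your proposal is correct and takes essentially the same route as the paper: both rely on the finite-scale tracking estimate of Theorem~\ref{T:raag-track} together with Borel--Cantelli, then telescope over geometrically growing scales ($2^k n$ in your version, $e^k n$ in the paper's) and use \CAT comparison from the common base-point (you phrase it as monotonicity of $d_X(\alpha(t),\beta(t))/t$, the paper as comparison with a Euclidean triangle) to show the contributions form a convergent series of total size $O(\sqrt{n\log n})$. One small caveat, which applies equally to the paper's own terse proof: the step $|\Norm{w_n} - \ell n| = O(\sqrt{n \log n})$ via Azuma--Hoeffding gives concentration around $\mathbb{E}\Norm{w_n}$, so one additionally needs $\mathbb{E}\Norm{w_n} - \ell n = O(\sqrt{n\log n})$ (this is known for acylindrically hyperbolic groups via linear progress with exponential decay, but neither you nor the paper spell it out).
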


\begin{proof}
Since $G = A(\Gamma)$ is non-amenable and its action on $X$ is cocompact, there exists $\ell > 0$ such that for almost every sample path
\begin{equation} \label{E:progress}
\lim_{n \to \infty} \frac{d_X(1, w_n)}{n} = \ell.
\end{equation}
Now, by Theorem \ref{T:raag-track}, there exists $C > 0$ such that for any $n$
$$\mathbb{P}\left( \sup_{i \leq n} d_X( w_i, \gamma_n) \geq C\sqrt{n \log n} \right) \leq C n^{-2}$$
where $\gamma_n$ is the \CAT geodesic joining $1$ and $w_n$. Hence, by Borel-Cantelli for almost every sample path 
there exists a constant $C'$ such that 
\begin{equation} \label{E:sqrtn}
\sup_{i \leq n} d_X( w_i, \gamma_n) \leq C' \sqrt{n \log n}
\end{equation}
for any $n$. 
Now, let $n_k := e^k n$ and consider the triangle with vertices $\{ w_{n_{k-1}}, 1, w_{n_k} \}$. 
For large $n$ and any $k \geq 1$, we have $d_X(w_{n_{k-1}}, \gamma_{n_k}) \leq C' \sqrt{n_k \log n_k }$ by \eqref{E:sqrtn} and $d_X(1, w_{n_{k-1}}) \geq \frac{\ell}{2} n_{k-1}$ by \eqref{E:progress}, then 
by comparison with a euclidean triangle,
$$d_X(\gamma_{n_{k-1}}(\ell n), \gamma_{n_{k}}(\ell n)) \lesssim \ell n \frac{d_X(w_{n_{k-1}}, \gamma_{n_{k}})}{d_X(1, w_{n_{k-1}})}  \lesssim \sqrt{n \log n} \sqrt{k} e^{-k/2}.$$
Hence 
$$d_X(w_n, \gamma(\ell n)) \lesssim \sum_{k=1}^\infty d_X(\gamma_{n_{k-1}}(\ell n), \gamma_{n_{k}}(\ell n)) \lesssim \sqrt{n \log n}$$
which proves the claim.
\end{proof}

\subsection*{Proof of Theorem~\ref{T:full-measure} and Theorem~\ref{T:intro-Poisson}}

Recall that almost every sample path $(w_n)$ converges to a point $\xi$ in the visual boundary: let $\gamma$ be the infinite \CAT geodesic 
connecting the base-point to $\xi$, and let $\gamma' = \{g_0, g_1, \dots \}$ be a combinatorial geodesic in $X(\Gamma)$ which lies at distance at most $1$ from $\gamma$.

By Theorem~\ref{T:log-exc}, for almost every sample path there exists $C_1 > 0$ such that
$$d_{s(J)}(1, w_n) \leq C_1 \log n.$$
for any join $J$. Moreover, by Proposition \ref{P:deviation}, for almost every sample path and any $J$,
$$d_{s(J)}(\gamma(\ell n), w_n) \leq d_{X(\Gamma)}(\gamma(\ell n), w_n) \leq C_2 \sqrt{n \log n}$$
hence, since $g_n$ lies within distance $1$ of $\gamma(n)$, 
$$d_{s(J)}(1, g_n) \leq C_1 \log (n/\ell) + C_2 \sqrt{(n/\ell) \log (n/\ell)} \leq C_3 \sqrt{n \log n}.$$
Thus, the geodesic $\gamma'$ is a $\kappa$-excursion geodesic with $\kappa = \sqrt{t \log t}$, hence it is also a $\kappa$-contracting geodesic. 
This proves Theorem~\ref{T:full-measure}, hence also Theorem~\ref{T:intro-Poisson}.

\bibliographystyle{alpha}

\end{document}